\documentclass[hidelinks]{article}
\usepackage
[a4paper,% other options: a3paper, a5paper, etc
left=2.5cm,
right=2.5cm,
top=2.5cm,
bottom=2.5cm]
{geometry}

\usepackage{inputenc}
\usepackage{amsmath}
\usepackage{amsfonts}
\usepackage{amsthm}
\usepackage{amssymb}
\usepackage{latexsym}
\usepackage{enumitem} 
\usepackage{mathrsfs}
\usepackage{mathtools}
\usepackage[]{algorithm2e}
\usepackage{hyperref}
\usepackage{xcolor}
\usepackage{setspace}
\usepackage[date=year,doi=false,isbn=false,url=false,eprint=false,citestyle=numeric-comp,maxnames=99,giveninits]{biblatex}
\usepackage{bbm}
\usepackage{subcaption}
\usepackage{graphicx}

\renewbibmacro{in:}{}

\def\R{\mathbb{R}}

\DeclareMathOperator{\Var}{Var}

\DeclareMathOperator{\Uniform}{Uniform}
\DeclareMathOperator{\Rademacher}{Rademacher}

\DeclareMathOperator{\spn}{span}

\DeclareMathOperator{\Rem}{Rem}

\newtheorem{theorem}{Theorem}
\newtheorem{proposition}{Proposition}
\newtheorem{lemma}{Lemma}
\newtheorem{corollary}{Corollary}
\newtheorem{definition}{Definition}

\hypersetup{
    colorlinks=true,
    linkcolor=blue,
    filecolor=blue,      
    urlcolor=cyan,
    citecolor=red
}

\title{Minimax Signal Detection in Sparse Additive Models}
\author{Subhodh Kotekal and Chao Gao \\ \textit{University of Chicago}}
\date{}
\onehalfspacing
%\doublespacing

\bibliography{spam_final}

\begin{document}
\maketitle

\abstract{Sparse additive models are an attractive choice in circumstances calling for modelling flexibility in the face of high dimensionality. We study the signal detection problem and establish the minimax separation rate for the detection of a sparse additive signal. Our result is nonasymptotic and applicable to the general case where the univariate component functions belong to a generic reproducing kernel Hilbert space. Unlike the estimation theory, the minimax separation rate reveals a nontrivial interaction between sparsity and the choice of function space. We also investigate adaptation to sparsity and establish an adaptive testing rate for a generic function space; adaptation is possible in some spaces while others impose an unavoidable cost. Finally, adaptation to both sparsity and smoothness is studied in the setting of Sobolev space, and we correct some existing claims in the literature.}

\section{Introduction}

In the interest of interpretability, computation, and circumventing the statistical curse of dimensionality plaguing high dimensional regression, structure is often assumed on the true regression function. Indeed, it might plausibly be argued that sparse linear regression is the distinguishing export of modern statistics. Despite its popularity, circumstances may call for more flexibility to capture nonlinear effects of the covariates. Striking a balance between flexibility and structure, Hastie and Tibshirani \cite{hastie_generalized_1986} proposed generalized additive models (GAMs) as a natural extension to the vaunted linear model. In a GAM, the regression function admits an additive decomposition of univariate (nonlinear) component functions. However, as in the linear model, the sample size must outpace the dimension for consistent estimation. Following modern statistical instinct, a sparse additive model is compelling \cite{lin_component_2006,raskutti_minimax-optimal_2012,koltchinskii_sparsity_2010,yuan_minimax_2016,meier_high-dimensional_2009,ravikumar_sparse_2009}. The regression function admits an additive decomposition of univariate functions for which only a small subset are nonzero; it is the combination of a GAM and sparsity.   

To fix notation, consider the \(p\)-dimensional Gaussian white noise model 
\begin{equation}\label{model:gwn}
    dY_x = f(x) \,dx + \frac{1}{\sqrt{n}} dB_x
\end{equation}
for \(x \in [0, 1]^p\). Though it may be more faithful to practical data analysis to consider the nonparametric regression model \(Y_i = f(X_i) + \epsilon_i\) (e.g. as in \cite{raskutti_minimax-optimal_2012}), the Gaussian white noise model is convenient as it avoids distracting technicalities while maintaining focus on the statistical essence. Indeed, the nonparametric statistics literature has a long history of studying the white noise model to understand theoretical limits, relying on well-known asymptotic equivalences \cite{brown_asymptotic_1996,reis_asymptotic_2008} which imply, under various conditions, that mathematical results obtained in one model can be ported over to the other model. As our focus is theoretical rather than practical, we follow in this tradition. The generalized additive model asserts the regression function is of the form  \(f(x) = \sum_{j=1}^{p} f_j(x_j)\) with \(f_1,...,f_p\) being univariate functions belonging to some function space \(\mathcal{H}\). Likewise, the sparse additive model asserts
\begin{equation*}
    f(x) = \sum_{j \in S} f_j(x_j)
\end{equation*}
for some unknown support set \(S\) of size \(s\) denoting the active covariates.

Most of the existing literature has addressed estimation of sparse additive models, primarily in the nonparametric regression setting and with \(\mathcal{H}\) being a reproducing kernel Hilbert space. After a series of works \cite{lin_component_2006,koltchinskii_sparsity_2010,meier_high-dimensional_2009,ravikumar_sparse_2009}, Raskutti et al. \cite{raskutti_minimax-optimal_2012} (see also \cite{koltchinskii_sparsity_2010}) established that a penalized \(M\)-estimator achieves the minimax estimation rate under various choices of the reproducing kernel Hilbert space \(\mathcal{H}\). Yuan and Zhou \cite{yuan_minimax_2016} establish minimax estimation rates under a notion of approximate sparsity. As is now seen as typical of estimation theory, the powerful framework of empirical processes is brought down to bear on their proofs. Some articles have also addressed generalizations of the sparse additive model. For example, the authors of \cite{yang_minimax-optimal_2015} consider, among other structures, an additive signal \(f(x) = \sum_{j=1}^{k} f_j(x)\) where each component function \(f_j\) is actually a multivariate function depending on at most \(s_j\) many coordinates and is \(\alpha_j\)-H\"{o}lder. The authors go on to derive minimax rates that handle heterogeneity in the smoothness indices and the sparsities of the coordinate functions; as a taste of their results, they show, in a particular regime and under some conditions, the rate \(kn^{-\frac{2\alpha}{2\alpha + s}} + ks \log\left(\frac{p}{s}\right)/n\) in the special, homogeneous case where \(s_j = s\) and \(\alpha_j = \alpha\) for all \(j\). Recently, the results of \cite{bhattacharya_deep_2023} show certain deep neural networks can achieve the minimax estimation rate for sparse \(k\)-way interaction models. The \(k\)-way interaction model is also known as nonparametric ANOVA. To give an example, the sparse \(2\)-way interaction model assumes \(f(x) = \sum_{j \in S_1} f_j(x_j) + \sum_{(k, l) \in S_2} f_{kl}(x_k, x_l)\) where the sets of active variables \(S_1\) and interactions \(S_2\) have small cardinalities. When the \(f_j\) are \(\beta_1\)-H\"{o}lder and the \(f_{kl}\) are \(\beta_2\)-H\"{o}lder, \cite{bhattacharya_deep_2023} establishes, under some conditions and up to factors logarithmic in \(n\), the rate \(s_1(n^{-\frac{2\beta_1}{2\beta_1 + 1}} + (\log p)/n) + s_2(n^{-\frac{2\beta_2}{2\beta_2 + 2}} + (\log p)/n)\).

The literature has much less to say on the problem of signal detection 
\begin{align}
    H_0 &: f \equiv 0, \label{problem:test_1}\\
    H_1 &: ||f||_2 \geq \varepsilon \text{ and } f \in \mathcal{F}_s \label{problem:test_2}
\end{align}
where \(\mathcal{F}_s\) is the class of sparse additive signals given by (\ref{param:Fs}). Adopting a minimax perspective \cite{burnasev_minimax_1979,ingster_minimax_1982,ingster_asymptotically_1987,ingster_adaptive_1990,ingster_nonparametric_2003}, the goal is to determine the smallest rate \(\varepsilon\) as a function of the sparsity level \(s\), the dimension \(p\), the sample size \(n\), and the function space \(\mathcal{H}\) such that consistent detection of the alternative against the null is possible. 

Though to a much lesser extent than the corresponding estimation theory, optimal testing rates have been established in various high dimensional settings other than sparse additive models. The most canonical setup, the Gaussian sequence model, is perhaps the most studied \cite{ingster_nonparametric_2003,baraud_non-asymptotic_2002,spokoiny_adaptive_1996,liu_minimax_2021,kotekal_minimax_2021,chhor_sparse_2022,donoho_higher_2004,collier_minimax_2017,carpentier_adaptive_2019,comminges_adaptive_2021,hall_innovated_2010}. Optimal rates have also been established in linear regression \cite{arias-castro_global_2011,ingster_detection_2010,mukherjee_minimax_2020} and other models \cite{mukherjee_testing_2022,deb_detecting_2020}. A common motif is that optimal testing rates exhibit different phenomena from optimal estimation rates. 

Returning to (\ref{problem:test_1})-(\ref{problem:test_2}), the only directly relevant works in the literature are Ingster and Lepski's article \cite{ingster_multichannel_2003} and a later article by Gayraud and Ingster \cite{gayraud_detection_2012}. Ingster and Lepski \cite{ingster_multichannel_2003} consider a sparse multichannel model which, after a transformation to sequence space, is closely related to (\ref{problem:test_1})-(\ref{problem:test_2}). Adopting an asymptotic setting and exclusively choosing \(\mathcal{H}\) to be a Sobolev space, they establish asymptotic minimax separation rates. However, their results only address the regimes \(p = O(s^2)\) and \(s = O(p^{1/2-\delta})\) for a constant \(\delta \in (0, 1/2)\). Their result does not precisely pin down the rate near the phase transition \(s \asymp \sqrt{p}\). In essence, their testing procedure in the sparse regime is to apply a Bonferroni correction to a collection of \(\chi^2\)-type tests, one test per each of the \(p\) coordinates. Thus, a gap in their rate near \(\sqrt{p}\) is unsurprising. In the dense regime, a single \(\chi^2\)-type test is used, as is typical in minimax testing literature. Ingster and Lepski \cite{ingster_multichannel_2003} also address adaptation to sparsity as well as adaptation both the sparsity and the smoothness. Ingster and Gayraud \cite{gayraud_detection_2012} consider sparse additive models rather than a sparse multichannel model but make the same choice of \(\mathcal{H}\) and work in an asymptotic setup. They establish sharp constants for the sparse case \(s = p^{1/2-\delta}\) via a Higher Criticism type testing statistic. Throughout the paper, we make comparisons of our results to primarily \cite{ingster_multichannel_2003} as it was the first article to establish rates. Our results do not address the question of sharp constants. 

Our paper's main contributions are the following. First, adopting a nonasymptotic minimax testing framework as initiated in \cite{baraud_non-asymptotic_2002}, we establish the nonasymptotic minimax separation rate for (\ref{problem:test_1})-(\ref{problem:test_2}) for any configuration of the sparsity \(s\), dimension \(p\), sample size \(n\), and a generic function space \(\mathcal{H}\). Notably, we do not restrict ourselves to Sobolev (or Besov) space as in \cite{ingster_multichannel_2003,gayraud_detection_2012}. The test procedure we analyze involves thresholded \(\chi^2\) statistics, following a strategy employed in other sparse signal detection problems \cite{collier_minimax_2017,kotekal_minimax_2021,chhor_sparse_2022,collier_estimation_2020,liu_minimax_2021}. 

Our second contribution is to establish an adaptive testing rate for a generic function space. Typically, the sparsity level is unknown, and it is of practical interest to have a methodology which can accommodate a generic \(\mathcal{H}\). Interestingly, some choices of \(\mathcal{H}\) do not involve any cost of adaptation, that is, the minimax rate can be achieved without knowing the sparsity. Our rate's incurred adaptation cost turns out to be a delicate function of \(\mathcal{H}\), thus extending Ingster and Lepski's focus on Sobolev spaces \cite{ingster_multichannel_2003}. Even in the Sobolev case, our result extends upon their article; near the regime \(s \asymp \sqrt{p}\), our test provides finer detail by incurring a cost involving only \(\log\log\log p\) instead of \(\log\log p\) as incurred by their test. In principle, our result can be used to reverse the process and find a space \(\mathcal{H}\) for which this adaptive rate incurs a given adaptation cost. 

Finally, adaptation to both sparsity and smoothness is studied in the context of Sobolev space. We identify an issue with and correct a claim made by \cite{ingster_multichannel_2003}. 

\subsection{Notation}\label{section:notation}
The following notation will be used throughout the paper. For \(p \in \mathbb{N}\), let \([p] := \{1,...,p\}\). For \(a, b \in \R\), denote \(a \vee b := \max\{a, b\}\) and \(a \wedge b = \min\{a , b\}\). Denote \(a \lesssim b\) to mean there exists a universal constant \(C > 0\) such that \(a \leq C b\). The expression \(a \gtrsim b\) means \(b \lesssim a\). Further, \(a \asymp b\) means \(a \lesssim b\) and \(b \lesssim a\). The symbol \(\langle \cdot, \cdot \rangle\) denotes the usual inner product in Euclidean space and \(\langle \cdot, \cdot\rangle_F\) denotes the Frobenius inner product. The total variation distance between two probability measures \(P\) and \(Q\) on a measurable space \((\mathcal{X}, \mathcal{A})\) is defined as \(d_{TV}(P, Q) := \sup_{A \in \mathcal{A}} |P(A) - Q(A)|\). If \(Q\) is absolutely continuous with respect to \(P\), the \(\chi^2\)-divergence is defined as \(\chi^2(Q\,||\,P) := \int_{\mathcal{X}} \left(\frac{dQ}{dP} - 1\right)^2 \, dP\). For a finite set \(S\), the notation \(|S|\) denotes the cardinality of \(S\). Throughout, iterated logarithms will be used (e.g. expressions like \(\log\log\log p\) and \(\log\log(np)\)). Without explicitly stating so, we will take such an expression to be equal to some universal constant if otherwise it would be less than one. For example, \(\log\log\log p\) should be understood to be equal to a universal constant when \(p < e^{e^e}\). 

\subsection{Setup}

\subsubsection{Reproducing Kernel Hilbert Space (RKHS)}
Following the literature (e.g. \cite{wainwright_high-dimensional_2019}), \(\mathcal{H}\) will denote a reproducing kernel Hilbert space (RKHS). Before discussing main results, basic properties of RKHSs are reviewed \cite{wainwright_high-dimensional_2019}. Suppose \(\mathcal{H} \subset L^2([0,1])\) is a reproducing kernel Hilbert space (RKHS) with associated inner product \(\langle \cdot, \cdot \rangle_{\mathcal{H}}\). There exists a symmetric function \(K : [0,1] \times [0, 1] \to \R_{+}\) called a \emph{kernel} such that for any \(x \in [0, 1]\) we have (1) the function \(K(\cdot, x) \in \mathcal{H}\) and (2) for all \(f \in \mathcal{H}\) we have \(f(x) = \langle f, K(\cdot, x)\rangle_{\mathcal{H}}\). Mercer's theorem (Theorem 12.20 in \cite{wainwright_high-dimensional_2019}) guarantees that the kernel \(K\) admits an expansion in terms of eigenfunctions \(\left\{ \psi_k \right\}_{k=1}^{\infty}\), namely \(K(x, x') = \sum_{k=1}^{\infty} \mu_k \psi_k(x)\psi_k(x')\). To give examples, the kernel \(K(x, x') = 1 + \min\{x, x'\}\) defines the first-order Sobolev space with eigenvalue decay \(\mu_k \asymp k^{-2}\), and the kernel \(K(x, x') = \exp\left(-\frac{(x-x')^2}{2}\right)\) exhibits eigenvalue decay \(\mu_k \asymp e^{-ck \log k}\) (see \cite{wainwright_high-dimensional_2019} for a more detailed review).

Without loss of generality, we order the eigenvalues \(\mu_1 \geq \mu_2 \geq ... \geq 0\). The eigenfunctions \(\left\{\psi_k\right\}_{k=1}^{\infty}\) are orthonormal in \(L^2([0,1])\) under the usual \(L^2\) inner product \(\langle \cdot, \cdot \rangle_{L^2}\) and the inner product \(\langle \cdot, \cdot \rangle_{\mathcal{H}}\) satisfies \(\langle f, g \rangle_{\mathcal{H}} = \sum_{k=1}^{\infty} \mu_k^{-1} \langle f, \psi_k \rangle_{L^2} \langle g, \psi_k \rangle_{L^2}\) for \(f, g \in \mathcal{H}\). 

\subsubsection{Parameter space}\label{section:parameter}
The parameter space which will be used throughout the paper is defined in this section. Suppose \(\mathcal{H}\) is an RKHS. Recall we are interested in sparse additive signals \(f(x) = \sum_{j \in S} f_j(x_j)\) for some sparsity pattern \(S \subset [p]\). Following \cite{gayraud_detection_2012,raskutti_minimax-optimal_2012}, for convenience we assume \(\int_{0}^{1} f_j(t) \, dt = 0\) for all \(j\). This assumption is mild and can be relaxed; further discussion can be found in Section \ref{section:uncentered}. Letting \(\mathbf{1} \in L^2([0,1])\) denote the constant function equal to one, consider that \(\mathcal{H}_0 := \mathcal{H} \cap \spn\left\{\mathbf{1}\right\}^\perp\) is a closed subspace of \(\mathcal{H}\). Hence, \(\mathcal{H}_0\) is also an RKHS. We will put aside \(\mathcal{H}\) (along with its eigenfunctions and eigenvalues) and only work with \(\mathcal{H}_0\). Let \(\{\psi_k\}_{k=1}^{\infty}\) and \(\left\{\mu_k\right\}_{k=1}^{\infty}\) denote its associated eigenfunctions and eigenvalues respectively. Following \cite{baraud_non-asymptotic_2002}, we assume \(\mu_1 = 1\) also for convenience; this can easily be relaxed. 

For each subset \(S \subset [p]\), define 
\begin{equation*}
    \mathcal{H}_S := \left\{f(x) = \sum_{j \in S} f_j(x_j) : f_j \in \mathcal{H}_0 \text{ and } ||f_j||_{\mathcal{H}_0} \leq 1 \text{ for all } j \in S\right\}. 
\end{equation*}
The condition on the RKHS norm enforces regularity. Define the parameter space 
\begin{equation}\label{param:Fs}
    \mathcal{F}_s := \bigcup_{\substack{S \subset [p], \\ |S| \leq s}} \mathcal{H}_S
\end{equation}
for each \(1 \leq s \leq p\). 

Following \cite{ingster_multichannel_2003,gayraud_detection_2012}, it is convenient to transform (\ref{model:gwn}) from function space to sequence space. The tensor product \(L^2([0,1])^{\otimes p}\) admits the orthonormal basis \(\left\{ \Phi_\ell \right\}_{\ell \in \mathbb{N}^p}\) with \(\Phi_\ell(t) = \prod_{j=1}^{p} \psi_{\ell_j}(t_j)\) for \(t \in [0, 1]^p\). For ease of notation, denote \(\phi_{j,k}(t) = \psi_k(t_j)\) for \(k \in \mathbb{N}\) and \(j \in [p]\). Define the random variables
\begin{align}
    X_{k,j} &:= \int_{[0, 1]^p} \phi_{j,k}(x) \, dY_x \sim N(\theta_{k,j}, n^{-1}) \label{data:X}
\end{align}
where \(\theta_{k,j} = \int_{0}^{1} \psi_k(x) f_j(x) \, dx\). The assumption \(\int_{0}^{1} f_j(t) \, dt = 0\) for all \(j\) is used here. Note by orthogonality that \(\{X_{k,j}\}_{k \in \mathbb{N}, j \in [p]}\) is a collection of independent random variables. The notation \(\Theta \in \R^{\mathbb{N} \times p}\) will frequently be used to denote the full matrix of coefficients. The notation \(\Theta_j \in \R^{\mathbb{N}}\) will also be used to denote the \(j\)th column of \(\Theta\). For \(f \in \mathcal{F}_s\), the corresponding set of coefficients is 
\begin{equation}\label{param:Ts}
    \mathscr{T}_s := \left\{ \Theta \in \R^{\mathbb{N} \times p} : \sum_{j=1}^{p} \mathbbm{1}_{\{\Theta_j \neq 0\}} \leq s \text{ and } \sum_{k=1}^{\infty} \frac{\theta_{k,j}^2}{\mu_{k}} \leq 1 \text{ for all } j \right\}.
\end{equation}

Note the parameter spaces \(\mathcal{F}_s\) and \(\mathscr{T}_s\) are in correspondence, and we will frequently write \(f\) and \(\Theta\) freely in the same context without comment. The understanding is \(f\) is a function and \(\Theta\) is its corresponding basis coefficients. The notation \(E_f, P_f, E_\Theta\), and \(P_\Theta\) will be used to denote expectations and probability measures with respect to the denoted parameters. 

\subsubsection{Problem}
As described earlier, given an observation from (\ref{model:gwn}) the signal detection problem (\ref{problem:test_1})-(\ref{problem:test_2}) is of interest. The goal is to characterize the nonasymptotic minimax separation rate \(\varepsilon_{\mathcal{H}}^* = \varepsilon_{\mathcal{H}}^*(p, s, n)\).
\begin{definition}\label{def:minimax_separation_rate}
    We say \(\varepsilon^*_{\mathcal{H}} = \varepsilon_{\mathcal{H}}^*(p, s, n)\) is the nonasymptotic minimax separation rate for the problem (\ref{problem:test_1})-(\ref{problem:test_2}) if 
    \begin{enumerate}[label=(\roman*)]
        \item for all \(\eta \in (0, 1)\), there exists \(C_\eta > 0\) depending only on \(\eta\) such that for all \(C > C_\eta\),
        \begin{equation*}
            \inf_{\varphi}\left\{ P_0\left\{\varphi \neq 0\right\} + \sup_{\substack{f \in \mathcal{F}_s, \\ ||f||_2 \geq C \varepsilon_{\mathcal{H}}^*(p, s, n)}} P_f\left\{\varphi \neq 1\right\}\right\} \leq \eta, 
        \end{equation*}
        \item for all \(\eta \in (0, 1)\), there exists \(c_\eta > 0\) depending only on \(\eta\) such that for all \(0 < c < c_\eta\),
        \begin{equation*}
            \inf_{\varphi}\left\{ P_0\left\{\varphi \neq 0\right\} + \sup_{\substack{f \in \mathcal{F}_s, \\ ||f||_2 \geq c\varepsilon_{\mathcal{H}}^*(p, s, n)}} P_f\left\{\varphi \neq 1\right\}\right\} \geq 1-\eta. 
        \end{equation*}
    \end{enumerate}
\end{definition}

The nonasymptotic minimax testing framework was initiated by Baraud \cite{baraud_non-asymptotic_2002} and characterizes (up to universal factors) the fundamental statistical limit of the detection problem. The framework is nonasymptotic in the sense that the conditions for the minimax separation rate hold for any configuration of the problem parameters.

Since \(||f||_2^2 = \sum_{j=1}^{p} \sum_{k=1}^{\infty} \theta_{k,j}^2\), the problem can be equivalently formulated in sequence space as 
\begin{align}
    H_0 &: \Theta \equiv 0, \label{problem:seq_test_1}\\
    H_1 &: ||\Theta||_F \geq \varepsilon \text{ and } \Theta \in \mathscr{T}_s. \label{problem:seq_test_2}
\end{align}

This testing problem, with the parameter space (\ref{param:Ts}), is interesting its own right outside the sparse additive model and RKHS context. Indeed, the testing problem (\ref{problem:seq_test_1})-(\ref{problem:seq_test_2}) is essentially a group-sparse extension of the detection problem in ellipses considered in Baraud's seminal article \cite{baraud_non-asymptotic_2002}. In fact, this interpretation was actually our initial motivation to study the detection problem. The connection to sparse additive models was a later consideration; similar to the way in which the later article \cite{gayraud_detection_2012} considers sparse additive models when building upon the earlier, fundamental work \cite{ingster_multichannel_2003} dealing with a sparse multichannel (essentially group-sparse) model. Taking the perspective of a sequence problem has a long history in nonparametric regression \cite{ingster_nonparametric_2003,ingster_multichannel_2003,baraud_non-asymptotic_2002,ermakov_minimax_1990,johnstone_gaussian_nodate,wei_local_2020,reis_asymptotic_2008,brown_asymptotic_1996} due to not only its fundamental connections but also its advantage in distilling the problem to its essence and dispelling technical distractions. Our results can be exclusively read (and readers are encouraged to do so) in the context of the sequence problem (\ref{problem:seq_test_1})-(\ref{problem:seq_test_2}).

\section{Minimax rates}
We begin by describing some high-level and natural intuition before informally stating our main result in Section \ref{section:naive_ansatz}. Section \ref{section:preliminaries} contains the development of some key quantities. In Sections \ref{section:lower_bound} and \ref{section:upper_bound}, we formally state minimax lower and upper bounds respectively. In Section \ref{section:special_cases}, some special cases illustrating the general result are discussed.

\subsection{A naive ansatz}\label{section:naive_ansatz}

A first instinct is to look to previous results for context in an attempt to make a conjecture regarding the optimal testing rate. To illustrate how this line of thinking might proceed, consider the classical Gaussian white noise model on the unit interval in one dimension,
\begin{equation*}
    dY_x = f(x)\, dx + \frac{1}{\sqrt{n}} dB_x
\end{equation*}
for \(x \in [0, 1]\). Assume \(f\) lives inside the unit ball of a reproducing kernel Hilbert space \(\mathcal{H}\) and thus admits a decomposition in the associated orthonormal basis with a coefficient vector \(\theta \in \ell^2(\mathbb{N})\) living in an ellipsoid determined by the kernel's eigenvalues \(\mu_1 \geq \mu_2 \geq ... \geq 0\). The optimal rate of estimation is given by Birg\'{e} and Massart \cite{birge_gaussian_2001}
\begin{equation*}
    \epsilon_{\text{est}}^2 \asymp \max_{\nu \in \mathbb{N}}\left\{ \mu_{\nu} \wedge \frac{\nu}{n} \right\}.
\end{equation*}
Baraud \cite{baraud_non-asymptotic_2002} established that the minimax separation rate for the signal detection problem 
\begin{align*}
    H_0 &: f \equiv 0, \\
    H_1 &: ||f||_2 \geq \varepsilon \text{ and } ||f||_{\mathcal{H}} \leq 1
\end{align*}
is given by 
\begin{equation*}
    \epsilon_{\text{test}}^2 \asymp \max_{\nu \in \mathbb{N}} \left\{\mu_{\nu} \wedge \frac{\sqrt{\nu}}{n}\right\}. 
\end{equation*}
In both estimation and testing, the maximizer \(\nu^*\) can be conceptualized as the correct truncation order. Specifically for testing, Baraud's procedure \cite{baraud_non-asymptotic_2002}, working in sequence space, rejects \(H_0\) when \(\sum_{k=1}^{\nu^*} X_k^2 - \frac{\nu^*}{n} \gtrsim \frac{\sqrt{\nu^*}}{n}\). The data for \(k > \nu^*\) are not used, and it is in this sense \(\nu^*\) is understood as a truncation level. To illustrate these results, the rates for Sobolev spaces with smoothness \(\alpha\) are \(\epsilon_{\text{est}}^2 \asymp n^{-\frac{2\alpha}{2\alpha+1}}\) and \(\epsilon_{\text{test}}^2 \asymp n^{-\frac{4\alpha}{4\alpha+1}}\) since \(\mu_\nu \asymp \nu^{-2\alpha}\). By now, these nonasymptotic results of \cite{birge_gaussian_2001,baraud_non-asymptotic_2002} are well known and canonical.  

Moving to the setting of sparse additive signals in the model (\ref{model:gwn}), Raskutti et al. \cite{raskutti_minimax-optimal_2012} derive the nonasymptotic minimax rate of estimation
\begin{equation*}
    \frac{s \log p}{n} + s \epsilon_{\text{est}}^2.
\end{equation*}
While their upper bound holds for any choice of \(\mathcal{H}\) (satisfying some mild conditions) and sparsity level \(s\), they only obtain a matching lower bound when \(s = o(p)\) and when the unit ball of \(\mathcal{H}\) has logarithmic or polynomial scaling metric entropy. This rate obtained by Raskutti et al. \cite{raskutti_minimax-optimal_2012} is pleasing. It is quite intuitive to see the term \(\frac{s \log p}{n}\) due to the sparsity in the parameter space. The term \(s \epsilon_{\text{est}}^2\) is the natural rate for estimating \(s\) many univariate functions in \(\mathcal{H}\) as if sparsity pattern were known. Notably, there is no interaction between the choice of \(\mathcal{H}\) and the sparsity structure. The sparsity term \(\frac{s \log p}{n}\) is independent of \(\mathcal{H}\) and the estimation term \(s \epsilon_{\text{est}}^2\) is dimension free.

One might intuit that this lack of interaction is a general phenomenon. Instinct may suggest that signal detection in sparse additive models is also just \(s\) many instances of a univariate nonparametric detection problem plus the problem of a detecting a signal which is nonzero on an unknown sparsity pattern. Framing it as two distinct problems, one might conjecture the optimal testing rate should be \(s \epsilon_{\text{test}}^2\) plus the \(s\)-sparse detection rate. Collier et al. \cite{collier_minimax_2017} provide a natural candidate for the \(s\)-sparse testing rate, namely the rate \(\frac{s \log\left(\frac{p}{s^2}\right)}{n}\) for \(s < \sqrt{p}\) and \(\frac{\sqrt{p}}{n}\) for \(s \geq \sqrt{p}\). 

However, this is not the case as a quick glance at \cite{ingster_multichannel_2003} falsifies the conjecture for the case of Sobolev \(\mathcal{H}\). Though quick to dispel hopeful thinking, \cite{ingster_multichannel_2003} expresses little about the interaction between sparsity and \(\mathcal{H}\). Our result explicitly captures this nontrivial interaction for a generic \(\mathcal{H}\). We show the minimax separation rate is given by
\begin{align}\label{rate:main}
    \varepsilon_{\mathcal{H}}^*(p, s, n)^2 &\asymp 
    s \wedge \begin{cases}
        \frac{s \log\left(\frac{p}{s^2}\right)}{n} + s \cdot \max_{\nu \in \mathbb{N}} \left\{\mu_{\nu} \wedge \frac{\sqrt{\nu \log\left(1 + \frac{p}{s^2}\right)}}{n} \right\} &\textit{if } s < \sqrt{p}, \\
        s \cdot \max_{\nu \in \mathbb{N}} \left\{\mu_{\nu} \wedge \frac{\sqrt{\nu \log\left(1 + \frac{p}{s^2}\right)}}{n} \right\} &\textit{if } s \geq \sqrt{p}. 
    \end{cases} 
\end{align}

The rate bears some resemblance to the sparse testing rate of Collier et al. \cite{collier_minimax_2017} and the nonparametric testing rate of Baraud \cite{baraud_non-asymptotic_2002}, but the combination of the two is not \textit{a priori} straightforward. At this point in discussion, not enough has been developed to explain how the form of the rate arises. Various features of the rate will be commented on later on in the paper. 

\subsection{Preliminaries}\label{section:preliminaries}
In this section, some key pieces are defined. 
\begin{definition}
    Define \(\Gamma_{\mathcal{H}} = \Gamma_{\mathcal{H}}(p, s, n)\) to be the quantity 
    \begin{equation*}
        \Gamma_{\mathcal{H}} := \max_{\nu \in \mathbb{N}} \left\{ \mu_{\nu} \wedge \frac{\sqrt{\nu \log\left(1 + \frac{p}{s^2}\right)}}{n}\right\}.
    \end{equation*} 
\end{definition}
Note, since \(\mu_1 = 1\), it follows \(\Gamma_{\mathcal{H}} \gtrsim \frac{1}{n}\). It is readily seen from (\ref{rate:main}) there are two broad regimes to consider. When \(n \lesssim \log(1 + p/s^2)\), we have \(\varepsilon_{\mathcal{H}}^*(p, s, n)^2 \asymp s\). In the regime \(n \gtrsim \log(1 + p/s^2)\), the rate is more complicated. The first regime is really a trivial regime since any signal \(f \in \mathcal{F}_s\) must satisfy \(||f||_2^2 \leq s\) by virtue of \(\mu_1 = 1\). Therefore, the degenerate test which always accepts \(H_0\) vacuously detects sparse additive signals with \(||f||_2^2 \geq 2s\). Hence, the upper bound \(\varepsilon_{\mathcal{H}}^*(p, s, n)^2 \lesssim s\) is trivially achieved. It turns out a matching lower bound can be proved which establishes that the regime \(n \lesssim \log(1 + p/s^2)\) is fundamentally trivial; see Section \ref{section:lower_bound} for a formal statement.

More generally, the form (\ref{rate:main}) is useful when discussing lower bounds. A different form is more convenient when discussing upper bounds, and it is a form which is familiar in the context of \cite{collier_minimax_2017,collier_optimal_2018}. 

\begin{definition}
    Define \(\nu_{\mathcal{H}}\) to be the smallest positive integer \(\nu\) such that 
    \begin{equation*}
        \mu_{\nu} \leq \frac{\sqrt{\nu \log\left(1+\frac{p}{s^2}\right)}}{n}.
    \end{equation*}
\end{definition}

As the next lemma shows, \(\nu_{\mathcal{H}}\) is essentially the solution to the maximization problem in the definition of \(\Gamma_{\mathcal{H}}\). Drawing an analogy to the result of Baraud \cite{baraud_non-asymptotic_2002} described in Section \ref{section:naive_ansatz}, \(\nu_{\mathcal{H}}\) can be conceptualized as the correct order of truncation accounting for the dimension and sparsity. 

\begin{lemma}\label{lemma:fixed_point_equiv}
    If \(\log\left(1 + \frac{p}{s^2}\right) \leq \frac{n}{2}\), then \(\Gamma_{\mathcal{H}} \leq \frac{\sqrt{\nu_{\mathcal{H}} \log\left(1 + \frac{p}{s^2}\right)}}{n}\leq \sqrt{2} \Gamma_{\mathcal{H}}\). 
\end{lemma}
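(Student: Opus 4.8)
The plan is to exploit the opposing monotonicities: writing $L := \log\left(1 + \frac{p}{s^2}\right)$ and $g(\nu) := \frac{\sqrt{\nu L}}{n}$, the sequence $\mu_\nu$ is nonincreasing while $g(\nu)$ is strictly increasing, so $\nu_{\mathcal{H}}$ is precisely the first crossover index at which $g$ overtakes $\mu$. Before the main argument I would record two quick facts: (i) $\nu_{\mathcal{H}}$ exists and is finite, since $s \le p$ forces $L > 0$ so that $g(\nu) \to \infty$ while $\mu_\nu \le \mu_1 = 1$; and (ii) under the hypothesis of the lemma, $\nu_{\mathcal{H}} \ge 2$, because $\nu_{\mathcal{H}} = 1$ would force $1 = \mu_1 \le g(1) = \sqrt{L}/n$, i.e. $L \ge n^2$, contradicting $L \le \frac{n}{2} < n^2$ (for $n \ge 1$).

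For the first inequality $\Gamma_{\mathcal{H}} \le g(\nu_{\mathcal{H}})$, I would bound each term of the defining maximum separately. If $\nu \le \nu_{\mathcal{H}}$, then $\mu_\nu \wedge g(\nu) \le g(\nu) \le g(\nu_{\mathcal{H}})$ by monotonicity of $g$. If $\nu > \nu_{\mathcal{H}}$, then $\mu_\nu \wedge g(\nu) \le \mu_\nu \le \mu_{\nu_{\mathcal{H}}} \le g(\nu_{\mathcal{H}})$, where the middle step uses monotonicity of $\mu$ and the last is the defining property of $\nu_{\mathcal{H}}$. Taking the supremum over $\nu \in \mathbb{N}$ gives the bound; note this half requires no hypothesis on $n$.

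For the second inequality $g(\nu_{\mathcal{H}}) \le \sqrt{2}\,\Gamma_{\mathcal{H}}$, I would lower-bound $\Gamma_{\mathcal{H}}$ by its $\nu = \nu_{\mathcal{H}} - 1$ term, legitimate since $\nu_{\mathcal{H}} - 1 \ge 1$ by fact (ii). Minimality of $\nu_{\mathcal{H}}$ gives $\mu_{\nu_{\mathcal{H}} - 1} > g(\nu_{\mathcal{H}} - 1)$, hence $\mu_{\nu_{\mathcal{H}} - 1} \wedge g(\nu_{\mathcal{H}} - 1) = g(\nu_{\mathcal{H}} - 1)$, so $\Gamma_{\mathcal{H}} \ge g(\nu_{\mathcal{H}} - 1) = g(\nu_{\mathcal{H}})\sqrt{\tfrac{\nu_{\mathcal{H}} - 1}{\nu_{\mathcal{H}}}} \ge \tfrac{1}{\sqrt{2}}\,g(\nu_{\mathcal{H}})$, the last step because $\nu_{\mathcal{H}} \ge 2$ forces $\tfrac{\nu_{\mathcal{H}}}{\nu_{\mathcal{H}} - 1} \le 2$. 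Rearranging completes the proof.

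There is no genuine obstacle; the only spot needing care is fact (ii), ensuring $\nu_{\mathcal{H}} \ge 2$ so that the index $\nu_{\mathcal{H}} - 1$ is a valid element of $\mathbb{N}$ and the ratio $\tfrac{\nu_{\mathcal{H}}}{\nu_{\mathcal{H}} - 1}$ is controlled by $2$ — and this is exactly where the assumption $\log\left(1 + \frac{p}{s^2}\right) \le \frac{n}{2}$ is used.
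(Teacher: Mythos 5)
Your proof is correct and follows essentially the same route as the paper's: the first inequality by the two-case monotonicity argument splitting at $\nu_{\mathcal{H}}$, the second by lower-bounding $\Gamma_{\mathcal{H}}$ by its term at $\nu_{\mathcal{H}}-1$ (which equals $g(\nu_{\mathcal{H}}-1)$ by minimality) and then using $\nu_{\mathcal{H}}\ge 2$ to absorb the shift into a $\sqrt{2}$. The only cosmetic difference is that the paper first records the exact identity $\Gamma_{\mathcal{H}}=\mu_{\nu_{\mathcal{H}}}\vee \frac{\sqrt{(\nu_{\mathcal{H}}-1)\log(1+p/s^2)}}{n}$ (Lemma \ref{lemma:exact_Gamma}) and then specializes, whereas you use only the one-sided bound $\Gamma_{\mathcal{H}}\ge g(\nu_{\mathcal{H}}-1)$, which is all that is needed.
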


With Lemma \ref{lemma:fixed_point_equiv}, the testing rate can be expressed as  
\begin{equation}\label{rate:main_expanded}
    \varepsilon^*_{\mathcal{H}}(p, s, n)^2 \asymp 
    \begin{cases}
        \frac{s}{n}\log\left(1 + \frac{p}{s^2}\right) + \frac{s}{n} \sqrt{\nu_{\mathcal{H}} \log\left(1 + \frac{p}{s^2}\right)} &\textit{if } s < \sqrt{p}, \\
        \frac{\sqrt{p \nu_{\mathcal{H}}}}{n} &\textit{if } s \geq \sqrt{p}. 
    \end{cases}
\end{equation}

The condition \(\log\left(1 + \frac{p}{s^2}\right) \lesssim n\) in Lemma \ref{lemma:fixed_point_equiv} is natural in light of the triviality which occurs when \(n \lesssim \log\left(1 + \frac{p}{s^2}\right)\).

\subsection{Lower bound}\label{section:lower_bound}
In this section, a formal statement of a lower bound on the minimax separation rate is given. Define 
\begin{equation}\label{rate:psi_lbound}
    \psi(p, s, n)^2 := 
    \begin{cases}
        \frac{s \log\left(1+\frac{p}{s^2}\right)}{n} \vee s\Gamma_{\mathcal{H}} &\textit{if } s < \sqrt{p},\\
        s\Gamma_{\mathcal{H}} &\textit{if } s \geq \sqrt{p}.
    \end{cases}
\end{equation}

First, it is shown that the testing problem is trivial if \(\log\left(\frac{p}{s^2}\right) \gtrsim n\).

\begin{proposition}[Triviality]\label{prop:lbound_trivial}
    Suppose \(1 \leq s \leq p\). Suppose \(\kappa > 0\) and \(\log\left(1 + \frac{p}{s^2}\right) \geq \kappa n \). If \(\eta \in (0, 1)\), then for any \(0 < c < 1 \wedge \sqrt{\kappa} \wedge \sqrt{\kappa \log\left(1 + 4\eta^2\right)}\) we have 
    \begin{equation*}
        \inf_{\varphi}\left\{ P_0\left\{ \varphi \neq 0 \right\} + \sup_{\substack{f \in \mathcal{F}_s, \\ ||f||_2 \geq c\sqrt{s} }} P_{f}\left\{\varphi \neq 1\right\} \right\} \geq 1 - \eta. 
    \end{equation*}
\end{proposition}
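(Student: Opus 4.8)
The plan is to establish this triviality bound by the standard mixture (Bayes risk) argument, exploiting that $\mu_1 = 1$ to reduce to a sparse detection problem living entirely in the top basis coordinate $k=1$. First I would fix $c$ as in the statement and construct a prior $\pi$ on $\mathcal{F}_s$: draw $S$ uniformly at random from the size-$s$ subsets of $[p]$ and set $\theta_{1,j} = c\,\mathbbm{1}_{\{j \in S\}}$ and $\theta_{k,j} = 0$ for all $k \geq 2$. Since $\sum_{k} \theta_{k,j}^2/\mu_k = c^2/\mu_1 = c^2 \leq 1$ for every $j$ (here $c \leq 1$ is used) and the support has size $s$, every such $\Theta$ lies in $\mathscr{T}_s$, and moreover $\|f\|_2 = \|\Theta\|_F = c\sqrt{s}$, so the whole support of $\pi$ lies in the alternative $\{f \in \mathcal{F}_s : \|f\|_2 \geq c\sqrt{s}\}$. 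It then suffices to lower bound the Bayes risk $\inf_\varphi\{P_0(\varphi \neq 0) + P_\pi(\varphi \neq 1)\} = 1 - d_{TV}(P_0, P_\pi)$, where $P_\pi := \int P_f \, d\pi(f)$, and by Cauchy--Schwarz $d_{TV}(P_0, P_\pi) \leq \tfrac{1}{2}\sqrt{\chi^2(P_\pi\,\|\,P_0)}$.

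Next I would compute the $\chi^2$-divergence. Only the $p$ independent variables $\{X_{1,j}\}_{j=1}^{p} \sim N(\cdot, n^{-1})$ carry any signal, so the likelihood ratio $dP_\pi/dP_0$ factorizes over these coordinates and a routine Gaussian moment computation yields $1 + \chi^2(P_\pi\,\|\,P_0) = E_{S, S'}\big[ e^{n c^2 |S \cap S'|} \big]$, where $S$ and $S'$ are independent uniform size-$s$ subsets. Conditionally on $S$, the overlap $|S \cap S'|$ follows the hypergeometric law $\Hyp(p, s, s)$; applying Hoeffding's inequality comparing sampling without and with replacement to the convex function $x \mapsto e^{nc^2 x}$ gives $E\big[e^{nc^2 |S \cap S'|}\big] \leq \big(1 + \tfrac{s}{p}(e^{nc^2} - 1)\big)^{s} \leq \exp\!\big(\tfrac{s^2}{p}(e^{nc^2} - 1)\big)$, using $1 + u \leq e^u$.

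Finally I would verify that the hypothesis $\log(1 + p/s^2) \geq \kappa n$, together with the stated constraints on $c$, forces $\tfrac{s^2}{p}(e^{nc^2} - 1) \leq \log(1 + 4\eta^2)$, i.e. $\chi^2(P_\pi\,\|\,P_0) \leq 4\eta^2$; then $1 - d_{TV}(P_0, P_\pi) \geq 1 - \tfrac{1}{2}\sqrt{4\eta^2} = 1 - \eta$, finishing the proof. Rearranging, the required inequality is $nc^2 \leq \log\!\big(1 + \tfrac{p}{s^2}\log(1 + 4\eta^2)\big)$; since $p/s^2 \geq e^{\kappa n} - 1$, this follows from the elementary fact $\log(1 + a u) \geq a \log(1 + u)$ for $a \in (0,1)$, $u \geq 0$ (applied with $a = \log(1+4\eta^2)$, $u = e^{\kappa n}-1$) when $\log(1 + 4\eta^2) \leq 1$, yielding $\kappa n \log(1+4\eta^2) \geq nc^2$ provided $c \leq \sqrt{\kappa \log(1 + 4\eta^2)}$; and it follows directly by monotonicity when $\log(1+4\eta^2) > 1$ provided $c \leq \sqrt{\kappa}$. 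This bookkeeping with nested logarithms is the only delicate point, and it is precisely what produces the three-way minimum $1 \wedge \sqrt{\kappa} \wedge \sqrt{\kappa \log(1+4\eta^2)}$ in the statement---the factor $1$ being exactly the RKHS-norm budget $c = \rho \leq \sqrt{\mu_1} = 1$; everything else is the by-now-standard mixture lower bound for sparse detection, as in \cite{baraud_non-asymptotic_2002, collier_minimax_2017}.
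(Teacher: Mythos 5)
Your proof is correct and follows essentially the same strategy as the paper's: same prior (uniform random size-$s$ support with $\theta_{1,j}=c$ on the support, all other coordinates zero), same reduction $1-d_{TV}\geq 1-\tfrac12\sqrt{\chi^2}$, and the same hypergeometric moment-generating-function bound (the paper's Corollary~\ref{corollary:hypergeometric} is exactly the Hoeffding comparison you invoke). The only cosmetic difference is in the final bookkeeping: the paper bounds $(1-\tfrac{s}{p}+\tfrac{s}{p}e^{c^2n})^s$ by applying $(1+x)^y\leq 1+xy$ inside and then $(1+u/s)^s\leq e^u$, whereas you first pass to $\exp\bigl(\tfrac{s^2}{p}(e^{nc^2}-1)\bigr)$ and then use $\log(1+au)\geq a\log(1+u)$, which is the same elementary inequality after exponentiation. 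One small point worth noting: the paper splits into the cases $s<\sqrt p$ and $s\geq\sqrt p$, the latter being dispatched with the terse remark that one should replace $s$ by $\lceil\sqrt p\rceil$ throughout; your argument runs uniformly over $1\leq s\leq p$ and avoids that split entirely, which is a genuine (minor) simplification since nothing in the chain of inequalities actually uses $s<\sqrt p$.
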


Proposition \ref{prop:lbound_trivial} asserts that in order to achieve small testing error, a necessary condition is that \(||f||_2 \geq C \sqrt{s}\) for a sufficiently large \(C > 0\). To see why Proposition \ref{prop:lbound_trivial} is a statement about triviality, observe that \(\mathcal{F}_s \subset \left\{f \in L^2([0, 1]^p) : ||f||_2 \leq \sqrt{s}\right\}\). To reiterate plainly, all potential \(f \in \mathcal{F}_s\) in the model (\ref{model:gwn}) live inside the ball of radius \(\sqrt{s}\). There are essentially no functions \(f \in \mathcal{F}_s\) that have detectable norm when \(\log\left(\frac{p}{s^2}\right) \gtrsim n\), and so the problem is essentially trivial. 

The lower bound construction is straightforward. Working in sequence space, consider an alternative hypothesis with a prior \(\pi\) in which a draw \(\Theta \sim \pi\) is obtained by drawing a size \(s\) subset \(S \subset [p]\) uniformly at random and setting \(\theta_{k,j} = \rho\) if \(k = 1\) and \(j \in S\) or setting \(\theta_{k,j} = 0\) otherwise. The value of \(\rho\) determines the separation between the null and alternative hypotheses since \(||\Theta||_F^2 = s\rho^2\). However, \(\rho\) must respect some constraints to ensure \(\pi\) is supported on the parameter space and that it is impossible to distinguish the null and alternative hypotheses with vanishing error. Observe this construction places us in a one-dimensional sparse sequence model (since \(\theta_{k,j} = 0\) for \(k \geq 2\)), which is precisely the setting of \cite{collier_minimax_2017}. From their results, it is seen we must have \(\rho^2 \lesssim \log\left(1 + \frac{p}{s^2}\right)/n\). To ensure \(\pi\) is supported on the parameter space, we must have \(\sum_{k=1}^{\infty} \theta_{k,j}^2/\mu_k \leq 1\) for all \(j \in [p]\). Since \(\mu_1 = 1\), it follows \(\sum_{k=1}^{\infty} \theta_{k,j}^2/\mu_k = \theta_{1,j}^2 \leq \rho^2\), and so the constraint \(\rho \lesssim 1\) must be enforced. When \(\log\left(1 + p/s^2\right) \gtrsim n\), only the second condition \(\rho \lesssim 1\) is binding, and so the largest separation we can achieve is \(||\Theta||_F^2 \asymp s\). Hence, the problem is trivial in this regime.

To ensure non-triviality, it will be assumed \(\log\left(1 + \frac{p}{s^2}\right) \leq \frac{n}{2}\). The choice of the factor \(1/2\) is only for convenience and is not essential. In fact, the condition \(\log\left(1 + \frac{p}{s^2}\right) < n\) would always suffice for our purposes, and the condition \(\log\left(1 + \frac{p}{s^2}\right) \leq n\) would also suffice for \(n > 1\). The following theorem establishes \(\varepsilon_{\mathcal{H}}^*(p, s, n)^2 \gtrsim \psi(p, s, n)^2\). 
\begin{theorem}\label{thm:lbound_nontrivial}
    Suppose \(1 \leq s \leq p\) and \(\log\left(1 + \frac{p}{s^2}\right) \leq \frac{n}{2}\). If \(\eta \in (0, 1)\), then there exists \(c_\eta > 0\) depending only on \(\eta\) such that for any \(0 < c < c_\eta\) we have 
    \begin{equation*}
        \inf_{\varphi}\left\{ P_0\left\{ \varphi \neq 0 \right\} + \sup_{\substack{f \in \mathcal{F}_s, \\ ||f||_2 \geq c \psi(p, s, n) }} P_{f}\left\{\varphi \neq 1\right\} \right\} \geq 1 - \eta
    \end{equation*}
    where \(\psi\) is given by (\ref{rate:psi_lbound}).
\end{theorem}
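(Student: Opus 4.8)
The proof of Theorem~\ref{thm:lbound_nontrivial} runs the standard Bayesian reduction for minimax lower bounds. Fix $\eta\in(0,1)$. For any prior $\pi$ supported on $\{f\in\mathcal{F}_s:\|f\|_2\geq c\,\psi(p,s,n)\}$, the two-point bound gives
\[
\inf_\varphi\left\{P_0(\varphi\neq 0)+\sup_{\substack{f\in\mathcal{F}_s,\\ \|f\|_2\geq c\psi}}P_f(\varphi\neq 1)\right\}\ \geq\ 1-d_{TV}(P_0,P_\pi)\ \geq\ 1-\tfrac12\sqrt{\chi^2(P_\pi\,\|\,P_0)},
\]
so it suffices to exhibit, for a small enough $\eta$-dependent constant, such a $\pi$ with $\chi^2(P_\pi\,\|\,P_0)\leq 4\eta^2$. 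I would work in sequence space with the parameter set $\mathscr{T}_s$ of (\ref{param:Ts}) and, recalling $\chi^2(P_\pi\,\|\,P_0)+1=E_{\Theta,\Theta'\sim\pi}\big[e^{n\langle\Theta,\Theta'\rangle}\big]$, prove the bound in two independent pieces, one matching each term of $\psi^2$ in (\ref{rate:psi_lbound}); taking the better of the two constructions yields the claimed $\psi$ (for $s\geq\sqrt p$ only the second is needed). Throughout write $L=\log(1+p/s^2)$, and use the hypothesis $L\leq n/2$ (so $L/n\leq 1/2$ and, since $\mu_1=1$ and $n\geq 1$, $\nu_{\mathcal{H}}\geq 2$).

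\emph{The sparse term $\tfrac{sL}{n}$ (relevant when $s<\sqrt p$).} Here I would use the prior that draws $S\subset[p]$ of size $s$ uniformly and sets $\theta_{1,j}=\rho$ for $j\in S$ and $\theta_{k,j}=0$ otherwise, with $\rho^2=\delta_\eta L/n$ for a small constant $\delta_\eta$. Since $\mu_1=1$ and $\rho^2\leq\delta_\eta/2<1$, this $\pi$ is supported on $\mathscr{T}_s$; it is exactly the hardest instance of the $s$-sparse detection problem in the one-dimensional Gaussian sequence model, and its $\chi^2$-divergence is controlled either by quoting Collier et al.~\cite{collier_minimax_2017} or directly: $\chi^2(P_\pi\,\|\,P_0)+1=E_{S,S'}\big[e^{n\rho^2|S\cap S'|}\big]\leq\exp\!\big(\tfrac{s^2}{p}(e^{n\rho^2}-1)\big)$ by Hoeffding's convex-order domination of the hypergeometric $|S\cap S'|$ by $\mathrm{Binomial}(s,s/p)$; since $e^{n\rho^2}=(1+p/s^2)^{\delta_\eta}$ and $\tfrac{s^2}{p}\big((1+p/s^2)^{a}-1\big)\leq a$ for $a\in(0,1)$, this is $\leq e^{\delta_\eta}\leq 1+4\eta^2$ once $\delta_\eta\leq\log(1+4\eta^2)$. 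The separation is $\|\Theta\|_F^2=s\rho^2\asymp_\eta sL/n$.

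\emph{The nonparametric term $s\Gamma_{\mathcal{H}}$ (relevant for all $s$).} Let $\nu^\star=\nu_{\mathcal{H}}-1\geq 1$, so by definition of $\nu_{\mathcal{H}}$ we have $\mu_{\nu^\star}>\sqrt{\nu^\star L}/n$. Take the prior that draws $S$ of size $s$ uniformly and, for $j\in S$ and $1\leq k\leq\nu^\star$, sets $\theta_{k,j}=\varepsilon_{k,j}\rho$ with i.i.d.\ Rademacher signs $\varepsilon_{k,j}$, where $\rho^2=\delta_\eta\sqrt{L}/(n\sqrt{\nu^\star})$. Feasibility: the support has size $\leq s$, and $\sum_{k\leq\nu^\star}\rho^2/\mu_k\leq\nu^\star\rho^2/\mu_{\nu^\star}=\delta_\eta\,\tfrac{\sqrt{\nu^\star L}}{n\,\mu_{\nu^\star}}<\delta_\eta\leq 1$, so $\pi$ is supported on $\mathscr{T}_s$. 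Separation: $\|\Theta\|_F^2=s\nu^\star\rho^2=\delta_\eta\,s\sqrt{\nu^\star L}/n\asymp s\Gamma_{\mathcal{H}}$, using Lemma~\ref{lemma:fixed_point_equiv} together with $\nu^\star\asymp\nu_{\mathcal{H}}$. For the $\chi^2$, averaging first over the signs,
\[
\chi^2(P_\pi\,\|\,P_0)+1=E_{S,S'}\left[\cosh(n\rho^2)^{\,\nu^\star|S\cap S'|}\right]\ \leq\ \exp\left(\frac{s^2}{p}\left((1+p/s^2)^{\delta_\eta^2/2}-1\right)\right)\ \leq\ e^{\delta_\eta^2/2},
\]
where the first inequality uses $\log\cosh x\leq x^2/2$ together with $\nu^\star(n\rho^2)^2=\delta_\eta^2 L$ and then Hoeffding's domination, and the last is the same elementary bound as above with $a=\delta_\eta^2/2$. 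Choosing $\delta_\eta$ so that $e^{\delta_\eta^2/2}\leq 1+4\eta^2$ finishes this piece.

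\emph{Main obstacle.} The crux is the nonparametric construction: one must choose the truncation level and per-coordinate magnitude so as to \emph{simultaneously} respect the RKHS-ball (ellipsoid) constraint $\sum_k\theta_{k,j}^2/\mu_k\leq 1$ and keep the $\chi^2$ within budget, while the Frobenius norm attains order $s\Gamma_{\mathcal{H}}$. This is possible precisely because of the fixed-point identity of Lemma~\ref{lemma:fixed_point_equiv}, and it is why the correct truncation is $\nu_{\mathcal{H}}-1$ rather than $\nu_{\mathcal{H}}$: at $\nu_{\mathcal{H}}$ the ellipsoid constraint becomes binding and would deflate the separation by the factor $\tfrac{\sqrt{\nu_{\mathcal{H}}L}/n}{\mu_{\nu_{\mathcal{H}}}}$, which can be arbitrarily large for rapidly decaying spectra. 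The remaining work is bookkeeping: the hypergeometric MGF control of $|S\cap S'|$ (this is what couples the sparsity to the per-coordinate budget and manufactures the $\log(1+p/s^2)$ factor), the elementary inequality $\tfrac{s^2}{p}\big((1+p/s^2)^a-1\big)\lesssim a$ uniformly over $p/s^2\in(0,\infty)$, and the degenerate case $\nu_{\mathcal{H}}=1$ (possible only for very small $n$, where the bound is immediate).
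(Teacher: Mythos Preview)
Your proposal is correct and follows essentially the same route as the paper. Both split into the same two priors---a one-coordinate sparse construction for the $\tfrac{s}{n}\log(1+p/s^2)$ term and a Rademacher-signed block on frequencies $1\leq k\leq \nu_{\mathcal H}-1$ for the $s\Gamma_{\mathcal H}$ term---and both control $\chi^2(P_\pi\Vert P_0)$ via the Ingster--Suslina identity, $\cosh x\leq e^{x^2/2}$, and a hypergeometric MGF bound; the differences are cosmetic (you parameterize $\rho$ through $\sqrt{\nu^\star L}/n$ and invoke Hoeffding's binomial domination, whereas the paper parameterizes through $\Gamma_{\mathcal H}/\nu_{\mathcal H}$ and uses the equivalent bound $(1-\tfrac{s}{p}+\tfrac{s}{p}e^\lambda)^s$ together with Lemma~\ref{lemma:exact_Gamma}). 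Your remark about the degenerate case $\nu_{\mathcal H}=1$ is unnecessary: as you already noted, the hypothesis $\log(1+p/s^2)\leq n/2$ with $\mu_1=1$ forces $\nu_{\mathcal H}\geq 2$.
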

The lower bound is proved via Le Cam's two point method (also known as the method of ``fuzzy'' hypotheses \cite{tsybakov_introduction_2009}) which is standard in the minimax hypothesis testing literature \cite{ingster_nonparametric_2003}. The prior distribution employed in the argument is a natural construction. Namely, the active set (i.e. the size \(s\) set of nonzero coordinate functions) is selected uniformly at random and the corresponding nonzero coordinate functions are drawn from the usual univariate nonparametric prior employed in the literature \cite{baraud_non-asymptotic_2002, ingster_nonparametric_2003}. 

\subsection{Upper bound}\label{section:upper_bound}
In this section, testing procedures are constructed and formal statements establishing rate-optimality are made. The form of the rate in (\ref{rate:main_expanded}) is a more convenient target, and should be kept in mind when reading the statements of our upper bounds. 

\subsubsection{Hard thresholding in the sparse regime}
In this section, the sparse regime \(s < \sqrt{p}\) is discussed. For any \(d \in \mathbb{N}\) and \(j \in [p]\), define \(E_j(d) = n \sum_{k \leq d} X_{k, j}^2\) where the data \(\{X_{k,j}\}_{k \in \mathbb{N}, j \in [p]}\) is defined via transformation to sequence space (\ref{data:X}). For any \(r \geq 0\), define 
\begin{equation}\label{def:T_r}
    T_r(d) := \sum_{j = 1}^{p} \left(E_{j}(d) - \alpha_r(d)\right) \mathbbm{1}_{\{E_j(d) \geq d + r^2\}}
\end{equation}
where 
\begin{equation}\label{def:alpha}
    \alpha_r(d) := \frac{E\left(||g||^2 \mathbbm{1}_{\{||g||^2 \geq d + r^2\}}\right)}{P\{||g||^2 \geq d + r^2\}}
\end{equation}
where \(g \sim N(0, I_d)\). Note \(\alpha_r(d)\) is a conditional expectation under the null hypothesis. The random variable \(T_r(d)\) will be used as a test statistic. Such a statistic was first defined by Collier et al. \cite{collier_minimax_2017}, and similar statistics have been successfully employed in other signal detection problems \cite{collier_estimation_2020,liu_minimax_2021,kotekal_minimax_2021,chhor_sparse_2022}. However, all previous works in the literature have only used this statistic with \(d = 1\). For our problem, it is necessary to take growing \(d\). Consequently, a more refined analysis is necessary, and essentially new phenomena appear in the upper bound as we discuss later. As noted in Section \ref{section:preliminaries}, the quantity \(\nu_{\mathcal{H}}\) can be conceptualized of as the correct truncation order, that is, it turns out the correct choice is \(d \asymp \nu_{\mathcal{H}}\). The choice of \(r\) is more complicated, and in fact there are two separate regimes depending on the size of \(d\). The regime in which \(d \gtrsim \log\left(1 + \frac{p}{s^2}\right)\) is referred to as the ``bulk'' regime. The complementary regime \(d \lesssim \log\left(1 + \frac{p}{s^2}\right)\) is referred to as the ``tail'' regime.    

\begin{proposition}[Bulk]\label{prop:test_sparse_bulk}
    Set \(d = \nu_{\mathcal{H}} \vee \lceil D \rceil \) where \(D\) is the universal constant from Lemma \ref{lemma:null_bulk}. There exist universal positive constants \(K_1,K_2,\) and \(K_3\) such that the following holds. Suppose \(1 \leq s < \sqrt{p}\) and \(\log\left(1 + \frac{p}{s^2}\right) \leq K_3^2 d\). Set 
    \begin{equation*}
        \tau(p, s, n)^2 = \frac{s\sqrt{\nu_{\mathcal{H}} \log\left(1 + \frac{p}{s^2}\right)}}{n}.
    \end{equation*}
    If \(\eta \in (0, 1)\), then there exists \(C_\eta > 0\) depending only on \(\eta\) such that for all \(C > C_\eta\), we have 
    \begin{equation*}
        P_0\left\{T_r(d) > C K_1 n\tau(p, s, n)^2\right\} + \sup_{\substack{f \in \mathcal{F}_s, \\ ||f||_2 \geq C \tau(p, s, n)}} P_f\left\{ T_r(d) \leq C K_1 n\tau(p, s, n)^2 \right\} \leq \eta
    \end{equation*}
    where \(r = K_2 \left(d \log\left(1 + \frac{p}{s^2}\right)\right)^{1/4}\). Here, \(T_r(d)\) is given by (\ref{def:T_r}). 
\end{proposition}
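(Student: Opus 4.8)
The plan is to control the Type I and Type II errors separately, treating $T_r(d)$ as a sum of independent contributions $\big(E_j(d) - \alpha_r(d)\big)\mathbbm{1}_{\{E_j(d)\geq d+r^2\}}$ across $j\in[p]$. Under $H_0$, each $n X_{k,j}^2 \sim \chi^2_1$ so $E_j(d) = \|g_j\|^2$ with $g_j\sim N(0,I_d)$ independent across $j$; under $H_1$ with $f\in\mathcal{F}_s$, the coefficient matrix $\Theta$ has at most $s$ nonzero columns, each constrained by $\sum_k \theta_{k,j}^2/\mu_k \le 1$, and $E_j(d)$ is a noncentral $\chi^2_d$ with noncentrality $\lambda_j = n\sum_{k\le d}\theta_{k,j}^2$. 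The key scalars are the centering $\alpha_r(d)$ and the tail probability $P\{\|g\|^2 \ge d + r^2\}$ for $g\sim N(0,I_d)$; these are exactly the quantities Lemma \ref{lemma:null_bulk} is set up to estimate. In the bulk regime $\log(1+p/s^2)\lesssim d$ with $r \asymp (d\log(1+p/s^2))^{1/4}$, so $r^2 \asymp \sqrt{d\log(1+p/s^2)} \lesssim d$; thus the threshold $d+r^2$ sits within a constant-order-of-$\sqrt d$ window of the mean $d$, and Gaussian concentration for $\|g\|^2$ (via Bernstein/Laurent–Massart) gives $P\{\|g\|^2\ge d+r^2\} \ge \exp(-c r^4/d) = \exp(-c\log(1+p/s^2)) \asymp (s^2/p)^c$, together with the complementary upper bound. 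I would extract from Lemma \ref{lemma:null_bulk} the two-sided bounds $\alpha_r(d) \asymp d$ (more precisely $\alpha_r(d) = d + \Theta(\sqrt{d\log(1+p/s^2)})$) and the matching tail-probability estimates, and feed these in mechanically.

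For the Type I bound, I would first show $E_0 T_r(d) \lesssim p\cdot \big(E_0[(\|g\|^2-\alpha_r(d))\mathbbm{1}_{\{\|g\|^2\ge d+r^2\}}]\big)$; by the definition of $\alpha_r(d)$ as the conditional mean, the per-coordinate expectation $E_0[(\|g\|^2-\alpha_r(d))\mathbbm{1}_{\{\|g\|^2\ge d+r^2\}}]$ is exactly zero. Wait — that is the point of the Collier et al. centering: $E_0 T_r(d) = 0$. So the Type I analysis is purely about fluctuations: I would bound $\operatorname{Var}_0(T_r(d)) = p\operatorname{Var}_0\big((\|g\|^2-\alpha_r(d))\mathbbm{1}_{\{\|g\|^2\ge d+r^2\}}\big)$. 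Since $\|g\|^2$ is sub-exponential with mean $d$ and the threshold exceeds the mean, on the event $\{\|g\|^2 \ge d+r^2\}$ the recentered variable has magnitude typically $O(\sqrt d)$ (the conditional overshoot), so $\operatorname{Var}_0 \lesssim P\{\|g\|^2\ge d+r^2\}\cdot d \asymp (s^2/p)^c \cdot d$, giving $\operatorname{Var}_0(T_r(d)) \lesssim p\cdot (s^2/p)^c\cdot d$. With $c$ chosen (via $K_2$, $K_3$) so that $p^{1-c}(s^2)^c \asymp s\sqrt{p}\cdot(\text{stuff}) = n\tau^2/\sqrt{d}\cdot(\ldots)$, Chebyshev's inequality yields $P_0\{T_r(d) > CK_1 n\tau^2\}\le\eta/2$ for $C$ large; I expect I will actually need a Bernstein-type tail rather than Chebyshev to get the right dependence, controlling the sub-exponential moments of the summands. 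This calibration of the constants $K_1,K_2,K_3$ so that $p\cdot P\{\|g\|^2\ge d+r^2\}\cdot\sqrt d \asymp n\tau^2 = s\sqrt{\nu_{\mathcal H}\log(1+p/s^2)}$ is the crux and is where $s<\sqrt p$ enters (to make $p\cdot(s^2/p)^c$ comparable to $s$ one needs $r^4/d \asymp \log(p/s^2)$, and then $p^{1-c}s^{2c} = p(s^2/p)^c$; matching against $s\sqrt d$ pins $c$ near $1/2$ up to the $\sqrt{\log}$ correction).

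For the Type II bound, fix $f\in\mathcal{F}_s$ with $\|f\|_2^2 = \sum_{j\in S}\sum_k\theta_{k,j}^2 \ge C^2\tau^2$. I would lower-bound $E_f T_r(d)$. On the active coordinates, $E_j(d)$ is noncentral $\chi^2_d$ with noncentrality $\lambda_j = n\sum_{k\le d}\theta_{k,j}^2$; here the choice $d = \nu_{\mathcal H}$ is used to ensure that truncating at level $d$ retains a constant fraction of the signal energy — this is exactly the RKHS computation: since $\sum_k\theta_{k,j}^2/\mu_k\le 1$ and $\mu_k$ is decreasing, the tail $\sum_{k>d}\theta_{k,j}^2 \le \mu_{d+1}\sum_{k>d}\theta_{k,j}^2/\mu_k \le \mu_{d+1}\le \mu_{\nu_{\mathcal H}}$, which by the definition of $\nu_{\mathcal H}$ and Lemma \ref{lemma:fixed_point_equiv} is of order $\Gamma_{\mathcal H} = \sqrt{\nu_{\mathcal H}\log(1+p/s^2)}/n$; comparing against the per-coordinate budget one shows the retained energy dominates unless the signal was already undetectably small. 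Then each active $E_j(d)$ has mean $d+\lambda_j$ and, because $\lambda_j$ may or may not exceed $r^2$, I would split the active set $S$ by whether $\lambda_j \ge 2r^2$ or not: coordinates with large $\lambda_j$ reliably trip the indicator and contribute $\gtrsim \lambda_j$ to $E_f T_r(d)$ after recentering (using $\alpha_r(d)\asymp d + r^2\lesssim d+ \sqrt{d\log(1+p/s^2)}$ and $d+\lambda_j - \alpha_r(d) \gtrsim \lambda_j$); coordinates with small $\lambda_j$ contribute less but there is a Cauchy–Schwarz trade-off — the worst case is the "many small bumps" configuration, and one checks $\sum_{j\in S}\lambda_j = n\sum_{j\in S}\sum_{k\le d}\theta_{k,j}^2 \gtrsim n\|f\|_2^2 \ge C^2 n\tau^2$ still forces the sum of contributions to be $\gtrsim C^2 n\tau^2 = C^2 s\sqrt{\nu_{\mathcal H}\log(1+p/s^2)}$ provided $C$ is large enough relative to the per-coordinate threshold $r^2\asymp\sqrt{d\log(1+p/s^2)}$ times $s$, i.e. $s r^2 \asymp s\sqrt{\nu_{\mathcal H}\log(1+p/s^2)} = n\tau^2$, which is why this regime's rate has precisely this form. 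Finally I would bound $\operatorname{Var}_f(T_r(d))$ the same way as under the null (noncentral $\chi^2$ variance is $2d + 4\lambda_j \lesssim d + \lambda_j$, summed over active coordinates plus the null-like contribution from inactive ones) and apply Chebyshev/Bernstein to conclude $P_f\{T_r(d)\le CK_1 n\tau^2\}\le\eta/2$ once $C > C_\eta$.

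The main obstacle is the precise two-sided control of $\alpha_r(d)$ and of the truncated-Gaussian moments in the window $r^2 \asymp \sqrt{d\log(1+p/s^2)}$ where the threshold is a moderate-deviations distance $\asymp\sqrt{\log(1+p/s^2)}$ standard deviations above the mean of $\|g\|^2$: one needs sharp enough estimates that the exponent $r^4/d$ matches $\log(1+p/s^2)$ up to constants absorbed into $K_2,K_3$, and that the resulting $p\cdot(s^2/p)^{\Theta(1)}$ lands exactly at the claimed $s\sqrt{\nu_{\mathcal H}\log(1+p/s^2)}/\sqrt{d}\asymp s\sqrt{\log(1+p/s^2)}$ scale. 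This is precisely the content I would isolate into Lemma \ref{lemma:null_bulk} (and whatever companion lemma controls noncentral tails), so that the proof of the Proposition itself reduces to the two Chebyshev/Bernstein applications above plus the RKHS energy-truncation bound and the bookkeeping of constants.
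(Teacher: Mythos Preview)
Your approach is correct and matches the paper's proof: a Bernstein-type tail bound for the Type~I error (this is exactly Lemma~\ref{lemma:null_bulk}, obtained via an MGF computation), then Chebyshev for the Type~II error using a per-coordinate expectation lower bound (Lemma~\ref{lemma:bulk_thold_exp}) and variance upper bound (Lemma~\ref{lemma:bulk_thold_var}), together with the RKHS tail-truncation argument you describe to show $\sum_{j\in S_f}\sum_{k\le d}\theta_{k,j}^2 \ge \|f\|_2^2 - s\mu_{\nu_{\mathcal H}+1}$. One quantitative correction: in the bulk regime the conditional overshoot $\|g\|^2-\alpha_r(d)$ on $\{\|g\|^2\ge d+r^2\}$ has typical size $\asymp r^2 = \beta\sqrt d$, not $\sqrt d$ (cf.\ Lemma~\ref{lemma:bulk_alpha} and Lemma~\ref{lemma:conditional_var}), so the null per-coordinate variance is $\asymp r^4 e^{-c r^4/d}$ rather than $d\cdot e^{-cr^4/d}$; once $K_2$ is calibrated so that $cK_2^4\ge 1$ this gives $\operatorname{Var}_0(T_r(d))\lesssim s^2 r^4 \asymp (n\tau^2)^2$ and your argument proceeds unchanged.
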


\begin{proposition}[Tail]\label{prop:test_sparse_tail}
    Set \(d = \nu_{\mathcal{H}} \vee \lceil D \rceil \) where \(D\) is the universal constant from Lemma \ref{lemma:null_tail}. Let \(K_3\) denote the universal positive constant from Proposition \ref{prop:test_sparse_bulk}. There exist universal positive constants \(K_1\) and \(K_2\) such that the following holds. Suppose \(1 \leq s < \sqrt{p}\) and \(\log\left(1 + \frac{p}{s^2}\right) \geq K_3^2 d\). Set 
    \begin{equation*}
        \tau(p, s, n)^2 = \frac{s \log\left(1 + \frac{p}{s^2}\right)}{n}.    
    \end{equation*}
    If \(\eta \in (0, 1)\), then there exists \(C_\eta > 0\) depending only on \(\eta\) such that for all \(C > C_\eta\), we have
    \begin{equation*}
        P_0\left\{T_r(d) > C K_1 n\tau(p, s, n)^2\right\} + \sup_{\substack{f \in \mathcal{F}_s, \\ ||f||_2 \geq C \tau(p, s, n)}} P_f\left\{ T_r(d) \leq C K_1 n\tau(p, s, n)^2 \right\} \leq \eta
    \end{equation*}
    where \(r = K_2 \sqrt{\log\left(1 + \frac{p}{s^2}\right)}\). Here, \(T_r(d)\) is given by (\ref{def:T_r}). 
\end{proposition}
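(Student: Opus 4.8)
The plan is to bound the Type I term \(P_0\{T_r(d) > CK_1 n\tau^2\}\) and the Type II term \(\sup_{f}P_f\{T_r(d) \le CK_1 n\tau^2\}\) separately, following the template of Proposition~\ref{prop:test_sparse_bulk} but exploiting the defining feature of the tail regime: \(r^2 = K_2^2\log(1+\tfrac{p}{s^2}) \ge K_2^2 K_3^2\, d\) is large relative to \(d\), so \(d+r^2\) is a ``high'' threshold for the \(\chi^2_d\) law. In particular \(P\{\|g\|^2 \ge d+r^2\} \le e^{-c r^2} = (1+\tfrac{p}{s^2})^{-cK_2^2}\) for a universal \(c>0\), and \(\alpha_r(d) - (d+r^2) = O(1)\); these facts (together with the constant \(D\)) are what Lemma~\ref{lemma:null_tail} is meant to supply. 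A consequence used repeatedly is that each summand \(\xi_j := (E_j(d) - \alpha_r(d))\mathbbm{1}_{\{E_j(d)\ge d+r^2\}}\) satisfies \(\xi_j \ge -O(1)\), with \(E_0\xi_j = 0\).

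For Type I, under \(P_0\) the statistic \(T_r(d) = \sum_{j=1}^p \xi_j\) is a sum of i.i.d.\ centered variables, so Chebyshev suffices: \(\Var_0(T_r(d)) = p\Var_0(\xi_1) \le p\,E_0[\xi_1^2] \lesssim p\,(d+r^2)^2 e^{-cr^2}\). Using \(p \le s^2(1+\tfrac{p}{s^2})\) and \(1+\tfrac{p}{s^2} \ge e^{K_3^2}\) in this regime, this is \(\lesssim s^2 r^4 (1+\tfrac{p}{s^2})^{1-cK_2^2}\). Dividing by \((CK_1 n\tau^2)^2 = (CK_1\, s r^2/K_2^2)^2\) gives a ratio \(\lesssim K_2^4 C^{-2}(1+\tfrac{p}{s^2})^{1-cK_2^2}\); choosing the universal constant \(K_2\) large enough that \(cK_2^2 \ge 2\) makes the last factor \(\le e^{-K_3^2}\), and then \(C > C_\eta\) pushes the Type I error below \(\eta/2\).

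For Type II, fix \(f \in \mathcal{F}_s\) with \(\|f\|_2 \ge C\tau\), support \(S\) with \(|S|\le s\), and write \(\lambda_j := n\sum_{k\le d}\theta_{k,j}^2\) for the noncentrality of \(E_j(d)\). First, little energy is lost to truncation: since \(d \ge \nu_{\mathcal{H}}\), the definition of \(\nu_{\mathcal{H}}\) and the tail condition give \(\sum_j \sum_{k>d}\theta_{k,j}^2 \le s\,\mu_{\nu_{\mathcal{H}}} \le \frac{s\sqrt{\nu_{\mathcal{H}}\log(1+p/s^2)}}{n} \le \frac{s\log(1+p/s^2)}{K_3 n} = \tau^2/K_3\), hence \(\sum_{j\in S}\lambda_j = n\big(\|f\|_2^2 - \sum_j\sum_{k>d}\theta_{k,j}^2\big) \ge (C^2 - 1/K_3)n\tau^2 \ge \tfrac{C^2}{2}n\tau^2\). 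Second, a pointwise estimate: by concentration of the noncentral \(\chi^2_d\) when \(\lambda \gtrsim d+r^2\), and the crude bound \(\xi_j \ge -O(1)\) otherwise, one gets \(E[(\chi^2_d(\lambda) - \alpha_r(d))\mathbbm{1}_{\{\chi^2_d(\lambda)\ge d+r^2\}}] \ge c_1\lambda - c_2(d+r^2)\) for universal \(c_1,c_2>0\). Summing over \(j\in S\) and noting \(s(d+r^2) \lesssim s r^2 \asymp K_2^2 n\tau^2\), we obtain \(E_f[T_r(d)] \ge c_1\tfrac{C^2}{2}n\tau^2 - c_2\, s(d+r^2) \gtrsim C^2 n\tau^2\) once \(C\) is large relative to \(K_2\).

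It remains to show \(T_r(d)\) concentrates near this mean under \(P_f\), and here I would split. If some \(\lambda_{j^*} \ge \tfrac12\sum_{j\in S}\lambda_j\), then \(\lambda_{j^*} \le n\mu_1 = n\) forces \(n\tau^2 \lesssim n/C^2\), and the single coordinate \(j^*\) drives the statistic: with probability \(\ge 1-\eta/4\), \(E_{j^*}(d) \ge \lambda_{j^*}/2 \gtrsim C^2 n\tau^2\) so \(\xi_{j^*} \gtrsim C^2 n\tau^2\), while \(\sum_{j\in S\setminus\{j^*\}}\xi_j \ge -O(s)\) and \(\sum_{j\notin S}\xi_j \ge -\tfrac12 CK_1 n\tau^2\) with probability \(\ge 1-\eta/4\) (a lower-tail version of the Type I argument), giving \(T_r(d) > CK_1 n\tau^2\). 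Otherwise \(\max_{j\in S}\lambda_j \le \tfrac12\sum_{j\in S}\lambda_j\), so \(\Var_f(T_r(d)) = \sum_{j\notin S}\Var_0(\xi_j) + \sum_{j\in S}\Var_f(\xi_j) \lesssim p(d+r^2)^2 e^{-cr^2} + \sum_{j\in S}\lambda_j^2 \lesssim (\text{Type I bound}) + \big(\max_{j\in S}\lambda_j\big)\sum_{j\in S}\lambda_j \lesssim (E_f[T_r(d)])^2\), and Chebyshev gives Type II error \(\le \eta/2\). The main obstacle is the cluster of moment estimates for the (non)central \(\chi^2_d\) truncated above the high level \(d+r^2\): the lower bound \(\ge c_1\lambda - c_2(d+r^2)\), the second-moment bounds \(E_f[\xi_j^2] \lesssim \lambda_j^2 + (\text{small})\), and the control of \(\alpha_r(d)\). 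In the \(d=1\) problems of prior work these were elementary; here \(d = \nu_{\mathcal{H}}\) can be arbitrarily large, so they must be uniform in \(d\) (with the right dependence on \(d\) relative to \(r^2\)), which is exactly the content of Lemma~\ref{lemma:null_tail} and its companions. A secondary point is bookkeeping: \(K_1, K_2\) and the tail constant \(c\) are universal, while \(C_\eta\) is taken large depending on \(\eta\) and on these, so that every ``\(\lesssim\)'' above is driven below the required fraction of \(\eta\) by enlarging \(C\).
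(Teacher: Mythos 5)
Your high-level plan (Chebyshev Type I, split Type II into expectation and variance control) is sound, and the Type I argument via Chebyshev is a valid, simpler alternative to the paper's Chernoff bound (Lemma~\ref{lemma:null_tail}); your expectation lower bound $E[\xi_j]\ge c_1\lambda_j - c_2(d+r^2)$ is also plausible and functionally equivalent to Lemma~\ref{lemma:tail_thold_exp}. However, there is a genuine gap in the Type~II variance step. You assert $\Var_f(\xi_j)\lesssim\lambda_j^2$ for $j\in S$, but this is not a correct bound: the noncentral $\chi^2_d(\lambda)$ has variance $2d+4\lambda$, which scales \emph{linearly} in $\lambda$, and thresholding only worsens the small-$\lambda$ regime (for $0<\lambda_j<4r^2$ the variance is of order $r^4$, not $\lambda_j^2$, and in particular does not vanish as $\lambda_j\to 0$). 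The correct bound, which is exactly the content of Lemma~\ref{lemma:tail_thold_var}, is $\Var_f(\xi_j)\lesssim r^4 + \lambda_j$, so that $\sum_{j\in S}\Var_f(\xi_j)\lesssim sr^4 + \sum_{j\in S}\lambda_j$.

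This looseness is not cosmetic: it breaks the Chebyshev argument in your ``spread'' case. With $\Var_f(T_r(d))\lesssim(\text{null term}) + \sum\lambda_j^2$ and $\max_j\lambda_j\le\tfrac12\sum_j\lambda_j$, you only get $\sum\lambda_j^2\le\tfrac12(\sum\lambda_j)^2$, so the Chebyshev ratio is bounded by a universal constant and does not go to zero as $C\to\infty$ (e.g., two equal coordinates already give $\sum\lambda_j^2/(\sum\lambda_j)^2=\tfrac12$). With the correct bound $\Var\lesssim s^2r^4 + E_f(T_r(d))$ the ratio becomes $\lesssim K_2^4/C^4 + 1/(C^2 n\tau^2)$, both of which are driven small by enlarging $C_\eta$ — this is exactly what the paper does, and it obviates the need for your case split between a dominant coordinate and a spread configuration. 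A secondary loose end: your crude claim $\xi_j\ge -O(1)$ (equivalently $\alpha_r(d)-(d+r^2)=O(1)$) is likely true in the tail regime but is \emph{not} what Lemma~\ref{lemma:conditional_2} provides — it only gives $\alpha_r(d)\le d + C^*r^2$, hence $\xi_j\ge -O(r^2)$. Fortunately, in your dominant-coordinate case the weaker $-O(r^2)$ suffices (yielding $-O(sr^2)=-O(K_2^2 n\tau^2)$ which is dominated by $C^2 n\tau^2$), so this is a matter of citing the right lemma rather than a failure.
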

Propositions \ref{prop:test_sparse_tail} and \ref{prop:test_sparse_bulk} thus imply that, for \(s < \sqrt{p}\), the minimax separation rate is upper bounded by \(s\log\left(1 + \frac{p}{s^2}\right)/n + s \sqrt{\nu_{\mathcal{H}}\log\left(1 + \frac{p}{s^2}\right)}/n\). By Lemma \ref{lemma:fixed_point_equiv}, it follows that  
\begin{equation*}
    \varepsilon^*_{\mathcal{H}}(p, s, n) \lesssim \frac{s\log\left(1 + \frac{p}{s^2}\right)}{n} + s \Gamma_{\mathcal{H}}
\end{equation*}
under the condition that \(\log\left(1 + \frac{p}{s^2}\right) \leq \frac{n}{2}\). As established by Proposition \ref{prop:lbound_trivial} in Section \ref{section:lower_bound}, a condition like this is essential to avoid triviality. 

Propositions \ref{prop:test_sparse_tail} and \ref{prop:test_sparse_bulk} reveal an interesting phase transition in the minimax separation rate at the point 
\begin{equation*}
    \nu_{\mathcal{H}} \asymp \log\left(1 + \frac{p}{s^2}\right).
\end{equation*}
This phase transition phenomena is driven by the tail behavior of \(\chi^2_d\). Consider that under the null distribution \(E_j(d) \sim \chi^2_d\) for all \(j\), and so the statistic \(T_r(d)\) is the sum of \(p\) independent and thresholded \(\chi^2_d\) random variables. By a well-known lemma of Laurent and Massart \cite{laurent_adaptive_2000} (also from Bernstein's inequality up to constants), for any \(u > 0\) we have 
\begin{equation}\label{eqn:laurent_massart_tail}
    P\left\{\chi^2_d - d \geq 2 \sqrt{d u} + 2u\right\} \leq e^{-u}.
\end{equation}
Roughly speaking, \(\chi^2_d - d\) exhibits subgaussian-type deviation in the ``bulk'' \(u \lesssim \sqrt{d}\) and subexponential-type deviation in the ``tail'' \(u \gtrsim \sqrt{d}\). Consequently, \(T_r(d)\) should be intuited as a sum of thresholded subgaussian random variables when \(r \lesssim \sqrt{d}\), and as a sum of thresholded subexponential random variables when \(r \gtrsim \sqrt{d}\). 

Examining (\ref{eqn:laurent_massart_tail}), in the ``tail'' \(u \gtrsim \sqrt{d}\) it follows \(2\sqrt{du} + 2u \asymp u\), which no longer exhibits dependence on \(d\). Analogously, in Proposition \ref{prop:test_sparse_tail} the threshold is taken as \(r \gtrsim \sqrt{d}\) and so the resulting rate exhibits no dependence on \(d\) and consequently no dependence on \(\mathcal{H}\). On the other hand, in the ``bulk'' \(u \lesssim \sqrt{d}\) it follows \(2 \sqrt{du} + 2u \asymp \sqrt{du}\). Analogously, in Proposition \ref{prop:test_sparse_bulk} the threshold is taken as \(r \lesssim \sqrt{d}\) and so the resulting rate indeed depends on \(d\) and thus on \(\mathcal{H}\). 

\subsubsection{\texorpdfstring{\(\chi^2\) tests in the dense regime}{Chi-squared tests in the dense regime}}

The situation is less complicated in the dense regime \(s \geq \sqrt{p}\) as it suffices to use the \(\chi^2\) testing statistic \(T := n \sum_{j=1}^{p} \sum_{k \leq \nu_{\mathcal{H}}} X_{k,j}^2\). 

\begin{proposition}\label{prop:dense_alpha}
    Suppose \(s \geq \sqrt{p}\). Set
    \begin{equation*}
        \tau(p, s, n)^2 = \frac{\sqrt{p\nu_{\mathcal{H}}}}{n}.
    \end{equation*}
    If \(\eta \in (0, 1)\), then there exists \(C_\eta > 0\) depending only on \(\eta\) such that for all \(C > C_\eta\), we have
    \begin{equation*}
        P_0\left\{ T > p \nu_{\mathcal{H}} + \frac{2}{\sqrt{\eta}} \sqrt{p \nu_{\mathcal{H}}}\right\} + \sup_{\substack{f \in \mathcal{F}_s, \\ ||f||_2 \geq C \tau(p, s, n)}}P_f\left\{ T \leq p \nu_{\mathcal{H}} + \frac{2}{\sqrt{\eta}} \sqrt{p \nu_{\mathcal{H}}} \right\} \leq \eta.
    \end{equation*} 
\end{proposition}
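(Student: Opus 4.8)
The plan is to condition on the two hypotheses and exploit that $T$ is a central (under $H_0$) or noncentral (under $H_1$) chi-squared statistic, controlling each regime by a one-line second-moment (Chebyshev) bound. First, under $P_0$ every $\sqrt{n}\,X_{k,j}$ is a standard normal, so $T \sim \chi^2_{p\nu_{\mathcal{H}}}$ has mean $p\nu_{\mathcal{H}}$ and variance $2p\nu_{\mathcal{H}}$, and Chebyshev's inequality immediately gives
\[
P_0\left\{T > p\nu_{\mathcal{H}} + \tfrac{2}{\sqrt{\eta}}\sqrt{p\nu_{\mathcal{H}}}\right\} \;\le\; \frac{2p\nu_{\mathcal{H}}}{(4/\eta)\,p\nu_{\mathcal{H}}} \;=\; \frac{\eta}{2},
\]
with no condition on $C$.

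For the alternative, fix $f \in \mathcal{F}_s$ with coefficient matrix $\Theta \in \mathscr{T}_s$ and $\|f\|_2^2 = \|\Theta\|_F^2 \ge C^2\tau^2$. Under $P_f$, the variable $T$ is noncentral chi-squared on $p\nu_{\mathcal{H}}$ degrees of freedom with noncentrality $\Lambda := n\sum_{j=1}^p\sum_{k\le\nu_{\mathcal{H}}}\theta_{k,j}^2$, so $E_f T = p\nu_{\mathcal{H}} + \Lambda$ and $\Var_f T = 2p\nu_{\mathcal{H}} + 4\Lambda$. The key step I would carry out is to bound the signal discarded by truncating at $\nu_{\mathcal{H}}$: since at most $s$ columns of $\Theta$ are nonzero, each obeys the ellipsoid constraint $\sum_k \theta_{k,j}^2/\mu_k \le 1$, and the $\mu_k$ are nonincreasing, so
\[
\|\Theta\|_F^2 - \sum_{j=1}^p\sum_{k\le\nu_{\mathcal{H}}}\theta_{k,j}^2 \;=\; \sum_{j\,:\,\Theta_j\ne 0}\sum_{k>\nu_{\mathcal{H}}}\mu_k\cdot\frac{\theta_{k,j}^2}{\mu_k} \;\le\; s\,\mu_{\nu_{\mathcal{H}}} \;\le\; \frac{s\sqrt{\nu_{\mathcal{H}}\log(1+p/s^2)}}{n} \;\le\; \frac{\sqrt{p\nu_{\mathcal{H}}}}{n} = \tau^2,
\]
where the last two steps use the definition of $\nu_{\mathcal{H}}$ and the elementary inequality $\log(1+p/s^2)\le p/s^2$. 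Consequently $\Lambda = n\big(\|\Theta\|_F^2 - (\text{discarded energy})\big) \ge n(C^2-1)\tau^2 = (C^2-1)\sqrt{p\nu_{\mathcal{H}}}$.

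Finally, writing $m := \sqrt{p\nu_{\mathcal{H}}}\ge 1$ and $t := E_f T - \big(p\nu_{\mathcal{H}} + \tfrac{2}{\sqrt{\eta}}m\big) = \Lambda - \tfrac{2}{\sqrt{\eta}}m$, the bound $\Lambda\ge(C^2-1)m$ gives $t\ge\tfrac12\Lambda>0$ as soon as $C^2-1\ge 4/\sqrt{\eta}$, so Chebyshev's inequality yields
\[
P_f\left\{T \le p\nu_{\mathcal{H}} + \tfrac{2}{\sqrt{\eta}}m\right\} \;\le\; \frac{\Var_f T}{t^2} \;\le\; \frac{2m^2 + 4\Lambda}{(\Lambda/2)^2} \;=\; \frac{8m^2}{\Lambda^2} + \frac{16}{\Lambda} \;\le\; \frac{8}{(C^2-1)^2} + \frac{16}{C^2-1}
\]
using $\Lambda\ge(C^2-1)m\ge C^2-1$. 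For $C$ exceeding a threshold $C_\eta$ depending only on $\eta$ this is at most $\eta/2$, and adding the two displayed bounds finishes the proof.

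The only step with real content is the truncation bound: one must verify that the energy a sparse additive signal can hide in frequencies beyond $\nu_{\mathcal{H}}$ is itself no larger than the target rate $\tau^2$, which is precisely what the defining inequality for $\nu_{\mathcal{H}}$ provides once $\log(1+x)\le x$ is invoked. Everything else is routine bookkeeping with the mean and variance of (noncentral) $\chi^2$ random variables; in particular, unlike the sparse-regime Propositions \ref{prop:test_sparse_bulk}--\ref{prop:test_sparse_tail}, no thresholding of the per-coordinate statistics and no delicate tail analysis is required here.
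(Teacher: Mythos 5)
Your proposal is correct and follows essentially the same path as the paper's proof: Chebyshev under the null, the truncation bound $\|\Theta\|_F^2 - \sum_{k\le\nu_{\mathcal{H}}}\sum_j\theta_{k,j}^2 \le s\mu_{\nu_{\mathcal{H}}} \le \tau^2$ to lower-bound the noncentrality parameter by $(C^2-1)\sqrt{p\nu_{\mathcal{H}}}$, and Chebyshev again under the alternative. The only differences are minor bookkeeping choices in how the variance and centering terms are grouped before applying Chebyshev.
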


As in the sparse case, (\ref{rate:main_expanded}) Lemma \ref{lemma:fixed_point_equiv} asserts \(\tau(p, s, n)^2 \asymp s\Gamma_{\mathcal{H}}\) provided that the condition \(\log\left(1 + \frac{p}{s^2}\right) \leq n/2\) is satisfied. 

\subsection{Special cases}\label{section:special_cases}

Having formally stated lower and upper bounds for the minimax separation rate, it is informative to explore a number of special cases and witness a variety of phenomena. Throughout the illustrations it will be assumed \(\log\left(1 + \frac{p}{s^2}\right) \leq n/2\) as discussed earlier. 

\subsubsection{Sobolev}\label{section:special_cases_sobolev}
Taking the case \(\mu_k \asymp k^{-2\alpha}\) as emblematic of Sobolev space with smoothness \(\alpha > 0\), we obtain the minimax separation rate
\begin{equation*}
    \varepsilon_{\mathcal{H}}^*(p, s, n)^2 \asymp
    \begin{cases}
        \frac{s \log\left(\frac{p}{s^2}\right)}{n} + s \left( \frac{n}{\sqrt{\log\left(\frac{p}{s^2}\right)}} \right)^{-\frac{4\alpha}{4\alpha+1}} &\textit{if } s < \sqrt{p}, \\
        s \left(\frac{n s}{\sqrt{p}}\right)^{-\frac{4\alpha}{4\alpha+1}} &\textit{if } s \geq \sqrt{p}. 
    \end{cases}
\end{equation*}
It is useful to compare to the rates obtained by Ingster and Lepski (Theorem 2 in \cite{ingster_multichannel_2003}) (see also \cite{gayraud_detection_2012}) although their choice of parameter is space is slightly different from that considered in this paper. In the sparse case \(s = O\left(p^{1/2-\delta}\right)\) for a constant \(\delta \in (0, 1/2]\), their rate is 
\begin{equation*}
    \begin{cases}
        s \left(\frac{n}{\sqrt{\log p}}\right)^{-\frac{4\alpha}{4\alpha+1}} &\textit{if } \log p \leq n^{\frac{1}{2\alpha+1}}, \\
        s \frac{\log p}{n} &\textit{if } \log p > n^{\frac{1}{2\alpha+1}}.
    \end{cases}
\end{equation*}
In the dense regime \(p = O(s^2)\), their rate (Theorem 1 in \cite{ingster_multichannel_2003}) is \(s \left(\frac{ns}{\sqrt{p}}\right)^{-\frac{4\alpha}{4\alpha+1}}\). Quick algebra verifies that our rate indeed matches Ingster and Lepski's rate in these sparsity regimes. 

In the sparse regime, the strange looking phase transition at \(\log p \asymp n^{\frac{1}{2\alpha+1}}\) in their rate now has a coherent explanation in view of our result. The situation \(\log p \lesssim n^{\frac{1}{2\alpha+1}}\) corresponds to the ``bulk'' regime in which case \(T_r(d)\) from (\ref{def:T_r}) behaves like a sum of thresholded subgaussian random variables. On the other side where \(\log p \gtrsim n^{\frac{1}{2\alpha+1}}\), subexponential behavior is exhibited by \(T_r(d)\). In fact, our result gives more detail beyond \(s \lesssim p^{1/2 - \delta}\). Assume only \(s < \sqrt{p}\) (for example, \(s = \frac{\sqrt{p}}{\log p}\) is allowed now). Then the phase transition between the ``bulk'' and ``tail'' regimes actually occurs at \(\log\left(1 + \frac{p}{s^2}\right) \asymp n^{\frac{1}{2\alpha+1}}\).

\subsubsection{Finite dimension}
Consider a finite dimensional situation, that is \(1 = \mu_1 = \mu_2 = ... = \mu_m > \mu_{m+1} = \mu_{m+2} = ... = 0\) for some positive integer \(m\). Function spaces exhibiting this kind of structure include linear functions, finite polynomials, and generally RKHSs based on kernels with finite rank. If \(m < \frac{n^2}{\log\left(1+\frac{p}{s^2}\right)}\), the minimax separation rate is 
\begin{equation*}
    \varepsilon_{\mathcal{H}}^*(p, s, n)^2 \asymp 
    \begin{cases}
        \frac{s\log\left(\frac{p}{s^2}\right)}{n} + \frac{s\sqrt{m \log\left(\frac{p}{s^2}\right)}}{n} &\textit{if } s < \sqrt{p}, \\
        \frac{\sqrt{p m}}{n} &\textit{if } s \geq \sqrt{p}. 
    \end{cases}
\end{equation*}
In the sparse regime, the phase transition between the bulk and tail regimes occurs at \(\log\left(1 + \frac{p}{s^2}\right) \asymp m\).

\subsubsection{Exponential decay}
As another example, consider exponential decay of the eigenvalues \(\mu_k = c_1 e^{-c_2 k^\gamma}\) where \(c_1,c_2\) are universal constants and \(\gamma > 0\). Such decay is a feature of RKHSs based on Gaussian kernels. The minimax separation rate is 
\begin{equation*}
    \varepsilon_{\mathcal{H}}^*(p, s, n)^2 \asymp 
    \begin{cases}
        \frac{s\log\left(\frac{p}{s^2}\right)}{n} + s \log^{\frac{1}{2\gamma}}\left(\frac{n}{\sqrt{\log\left(\frac{p}{s^2}\right)}}\right) \cdot \frac{\sqrt{\log\left(\frac{p}{s^2}\right)}}{n} &\textit{if } s < \sqrt{p} \\
        \frac{\sqrt{p}}{n} \log^{\frac{1}{2\gamma}}\left(\frac{ns}{\sqrt{p}}\right) &\textit{if } s \geq \sqrt{p}.
    \end{cases}
\end{equation*}
In the sparse regime, the phase transition between the bulk and tail regimes occurs at \(\log^{\gamma}\left(1 + \frac{p}{s^2}\right) \asymp \log n\). The minimax separation rate is quite close to the finite dimensional rate, which is sensible as RKHSs based on Gaussian kernels are known to be fairly ``small'' nonparametric function spaces \cite{wainwright_high-dimensional_2019}.

\section{Adaptation}

Thus far the sparsity parameter \(s\) has been assumed known, and the tests constructed make use of this information. In practice, the statistician is typically ignorant of the sparsity level and so it is of interest to understand whether adaptive tests can be furnished. In this section, we will establish an adaptive testing rate which accommodates a generic \(\mathcal{H}\), and it turns out to exhibit a cost for adaptation which depends delicately on the function space.  

To the best of our knowledge, Spokoiny's article was the first to demonstrate an unavoidable cost for adaptation in a signal detection problem \cite{spokoiny_adaptive_1996}. Later work established unavoidable costs for adaptive testing across a variety of situations (see \cite{ingster_nonparametric_2003} and references therein). This early work largely focused on adapting to an unknown smoothness parameter in a univariate nonparametric regression setting. More recently adaptation to sparsity in high dimensional models has been studied. In many problems, one can adapt to sparsity without cost in the rate (nor in the constant for some problems) \cite{donoho_higher_2004,liu_minimax_2021,kotekal_minimax_2021,ingster_detection_2010,arias-castro_global_2011,hall_innovated_2010}. In the context of sparse additive models in Sobolev space, Ingster and Lepski \cite{ingster_multichannel_2003} (see also \cite{gayraud_detection_2012}) consider adaptation, and we discuss their results in Section \ref{section:special_cases_adapt}. 

\subsection{Preliminaries}\label{section:preliminaries_adapt}
To formally state our result, some slight generalizations of the concepts found in Section \ref{section:preliminaries} are needed.

\begin{definition}
    For \(a > 0\), define \(\nu_{\mathcal{H}}(s, a)\) to be the smallest positive integer \(\nu\) satisfying 
    \begin{equation*}
        \mu_{\nu} \leq \frac{\sqrt{\nu\log\left(1 + \frac{pa}{s^2}\right)}}{n}.
    \end{equation*}
\end{definition} 

\begin{definition}
    For \(a > 0\), define \(\Gamma_{\mathcal{H}}(s, a)\) to be 
    \begin{equation*}
        \Gamma_{\mathcal{H}}(s, a) := \max_{\nu \in \mathbb{N}} \left\{ \mu_{\nu} \wedge \frac{\sqrt{\nu \log\left(1 + \frac{pa}{s^2}\right)}}{n} \right\}.
    \end{equation*}
\end{definition}

Note \(\nu_{\mathcal{H}}(s, a)\) is decreasing in \(a\) and \(\Gamma_{\mathcal{H}}(s, a)\) is increasing in \(a\). As discussed in Section \ref{section:preliminaries}, two different forms are useful when discussing the separation rate, and Lemma \ref{lemma:fixed_point_equiv} facilitated that discussion. The following lemma is a slight generalization and has the same purpose.  

\begin{lemma}\label{lemma:Gamma_nu_equiv_adapt}
    Suppose \(a > 0\). If \(\log\left(1 + \frac{pa}{s^2}\right) \leq \frac{n}{2}\), then 
    \begin{equation*}
        \Gamma_{\mathcal{H}}(s, a) \leq \frac{\sqrt{\nu_{\mathcal{H}}(s, a) \log\left(1 + \frac{pa}{s^2}\right)}}{n} \leq \sqrt{2} \Gamma_{\mathcal{H}}(s, a).
    \end{equation*}
\end{lemma}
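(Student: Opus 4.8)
The plan is to follow verbatim the proof of Lemma~\ref{lemma:fixed_point_equiv}, since the present statement is that lemma with $\log\left(1 + \frac{p}{s^2}\right)$ replaced by $\log\left(1 + \frac{pa}{s^2}\right)$ everywhere. Abbreviate $L := \log\left(1 + \frac{pa}{s^2}\right)$ and $\nu^* := \nu_{\mathcal{H}}(s,a)$; the integer $\nu^*$ is well defined because $\mu_\nu \to 0$ while $\nu \mapsto \frac{\sqrt{\nu L}}{n}$ increases to $+\infty$, so the defining inequality $\mu_\nu \le \frac{\sqrt{\nu L}}{n}$ eventually holds. The two bounds are proved separately, using only that $\nu \mapsto \frac{\sqrt{\nu L}}{n}$ is increasing, that $\nu \mapsto \mu_\nu$ is nonincreasing, and the (near-)crossing property defining $\nu^*$.

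For the upper bound $\Gamma_{\mathcal{H}}(s,a) \le \frac{\sqrt{\nu^* L}}{n}$, I would show $\mu_\nu \wedge \frac{\sqrt{\nu L}}{n} \le \frac{\sqrt{\nu^* L}}{n}$ for every $\nu \in \mathbb{N}$ and then take the maximum over $\nu$. For $\nu \le \nu^*$, this follows from $\mu_\nu \wedge \frac{\sqrt{\nu L}}{n} \le \frac{\sqrt{\nu L}}{n} \le \frac{\sqrt{\nu^* L}}{n}$; for $\nu > \nu^*$, it follows from $\mu_\nu \wedge \frac{\sqrt{\nu L}}{n} \le \mu_\nu \le \mu_{\nu^*} \le \frac{\sqrt{\nu^* L}}{n}$, the final step being the definition of $\nu^*$. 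This half requires no hypothesis on $L$.

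For the lower bound $\frac{\sqrt{\nu^* L}}{n} \le \sqrt{2}\,\Gamma_{\mathcal{H}}(s,a)$, I would split on the value of $\nu^*$. If $\nu^* = 1$, then $\frac{\sqrt{\nu^* L}}{n} = \frac{\sqrt{L}}{n}$ and the hypothesis $L \le \frac{n}{2}$ gives $\frac{\sqrt{L}}{n} \le \frac{1}{\sqrt{2n}} \le 1 = \mu_1$, so $\Gamma_{\mathcal{H}}(s,a) \ge \mu_1 \wedge \frac{\sqrt{L}}{n} = \frac{\sqrt{L}}{n}$ and the constant $\sqrt{2}$ is not even needed. If $\nu^* \ge 2$, minimality of $\nu^*$ yields $\mu_{\nu^*-1} > \frac{\sqrt{(\nu^*-1)L}}{n}$, hence $\Gamma_{\mathcal{H}}(s,a) \ge \mu_{\nu^*-1} \wedge \frac{\sqrt{(\nu^*-1)L}}{n} = \frac{\sqrt{(\nu^*-1)L}}{n} \ge \frac{1}{\sqrt{2}} \cdot \frac{\sqrt{\nu^* L}}{n}$, the last inequality using $\nu^* - 1 \ge \nu^*/2$ for $\nu^* \ge 2$. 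Rearranging gives the claim.

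There is no substantive obstacle: the whole argument is elementary monotonicity bookkeeping. The only point that needs a little care is the boundary case $\nu^* = 1$, where one must observe that the hypothesis $\log\left(1 + \frac{pa}{s^2}\right) \le \frac{n}{2}$ is precisely what keeps $\frac{\sqrt{L}}{n}$ below the leading eigenvalue $\mu_1 = 1$, so that the truncation of $\Gamma_{\mathcal{H}}(s,a)$ at level $\mu_1$ plays no role.
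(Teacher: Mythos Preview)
Your proof is correct and follows essentially the same monotonicity argument as the paper; the only cosmetic difference is that the paper first observes that the hypothesis $\log(1+pa/s^2)\le n/2$ together with $\mu_1=1$ forces $\nu_{\mathcal H}(s,a)\ge 2$, so your case $\nu^*=1$ is in fact vacuous (though you handle it correctly regardless).
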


At a high level, the central issue with adaptation lies in the selection of an estimator's or a test's hyperparameter (such as the bandwidth or penalty). Typically, there is some optimal choice but it requires knowledge of an unknown parameter (e.g. smoothness or sparsity) in order to pick it. In the current problem, the optimal choice of \(\nu\) is unknown since the sparsity level \(s\) is unknown. 

In adaptive testing, the typical strategy is to fix a grid of different values, construct a test for each potential value of the hyperparameter, and for detection use the test which takes the maximum over this collection of tests. Typically, a geometric grid is chosen, and the logarithm of the grid's cardinality reflects a cost paid by the testing procedure \cite{spokoiny_adaptive_1996,liu_minimax_2021,ingster_multichannel_2003}. It turns out this high level intuition holds for signal detection in sparse additive models, but the details of how to select the grid are not direct since a generic \(\mathcal{H}\) must be accommodated. For \(a > 0\), define 
\begin{equation*}
    \mathscr{V}_a := \left\{ 2^k : k \in \mathbb{N} \cup \{0\} \text{ and } 2^{k-1} < \nu_{\mathcal{H}}(s, a) \leq 2^k \text{ for some } s \in [p]\right\}. 
\end{equation*}
Define 
\begin{equation}\label{def:script_A}
    \mathscr{A}_{\mathcal{H}} := \sup\left\{a \geq 1 : \log(e|\mathscr{V}_a|) \geq a \right\}
\end{equation}
and define 
\begin{equation}\label{def:script_V}
    \mathscr{V}_{\mathcal{H}} := \mathscr{V}_{\mathscr{A}_{\mathcal{H}}}.
\end{equation}
The grid \(\mathscr{V}_{\mathcal{H}}\) will be used. It is readily seen that \(\mathscr{A}_{\mathcal{H}}\) is finite as the crude bound \(\mathscr{A}_{\mathcal{H}} \leq \log(ep)\) is immediate. The following lemma shows that \(\mathscr{A}_{\mathcal{H}}\) is, in essence, a fixed point. 

\begin{lemma}\label{lemma:script_AV_equiv}
    If \(\log\left(1 + p\mathscr{A}_{\mathcal{H}}\right) \leq \frac{n}{2}\), then \(\mathscr{A}_{\mathcal{H}} \leq \log(e|\mathscr{V}_{\mathcal{H}}|) \leq 2\mathscr{A}_{\mathcal{H}}\). 
\end{lemma}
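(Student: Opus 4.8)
The approach is to argue that $\mathscr{A}_{\mathcal{H}}$ is essentially a fixed point of the map $a \mapsto \log(e|\mathscr{V}_a|)$, in close analogy with how $\nu_{\mathcal{H}}$ is handled in Lemma~\ref{lemma:fixed_point_equiv}. First I would record the elementary structure of $\nu_{\mathcal{H}}(s,a)$: writing $F(\nu) := \mu_\nu^2 n^2/\nu$ and $L_s(a) := \log\left(1 + \frac{pa}{s^2}\right)$, the integer $\nu_{\mathcal{H}}(s,a)$ is the least $\nu$ with $F(\nu) \le L_s(a)$, and since $\mu_\nu$ is nonincreasing so is $F$; hence $\nu_{\mathcal{H}}(s,a)$ is nonincreasing and right-continuous in $a$ and nondecreasing in $s$. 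Consequently $\mathscr{V}_a$ and $|\mathscr{V}_a|$ are right-continuous, piecewise-constant functions of $a$ with finitely many values; the set in the definition of $\mathscr{A}_{\mathcal{H}}$ is nonempty (it contains $a=1$, as $|\mathscr{V}_1| \ge 1$) and bounded by $\log(ep)$, so $\mathscr{A}_{\mathcal{H}} \in [1, \log(ep)]$. The hypothesis $\log(1 + p\mathscr{A}_{\mathcal{H}}) \le n/2$ enters through $\mu_1 = 1$: for every $a \le \mathscr{A}_{\mathcal{H}}$, $F(1) = n^2 > n/2 \ge \log(1+pa) = L_1(a)$, so $\nu_{\mathcal{H}}(1,a) \ge 2$, whence $1 \notin \mathscr{V}_a$ and $|\mathscr{V}_a| \le \lceil \log_2 \nu_{\mathcal{H}}(p,a)\rceil$ throughout the relevant range.

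The technical heart is a \emph{slow-variation} estimate: there is a universal constant $C$ with $\tfrac1C|\mathscr{V}_a| \le |\mathscr{V}_{a'}| \le C|\mathscr{V}_a|$ whenever $1 \le a \le a' \le 2a$ and $\log(1+pa') \le n/2$. This rests on two quantitative facts: $F$ decays at least geometrically, $F(2\nu) \le \tfrac12 F(\nu)$ (since $\mu_{2\nu} \le \mu_\nu$), and $L_s(\cdot)$ varies slowly, $L_s(a') \le 2L_s(a)$ for $a' \le 2a$ (since $\log(1+2x) \le 2\log(1+x)$) while $L_s(a) - L_{s+1}(a) \le \log 4$. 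Together these bound, for each $s$, both the ratio $\nu_{\mathcal{H}}(s,a')/\nu_{\mathcal{H}}(s,a)$ and the ratio of consecutive $\nu_{\mathcal{H}}(s,a)$'s; since $s \mapsto \nu_{\mathcal{H}}(s,a)$ is a monotone chain, this controls how many dyadic brackets the family $\{\nu_{\mathcal{H}}(s,a)\}_{s\in[p]}$ occupies and how that count changes under a bounded change of $a$. Granting the estimate, the two inequalities follow by the standard supremum argument. For $\mathscr{A}_{\mathcal{H}} \le \log(e|\mathscr{V}_{\mathscr{A}_{\mathcal{H}}}|)$, take $a_i \uparrow \mathscr{A}_{\mathcal{H}}$ with $\log(e|\mathscr{V}_{a_i}|) \ge a_i$ (these exist as $\mathscr{A}_{\mathcal{H}}$ is a supremum), note $a_i \in [\mathscr{A}_{\mathcal{H}}/2, \mathscr{A}_{\mathcal{H}}]$ eventually, apply slow variation, and let $a_i \uparrow \mathscr{A}_{\mathcal{H}}$. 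For $\log(e|\mathscr{V}_{\mathscr{A}_{\mathcal{H}}}|) \le 2\mathscr{A}_{\mathcal{H}}$, suppose not; then $|\mathscr{V}_{\mathscr{A}_{\mathcal{H}}}|$ is large enough that slow variation on $[\mathscr{A}_{\mathcal{H}}, 2\mathscr{A}_{\mathcal{H}}]$ produces some $a^* \in (\mathscr{A}_{\mathcal{H}}, 2\mathscr{A}_{\mathcal{H}}]$ with $\log(e|\mathscr{V}_{a^*}|) \ge a^*$, contradicting the maximality of $\mathscr{A}_{\mathcal{H}}$; this is exactly where $\log(1+p\mathscr{A}_{\mathcal{H}}) \le n/2$ is used, to keep $[\mathscr{A}_{\mathcal{H}}, 2\mathscr{A}_{\mathcal{H}}]$ in the nontrivial regime (the residual corner where $\mathscr{A}_{\mathcal{H}}$ is too small for this to yield a contradiction is disposed of directly, since then $\mathscr{A}_{\mathcal{H}}$ and $|\mathscr{V}_{\mathscr{A}_{\mathcal{H}}}|$ are bounded by universal constants). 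This yields $\mathscr{A}_{\mathcal{H}} \asymp \log(e|\mathscr{V}_{\mathscr{A}_{\mathcal{H}}}|)$; extracting the precise constants $1$ and $2$ is then a matter of carefully bookkeeping the various $\log 2$, $\log 4$ factors, for which the built-in slack (the factor $e$ inside $\log(e|\mathscr{V}_a|)$ and the factor $2$ on the right) is designed.

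I expect the main obstacle to be exactly the \emph{non-monotonicity} of $a \mapsto |\mathscr{V}_a|$: as $a$ grows, a tight cluster of values $\nu_{\mathcal{H}}(s,a)$ can spread into more dyadic brackets, or several of them can collapse into one bracket, so neither direction of the lemma is automatic and everything must be routed through the slow-variation estimate. Proving that estimate is itself the delicate point: $\mu_\nu$ may have sharp drops, which make $\nu_{\mathcal{H}}(s,a)$ skip dyadic brackets as $s$ increases, so the bookkeeping has to confine such skips to the regime $L_s(a) \lesssim 1$ (equivalently $s \gtrsim \sqrt{pa}$) in order to conclude that they cannot destroy more than a controlled fraction of the occupied brackets.
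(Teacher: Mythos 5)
Your proposal routes the lemma through a quantitative ``slow-variation'' estimate, namely $\tfrac1C|\mathscr{V}_a|\leq|\mathscr{V}_{a'}|\leq C|\mathscr{V}_a|$ for $a\leq a'\leq 2a$. This is both more than is needed and, for the left inequality, not enough. The paper's argument does not compare $|\mathscr{V}_a|$ across a dyadic range of $a$; it establishes \emph{exact local constancy}: for each $s$, the non-increasing integer step function $a\mapsto\nu_{\mathcal{H}}(s,a)$ has a strict gap at $a=\mathscr{A}_{\mathcal{H}}$ (because $\mu_{\nu_{\mathcal{H}}(s,a)-1}>\sqrt{(\nu_{\mathcal{H}}(s,a)-1)\log(1+pa/s^2)}/n$ is a \emph{strict} inequality), so there is $\delta_s>0$ with $\nu_{\mathcal{H}}(s,a+\delta)=\nu_{\mathcal{H}}(s,a)$ for all $\delta<\delta_s$, and since $s$ ranges over a finite set, $\delta^*:=1\wedge\min_s\delta_s>0$ gives $\mathscr{V}_{a+\delta}=\mathscr{V}_a$ exactly on $[\,\mathscr{A}_{\mathcal{H}},\mathscr{A}_{\mathcal{H}}+\delta^*)$. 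Then the whole lemma is a three-line chain: for $\delta\in(0,\delta^*)$, $\mathscr{A}_{\mathcal{H}}\leq\log(e|\mathscr{V}_{\mathscr{A}_{\mathcal{H}}}|)=\log(e|\mathscr{V}_{\mathscr{A}_{\mathcal{H}}+\delta}|)<\mathscr{A}_{\mathcal{H}}+\delta\leq 2\mathscr{A}_{\mathcal{H}}$, where the last step just uses $\mathscr{A}_{\mathcal{H}}\geq 1$ and $\delta\leq 1$. No geometric decay of $\mu_\nu$, no dyadic comparison, no case distinction for sharp eigenvalue drops.

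The concrete gap in your route is the first inequality. Slow variation gives, for a sequence $a_i\uparrow\mathscr{A}_{\mathcal{H}}$ with $\log(e|\mathscr{V}_{a_i}|)\geq a_i$, only $\log(e|\mathscr{V}_{\mathscr{A}_{\mathcal{H}}}|)\geq\log(e|\mathscr{V}_{a_i}|)-\log C\geq a_i-\log C\to\mathscr{A}_{\mathcal{H}}-\log C$. There is no slack in the stated inequality $\mathscr{A}_{\mathcal{H}}\leq\log(e|\mathscr{V}_{\mathcal{H}}|)$ to absorb the $\log C$: the ``built-in slack'' you invoke (the $e$ inside the logarithm and the factor $2$ on the right) is placed entirely on the \emph{second} inequality, not the first. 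So even granting the slow-variation estimate, the bookkeeping you propose cannot close the argument for the lower bound. By contrast, the paper's local-constancy comparison is at the level of sets, not cardinalities, so it introduces no constant at all. It is also worth noting that the slow-variation estimate itself (with a uniform $C$ over all kernels satisfying the paper's assumptions) is not obviously true and is never used anywhere in the paper; in the finite-rank case $\mu_{m+1}=0$, for instance, $\nu_{\mathcal{H}}(s,a)$ can snap to a single value for all $s$ as $a$ crosses a threshold, collapsing $|\mathscr{V}_a|$ to $1$ discontinuously, and your acknowledged bookkeeping around this regime is a real obstacle you would have to resolve rather than a corner case. The correct move is to drop the quantitative slow-variation claim entirely and instead exploit that $a\mapsto\nu_{\mathcal{H}}(s,a)$ is piecewise constant with a positive-length flat to the right of any $a$.
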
  

It turns out an adaptive testing procedure can be constructed which pays a price determined by \(\mathscr{A}_{\mathcal{H}}\) (equivalently \(\log(e|\mathscr{V}_{\mathcal{H}}|)\) up to constant factors). To elaborate, assume \(\log\left(1 + p\mathscr{A}_{\mathcal{H}}\right) \leq n/2\). We will construct an adaptive test which achieves 
\begin{equation*}
    \begin{cases}
        \frac{s}{n}\log\left(1 + \frac{p\mathscr{A}_{\mathcal{H}}}{s^2}\right) + s \Gamma_{\mathcal{H}}(s, \mathscr{A}_{\mathcal{H}}) &\textit{if } s < \sqrt{p\mathscr{A}_{\mathcal{H}}}, \\
        s \Gamma_{\mathcal{H}}(s, \mathscr{A}_{\mathcal{H}}) &\textit{if } s \geq \sqrt{p\mathscr{A}_{\mathcal{H}}}. 
    \end{cases}
\end{equation*}
The form of this separation rate is very similar to the minimax rate, except that the phase transition has shifted to \(s \asymp \sqrt{p\mathscr{A}_{\mathcal{H}}}\) and a cost involving \(\mathscr{A}_{\mathcal{H}}\) is incurred. The value of \(\mathscr{A}_{\mathcal{H}}\) can vary quite a bit with the choice of \(\mathcal{H}\), and a few examples are illustrated in Section \ref{section:special_cases_adapt}. 

Our choice of the geometric grid yielding the fixed point characterization of Lemma \ref{lemma:script_AV_equiv} may not be immediately intuitive. It can be understood at a high level by noticing there are two competing factors at play. First, fix \(a \geq 1\) and note it can be conceptualized to represent a target cost from scanning over some grid. To achieve that target cost, we should use the truncation levels \(\nu_{\mathcal{H}}(s, a)\) for each sparsity \(s\). Now, consider that we will scan over the geometric grid \(\mathscr{V}_{a}\) obtained from the truncation levels. If that geometric grid has cardinality which would incur a cost less than \(a\), then it would appear strange that we are scanning over a smaller grid to achieve a target cost associated to a larger grid. It is intuitive that we might try to aim for a lower target cost \(a' < a\) instead. But this changes our grid to \(\mathscr{V}_{a'}\) which now has a different cardinality, and so the same logic applies to \(a'\). A similar line of reasoning applies if the grid had happened to be too large. Therefore, we intuitively want to select \(a\) such that \(\mathscr{V}_a\) has the proper cardinality to push the cost \(a\) upon us. Hence, we are seeking a kind of fixed point in this sense.  

\subsection{Lower bound}\label{section:adapt_lower_bound}
The formulation of a lower bound is more delicate than one might initially anticipate. To illustrate the subtlety, suppose we have a candidate adaptive rate \(s \mapsto \epsilon(s)\). It is tempting to consider the following lower bound formulation; for any \(\eta \in (0, 1)\), there exists \(c_\eta > 0\) such that for any \(0 < c < c_\eta\),
\begin{equation*}
    \inf_{\varphi} \left\{P_0\left\{\varphi \neq 0\right\} + \max_{1 \leq s \leq p} \sup_{\substack{f \in \mathcal{F}_s, \\ ||f||_2 \geq c\epsilon(s)}} P_f\left\{\varphi \neq 1\right\} \right\} \geq 1-\eta. 
\end{equation*}
While extremely natural, this criterion has an issue. In particular, if there exists \(\bar{s}\) such that \(\epsilon(\bar{s}) \asymp \varepsilon^*(\bar{s})\) where \(s \mapsto \varepsilon^*(s)\) denotes the minimax rate, then the above criterion is trivially satisfied. One simply lower bounds the maximum over all \(s\) with the choice \(s = \bar{s}\) and then invokes the minimax lower bound. To exaggerate, the candidate rate \(\epsilon(1) = \varepsilon^*(1)\) and \(\epsilon(s) = \infty\) for \(s \geq 2\) satisfies the above criterion. Furthermore, from an upper bound perspective there is a trivial test which achieves this candidate rate. The absurdity from this lower bound criterion would then force us to conclude this candidate rate gives an adaptive testing rate. 

To avoid such absurdities, we use a different lower bound criterion. The key issue in the formulation is that, in the domain of maximization for the Type II error, one must not include any \(s\) for which the candidate rate is of the same order as the minimax rate. 

\begin{definition}\label{def:adapt_lower_bound}
    Suppose \(s \mapsto \varepsilon_{\text{adapt}}(p, s, n)\) is a candidate adaptive rate and \(\left\{\mathcal{S}_{p,n}\right\}_{p,n \in \mathbb{N}}\) are a collection of sets satisfying \(\mathcal{S}_{p,n} \subset [p]\) for all \(p\) and \(n\). We say \(s \mapsto \varepsilon_{\text{adapt}}(p, s, n)\) satisfies the adaptive lower bound criterion with respect to \(\left\{\mathcal{S}_{p,n}\right\}_{p,n \in \mathbb{N}}\) if the following two conditions hold. 
    \begin{enumerate}[label=(\roman*), itemsep=12pt]
        \item For any \(\eta \in (0, 1)\), there exists \(c_\eta > 0\) depending only on \(\eta\) such that for all \(0 < c < c_\eta\), 
        \begin{equation*}
            \inf_{\varphi}\left\{P_0\left\{\varphi \neq 0\right\} + \max_{s \in \mathcal{S}_{p,n}} \sup_{\substack{f \in \mathcal{F}_s, \\ ||f||_2 \geq c\varepsilon_{\text{adapt}}(s)}} P_f\left\{\varphi \neq 1\right\} \right\} \geq 1-\eta
        \end{equation*}
        for all \(p\) and \(n\). 
        \item Either 
        \begin{equation}
            \max_{p, n \in \mathbb{N}} \min_{s \in \mathcal{S}_{p,n}} \frac{\varepsilon_{\text{adapt}}(p, s, n)}{\varepsilon^*(p, s, n)} = \infty \label{eqn:nontrivial_candidate}
        \end{equation}
        or 
        \begin{equation}
            \max_{p, n \in \mathbb{N}} \max_{s \in [p]} \frac{\varepsilon_{\text{adapt}}(p, s, n)}{\varepsilon^*(p, s, n)} < \infty \label{eqn:trivial_candidate}
        \end{equation}
        where \(s \mapsto \varepsilon^*(p, s, n)\) denotes the minimax separation rate. 
    \end{enumerate}
\end{definition}

Intuitively, this criterion is nontrivial only when there are no sparsity levels in the chosen reference sets for which the candidate rate is the same order as the minimax rate. More explicitly, there is no sequence \(\{s_{p,n}\}_{p,n \in \mathbb{N}}\) with \(s_{p,n} \in \mathcal{S}_{p,n}\) along which the candidate and minimax rates match (up to constants). This criterion avoids trivialities such as the one described earlier. In Definition \ref{def:adapt_lower_bound}, formalizing the notion of ``order'' requires speaking in terms of sequences, though it may appear unfamiliar and clunky at first glance. Definition \ref{def:adapt_lower_bound} is not new, but rather a direct port to the testing context of the notion of an \emph{adaptive rate of convergence on the scale of classes} from \cite{tsybakov_pointwise_1998} in the estimation setting (see also \cite{collier_optimal_2018} for an application of this notion to linear functional estimation in the sparse Gaussian sequence model).

The condition (\ref{eqn:trivial_candidate}) simply requires the candidate rate to match the minimax rate if (\ref{eqn:nontrivial_candidate}) does not hold. In this way, the definition allows for the possibility of the minimax rate to be an adaptive rate (in those cases where no cost for adaptation needs to be paid).

We now state a lower bound with respect to this criterion. Define the candidate rate
\begin{equation}\label{rate:psi_adapt}
    \psi_{\text{adapt}}(p, s, n)^2 := 
    \begin{cases}
        \frac{s}{n}\log\left(1 + \frac{p\mathscr{A}_{\mathcal{H}}}{s^2}\right) \vee s \Gamma_{\mathcal{H}}(s, \mathscr{A}_{\mathcal{H}}) &\textit{if } s < \sqrt{p \mathscr{A}_{\mathcal{H}}}, \\
        s \Gamma_{\mathcal{H}}(s, \mathscr{A}_{\mathcal{H}}) &\textit{if } s \geq \sqrt{p \mathscr{A}_{\mathcal{H}}}. 
    \end{cases}
\end{equation}
Here, \(\mathscr{A}_{\mathcal{H}}\) is defined in (\ref{def:script_A}). Examining this candidate, it is possible one may run into absurdities for \(s < (p\mathscr{A}_{\mathcal{H}})^{1/2-\delta}\) since \(\log\left(1 + \frac{p\mathscr{A}_{\mathcal{H}}}{s^2}\right) \asymp \log p\), meaning the candidate rate can match the minimax rate. Note here we have used that \(\mathscr{A}_{\mathcal{H}}\) grows at most logarithmically in \(p\). Therefore, we will take \(\mathcal{S} = \left\{ s \in [p] : s \geq \sqrt{p\mathscr{A}_{\mathcal{H}}}\right\}\) (dropping the subscripts and writing \(\mathcal{S} = \mathcal{S}_{p,n}\) for notational ease). 

It follows from the fact that \(\Gamma_{\mathcal{H}}(s, a)\) is increasing in \(a\) that either (\ref{eqn:nontrivial_candidate}) or (\ref{eqn:trivial_candidate}) should typically hold. To see that \(\Gamma_{\mathcal{H}}(s, a)\) is indeed increasing in \(a\), let us fix \(a \leq a'\). Consider that for every \(\nu \in \mathbb{N}\) we have 
\begin{equation*}
    \mu_{\nu} \wedge \frac{\sqrt{\nu \log\left(1 + \frac{pa}{s^2}\right)}}{n} \leq \mu_{\nu} \wedge \frac{\sqrt{\nu \log\left(1 + \frac{pa'}{s^2}\right)}}{n}.   
\end{equation*}
Taking maximum over \(\nu\) on both sides yields \(\Gamma_{\mathcal{H}}(s, a) \leq \Gamma_{\mathcal{H}}(s, a')\), i.e. \(\Gamma_{\mathcal{H}}\) is increasing in \(a\). Now, consider \(a = 1\) and \(a' = \mathscr{A}_{\mathcal{H}}\) in order to compare the candidate rate \(\psi_{\text{adapt}}\) to the minimax rate \(\varepsilon^*\). Since \(s \geq \sqrt{p\mathscr{A}_{\mathcal{H}}}\), we have \(\log\left(1 + \frac{p}{s^2}\right) \asymp \frac{p}{s^2}\) and \(\log\left(1 + \frac{p\mathscr{A}_{\mathcal{H}}}{s^2}\right) \asymp \frac{p\mathscr{A}_{\mathcal{H}}}{s^2}\). With the above display, it is intuitive that either (\ref{eqn:nontrivial_candidate}) or (\ref{eqn:trivial_candidate}) should hold in typical nonpathological cases; these conditions can be easily checked on a problem by problem basis (such as those in Section \ref{section:special_cases_adapt}).

In preparation for the statement of the lower bound, the following set is needed 
\begin{equation}
    \widetilde{\mathscr{V}}_{\mathcal{H}} := \left\{2^k : k \in \mathbb{N} \cup \left\{0\right\} \text{ and } 2^{k-1} < \nu_{\mathcal{H}}(s, \mathscr{A}_{\mathcal{H}}) \leq 2^k \text{ for some } s \in \mathcal{S} \right\}. \label{def:tilde_script_V}
\end{equation}
With \(\widetilde{\mathscr{V}}_{\mathcal{H}}\) defined, we are ready to state the lower bound. The following result establishes condition (i) of Definition \ref{def:adapt_lower_bound} for \(\psi_{\text{adapt}}\) given by (\ref{rate:psi_adapt}) with respect to our choice \(\mathcal{S} = \left\{s \in [p] : s \geq \sqrt{p\mathscr{A}_{\mathcal{H}}}\right\}\).

\begin{theorem}\label{thm:adapt_lbound}
    Suppose \(\mathscr{A}_{\mathcal{H}} \asymp \log\left(e|\widetilde{\mathscr{V}}_{\mathcal{H}}|\right)\). Assume further \(\log\left(1 + p \mathscr{A}_{\mathcal{H}}\right) \leq \frac{n}{2}\). If \(\eta \in \left(0 , 1\right)\), then there exists \(c_\eta > 0\) depending only on \(\eta\) such that for all \(0 < c < c_\eta\), we have 
    \begin{align*}
        \inf_{\varphi} \left\{ P_0\left\{ \varphi \neq 0 \right\} + \max_{s \in \mathcal{S}} \sup_{\substack{f \in \mathcal{F}_s, \\ ||f||_2 \geq c \psi_{\text{adapt}}}} P_f\left\{\varphi \neq 1\right\} \right\} \geq 1-\eta
    \end{align*} 
    where \(\psi_{\text{adapt}}\) is given by (\ref{rate:psi_adapt}) and \(\mathcal{S} = \left\{s \in [p] : s \geq \sqrt{p\mathscr{A}_{\mathcal{H}}}\right\}\). 
\end{theorem}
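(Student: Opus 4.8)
The plan is to establish condition (i) of Definition~\ref{def:adapt_lower_bound} by Le Cam's two-point method (the method of fuzzy hypotheses), working in the sequence formulation (\ref{problem:seq_test_1})--(\ref{problem:seq_test_2}). Write $N := |\widetilde{\mathscr{V}}_{\mathcal{H}}|$. For each dyadic level $m = 2^k \in \widetilde{\mathscr{V}}_{\mathcal{H}}$, fix a representative sparsity $s_m \in \mathcal{S}$ with $2^{k-1} < \nu_{\mathcal{H}}(s_m, \mathscr{A}_{\mathcal{H}}) \le 2^k$ and a dedicated block of eigen-indices $B_m := \{\nu \in \mathbb{N} : 2^{k-2} < \nu \le 2^{k-1}\}$; because distinct elements of $\widetilde{\mathscr{V}}_{\mathcal{H}}$ are distinct powers of two, the blocks $\{B_m\}$ are pairwise disjoint, and each obeys $|B_m| \asymp \nu_{\mathcal{H}}(s_m, \mathscr{A}_{\mathcal{H}})$. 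Define the sub-prior $\pi_m$ by drawing a uniformly random subset $S \subseteq [p]$ with $|S| = s_m$ together with i.i.d.\ Rademacher signs $\{\epsilon_{\nu,j}\}$, and setting $\theta_{\nu,j} = 2c\,\rho_m\,\epsilon_{\nu,j}$ for $j \in S$, $\nu \in B_m$, and $\theta_{\nu,j} = 0$ otherwise, where $\rho_m^2 := n^{-1}\nu_{\mathcal{H}}(s_m,\mathscr{A}_{\mathcal{H}})^{-1/2}\sqrt{\log(1 + p\mathscr{A}_{\mathcal{H}}/s_m^2)}$ and $c$ is a small constant chosen at the end. The mixture $\bar{\pi} := \frac1N\sum_{m \in \widetilde{\mathscr{V}}_{\mathcal{H}}} \pi_m$ is the composite alternative, with $\bar{P} := \frac1N\sum_m E_{\Theta \sim \pi_m} P_\Theta$ its law on the data.

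First I would verify $\pi_m$ is admissible. Every $\nu \in B_m$ satisfies $\nu < \nu_{\mathcal{H}}(s_m,\mathscr{A}_{\mathcal{H}})$, so minimality of $\nu_{\mathcal{H}}$ gives $\mu_\nu > n^{-1}\sqrt{\nu\log(1 + p\mathscr{A}_{\mathcal{H}}/s_m^2)}$; combined with $|B_m| \asymp \nu_{\mathcal{H}}(s_m,\mathscr{A}_{\mathcal{H}})$ this yields $\sum_{\nu \in B_m}\mu_\nu^{-1} \lesssim n\,\nu_{\mathcal{H}}(s_m,\mathscr{A}_{\mathcal{H}})^{1/2}/\sqrt{\log(1 + p\mathscr{A}_{\mathcal{H}}/s_m^2)}$, so $\sum_\nu \theta_{\nu,j}^2/\mu_\nu = 4c^2\rho_m^2\sum_{\nu \in B_m}\mu_\nu^{-1} \le 1$ once $c$ is a small universal constant, i.e.\ $\pi_m$ is supported on $\mathscr{T}_{s_m}$. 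Moreover $||\Theta||_F^2 = 4c^2\rho_m^2\,s_m\,|B_m| \gtrsim c^2 s_m\, n^{-1}\sqrt{\nu_{\mathcal{H}}(s_m,\mathscr{A}_{\mathcal{H}})\log(1 + p\mathscr{A}_{\mathcal{H}}/s_m^2)} \gtrsim c^2 s_m\Gamma_{\mathcal{H}}(s_m,\mathscr{A}_{\mathcal{H}}) = c^2\psi_{\text{adapt}}(p,s_m,n)^2$, using Lemma~\ref{lemma:Gamma_nu_equiv_adapt} and $s_m \in \mathcal{S}$; after absorbing the hidden constant into $c$, $\pi_m$ is supported on $\{\Theta \in \mathscr{T}_{s_m} : ||\Theta||_F \ge c\,\psi_{\text{adapt}}(p,s_m,n)\}$. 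Hence for every test $\varphi$, $\bar{P}(\varphi \neq 1) \le \max_{s \in \mathcal{S}}\sup_{f \in \mathcal{F}_s,\, ||f||_2 \ge c\psi_{\text{adapt}}} P_f(\varphi \neq 1)$, and it suffices to prove $\chi^2(\bar{P}\,||\,P_0) \le 4\eta^2$, since then $\inf_\varphi\{P_0(\varphi \neq 0) + \bar{P}(\varphi \neq 1)\} = 1 - d_{TV}(P_0,\bar{P}) \ge 1 - \tfrac12\sqrt{\chi^2(\bar{P}\,||\,P_0)} \ge 1-\eta$.

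The disjointness of the blocks is what keeps the $\chi^2$ small: for $m \neq m'$ one has $\langle\Theta,\Theta'\rangle_F \equiv 0$ when $\Theta \sim \pi_m$ and $\Theta' \sim \pi_{m'}$, so writing $dP_\Theta/dP_0 = \exp(n\langle X,\Theta\rangle_F - \tfrac n2||\Theta||_F^2)$ and using Fubini, every off-diagonal term in $\chi^2(\bar{P}\,||\,P_0) + 1 = \frac1{N^2}\sum_{m,m'} E_{\Theta \sim \pi_m,\, \Theta' \sim \pi_{m'}} e^{n\langle\Theta,\Theta'\rangle_F}$ equals $1$, leaving $\chi^2(\bar{P}\,||\,P_0) = \frac1{N^2}\sum_m \chi^2(\pi_m\,||\,P_0)$. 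For a diagonal term, applying $\cosh(x) \le e^{x^2/2}$ over the sign randomization and then the standard fact that a hypergeometric variable is dominated in convex order by a binomial one (so $E[e^{t|S \cap S'|}] \le \exp(\tfrac{s_m^2}{p}(e^t - 1))$ for $t \ge 0$) gives $\chi^2(\pi_m\,||\,P_0) + 1 \le \exp\!\big(\tfrac{s_m^2}{p}(e^{u_m} - 1)\big)$, with $u_m \asymp c^4 n^2\rho_m^4|B_m| \asymp c^4\log(1 + p\mathscr{A}_{\mathcal{H}}/s_m^2) \le c^4\log 2$; since $u_m \le 1$ and $s_m \ge \sqrt{p\mathscr{A}_{\mathcal{H}}}$ forces $\tfrac{s_m^2}{p}\log(1 + p\mathscr{A}_{\mathcal{H}}/s_m^2) \asymp \mathscr{A}_{\mathcal{H}}$, this reads $\chi^2(\pi_m\,||\,P_0) \le \exp(C_1 c^4\mathscr{A}_{\mathcal{H}}) - 1$ for a universal $C_1$. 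Plugging in the hypothesis, which in particular gives $\mathscr{A}_{\mathcal{H}} \le c_2\log(eN)$, one obtains $\chi^2(\bar{P}\,||\,P_0) \le \frac1N\big((eN)^{C_1 c_2 c^4} - 1\big)$, and a short calculus estimate shows the right side is at most $C_3 c_2 c^4$ for $c$ below a universal threshold; choosing $c = c_\eta$ small enough that $C_3 c_2 c_\eta^4 \le 4\eta^2$ completes the argument. I expect the main obstacle to be the prior construction: it must simultaneously keep the cross-terms null (which is why each $\pi_m$ is confined to its own dyadic eigen-block), keep each $\pi_m$ inside $\mathscr{T}_{s_m}$ at separation of order $\psi_{\text{adapt}}(p,s_m,n)$ (which is why the block must sit just below $\nu_{\mathcal{H}}(s_m,\mathscr{A}_{\mathcal{H}})$, where the eigenvalues are still large), and keep the diagonal $\chi^2$ at most $e^{O(c^4\mathscr{A}_{\mathcal{H}})}$, so that averaging over the $N$ levels --- with $\log N \asymp \mathscr{A}_{\mathcal{H}}$ --- absorbs it; this last averaging is exactly the mechanism that turns the grid size $\log N \asymp \mathscr{A}_{\mathcal{H}}$ into the adaptation surcharge present in $\psi_{\text{adapt}}$.
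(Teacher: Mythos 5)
Your proof is correct, but it takes a genuinely different and arguably cleaner route than the paper's. The paper uses a \emph{single} prior in which the truncation level $v \sim \text{Uniform}(\widetilde{\mathscr{V}}_{\mathcal{H}})$ determines the sparsity $s$, and the nonzero coordinates then occupy the \emph{nested} index set $\{1,\dots,\nu_{\mathcal{H}}(s,\mathscr{A}_{\mathcal{H}})-1\}$. Because these index sets overlap across different realizations of $v$, the Ingster--Suslina cross-terms do not vanish: the paper obtains an expression of the form $E\exp\bigl(8c^4\,\tfrac{d_s\wedge d_t}{\sqrt{d_sd_t}}\,\mathscr{A}_{\mathcal{H}}\bigr)$ and must split the analysis according to whether the dyadic levels $k_s,k_t$ are close (set $E_1$) or far apart (set $E_2$), exploiting the decay $2^{-|k_s-k_t|/2}$ to control the former and the log against a power to control the latter. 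By contrast, you confine $\pi_m$ to the \emph{disjoint} dyadic eigen-block $B_m=(2^{k-2},2^{k-1}]$, which kills every off-diagonal term of $\chi^2(\bar P\,\|\,P_0)$ outright and leaves a clean average $\tfrac{1}{N^2}\sum_m\chi^2(\pi_m\,\|\,P_0)$; the diagonal terms are then handled exactly as in the paper (Rademacher $\cosh(x)\le e^{x^2/2}$, the hypergeometric MGF bound, and $\tfrac{s_m^2}{p}\log(1+p\mathscr{A}_{\mathcal{H}}/s_m^2)\asymp\mathscr{A}_{\mathcal{H}}$ on $\mathcal{S}$), and the hypothesis $\mathscr{A}_{\mathcal{H}}\asymp\log(e|\widetilde{\mathscr{V}}_{\mathcal{H}}|)$ enters only through the elementary estimate $\sup_{N\ge1}N^{-1}\bigl((eN)^a-1\bigr)\lesssim a$ for small $a$. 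The disjoint-block device sacrifices a constant factor (you use a block of size $\asymp\nu_{\mathcal{H}}/4$ rather than $\nu_{\mathcal{H}}-1$, which you compensate by rescaling $\rho_m$), but it removes the $E_1/E_2$ case analysis entirely and makes the role of the grid cardinality --- the $\tfrac{1}{N}$ averaging turning $\log N\asymp\mathscr{A}_{\mathcal{H}}$ into the surcharge --- fully transparent. Two small points worth keeping in a polished write-up: under the standing hypothesis $\log(1+p\mathscr{A}_{\mathcal{H}})\le n/2$ one has $\nu_{\mathcal{H}}(s,\mathscr{A}_{\mathcal{H}})\ge2$ for every $s$, so the smallest dyadic level in $\widetilde{\mathscr{V}}_{\mathcal{H}}$ is at least $2$ and every $B_m$ is nonempty; and the requisite representative $s_m\in\mathcal{S}$ exists precisely by the definition of $\widetilde{\mathscr{V}}_{\mathcal{H}}$.
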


The condition \(\mathscr{A}_{\mathcal{H}} \asymp \log\left(e|\widetilde{\mathscr{V}}_{\mathcal{H}}|\right)\) is somewhat mild; for example it is satisfied when the eigenvalues satisfy a polynomial decay such as \(\mu_k = c_\alpha k^{-2\alpha}\) for some \(\alpha > 0\) and positive universal constant \(c_\alpha\). It is also satisfied when there is exponential decay such as \(\mu_k = c_1 e^{-c_2 j^\gamma}\) for some \(\gamma > 0\) and positive universal constants \(c_1\) and \(c_2\). Sections \ref{section:special_cases_adapt} contains further discussion of these two cases. 

The proof strategy is, at a high level, the same as that in Section \ref{section:lower_bound} with the added complication that the prior should also randomize over the sparsity level in order to capture the added difficulty of not knowing the sparsity level. The random sparsity level is drawn by drawing a random truncation point \(\nu\) and extracting the induced sparsity. The rest of the prior construction is similar to that in Section \ref{section:lower_bound}, but the analysis is complicated by the random sparsity level. 

Finally, note Theorem \ref{thm:adapt_lbound} is a statement about a cost paid for adaptation for sparsity levels in \(\mathcal{S} = \left\{s \in [p] : s \geq \sqrt{p\mathscr{A}_{\mathcal{H}}} \right\}\). Nothing is asserted about smaller sparsities. It would be interesting to show whether or not the candidate \(\psi_{\text{adapt}}\) also captures the cost for sparsity levels \(s\) which satisfy \((p\mathscr{A}_{\mathcal{H}})^{1/2-\delta} < s < \sqrt{p\mathscr{A}_{\mathcal{H}}} \) for all \(\delta \in (0, 1/2]\) (e.g. \(s \asymp \sqrt{p\mathscr{A}_{\mathcal{H}}}/\log(p\mathscr{A}_{\mathcal{H}})\)); we leave it open for future work.

\subsection{Upper bound}\label{section:adapt_upper_bound}

In this section, an adaptive test is constructed. Recall \(\mathscr{A}_{\mathcal{H}}\) and \(\mathscr{V}_{\mathcal{H}}\) given in (\ref{def:script_A}) and (\ref{def:script_V}) respectively. Define 
\begin{equation}\label{rate:tau_adapt_ubound}
    \tau_{\text{adapt}}^2(p, s, n) =
    \begin{cases}
        \left(\frac{s}{n}\log\left(1 + \frac{p \mathscr{A}_{\mathcal{H}}}{s^2}\right) \right) \vee \left( \frac{s}{n} \sqrt{\nu_{\mathcal{H}}(s, \mathscr{A}_{\mathcal{H}}) \log\left(1 + \frac{p \mathscr{A}_{\mathcal{H}}}{s^2}\right)}\right) &\textit{if } s < \sqrt{p \mathscr{A}_{\mathcal{H}}}, \\
        \frac{s}{n}\sqrt{\nu_{\mathcal{H}}(s, \mathscr{A}_{\mathcal{H}}) \log\left(1 + \frac{p\mathscr{A}_{\mathcal{H}}}{s^2}\right)} &\textit{if } s \geq \sqrt{p \mathscr{A}_{\mathcal{H}}}. 
    \end{cases}
\end{equation}

In the test, the following collection of geometric grids is used. Define
\begin{equation}\label{def:script_S}
    \mathscr{S} := \left\{1, 2, 4, ... , 2^{\left\lceil\log_2 \sqrt{p \mathscr{A}_{\mathcal{H}}}\right\rceil-1}\right\} \cup \{p\}.
\end{equation}
The choice to take \(\mathscr{S}\) as a geometric grid is not statistically critical, but doing so is computationally advantageous. 

\begin{theorem}\label{thm:adapt_ubound}
    There exist universal positive constants \(D, K_2, K_2',\) and \(K_3\) such that the following holds. Let \(\mathscr{A}_{\mathcal{H}}\), \(\mathscr{V}_{\mathcal{H}}\), and \(\mathscr{S}\) be given by (\ref{def:script_A}), (\ref{def:script_V}), and (\ref{def:script_S}) respectively. For \(\nu \in \mathscr{V}_{\mathcal{H}}\) and \(s \in [p]\), set \(d_\nu = \nu \vee \lceil D \rceil\) and set
    \begin{align*}
        r_{\nu, s} &:= K_2\left(d_\nu \log\left(1 + \frac{p \mathscr{A}_{\mathcal{H}}}{s^2}\right)\right)^{1/4}, \\
        r_s' &:= K_2'\sqrt{\log\left(1 + \frac{p \mathscr{A}_{\mathcal{H}}}{s^2}\right)}. 
    \end{align*}
    If \(\eta \in (0, 1)\), then there exists \(C_\eta > 0\) depending only on \(\eta\) such that for all \(C > C_\eta\), we have 
    \begin{equation*}
        P_0\left\{ \max_{\nu \in \mathscr{V}_{\mathcal{H}}} \max_{s \in \mathscr{S}} \varphi_{\nu, s} \neq 0 \right\} + \max_{1 \leq s^* \leq p} \sup_{\substack{f \in \mathcal{F}_{s^*}, \\ ||f||_2 \geq C \tau_{\text{adapt}}(p, s^*, n)}} P_f\left\{ \max_{\nu \in \mathscr{V}_{\mathcal{H}}} \max_{s \in \mathscr{S}} \varphi_{\nu, s} \neq 1\right\} \leq \eta
    \end{equation*}
    where \(\tau_{\text{adapt}}\) is given by (\ref{rate:tau_adapt_ubound}). Here, \(\varphi_{\nu, s}\) is given by 
    \begin{equation*}
        \varphi_{\nu, s} := 
        \begin{cases}
            \mathbbm{1}_{\left\{T_{r_{\nu, s}}(d_\nu) > C s\sqrt{\nu \log\left(1 + \frac{p\mathscr{A}_{\mathcal{H}}}{s^2}\right)}\right\}} &\textit{if } s < \sqrt{p \mathscr{A}_{\mathcal{H}}} \textit{ and } \sqrt{\log\left(1 + \frac{p\mathscr{A}_{\mathcal{H}}}{s^2}\right)} \leq K_3 \sqrt{d_\nu}, \\
            \mathbbm{1}_{\left\{T_{r_s'}(d_\nu) > C s \log\left(1 + \frac{p\mathscr{A}_{\mathcal{H}}}{s^2}\right)\right\}} &\textit{if } s < \sqrt{p \mathscr{A}_{\mathcal{H}}} \textit{ and } \sqrt{\log\left(1 + \frac{p\mathscr{A}_{\mathcal{H}}}{s^2}\right)} > K_3 \sqrt{d_\nu}, \\
            \mathbbm{1}_{\left\{ n\sum_{j=1}^{p}\sum_{k \leq \nu} X_{k,j}^2 > \nu p + C\sqrt{\nu p \mathscr{A}_{\mathcal{H}}}\right\}} &\textit{if } s \geq \sqrt{p\mathscr{A}_{\mathcal{H}}}
        \end{cases}
    \end{equation*}
    and the statistic \(T_r(d)\) is given by (\ref{def:T_r}).
\end{theorem}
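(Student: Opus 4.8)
The plan is to control the Type I and Type II errors separately, reducing both to the non-adaptive analysis in Propositions \ref{prop:test_sparse_bulk}, \ref{prop:test_sparse_tail}, and \ref{prop:dense_alpha}, paying for the union bound over the grids $\mathscr{V}_{\mathcal{H}} \times \mathscr{S}$ a price of order $\log(e|\mathscr{V}_{\mathcal{H}}|) + \log(e|\mathscr{S}|)$, which by Lemma \ref{lemma:script_AV_equiv} and the crude bound $|\mathscr{S}| \lesssim \log(p\mathscr{A}_{\mathcal{H}})$ is of order $\mathscr{A}_{\mathcal{H}}$ (here one uses $\mathscr{A}_{\mathcal{H}} \gtrsim \log\log p$, which holds by the definition \eqref{def:script_A} since $|\mathscr{V}_a|$ is at least a constant). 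First I would handle the null: under $P_0$ the statistics $E_j(d_\nu) \sim \chi^2_{d_\nu}$, so for each fixed pair $(\nu,s)$ the events $\{\varphi_{\nu,s} = 1\}$ are exactly the events analyzed in the null-control halves of the three propositions, \emph{provided} the thresholds $C s\sqrt{\nu\log(1+p\mathscr{A}_{\mathcal{H}}/s^2)}$ etc.\ are of the order required there. The key point is that the proposition-level null bounds are of the form $P_0\{T_r(d) > CK_1 n\tau^2\} \leq \eta'$ where the analysis actually yields a bound decaying like $e^{-c'C}$ (or polynomially in $C$) in $C$; choosing $C$ large enough — specifically $C \gtrsim \mathscr{A}_{\mathcal{H}}/(\log(e|\mathscr{V}_{\mathcal{H}}|\,|\mathscr{S}|))$, which is $O(1)$ by the fixed-point lemma — absorbs the $|\mathscr{V}_{\mathcal{H}}|\cdot|\mathscr{S}|$ factor from the union bound. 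I would therefore revisit the null-control arguments to extract the explicit dependence on $C$ (this is standard: thresholded $\chi^2$ sums concentrate, and the Laurent--Massart tail \eqref{eqn:laurent_massart_tail} gives exponential decay in the threshold), then union bound over $(\nu,s) \in \mathscr{V}_{\mathcal{H}} \times \mathscr{S}$.

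For the Type II error, fix the true sparsity $s^*$ and $f \in \mathcal{F}_{s^*}$ with $\|f\|_2 \geq C\tau_{\text{adapt}}(p,s^*,n)$. It suffices to exhibit \emph{one} pair $(\nu, s) \in \mathscr{V}_{\mathcal{H}} \times \mathscr{S}$ for which $P_f\{\varphi_{\nu,s} = 0\}$ is small, since $\max_{\nu,s}\varphi_{\nu,s} = 0$ implies $\varphi_{\nu,s}=0$ for that pair. The natural choice is $s$ equal to the smallest element of $\mathscr{S}$ with $s \geq s^*$ (so $s \asymp s^* \vee 1$ up to the geometric spacing, and $\log(1+p\mathscr{A}_{\mathcal{H}}/s^2) \asymp \log(1+p\mathscr{A}_{\mathcal{H}}/(s^*)^2)$), and $\nu$ equal to the element of $\mathscr{V}_{\mathcal{H}}$ that dyadically captures $\nu_{\mathcal{H}}(s^*, \mathscr{A}_{\mathcal{H}})$ — this $\nu$ lies in $\mathscr{V}_{\mathcal{H}} = \mathscr{V}_{\mathscr{A}_{\mathcal{H}}}$ precisely by the definition of $\mathscr{V}_a$ (it was built from truncation levels $\nu_{\mathcal{H}}(s,a)$ over $s \in [p]$). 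With these choices $d_\nu \asymp \nu_{\mathcal{H}}(s^*,\mathscr{A}_{\mathcal{H}}) \vee 1$ and the threshold in $\varphi_{\nu,s}$ is of order $n\tau_{\text{adapt}}^2(p,s^*,n)$ in the appropriate branch; the separation-detection halves of Propositions \ref{prop:test_sparse_bulk}, \ref{prop:test_sparse_tail}, \ref{prop:dense_alpha} then apply essentially verbatim, after checking that the bulk/tail dichotomy $\sqrt{\log(1+p\mathscr{A}_{\mathcal{H}}/s^2)} \lessgtr K_3\sqrt{d_\nu}$ used in the definition of $\varphi_{\nu,s}$ matches the dichotomy $\log(1+p/s^2) \lessgtr K_3^2 d$ in those propositions (with $a = \mathscr{A}_{\mathcal{H}}$ replacing $a=1$ throughout — this is exactly the role of the generalized quantities $\nu_{\mathcal{H}}(s,a), \Gamma_{\mathcal{H}}(s,a)$ and Lemma \ref{lemma:Gamma_nu_equiv_adapt}). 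One must also separately treat the regime $s^* \geq \sqrt{p\mathscr{A}_{\mathcal{H}}}$ using the dense $\chi^2$ branch, and handle the edge case $s^* > 2^{\lceil\log_2\sqrt{p\mathscr{A}_{\mathcal{H}}}\rceil - 1}$ where $\mathscr{S}$ jumps to $p$, which is why $p$ is appended to $\mathscr{S}$.

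The main obstacle I anticipate is bookkeeping the constants so that a \emph{single} threshold multiplier $C$ works simultaneously across all three branches of $\varphi_{\nu,s}$ and all pairs in the grid, while the union-bound cost stays $O(1)$. Concretely: the null bounds must decay fast enough in $C$ that summing $|\mathscr{V}_{\mathcal{H}}|\cdot|\mathscr{S}| \asymp e^{O(\mathscr{A}_{\mathcal{H}})}$ copies still yields something $\leq \eta/2$, which forces $C \gtrsim \mathscr{A}_{\mathcal{H}}$ \emph{unless} the per-pair null probability decays like $e^{-cC\mathscr{A}_{\mathcal{H}}}$ rather than $e^{-cC}$ — so the crucial technical step is verifying that the thresholds $r_{\nu,s} \asymp (d_\nu \log(1+p\mathscr{A}_{\mathcal{H}}/s^2))^{1/4}$ and $r_s' \asymp \sqrt{\log(1+p\mathscr{A}_{\mathcal{H}}/s^2)}$ are calibrated to produce exactly a $\log(1+p\mathscr{A}_{\mathcal{H}}/s^2) \gtrsim \mathscr{A}_{\mathcal{H}}/|\mathscr{S}|$-type exponent in the Laurent--Massart tail, i.e.\ that the $\mathscr{A}_{\mathcal{H}}$ baked into these thresholds (via replacing $a=1$ by $a=\mathscr{A}_{\mathcal{H}}$) is precisely what pays for the grid. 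A secondary nuisance is that the Type II argument needs the chosen $\nu \in \mathscr{V}_{\mathcal{H}}$ and $s \in \mathscr{S}$ to land in the \emph{same} branch of the definition of $\varphi_{\nu,s}$ that $\tau_{\text{adapt}}$ was designed for; since both the branch condition and $\tau_{\text{adapt}}$'s cases are governed by the same comparisons, this is a matter of careful case analysis rather than a genuine difficulty, but it is where an oversight would most easily creep in.
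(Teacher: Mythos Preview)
Your overall architecture is right and matches the paper: union bound over $(\nu,s)\in\mathscr{V}_{\mathcal{H}}\times\mathscr{S}$ for Type~I, exhibit a single good pair for Type~II, and reduce each branch to the non-adaptive Propositions~\ref{prop:test_sparse_bulk}--\ref{prop:dense_alpha} with $a=\mathscr{A}_{\mathcal{H}}$ replacing $a=1$. The Type~II sketch is essentially what the paper does.

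There is, however, a genuine gap in how you propose to control the Type~I sum over $\mathscr{S}$. You claim $\mathscr{A}_{\mathcal{H}}\gtrsim\log\log p$ ``by definition since $|\mathscr{V}_a|$ is at least a constant,'' and use this to absorb $\log|\mathscr{S}|\asymp\log\log p$ into $\mathscr{A}_{\mathcal{H}}$. This is false: $|\mathscr{V}_a|\geq c$ only gives $\mathscr{A}_{\mathcal{H}}\geq 1$, and indeed the paper exhibits finite-rank kernels for which $\mathscr{A}_{\mathcal{H}}\asymp 1$ (Section~\ref{section:special_cases_adapt}). In that regime your union bound would force $C\gtrsim\log\log p$, violating the theorem's requirement that $C_\eta$ depend only on $\eta$.

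The paper avoids this by \emph{not} bounding each per-$s$ null probability uniformly by $e^{-c\mathscr{A}_{\mathcal{H}}}$ and paying $|\mathscr{S}|$. Instead it extracts a sharper $s$-dependent bound: from Lemmas~\ref{lemma:null_bulk} and~\ref{lemma:null_tail} one gets
\[
P_0\{\varphi_{\nu,s}=1\}\leq\exp\!\Big(-C\kappa\Big(\tfrac{p\mathscr{A}_{\mathcal{H}}^2}{s^2}\wedge s\log\big(1+\tfrac{p\mathscr{A}_{\mathcal{H}}}{s^2}\big)\Big)\Big),
\]
and both pieces of this exponent grow \emph{geometrically} as $s=2^k$ moves through $\mathscr{S}$ away from its worst value near $\sqrt{p\mathscr{A}_{\mathcal{H}}}$. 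Consequently $\sum_{s\in\mathscr{S}}P_0\{\varphi_{\nu,s}=1\}$ is a convergent geometric series dominated by its largest term $\asymp e^{-c'C\mathscr{A}_{\mathcal{H}}}$, and only then does the $|\mathscr{V}_{\mathcal{H}}|\leq e^{2\mathscr{A}_{\mathcal{H}}}$ factor enter. This is precisely why, as the paper remarks after the theorem statement, the scan over $\mathscr{S}$ incurs no cost. Your proposal needs this refinement to go through for general $\mathcal{H}$.
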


As mentioned earlier, the adaptive test involves scanning over the potential values of \(\nu\) in the geometric grid \(\mathscr{V}_{\mathcal{H}}\). Consequently, a cost involving \(\mathscr{A}_{\mathcal{H}}\) is paid. Note for \(s < \sqrt{p\mathscr{A}_{\mathcal{H}}}\) we also need to scan over \(s \in \mathscr{S}\) in order to set the thresholds in the statistics \(T_r(d)\) properly. It turns out this extra scan does not incur a cost; cost is driven by needing to scan over \(\nu\). 

To understand how scanning over \(\nu\) can incur a cost, it is most intuitive to consider the need to control the Type I error when scanning over the \(\chi^2\) statistics with various degrees of freedom. Roughly speaking, it is necessary to pick the threshold values \(t_\nu\) such that the Type I error \(P\left(\bigcup_{\nu \in \mathscr{V}_{\mathcal{H}}} \left\{ \chi^2_{\nu p} - \nu p > t_\nu \right\}\right)\) is smaller than some prescribed error level. By union bound and (\ref{eqn:laurent_massart_tail}), consider the tail bound \( P\left(\bigcup_{\nu \in \mathscr{V}_{\mathcal{H}}} \left\{ \chi^2_{\nu p} - \nu p > 2\sqrt{\nu p u} + 2u \right\}\right) \leq \sum_{\nu \in \mathscr{V}_{\mathcal{H}}} P\left\{ \chi^2_{\nu p} - \nu p > 2\sqrt{\nu p u} + 2u \right\} \leq |\mathscr{V}_{\mathcal{H}}| e^{-u}\). To ensure the Type I error is small, this tail bound suggests the choice \(t_\nu \asymp \sqrt{\nu p \log |\mathscr{V}_{\mathcal{H}}|} + 2 \log|\mathscr{V}_{\mathcal{H}}| \asymp \sqrt{\nu p \log |\mathscr{V}_{\mathcal{H}}|}\) where the latter equivalence follows from the fact \(|\mathscr{V}_{\mathcal{H}}|\) grows at most logarithmically in \(p\). Since the threshold \(t_\nu\) is inflated, a signal must also have larger norm in order to be detected. The logarithmic inflation factor is a consequence of the \(\chi^2\) tail.

\subsection{Special cases}\label{section:special_cases_adapt}
To illustrate how a price for adaptation depends delicately on the choice of function space \(\mathcal{H}\), we consider a a variety of cases. Throughout, the assumption \(\log(1+p\mathscr{A}_{\mathcal{H}}) \leq n/2\) as well as the other assumptions necessary for our result are made.   

\subsubsection{Sobolev}\label{section:special_cases_adapt_sobolev}
Taking \(\mu_k \asymp k^{-2\alpha}\) as emblematic of Sobolev space with smoothness \(\alpha\), it can be shown that \(\mathscr{A}_{\mathcal{H}} \asymp \log(e|\widetilde{\mathscr{V}}_{\mathcal{H}}|) \asymp \log \log p\) (recall (\ref{def:script_A}) and (\ref{def:tilde_script_V})). Our upper and lower bounds assert an adaptive testing rate is given by
\begin{equation*}
    \varepsilon_{\text{adapt}}(p, s, n)^2 \asymp
    \begin{cases}
        \frac{s \log\left(\frac{p\log\log p}{s^2}\right)}{n} + s \left( \frac{n}{\sqrt{\log\left(\frac{p\log\log p}{s^2}\right)}}\right)^{-\frac{4\alpha}{4\alpha+1}} &\textit{if } s < \sqrt{p\log\log p}, \\
        s \left(\frac{ns}{\sqrt{p\log\log p}}\right)^{-\frac{4\alpha}{4\alpha+1}} &\textit{if } s \geq \sqrt{p\log\log p}. 
    \end{cases}
\end{equation*}
Ingster and Lepski \cite{ingster_multichannel_2003} consider adaptation in the sparse regime \(s = O(p^{1/2-\delta})\) and the dense regime \(s = p^{1/2+\delta}\) for \(\delta \in (0, 1/2)\) separately. In the sparse regime, they construct an adaptive test which is able to achieve the minimax rate; no cost is paid. In the dense rate, not only do they give a test which achieves \(s \left(\frac{ns}{\sqrt{p\log\log p}}\right)^{-\frac{4\alpha}{4\alpha+1}}\), they also supply a lower bound (now, in our lower bound notation, \(\mathcal{S} = [p]\) can be taken) showing it cannot be improved. As seen in the above display, our test enjoys optimality in these regimes.

The reader should take care when comparing \cite{ingster_multichannel_2003} to our result. In our setting, the unknown sparsity level can vary throughout the entire range \(s \in [p]\). In contrast, \cite{ingster_multichannel_2003} consider two separate cases. Either the unknown sparsity \(s\) is constrained to \(s \leq p^{1/2-\delta}\), or it is constrained to \(s \geq p^{1/2+\delta}\). To elaborate, the precise value of \(s\) is unknown but \cite{ingster_multichannel_2003} assumes the statistician has knowledge of which of the two cases is in force. In contrast, we make no such assumption; nothing at all is known about the sparsity level. In our view, this situation is more faithful to the problem facing the practitioner. Despite constructed for a seemingly more difficult setting, it turns out our test is optimal under Ingster and Lepski's setting.

Our result provides finer detail around \(\sqrt{p}\) missed by \cite{ingster_multichannel_2003}. In Theorem 3 of their article, Ingster and Lepski propose an adaptive test procedure which is applicable in the regime \(s \asymp \sqrt{p}\). It requires signal strength of squared order at least  
\begin{equation*}
    \sqrt{p}\left(\frac{n}{\sqrt{\log \log p}}\right)^{-\frac{4\alpha}{4\alpha+1}}.
\end{equation*}
This is suboptimal since our test achieves  
\begin{equation*}
    \begin{cases}
        \sqrt{p}\left(\frac{n}{\sqrt{\log\log\log p}}\right)^{-\frac{4\alpha}{4\alpha+1}} &\textit{if } \log\log\log p \leq n^{\frac{1}{2\alpha+1}}, \\
        \frac{\sqrt{p} \log\log\log p}{n} &\textit{if } \log\log\log p > n^{\frac{1}{2\alpha+1}},
    \end{cases}
\end{equation*}
which is faster.

\subsubsection{Finite dimension}
Consider a finite dimensional structure \(1 = \mu_1 = \mu_2 = ... = \mu_m > \mu_{m+1} = ... = 0\) where \(m\) is a positive integer. If \(m < \frac{n^2}{\log\left(1 + p\right)}\), it can be shown \(\mathscr{A}_{\mathcal{H}} \asymp 1\), and so the minimax separation rate can be achieved by our adaptive test. In other words, no cost is paid for adaptation. 

\subsubsection{Exponential decay}
Consider exponential decay of the eigenvalues \(\mu_k = c_1 e^{-c_2 k^\gamma}\) where \(\gamma > 0\). It can be shown that \(\mathscr{A}_{\mathcal{H}} \asymp \log(e|\widetilde{\mathscr{V}}_{\mathcal{H}}|) \asymp \log\log\log p\), and so an adaptive rate is
\begin{equation*}
    \varepsilon_{\text{adapt}}(p, s, n)^2 \asymp
    \begin{cases}
        \frac{s \log\left(\frac{p\log\log\log p}{s^2}\right)}{n} + s \log^{\frac{1}{2\gamma}}\left(\frac{n}{\sqrt{\log\left(\frac{p\log\log\log p}{s^2}\right)}}\right) \cdot \frac{\sqrt{\log\left(\frac{p\log\log\log p}{s^2}\right)}}{n} &\textit{if } s < \sqrt{p\log\log\log p}, \\
        \frac{\sqrt{p\log\log\log p}}{n}\log^{\frac{1}{2\gamma}}\left(\frac{ns}{\sqrt{p\log\log\log p}}\right) &\textit{if } s \geq \sqrt{p\log\log\log p}. 
    \end{cases}
\end{equation*}
The cost for adaptation here grows very slowly, and is perhaps another indication of the relative ``small'' size of RKHSs based on Gaussian kernels (as noted in Section \ref{section:special_cases}).

\section{Adaptation to both sparsity and smoothness}

So far we have assumed that the space \(\mathcal{H}\) is known. However, it is likely that \(\mathcal{H}\) is one constituent out of a collection of spaces indexed by some hyperparameter, such as a smoothness level. Typically, the true value of this hyperparameter is unknown in addition to the sparsity being unknown, and it is of interest to understand how the testing rate changes. To avoid excessive abstractness, we follow \cite{ingster_multichannel_2003,gayraud_detection_2012} and adopt a working model where \(\mathscr{T}_s\) has \(\mu_k\) of the form \(\mu_k = k^{-2\alpha}\) emblematic of Sobolev space with smoothness \(\alpha\). To emphasize the dependence on \(\alpha\), let us write \(\mathscr{T}(s, \alpha)\) and its corresponding space \(\mathcal{F}(s, \alpha)\). Reiterating, we are interested in adapting to both the sparsity level \(s\) and the smoothness level \(\alpha\).

Ingster and Lepski \cite{ingster_multichannel_2003} study adaptation to both sparsity and smoothness. In particular, they study adaptation for an unknown \(\alpha \in [\alpha_0, \alpha_1]\) in a closed interval where the endpoints \(\alpha_0 < \alpha_1\) are known. As argued in \cite{ingster_multichannel_2003}, since \(\alpha_1\) is known, the sequence space basis in Section \ref{section:parameter} for any \(\alpha \in [\alpha_0, \alpha_1]\) can be taken to be \(\alpha_1\)-regular, and thus is known to the statistician. Furthermore, they separately study the dense regime \(s \geq p^{1/2+\delta}\) and the sparse regime \(s < p^{1/2-\delta}\) for some constant \(\delta \in (0, 1/2)\). We adopt the same setup as them.

Ingster and Lepski \cite{ingster_multichannel_2003} claim the following adaptive rate. In the dense case \(s \geq p^{1/2 + \delta}\), they make the assumption \(\frac{p}{s^2} \log\left(\frac{n^2s^2/p}{\log(n^2s^2/p)} \right) \to 0\) and state (Theorem 4 in \cite{ingster_multichannel_2003})
\begin{equation*}
    s \left(\frac{ns}{\sqrt{p\log\left(\frac{ns}{\sqrt{p}}\right)}}\right)^{-\frac{4\alpha}{4\alpha+1}}. 
\end{equation*}

\noindent In contrast\footnote{The same error affects the proofs of the lower bounds in Theorems 4 and 5 in \cite{ingster_multichannel_2003}. The prior they define is not supported on the parameter space. Under their prior (see (75) in \cite{ingster_multichannel_2003}), the various coordinate functions \(f_j\) end up having different smoothness levels \(\alpha_j\) instead of sharing a single \(\alpha\).}, we will show the adaptive rate in the dense case \(s \geq p^{1/2 + \delta}\) is
\begin{equation*}
    \varepsilon_{\text{dense}}^2(p, s, n, \alpha) \asymp s \left(\frac{ns}{\sqrt{p\log\log(np)}}\right)^{-\frac{4\alpha}{4\alpha+1}}. 
\end{equation*}
The careful reader will note that in the special case \(p = 1\), our answer recovers the adaptive separation rate \(\left(n/\sqrt{\log\log n}\right)^{-\frac{4\alpha}{4\alpha+1}}\) proved by Spokoiny \cite{spokoiny_adaptive_1996}. In the sparse case \(s < p^{1/2 - \delta}\), their claimed rate (Theorem 5 in \cite{ingster_multichannel_2003}) is
\begin{equation}\label{rate:IL_sparse}
    \frac{s \log p}{n} + s \left(\frac{n}{\sqrt{\log(p\log n)}}\right)^{-\frac{4\alpha}{4\alpha+1}}.
\end{equation}
As a partial correction, we prove the following lower bound
\begin{equation}\label{rate:revised_lowerbound}
    \frac{s \log(p \log\log n)}{n} + s \left(\frac{n}{\sqrt{\log(p \log\log n)}}\right)^{-\frac{4\alpha}{4\alpha+1}}. 
\end{equation}
In the regime \(\log p \gtrsim \log\log n\), (\ref{rate:IL_sparse}) and (\ref{rate:revised_lowerbound}) match each other and match the minimax rate (see Section \ref{section:special_cases_sobolev}). However, in the case \(\log p \lesssim \log\log n\), there may be a difference between (\ref{rate:IL_sparse}) and (\ref{rate:revised_lowerbound}). 

\subsection{Dense}
Fix \(\delta \in (0, 1/2)\) and \(\alpha_0 < \alpha_1\). Recall in the dense regime that \(s \geq p^{1/2+\delta}\). Define 
\begin{equation}\label{rate:sobolev_adapt_dense}
    \tau_{\text{dense}}^2(p, s, n, \alpha) := s \left(\frac{ns}{\sqrt{p \log\log(np)}}\right)^{-\frac{4\alpha}{4\alpha+1}}.
\end{equation}
As mentioned earlier, we will be establishing this rate in the dense regime. For use in a testing procedure, define the geometric grid 
\begin{equation}\label{def:V_dense}
    \mathcal{V}_{\text{test}} := \left\{1, 2, 4, 8, ... 2^{\left\lceil \log_2 \left(\left(\frac{np}{\sqrt{p\log\log(np)}}\right)^{\frac{2}{4\alpha_0+1}}\right)\right\rceil} \right\}.
\end{equation}
Note the statistician does not need to know \(s\) nor \(\alpha\) to construct \(\mathcal{V}_{\text{test}}\). Further note \(\log|\mathcal{V}_{\text{test}}| \asymp \log\log(np)\). 

\subsubsection{Upper bound}
    The test procedure employed follows the typical strategy in adaptive testing, namely constructing individual tests for potential values of \(\nu\) in the grid \(\mathcal{V}_{\text{test}}\), and then taking maximum over the individual tests to perform signal detection. The cost of adaptation involves the logarithm of the cardinality of \(\mathcal{V}_{\text{test}}\) (i.e. \(\log\log(np)\)). Since the dense regime is in force, the individual tests are \(\chi^2\) tests. 

    \begin{theorem}\label{thm:sobolev_adapt_ubound_dense}
        Fix \(\delta \in (0, 1/2)\) and \(\alpha_0 < \alpha_1\). If \(\eta \in (0, 1)\), then there exists \(C_\eta > 0\) depending only on \(\eta\) such that for all \(C > C_\eta\), we have 
        \begin{equation*}
            P_0\left\{\max_{\nu \in \mathcal{V}_{\text{test}}} \varphi_\nu = 1\right\} + \max_{s \geq p^{1/2+\delta}} \sup_{\alpha \in [\alpha_0, \alpha_1]} \sup_{\substack{f \in \mathcal{F}(s, \alpha), \\ ||f||_2 \geq C \tau_{\text{dense}}(p, s, n, \alpha)}} P_f\left\{ \max_{\nu \in \mathcal{V}_{\text{test}}} \varphi_\nu = 0 \right\} \leq \eta
        \end{equation*}
        where 
        \begin{equation*}
            \varphi_\nu = \mathbbm{1}_{\left\{n \sum_{j=1}^{p} \sum_{k \leq \nu} X_{k,j}^2 \geq \nu p + K_\eta \left(\sqrt{\nu p \log\log(np)} + \log\log(np)\right)\right\}}.
        \end{equation*}
        with \(K_\eta\) being a constant depending only on \(\eta\). Here \(\tau_{\text{dense}}\) is given by (\ref{rate:sobolev_adapt_dense}) and \(\mathcal{V}_{\text{test}}\) is given by (\ref{def:V_dense}).
    \end{theorem}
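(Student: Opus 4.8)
The plan is to bound the Type I and Type II errors separately, taking the threshold constant $K_\eta$ and then the separation constant $C_\eta$ large in terms of $\eta$ (and the fixed $\delta,\alpha_0,\alpha_1$). For the Type I error, note that under $H_0$ we have $n\sum_{j=1}^p\sum_{k\le\nu}X_{k,j}^2\sim\chi^2_{\nu p}$ for each $\nu$, so a union bound over $\nu\in\mathcal{V}_{\text{test}}$ combined with the Laurent--Massart bound (\ref{eqn:laurent_massart_tail}) — applied with $u\asymp K_\eta\log\log(np)$, chosen so that $2\sqrt{\nu p\,u}+2u\le K_\eta(\sqrt{\nu p\log\log(np)}+\log\log(np))$ for every $\nu$ — gives $P_0\{\max_\nu\varphi_\nu=1\}\le |\mathcal{V}_{\text{test}}|\,e^{-u}$. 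Since $\log|\mathcal{V}_{\text{test}}|\asymp\log\log(np)$ by (\ref{def:V_dense}), this is at most $(\log(np))^{1-cK_\eta}$ for a universal $c>0$, which is $\le\eta/2$ once $K_\eta$ is large (the regime of small $np$ being covered by the convention that $\log\log(np)$ is floored at a universal constant).

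For the Type II error, fix $s^*\ge p^{1/2+\delta}$, $\alpha\in[\alpha_0,\alpha_1]$, and $f\in\mathcal{F}(s^*,\alpha)$ with $\|f\|_2\ge C\tau_{\text{dense}}$. Because $\|f\|_2^2\le s^*$ (as $\mu_1=1$), we may assume $C^2\tau_{\text{dense}}^2\le s^*$, as otherwise the supremum is over the empty set; writing $m:=ns^*/\sqrt{p\log\log(np)}$ so that $\tau_{\text{dense}}^2=s^*m^{-4\alpha/(4\alpha+1)}$, this forces $m\ge 1$. I would then take the oracle truncation $\nu^\circ:=\lceil m^{2/(4\alpha+1)}\rceil$ and let $\nu\in\mathcal{V}_{\text{test}}$ be the smallest grid point with $\nu\ge m^{2/(4\alpha+1)}$, so $m^{2/(4\alpha+1)}\le\nu<2\,m^{2/(4\alpha+1)}$; such a point exists because $m^{2/(4\alpha+1)}\ge 1$ and, using $s^*\le p$ together with the monotonicity $2/(4\alpha+1)\le 2/(4\alpha_0+1)$, one has $m^{2/(4\alpha+1)}\le(np/\sqrt{p\log\log(np)})^{2/(4\alpha_0+1)}$, which is at most the largest element of $\mathcal{V}_{\text{test}}$. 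The key computation is that the Sobolev constraint $\sum_k k^{2\alpha}\theta_{k,j}^2\le 1$ yields the tail-energy bound $\sum_{j}\sum_{k>\nu}\theta_{k,j}^2\le s^*\nu^{-2\alpha}\le s^*m^{-4\alpha/(4\alpha+1)}=\tau_{\text{dense}}^2$, hence $\sum_j\sum_{k\le\nu}\theta_{k,j}^2\ge\|f\|_2^2-\tau_{\text{dense}}^2\ge(C^2-1)\tau_{\text{dense}}^2$; a direct computation from (\ref{rate:sobolev_adapt_dense}) gives $n\tau_{\text{dense}}^2=m^{1/(4\alpha+1)}\sqrt{p\log\log(np)}\asymp\sqrt{\nu p\log\log(np)}$, and in the nondegenerate regime one further checks $\nu p\ge\log\log(np)$ (so that $\sqrt{\nu p\log\log(np)}\gtrsim\log\log(np)$), the degenerate regime forcing $np$ — hence $\log\log(np)$ — bounded by a constant depending only on $\delta$. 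Consequently the noncentrality of $n\sum_{j}\sum_{k\le\nu}X_{k,j}^2\sim\chi^2_{\nu p}(\lambda_\nu)$ satisfies $\lambda_\nu=n\sum_j\sum_{k\le\nu}\theta_{k,j}^2\ge(C^2-1)\,c_0\big(\sqrt{\nu p\log\log(np)}+\log\log(np)\big)$ for a universal $c_0>0$.

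To finish the Type II bound, I would apply Chebyshev's inequality: with $t:=K_\eta(\sqrt{\nu p\log\log(np)}+\log\log(np))$, the noncentral $\chi^2_{\nu p}(\lambda_\nu)$ has mean $\nu p+\lambda_\nu$ and variance $2\nu p+4\lambda_\nu$, so $P_f\{\varphi_\nu=0\}=P\{\chi^2_{\nu p}(\lambda_\nu)<\nu p+t\}\le(2\nu p+4\lambda_\nu)/(\lambda_\nu-t)^2$, which is $\le\eta/2$ once $\lambda_\nu\ge 2t$ (giving $\lambda_\nu-t\ge\lambda_\nu/2$), $\lambda_\nu^2\ge 32\nu p/\eta$, and $\lambda_\nu\ge 64/\eta$; all three hold by the bound on $\lambda_\nu$ from the previous paragraph provided $C\ge C_\eta:=\sqrt{1+4K_\eta/c_0}$ and $K_\eta$ was also taken $\ge\max\{32/\eta,\sqrt{8/\eta}\}$. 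Since $\varphi_\nu\le\max_{\nu'\in\mathcal{V}_{\text{test}}}\varphi_{\nu'}$, this controls the Type II error uniformly over the stated class, and adding the two bounds completes the proof for all $C>C_\eta$.

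The main obstacle is that the single grid $\mathcal{V}_{\text{test}}$, which depends on neither $\alpha$ nor $s^*$, must contain a near-optimal truncation level simultaneously for every $\alpha\in[\alpha_0,\alpha_1]$ and every $s^*\ge p^{1/2+\delta}$; the crucial structural fact making this possible is that the oracle level $\nu^\circ\asymp(ns^*/\sqrt{p\log\log(np)})^{2/(4\alpha+1)}$ is increasing in $s^*$ and decreasing in $\alpha$, so the endpoint choices $s^*=p$ and $\alpha=\alpha_0$ determine the required range, while the geometric spacing furnishes a point within a factor $2$ of every interior target. The remaining care is bookkeeping around the degenerate cases — the empty supremum when $\tau_{\text{dense}}^2>s^*$, and the flooring of $\log\log(np)$ when $np$ is small — and verifying the inequality $\nu p\ge\log\log(np)$ in the nondegenerate regime, which follows because $\log\log(np)$ large forces $np$ astronomically large and hence $\nu p\ge(np^\delta/\sqrt{\log\log(np)})^{2/(4\alpha+1)}p$ to dominate $\log\log(np)$.
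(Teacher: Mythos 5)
Your proof is correct and follows essentially the same strategy as the paper's: a union bound plus Laurent--Massart for the Type I error, and selection of a near-optimal grid point followed by Chebyshev for the Type II error. One point where you are actually more careful than the paper: the paper's Type II computation writes the alternative distribution as $\chi^2_{\nu p}(n\|f^*\|_2^2)$, implicitly treating the full squared norm as the noncentrality, whereas the true noncentrality is $n\sum_j\sum_{k\le\nu}\theta_{k,j}^2$; your explicit tail-energy estimate $\sum_j\sum_{k>\nu}\theta_{k,j}^2\le s^*\nu^{-2\alpha}\le\tau_{\text{dense}}^2$ (giving noncentrality $\ge (C^2-1)n\tau_{\text{dense}}^2$) is precisely the missing step needed to make that line rigorous. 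Your bookkeeping around degeneracies (the empty supremum when $C^2\tau_{\text{dense}}^2>s^*$ and the flooring of $\log\log(np)$) and the verification that $\nu p\gtrsim\log\log(np)$ mirror the paper's terser observation that $\nu$ grows polynomially in $n$, and both arguments are sound.
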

\subsubsection{Lower bound}
The following theorem states the lower bound. Note that it satisfies Definition \ref{def:adapt_lower_bound} with the straightforward modification to incorporate adaptation over \(\alpha\). In particular, the potential difficulty with absurdities outlined in Section \ref{section:adapt_lower_bound} do not arise since the candidate rate
\begin{equation*}
    \tau^2_{\text{dense}}(p, s, n, \alpha) = s \left(\frac{ns}{\sqrt{p \log\log(np)}}\right)^{-\frac{4\alpha}{4\alpha+1}}
\end{equation*}
is never of the same order as the minimax rate \(\varepsilon^*(p,s,n,\alpha)^2 = s \left(\frac{ns}{\sqrt{p}}\right)^{-\frac{4\alpha}{4\alpha+1}}\).

\begin{theorem}\label{thm:sobolev_adapt_lbound_dense}
    Fix \(\delta \in (0, 1/2)\) and \(\alpha_0 < \alpha_1\). If \(\eta \in (0, 1)\), then there exists \(c_\eta > 0\) depending only on \(\eta\) such that for all \(0 < c < c_\eta\), we have 
    \begin{equation*}
        \inf_{\varphi}\left\{ P_0\left\{\varphi \neq 0\right\} + \max_{s \geq p^{1/2+\delta}} \sup_{\alpha \in [\alpha_0, \alpha_1]} \sup_{\substack{f \in \mathcal{F}(s, \alpha), \\ ||f||_2 \geq c\tau_{\text{dense}}(p, s, n, \alpha)}} P_f\left\{\varphi \neq 1\right\} \right\} \geq 1-\eta
    \end{equation*}
    where \(\tau_{\text{dense}}\) is given by (\ref{rate:sobolev_adapt_dense}). 
\end{theorem}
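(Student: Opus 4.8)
The plan is to apply Le Cam's method of fuzzy hypotheses after passing to sequence space, where the null is $\Theta\equiv 0$ and $||f||_2=||\Theta||_F$. Since the Type~II risk is maximized over $s\geq p^{1/2+\delta}$, it suffices to exhibit hardness for the single value $s=p$: we will build a prior $\pi$ supported on $\bigcup_{\alpha\in[\alpha_0,\alpha_1]}\{\Theta\in\mathscr{T}(p,\alpha):||\Theta||_F\geq c\,\tau_{\text{dense}}(p,p,n,\alpha)\}$ for a small constant $c$, and show that the induced marginal $\overline{P}_\pi$ has $\chi^2(\overline{P}_\pi\,||\,P_0)\to 0$ as $c\to 0$, uniformly in $p,n$ (the iterated-logarithm conventions of Section~\ref{section:notation} absorb small $np$). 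Then $\inf_\varphi\{P_0\{\varphi\neq 0\}+\overline{P}_\pi\{\varphi\neq 1\}\}\geq 1-d_{TV}(P_0,\overline{P}_\pi)\geq 1-\tfrac12\sqrt{\chi^2(\overline{P}_\pi\,||\,P_0)}$, and choosing $c<c_\eta$ makes this $\geq 1-\eta$; bounding the displayed maximum over $s\geq p^{1/2+\delta}$ from below by the $s=p$ term then finishes.

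\emph{The prior.} Partition the frequencies into dyadic blocks $B_\ell=\{2^{\ell-1}+1,\dots,2^\ell\}$. Let $\mathcal{J}$ be the set of scales $\ell$ for which $2^\ell$ lies, up to a bounded factor, between $(n\sqrt p/\sqrt{\log\log(np)})^{2/(4\alpha_1+1)}$ and $(n\sqrt p/\sqrt{\log\log(np)})^{2/(4\alpha_0+1)}$, slightly overshooting the endpoints; a short computation gives $M:=|\mathcal{J}|\asymp\log(n\sqrt p)\asymp\log(np)$, hence $\log M\asymp\log\log(np)$. To each $\ell\in\mathcal{J}$ assign the smoothness $\alpha_\ell$ for which $B_\ell$ is the correct block, i.e.\ $2^{(4\alpha_\ell+1)\ell}\asymp(n\sqrt p/\sqrt{\log M})^{2}$, and put $\rho_\ell^2=c^2\,2^{-(2\alpha_\ell+1)\ell}$, namely $c^2$ times the value making the $\alpha_\ell$-Sobolev constraint $\rho_\ell^2\sum_{k\in B_\ell}k^{2\alpha_\ell}\asymp\rho_\ell^2\,2^{(2\alpha_\ell+1)\ell}\leq 1$ tight on each coordinate. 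A draw $\Theta\sim\pi$: choose $\ell\in\mathcal{J}$ uniformly and, on all $p$ coordinates $j$ and all $k\in B_\ell$, set $\theta_{k,j}=\rho_\ell\,\sigma_{k,j}$ with i.i.d.\ Rademacher $\sigma_{k,j}$ (and $0$ otherwise). Then $\Theta\in\mathscr{T}(p,\alpha_\ell)$ and $||\Theta||_F^2=p\,|B_\ell|\,\rho_\ell^2\asymp c^2 p\,2^{-2\alpha_\ell\ell}\asymp c^2\,\tau_{\text{dense}}(p,p,n,\alpha_\ell)^2$, the last step using $\log M\asymp\log\log(np)$. Since for each $\alpha\in[\alpha_0,\alpha_1]$ some $\ell\in\mathcal{J}$ has $\alpha_\ell\geq\alpha$ with $\alpha_\ell-\alpha=O(1/\log(np))$, one gets $\mathscr{T}(p,\alpha_\ell)\subset\mathscr{T}(p,\alpha)$ and $\tau_{\text{dense}}(\cdot,\alpha_\ell)\asymp\tau_{\text{dense}}(\cdot,\alpha)$, so $\pi$ is supported on the required set after shrinking $c$ by a universal constant.

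\emph{The $\chi^2$ bound.} Let $L_\ell$ be the likelihood ratio of the $\ell$-th mixture component, so $\chi^2(\overline{P}_\pi\,||\,P_0)+1=M^{-2}\sum_{\ell,\ell'\in\mathcal{J}}E_0[L_\ell L_{\ell'}]$. For independent draws $\Theta,\Theta'$ from components $\ell\neq\ell'$ one has $E_0[L_\ell L_{\ell'}]=E\big[\exp(n\langle\Theta,\Theta'\rangle)\big]=1$, because $B_\ell\cap B_{\ell'}=\emptyset$ forces $\langle\Theta,\Theta'\rangle=0$; hence $\chi^2=M^{-2}\sum_{\ell\in\mathcal{J}}(E_0[L_\ell^2]-1)$. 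Averaging over the signs, $E_0[L_\ell^2]=\cosh(n\rho_\ell^2)^{p|B_\ell|}\leq\exp\big(\tfrac12 p|B_\ell|\,n^2\rho_\ell^4\big)$, and the tuning above gives $p|B_\ell|\,n^2\rho_\ell^4\asymp c^4 p\,n^2\,2^{\ell}\,2^{-2(2\alpha_\ell+1)\ell}=c^4 p\,n^2\,2^{-(4\alpha_\ell+1)\ell}\asymp c^4\log M$. Thus $E_0[L_\ell^2]\leq \exp(\widetilde{C}c^4\log M)=M^{\widetilde{C}c^4}$ for a universal $\widetilde{C}$, uniformly in $\ell$, so $\chi^2\lesssim(M^{\widetilde{C}c^4}-1)/M$, whose supremum over $M\geq 1$ tends to $0$ as $c\to 0$. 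Choosing $c<c_\eta$ makes $\chi^2(\overline{P}_\pi\,||\,P_0)\leq 4\eta^2$, which is exactly what was needed.

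\textbf{Main obstacle.} The crux is the simultaneous tuning: the block scale $2^\ell$ must at once make the $\alpha_\ell$-Sobolev norm constraint tight, raise $||\Theta||_F^2$ to $c^2\tau_{\text{dense}}^2$, and keep each diagonal contribution $E_0[L_\ell^2]$ of polynomial-in-$M$ size. Compatibility of these three demands pins the per-level budget $p|B_\ell|n^2\rho_\ell^4$ at order $\log M$, and the self-consistent count $M\asymp\log(np)$ of dyadic scales spanning $[\alpha_0,\alpha_1]$ converts that budget into precisely the $\log\log(np)$ inflation over the minimax rate carried by $\tau_{\text{dense}}$. The secondary difficulty is the uniformity over the continuum $\alpha\in[\alpha_0,\alpha_1]$: each $\alpha$ must be covered by a grid scale with $\alpha_\ell\geq\alpha$ and $\alpha_\ell-\alpha=O(1/\log(np))$, so that $\mathscr{T}(p,\alpha_\ell)\subset\mathscr{T}(p,\alpha)$ and $\tau_{\text{dense}}(\cdot,\alpha_\ell)\asymp\tau_{\text{dense}}(\cdot,\alpha)$ hold together, along with handling small $n,p$ through the iterated-logarithm conventions.
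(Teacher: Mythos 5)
Your proof is correct, and it takes a genuinely different route from the paper's. The paper's prior draws a random truncation level $\nu$ from a geometric grid, places signal on the \emph{nested} frequency intervals $\{1,\dots,\nu\}$ for $s$ randomly chosen coordinates, and therefore must handle non-vanishing cross-terms in the $\chi^2$ divergence between mixture components with different $\nu$; it does so by bounding $n^2\rho_\nu^2\rho_{\nu'}^2(\nu\wedge\nu')$ via the ratio $(\nu\wedge\nu')/\sqrt{\nu\nu'}$ and then invoking the argument of Proposition~4.2 in \cite{gao_estimation_2020} to sum this geometric decay over the grid. You instead put signal on \emph{disjoint} dyadic frequency blocks $B_\ell$, so that $\langle\Theta,\Theta'\rangle_F\equiv 0$ for $\ell\neq\ell'$ and the off-diagonal contribution to $\chi^2(\overline P_\pi\,\|\,P_0)$ vanishes exactly, reducing the whole bound to the diagonal sum $M^{-2}\sum_\ell\bigl(\cosh(n\rho_\ell^2)^{p|B_\ell|}-1\bigr)$. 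This is a cleaner computation and avoids the external reference. You also restrict to the single choice $s=p$, which is legitimate because the displayed $\max_{s\geq p^{1/2+\delta}}$ can be lower-bounded by any one term; the paper leaves $s$ general, which is not needed here. The per-scale tuning (making the $\alpha_\ell$-ellipsoid constraint, the norm target $c\tau_{\text{dense}}$, and the diagonal $\chi^2$ contribution $\asymp c^4\log M$ all compatible, with $M\asymp\log(np)$ so that $\log M\asymp\log\log(np)$) is the same self-consistency identified by both arguments. One minor nit: the remark that "for each $\alpha$ some $\ell\in\mathcal{J}$ has $\alpha_\ell\geq\alpha$" is unnecessary for a lower bound; each draw already lies in $\mathcal{F}(p,\alpha_\ell)$ for its own $\alpha_\ell\in[\alpha_0,\alpha_1]$, and that suffices since the statement takes a supremum over $\alpha$. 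Otherwise the argument is complete modulo the usual constant-shrinking of $c$.
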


\subsection{Sparse}\label{section:both_adapt_sparse}
Fix \(\delta \in (0, 1/2)\) and \(\alpha_0 < \alpha_1\). Recall that in the sparse regime we have \(s < p^{1/2-\delta}\). Define 
\begin{equation}\label{rate:sobolev_adapt_sparse}
    \tau_{\text{sparse}}^2(p, s, n, \alpha) := \frac{s\log\left(p \log\log n\right)}{n} + s \left(\frac{n}{\sqrt{\log(p\log\log n)}}\right)^{-\frac{4\alpha}{4\alpha+1}}.
\end{equation}

Ingster and Lepski \cite{ingster_multichannel_2003} give a testing procedure achieving (\ref{rate:IL_sparse}) and thus establish the upper bound. As noted earlier, when \(\log p \gtrsim \log\log n\), not only do (\ref{rate:sobolev_adapt_sparse}) and (\ref{rate:IL_sparse}) match each other but they also match the minimax rate (see Section \ref{section:special_cases_sobolev}). In the regime \(\log p \lesssim \log\log n\), it is not clear from a lower bound perspective whether a cost for adaptation is unavoidable. We complement their upper bound by providing a lower bound which demonstrates that it is indeed necessary to pay a cost for adaptation. However, the cost we identify may not be sharp. To elaborate, in the case where \(p\) is a large universal constant but much smaller than \(n\), the sparse regime rate (\ref{rate:sobolev_adapt_sparse}) involves \(\sqrt{\log\log\log n}\). From Spokoiny's article \cite{spokoiny_adaptive_1996}, \(\sqrt{\log\log n}\) is instead expected. We leave it for future work to pin down the sharp cost.  

\begin{theorem}\label{thm:sobolev_adapt_lbound_sparse}
    Fix \(\delta \in (0, 1/2)\) and \(\alpha_0 < \alpha_1\). If \(\eta \in (0, 1)\), then there exists \(c_\eta > 0\) depending only on \(\eta\) such that for all \(0 < c < c_\eta\), we have 
    \begin{align*}
        \inf_{\varphi}\left\{ P_0\left\{\varphi \neq 0\right\} + \max_{s < p^{1/2-\delta}} \sup_{\alpha \in [\alpha_0, \alpha_1]} \sup_{\substack{f \in \mathcal{F}(s, \alpha), \\ ||f||_2 \geq c\tau_{\text{sparse}}(p, s, n, \alpha)}} P_f\left\{\varphi \neq 1\right\} \right\} \geq 1-\eta
    \end{align*}
    where \(\tau_{\text{sparse}}\) is given by (\ref{rate:sobolev_adapt_sparse}). 
\end{theorem}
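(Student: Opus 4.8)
The plan is to use Le Cam's method (fuzzy hypotheses) and split the argument according to the size of $p$ relative to $n$. Throughout, recall that it suffices to produce, for a suitable admissible pair $(s,\alpha)$, a prior $\pi$ supported on $\{f\in\mathcal{F}(s,\alpha):\|f\|_2\geq c\,\tau_{\text{sparse}}\}$ with $\chi^2(P_\pi\,\|\,P_0)\leq 4\eta^2$ once $c$ is small enough, since then $\inf_\varphi\{P_0\{\varphi\neq0\}+E_\pi P_f\{\varphi\neq1\}\}\geq 1-d_{TV}(P_\pi,P_0)\geq 1-\eta$, and the $\max$ over $s$ and $\sup$ over $\alpha$ in the statement only help. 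Write $Q:=\log(p\log\log n)$; in the sparse regime $s<p^{1/2-\delta}$ one has $\log(1+p/s^2)\asymp\log p$ and $Q=\log p+\log\log\log n$.

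First consider the regime $\log p\gtrsim\log\log\log n$. Here $Q\asymp\log p\asymp\log(1+p/s^2)$, so $\tau_{\text{sparse}}(p,s,n,\alpha)\asymp\varepsilon^*(p,s,n,\alpha)$, the non-adaptive minimax rate of Section~\ref{section:special_cases_sobolev}. The desired bound is then immediate from Theorem~\ref{thm:lbound_nontrivial} (Sobolev specialization) applied with, say, $\alpha=\alpha_0$ and any fixed admissible $s$; the hypothesis $\log(1+p/s^2)\leq n/2$ needed there is the natural nontriviality assumption. No randomization over $\alpha$ is needed in this regime.

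The substance is the complementary regime $\log p\lesssim\log\log\log n$, where $Q\asymp\log\log\log n$ and a short computation shows the second term of $\tau_{\text{sparse}}^2$ dominates, so it is enough to lower bound the rate by a constant multiple of $s^{1/2}\big(n/\sqrt{Q}\big)^{-2\alpha/(4\alpha+1)}$. Here the prior must genuinely randomize over smoothness. Fix an admissible $s$, choose a grid $\alpha_0\leq\beta_1<\dots<\beta_G\leq\alpha_1$ of cardinality $G\asymp\log\log n$ together with resolution levels $m_\ell$ and per-coefficient magnitudes $\rho_\ell$ calibrated so that, with $\rho_\ell$ saturating the Sobolev constraint $\sum_{k\leq m_\ell}k^{2\beta_\ell}\rho_\ell^2\asymp1$, a signal supported on $s$ coordinates and frequencies $k\leq m_\ell$ has $\|f\|_2^2\asymp\tau_{\text{sparse}}(p,s,n,\beta_\ell)^2$. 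The prior $\pi$ draws $\ell$ uniformly from $[G]$, then a size-$s$ support $S\subset[p]$ uniformly, then i.i.d. Rademacher signs on $\{\theta_{k,j}\}_{j\in S,\,k\leq m_\ell}$ scaled by $\rho_\ell$; conditionally on $\ell$ it lives in $\mathcal{F}(s,\beta_\ell)$ with the required norm, and $\sup_\alpha\sup_f P_f\{\varphi\neq1\}\geq G^{-1}\sum_\ell E_{\pi_\ell}P_f\{\varphi\neq1\}=E_\pi P_f\{\varphi\neq1\}$, reducing matters to the $\chi^2$ bound.

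The main obstacle is exactly this $\chi^2$ computation. By independence of the Gaussian coordinates, $1+\chi^2(P_\pi\,\|\,P_0)=E\exp(n^2\langle\Theta,\Theta'\rangle)$ for two independent draws $\Theta,\Theta'$ from the coefficient prior, and one must control both the combinatorics of the support overlap $|S\cap S'|$ — this is the Collier et al.~\cite{collier_minimax_2017} mechanism and is what produces the factor $\log(1+pG/s^2)\asymp Q$ inside the rate — and, crucially, the cross-resolution inner products arising when $\ell\neq\ell'$. Taking $G\asymp\log\log n$ keeps the levels $m_\ell$ geometrically separated enough that these off-diagonal terms are dominated by the diagonal $\ell=\ell'$ contribution, which the calibration of $\rho_\ell$ is designed to make negligible. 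This is precisely the point at which the bound is not sharp: the matching \cite{ingster_multichannel_2003}-type upper bound scans $\asymp\log n$ bandwidths, and reproducing that cost on the lower-bound side would require $G\asymp\log n$, for which adjacent smoothness levels yield signals too strongly correlated for this $\chi^2$ argument to close. Hence one obtains $\log(p\log\log n)$ rather than $\log(p\log n)$ and only a partial correction. Assembling the two regimes (and noting that in each one the construction already furnishes signals of norm $\gtrsim\tau_{\text{sparse}}$, so no further $T_1$-versus-$T_2$ bookkeeping is required) completes the proof.
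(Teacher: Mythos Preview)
Your case split is reasonable (the paper instead splits by the two \emph{terms} of $\tau_{\text{sparse}}$, handling $s\log p/n$ via the minimax bound and the nonparametric term via a randomized prior for \emph{all} $p$ at once), and your first regime is fine. The second regime, however, has a real gap: the grid cardinality $G\asymp\log\log n$ is too small, and your stated mechanism for how $G$ enters is not the actual one.

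With your (correct) calibration $m_\ell=(n/\sqrt{Q})^{2/(4\beta_\ell+1)}$ and $\rho_\ell^2\asymp m_\ell^{-(2\beta_\ell+1)}$, one has $n^2\rho_\ell^2\rho_{\ell'}^2\sqrt{m_\ell m_{\ell'}}=Q$. Since $s<p^{1/2-\delta}$ gives $Q\asymp\log\bigl(1+\frac{p\log\log n}{s^2}\bigr)$, the Rademacher plus hypergeometric steps yield, conditionally on $\ell,\ell'$, the bound $\exp\bigl(\kappa c^4\,(\log\log n)\cdot\frac{m_\ell\wedge m_{\ell'}}{\sqrt{m_\ell m_{\ell'}}}\bigr)$. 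The factor $\log\log n$ here comes from the $\log\log n$ already baked into $Q$, not from any ``$\log(1+pG/s^2)$'' combinatorics. Now average over $\ell,\ell'$ uniform on $[G]$: the diagonal $\ell=\ell'$ alone contributes $G^{-1}\exp(\kappa c^4\log\log n)\asymp(\log n)^{\kappa c^4}/\log\log n\to\infty$ for every fixed $c>0$. So with $G\asymp\log\log n$ the $\chi^2$ bound cannot close, and your claim that the diagonal is ``designed to be negligible'' fails.

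The paper takes the dyadic grid $\mathcal{V}_{\text{sparse}}$ of cardinality $\asymp\log n$, precisely what you assert would not close. It does: the diagonal term is then $(\log n)^{\kappa c^4-1}\to0$ for small $c$, and the off-diagonal is controlled by the geometric decay $\frac{m_\ell\wedge m_{\ell'}}{\sqrt{m_\ell m_{\ell'}}}=2^{-|k_\ell-k_{\ell'}|/2}$ over $\asymp\log n$ levels (the paper defers this step to Proposition~4.2 of \cite{gao_estimation_2020}, exactly as in Theorem~\ref{thm:sobolev_adapt_lbound_dense}). You have conflated the grid cardinality with its logarithm: the adaptation cost extracted is $\log|\mathcal{V}|\asymp\log\log n$, not $|\mathcal{V}|$. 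The genuine obstruction to reaching $\log(p\log n)$ is not that a $\log n$-point grid fails, but that pushing $\log|\mathcal{V}|$ up to $\log n$ would need $|\mathcal{V}|$ polynomial in $n$, and \emph{then} adjacent levels are too correlated.

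A minor slip: the Ingster--Suslina identity is $1+\chi^2(P_\pi\|P_0)=E\exp\bigl(n\langle\Theta,\Theta'\rangle\bigr)$, not $n^2$; the square appears only after $\cosh(x)\le e^{x^2/2}$.
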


As discussed with respect to Theorem \ref{thm:sobolev_adapt_lbound_dense}, the lower bound in Theorem \ref{thm:sobolev_adapt_lbound_sparse} satisfies Definition \ref{def:adapt_lower_bound} with the straightforward modification to incorporate adaptation over \(\alpha\). 

\section{Discussion}

\subsection{Relaxing the centered assumption}\label{section:uncentered}

In Section \ref{section:parameter}, in the definition of the parameter space (\ref{param:Fs}) the signal \(f\) was constrained to be centered. It was claimed this constraint is mild and can be relaxed; this section elaborates on this point. 

Suppose \(f\) is an additive function (possibly uncentered) with each \(f_j \in \mathcal{H}\). Let \(\bar{f} = \int_{[0,1]^p} f(x) \, dx\) and note we can write \(f = \bar{f}\mathbf{1}_p + (f - \bar{f}\mathbf{1}_p)\) where \(\mathbf{1}_p \in L^2([0,1]^p)\) is the constant function equal to one. Since \(f\) is an additive function, for any \(x \in [0, 1]^p\) we have \(f(x) = \bar{f} + g(x)\) where \(g(x) = \sum_{j=1}^{p} g_j(x_j)\) with \(g_j(x_j) = f_j(x_j) - \int_{0}^{1} f_j(t) \, dt\). In other words, we have \(f = \bar{f}\mathbf{1}_p + g\) where \(g\) itself is an additive function with each \(g_j \in \mathcal{H}_0\).

Asserting \(f\) is a sparse additive function implies \(g\) is a (centered) sparse additive function. Following \cite{raskutti_minimax-optimal_2012,gayraud_detection_2012}, we impose constraints (i.e. RKHS norm constraints) on the centered component functions \(g_j\). Therefore, the following uncentered signal detection problem can be considered,
\begin{align}
    H_0 &: f \equiv 0, \label{problem:uncentered_1}\\
    H_1 &: ||f||_2 \geq \varepsilon \text{, } f \text{ is } s\text{-sparse additive, and } g \in \mathcal{F}_s \label{problem:uncentered_2}
\end{align}
where \(\mathcal{F}_s\) is given by (\ref{param:Fs}). Let \(\varepsilon^*\) denote the minimax separation rate. Consider \(||f||_2^2 = \bar{f}^2 + ||g||_2^2\) by orthogonality. Thus, there are two related detection problems, Problem I
\begin{align*}
    H_0 &: f \equiv 0, \\
    H_1 &: |\bar{f}| \geq \varepsilon_1 \text{ and } f \text{ is } s\text{-sparse additive}
\end{align*}
and Problem II
\begin{align*} 
    H_0 &: f \equiv 0, \\
    H_1 &: ||g||_2 \geq \varepsilon_2 \text{ and } g \in \mathcal{F}_s. 
\end{align*}
Let \(\varepsilon_1^*\) and \(\varepsilon_2^*\) denote the minimax separation rates of the respective problems. We claim \((\varepsilon^*)^2 \asymp (\varepsilon_1^*)^2 + (\varepsilon_2^*)^2\). From \(||f||^2 = \bar{f}^2 + ||g||_2^2 \geq \bar{f}^2 \vee ||g||_2^2\) it is directly seen that \((\varepsilon^*)^2 \gtrsim (\varepsilon_1^*)^2 \vee (\varepsilon_2^*)^2 \asymp (\varepsilon_1^*)^2 + (\varepsilon_2^*)^2\), and so the lower bound is proved. To show the upper bound, consider that if \(||f||_2^2 \geq C \left((\varepsilon_1^*)^2 + (\varepsilon_2^*)^2\right)\), then by triangle inequality it follows that either \(|\bar{f}|^2 \geq C(\varepsilon_1^*)^2\) or \(||g||_2^2 \geq C(\varepsilon_2^*)^2\). Thus, there is enough signal to detect and the test which takes maximum over the subproblem minimax tests is optimal. Hence, the upper bound \((\varepsilon^*)^2 \lesssim (\varepsilon_1^*)^2 + (\varepsilon_2^*)^2\) is now also proved. 

All that remains is to determine the minimax rates \(\varepsilon_1^*\) and \(\varepsilon_2^*\). Let us index starting from zero and take \(\{\psi_{k}\}_{k=0}^{\infty}\) and \(\{\mu_k\}_{k=0}^{\infty}\) to be the associated eigenfunctions and eigenvalues of the RKHS \(\mathcal{H}_0\). Recall \(\{\psi_{k}\}_{k=0}^{\infty}\) forms an orthonormal basis for \(L^2([0, 1])\) under the usual \(L^2\) inner product. Since by definition \(\mathcal{H}_0\) is orthogonal to the span of the constant function in \(L^2([0, 1])\), without loss of generality we can take \(\psi_0\) to be the constant function equal to one, \(\mu_0 = 0\), and \(\{\psi_k\}_{k=1}^{\infty}\) to be the remaining eigenfunctions orthogonal (in \(L^2\)) to constant functions.   

The data \(\{X_{k, j}\}_{k \in \mathbb{N}, j \in [p]}\) (defined in (\ref{data:X})) is sufficient for Problem II. Therefore, the minimax rate \(\varepsilon_2^*\) is exactly given by our result established in this paper. The minimax rate \(\varepsilon_1^*\) can be upper bounded in the following manner. Consider that \(\langle \mathbf{1}_p, dY_x\rangle_{L^2([0, 1]^p)} \sim N(\bar{f}, \frac{1}{n})\) since \(||\mathbf{1}_p||_2^2 = 1\). Therefore, the parametric rate \(\varepsilon_1^* \lesssim \frac{1}{\sqrt{n}}\) can be achieved. Hence, \((\varepsilon^*)^2 \asymp (\varepsilon_1^*)^2 + (\varepsilon_2^*)^2 \asymp (\varepsilon_2^*)^2\). In other words, centering is immaterial to the fundamental limits of the signal detection problem. 

\subsection{Sharp constant}
As we noted earlier, our results do not say anything about the sharp constant in the detection boundary. The problem of obtaining a sharp characterization of the constants in the detection boundary is interesting and likely delicate. In an asymptotic setup and in the Sobolev case, Gayraud and Ingster \cite{gayraud_detection_2012} were able to derive the sharp constant in the sparse regime \(s = p^{1-\beta}\) for fixed \(\beta \in (1/2, 1)\) under the condition \(\log p = o(n^{\frac{1}{2\alpha+1}})\). Gayraud and Ingster discuss that this condition is actually essential, in that the detection boundary no longer exhibits a dependence on \(\beta\) when the condition is violated. This condition has a nice formulation in our notation, namely it is equivalent to the condition 
\begin{equation*}
    \frac{\log(p/s^2)}{n} = o\left(\Gamma_{\mathcal{H}}\right).
\end{equation*}
This correspondence in the Sobolev case suggests this condition may actually be essential for a generic \(\mathcal{H}\). It would be interesting to understand if that is true, and to derive the sharp constant when it holds if it so.

To be clear, it may be the case (perhaps likely) our proposed procedure is suboptimal in terms of the constant. Indeed, the existing literature on sparse signal detection, both in sparse sequence model \cite{donoho_higher_2004,hall_innovated_2010} and sparse linear regression \cite{ingster_detection_2010,arias-castro_global_2011} rely on Higher Criticism type tests to achieve the optimal constant. Gayraud and Ingster \cite{gayraud_detection_2012} themselves use Higher Criticism. For a generic space \(\mathcal{H}\), our procedure should not be understood as the only test which is rate-optimal. In the sparsity regime \(s = p^{1/2-\delta}\), we suspect an analogous Higher Criticism type statistic which accounts for the eigenstructure of the kernel might not only achieve the optimal rate, but also the sharp constant.

\subsection{Future directions}
There are a number of avenues for future work. First, we only considered one space \(\mathcal{H}\) in which all component functions \(f_j\) live in. In some scenarios, it may be desirable to consider a different space \(\mathcal{H}_j\) for each component. Raskutti et al. \cite{raskutti_minimax-optimal_2012} obtained the minimax estimation rate when considering multiple kernels (under some conditions on the kernels). We imagine our broad approach in this work could be extended to determine the minimax separation rate allowing multiple kernels. Instead of a common \(\nu_{\mathcal{H}}\), it is likely different quantities \(\nu_{\mathcal{H}_j}\) will be needed per coordinate and the test statistics could be modified in the natural way. The theory developed here could be used directly, and it seems plausible the minimax separation rate could be established in a straightforward manner.

Another avenue of research involves considering ``soft" sparsity in the form of \(\ell_q\) constraints for \(0 < q < 1\). Yuan and Zhou \cite{yuan_minimax_2016} developed minimax estimation rates for RKHSs exhibiting polynomial decay of its eigenvalues (e.g. Sobolev space). In terms of signal detection, its plausible that the quadratic functional estimator under the Gaussian sequence model with \(\ell_q\) bound on the mean could be extended and used as a test statistic \cite{collier_minimax_2017}. The hard sparsity and soft sparsity settings studied in \cite{collier_minimax_2017} are handled quite similarly. It is possible not much additional theory needs to be developed in order to obtain minimax separation rates under soft sparsity. 

Since hypothesis testing is quite closely related to functional estimation in many problems, it is natural to ask about functional estimation in the context of sparse additive models. For example, it would be of interest to estimate the quadratic functional \(||f||_2^2\) or the norm \(||f||_2\). It is well known in the nonparametric literature that estimating \(L_r\) norms for odd \(r\) yields drastically different rates from testing and from even \(r\) \cite{lepski_estimation_1999,han_estimation_2020}. A compelling direction is to investigate the same problem in the sparse additive model setting. 

Additionally, it would be interesting to consider the nonparametric regression model \(Y_i = f(X_i) + Z_i\) where the design distribution \(X_i \overset{iid}{\sim} P_X\) exhibits some dependence between the coordinates. The correspondence between the white noise model (\ref{model:gwn}) and the nonparametric regression model we relied on requires that the design distribution be close to uniform on \([0, 1]^p\). However, in practical situations it is typically the case that the coordinates of \(X_i\) exhibit dependence, and it would be interesting to understand how the fundamental limits of testing are affected.

Finally, in a somewhat different direction than that discussed so far, it is of interest to study the signal detection problem under group sparsity in other models. As we had encouraged, our results can be interpreted exclusively in terms of sequence space, that is the problem (\ref{problem:seq_test_1})-(\ref{problem:seq_test_2}) with parameter space (\ref{param:Ts}). From this perspective, the group sparse structure is immediately apparent. Estimation has been extensively studied in group sparse settings, especially in linear regression (see \cite{wainwright_high-dimensional_2019} and references therein). Hypothesis testing has not witnessed nearly the same level of research activity, and so there is ample opportunity. We imagine some features of the rates established in this paper are general features of the group sparse structure, and it would be intriguing to discover the commonalities.

\section{Acknowledgments}
SK is grateful to Oleg Lepski for a kind correspondence. The research of SK is supported in part by NSF Grants DMS-1547396 and ECCS-2216912. The research of CG is supported in part by NSF Grant ECCS-2216912, NSF Career Award DMS-1847590, and an Alfred Sloan fellowship.

\section{Proofs}

\subsection{Minimax upper bounds}

\subsubsection{Sparse}
\begin{proof}[Proof of Proposition \ref{prop:test_sparse_bulk}]
    Fix \(\eta \in (0, 1)\). We will set \(C_\eta\) at the end of the proof, so for now let \(C > C_\eta\). The universal constant \(K_1\) will be selected in the course of the proof. Let \(L^*\) denote the universal constant from Lemma \ref{lemma:bulk_alpha}. Set \(K_2 := 1 \vee \frac{1}{(\log 2)^{1/4}} \vee c^{-1/4} \vee \left(\frac{L^{*}}{c}\right)^{1/4}\) and \(K_3 := \frac{L^{*}}{K_2^2}\) where \(c = c^* \wedge c^{**}\) where \(c^*\) and \(c^{**}\) are the universal constants in the exponential terms of Lemmas \ref{lemma:null_bulk} and \ref{lemma:bulk_thold_var} respectively. 

    We first bound the Type I error. Since \(\sqrt{\log\left(1 + \frac{p}{s^2}\right)} \leq K_3 \sqrt{d}\), we have \(1 \leq K_2^2 \sqrt{\log\left(1+\frac{p}{s^2}\right)} \leq K_2^2 K_3 \sqrt{d} \leq L^*\sqrt{d}\). Therefore, we can apply Lemma \ref{lemma:null_bulk} to obtain that 
    \begin{equation*}
        P_0\left\{T_r(d) > C^*\left(\sqrt{xpr^4 e^{-\frac{c^*r^4}{d}}} + \frac{d}{r^2} x\right)\right\} \leq e^{-x}
    \end{equation*}
    for any \(x > 0\). Here, \(C^{*}\) is a universal constant. Taking \(x = C\) and noting that \(C > 1\) provided we select \(C_\eta \geq 1\), we see that 
    \begin{align*}
        C^*\left(\sqrt{x p r^4 e^{-\frac{c^*r^4}{d}}} + \frac{d}{r^2} x\right) &\leq C^*C\left( K_2^2 \sqrt{p d \log\left(1 + \frac{p}{s^2}\right) e^{-c^* K_2^4 \log\left(1 + \frac{p}{s^2}\right)}} + \frac{d}{r^2}\right) \\
        &\leq C^* C \left(K_2^2 \sqrt{d \log\left(1 + \frac{p}{s^2}\right)} \sqrt{p \cdot \frac{s^2}{s^2 + p}} + \frac{d}{r^2}\right) \\
        &\leq C^* C \left(K_2^2 s \sqrt{d \log\left(1 + \frac{p}{s^2}\right)} + \frac{d}{r^2}\right) \\
        &\leq 2C^* K_2^2 C s \sqrt{d \log\left(1 + \frac{p}{s^2}\right)} \\
        &= C K_1 n\tau(p, s, n)^2
    \end{align*}
    where we have used that \(c^* K_2^4 \geq 1\) and where we have selected \(K_1 = 2C^* K_2^2\). Note we have also used that \(\frac{d}{r^2} \leq \sqrt{d} \leq K_2^2 s \sqrt{d\log\left(1 + \frac{p}{s^2}\right)}\). Thus, with these choices of \(K_1, K_2,\) and \(x\), we have 
    \begin{equation*}
        P_0\left\{T_r(d) > CK_1 n \tau(p, s, n)^2\right\} \leq e^{-x} = e^{-C} \leq e^{-C_\eta} \leq \frac{\eta}{2}
    \end{equation*}
    provided we select \(C_\eta \geq \log\left(\frac{2}{\eta}\right) \vee 1\). 

    We now examine the Type II error. To bound the Type II error, we will use Chebyshev's inequality. In particular, consider that for any \(f \in \mathcal{F}_s\) with \(||f||_2 \geq C\tau(p, s, n)\), we have 
    \begin{align}
        P_f\left\{T_r(d) \leq C K_1 n \tau(p, s, n)^2\right\} &= P_f\left\{E_f(T_r(d)) - CK_1 n \tau(p, s, n)^2 \leq E_f(T_r(d)) - T_r(d)\right\} \nonumber \\
        &\leq \frac{\Var_f\left(T_r(d)\right)}{\left(E_f(T_r(d)) - CK_1n\tau(p, s, n)^2\right)^2} \label{eqn:bulk_test_chebyshev}
    \end{align}
    provided that \(E_f(T_r(d)) \geq CK_1n\tau(p, s, n)^2\). To ensure this application of Chebyshev's inequality is valid, we must compute suitable bounds for the expectation and variance of \(T_r(d)\), which the following lemmas provide.

    \begin{lemma}\label{lemma:test_bulk_exp_lbound}
        If \(C_\eta\) is larger than some sufficiently large universal constant, then \(E_f(T_r(d)) \geq 2 CK_1 n \tau(p, s, n)^2\). 
    \end{lemma}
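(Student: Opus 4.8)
The plan is to combine the additive structure with the reproducing--kernel norm constraint defining $\mathscr{T}_s$. Fix $f \in \mathcal{F}_s$ with active set $S$, $|S| \le s$, and $\|f\|_2 \ge C\tau(p,s,n)$, and write $\lambda_j := n\sum_{k\le d}\theta_{k,j}^2$ for $j \in [p]$. Under $P_f$ the variables $E_j(d) = n\sum_{k\le d}X_{k,j}^2$ are independent, with $E_j(d) \sim \chi^2_d$ for $j \notin S$ and $E_j(d) \sim \chi^2_d(\lambda_j)$ for $j \in S$. Since $\alpha_r(d)$ in (\ref{def:alpha}) is precisely $E_0\!\left[E_j(d) \mid E_j(d) \ge d+r^2\right]$, each inactive coordinate contributes exactly zero to $E_f(T_r(d))$, so $E_f(T_r(d)) = \sum_{j\in S} g_r(\lambda_j)$ where $g_r(\lambda) := E\!\left[(W-\alpha_r(d))\mathbbm{1}_{\{W \ge d+r^2\}}\right]$ with $W \sim \chi^2_d(\lambda)$. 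Moreover every summand is nonnegative: noncentral $\chi^2$ is stochastically increasing in its noncentrality, so $E_j(d)$ under $P_f$ stochastically dominates $E_j(d)$ under $P_0$, whence $E[W \mid W \ge d+r^2] \ge \alpha_r(d)$. It therefore suffices to bound $\sum_{j\in S} g_r(\lambda_j)$ from below.

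First I would lower bound the aggregate ``head'' noncentrality $\sum_{j\in S}\lambda_j$. For each $j \in S$ the constraint $\sum_{k\ge1}\theta_{k,j}^2/\mu_k \le 1$ together with $d \ge \nu_{\mathcal{H}}$ and monotonicity of the eigenvalues gives $\sum_{k>d}\theta_{k,j}^2 \le \mu_{d+1}\sum_{k>d}\theta_{k,j}^2/\mu_k \le \mu_{d+1} \le \mu_{\nu_{\mathcal{H}}}$, and by the definition of $\nu_{\mathcal{H}}$, $\mu_{\nu_{\mathcal{H}}} \le n^{-1}\sqrt{\nu_{\mathcal{H}}\log(1+p/s^2)}$. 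Summing over the at most $s$ active coordinates, $\sum_{j\in S}\sum_{k>d}\theta_{k,j}^2 \le n^{-1}s\sqrt{\nu_{\mathcal{H}}\log(1+p/s^2)} = \tau(p,s,n)^2$. Since $\|f\|_2^2 = \sum_{j\in S}\sum_{k\ge1}\theta_{k,j}^2 \ge C^2\tau^2$, it follows that $\sum_{j\in S}\lambda_j = n\big(\|f\|_2^2 - \sum_{j\in S}\sum_{k>d}\theta_{k,j}^2\big) \ge n(C^2-1)\tau^2 \ge \tfrac{C^2}{2}n\tau^2 = \tfrac{C^2}{2}s\sqrt{\nu_{\mathcal{H}}\log(1+p/s^2)}$, provided $C_\eta \ge \sqrt{2}$.

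Next I would convert this into a bound on $\sum_{j\in S}g_r(\lambda_j)$ via a case split. Let $A_0$ be the universal constant from the per--coordinate estimate below and partition $S = S_{\mathrm{big}} \cup S_{\mathrm{small}}$ according to whether $\lambda_j \ge A_0 r^2$ or $\lambda_j < A_0 r^2$. On $S_{\mathrm{small}}$ one has $\sum_{j\in S_{\mathrm{small}}}\lambda_j < |S_{\mathrm{small}}|A_0 r^2 \le sA_0 K_2^2\sqrt{d\log(1+p/s^2)}$; since $d = \nu_{\mathcal{H}}\vee\lceil D\rceil \le \lceil D\rceil\,\nu_{\mathcal{H}}$ (using $\nu_{\mathcal{H}}\ge1$), this is at most $A_0 K_2^2\sqrt{\lceil D\rceil}\,s\sqrt{\nu_{\mathcal{H}}\log(1+p/s^2)} = A_0 K_2^2\sqrt{\lceil D\rceil}\,n\tau^2$, which is strictly less than $\tfrac14\sum_{j\in S}\lambda_j$ once $C^2 > 8A_0 K_2^2\sqrt{\lceil D\rceil}$, i.e. once $C_\eta$ exceeds a suitable universal constant. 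Hence $\sum_{j\in S_{\mathrm{big}}}\lambda_j \ge \tfrac34\sum_{j\in S}\lambda_j$. For $j \in S_{\mathrm{big}}$ I would invoke the per--coordinate bound $g_r(\lambda_j) \gtrsim \lambda_j$, valid in the ``bulk'' regime $1 \le r^2 \le L^*d$ in force here. Using nonnegativity of the remaining terms, $E_f(T_r(d)) \ge \sum_{j\in S_{\mathrm{big}}}g_r(\lambda_j) \gtrsim \sum_{j\in S_{\mathrm{big}}}\lambda_j \ge \tfrac34\cdot\tfrac{C^2}{2}n\tau^2$, which exceeds $2CK_1 n\tau^2$ once $C$ is larger than a universal constant; taking $C_\eta$ to be the maximum of this threshold, those above, and the one from the Type I bound finishes the proof.

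The main obstacle is the per--coordinate estimate $g_r(\lambda) \gtrsim \lambda$ for $\lambda \ge A_0 r^2$ in the bulk regime. Here one writes $g_r(\lambda) = \big(E[W\mid W\ge d+r^2] - \alpha_r(d)\big)P(W\ge d+r^2)$ with $W\sim\chi^2_d(\lambda)$, and the delicate point is that the centering constant $\alpha_r(d) = E_0[\chi^2_d\mid \chi^2_d\ge d+r^2]$ (which always exceeds $d+r^2$) must not swamp the noncentral gain. This is controlled by the bound on $\alpha_r(d)$ from Lemma~\ref{lemma:bulk_alpha}, which applies exactly because the hypothesis $\log(1+p/s^2) \le K_3^2 d$ places us in the bulk regime $1 \le r^2 \le L^*d$: for $\lambda$ above a sufficiently large multiple of $r^2$, the mean $d+\lambda$ of $W$ sits well above the threshold $d+r^2$, so $P(W\ge d+r^2)$ is bounded below while $E[W\mid W\ge d+r^2] - \alpha_r(d) \gtrsim \lambda$. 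Tracking the numerical constants through this step, and verifying compatibility with the already--fixed values of $K_1, K_2, K_3$, is the fiddly part of the argument.
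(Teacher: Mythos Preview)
Your proposal is correct and follows essentially the same approach as the paper: the same truncation-loss bound via the RKHS constraint, the same partition of active coordinates according to whether $\lambda_j$ exceeds a multiple of $r^2$, and the same per-coordinate estimate $g_r(\lambda_j)\ge \lambda_j/2$ for large $\lambda_j$ together with nonnegativity for small $\lambda_j$. The ``main obstacle'' you flag is precisely the paper's Lemma~\ref{lemma:bulk_thold_exp}, whose proof indeed reduces to the bound $\alpha_r(d)\le d+C^*r^2$ from Lemma~\ref{lemma:bulk_alpha}; your justification of nonnegativity via the MLR property of the noncentral $\chi^2$ family (hence monotonicity of $E[W\mid W\ge d+r^2]$ in the noncentrality) is in fact a cleaner version of the paper's one-line appeal to stochastic ordering.
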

    \begin{lemma}\label{lemma:test_bulk_var_ubound}
        If \(C_\eta\) is larger than some sufficiently large universal constant, then \(\Var_f(T_r(d)) \leq C^\dag\left(s^2r^4 + E_f(T_r(d))\right)\) where \(C^\dag > 0\) is a universal constant. 
    \end{lemma}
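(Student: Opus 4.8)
\textbf{Proof plan for Lemma \ref{lemma:test_bulk_var_ubound}.}

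The plan is to exploit the independence of the columns $\Theta_1,\dots,\Theta_p$. Writing $Y_j := \left(E_j(d)-\alpha_r(d)\right)\mathbbm{1}_{\{E_j(d)\ge d+r^2\}}$, each $Y_j$ depends only on the data block $\{X_{k,j}\}_{k\in\mathbb{N}}$, so $\Var_f(T_r(d))=\sum_{j=1}^p\Var_f(Y_j)$. Set $\lambda_j := n\sum_{k\le d}\theta_{k,j}^2$, so that under $P_f$ we have $E_j(d)\sim\chi^2_d(\lambda_j)$ with $\lambda_j=0$ exactly when $\Theta_j=0$; at most $s$ indices have $\lambda_j>0$. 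Throughout, the bulk hypothesis $\log(1+p/s^2)\le K_3^2 d$ together with the choices of $K_2,K_3$ places us in the regime $\sqrt d\lesssim r^2\lesssim d$, equivalently $d\le r^4/K_2^4$ (so $d\lesssim r^4$), $r^2\ge 1$, and $\alpha_r(d)\lesssim d+r^2\lesssim r^4$ by Lemma \ref{lemma:bulk_alpha}. For the null coordinates, $Y_j$ is a thresholded central $\chi^2_d$ which is \emph{centered} by the very definition (\ref{def:alpha}) of $\alpha_r(d)$, so $\Var_f(Y_j)=E_f(Y_j^2)$ is precisely the quantity bounded by Lemma \ref{lemma:bulk_thold_var}, giving $\Var_f(Y_j)\lesssim r^4 e^{-c^{**}r^4/d}$. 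Summing over the $\le p$ null coordinates and repeating verbatim the arithmetic already used for the Type I bound --- $r^4/d=K_2^4\log(1+p/s^2)$, $c^{**}K_2^4\ge 1$, and $p(1+p/s^2)^{-1}\le s^2$ --- the null contribution is $\lesssim s^2 r^4$.

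For the signal coordinates I would first prove the one-sided estimate $E_f(Y_j)\ge \lambda_j-C_0 r^4$ for a universal $C_0$. Indeed $E_f(Y_j)=\left(d+\lambda_j-\alpha_r(d)\right)-E_f\!\left((E_j(d)-\alpha_r(d))\mathbbm{1}_{\{E_j(d)<d+r^2\}}\right)$, and on $\{E_j(d)<d+r^2\le\alpha_r(d)\}$ the integrand is bounded in absolute value by $\alpha_r(d)$, so the subtracted term is at most $\alpha_r(d)P_f(E_j(d)<d+r^2)\le\alpha_r(d)\lesssim r^4$; since $d\ge 0$ this gives the claim. Summing over the $\le s$ signal coordinates and using that null coordinates contribute $0$ to $E_f(T_r(d))=\sum_j E_f(Y_j)$ and have $\lambda_j=0$, we obtain $\sum_j\lambda_j\le E_f(T_r(d))+C_0 s r^4$.

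Next I would establish $\Var_f(Y_j)\lesssim r^4+\lambda_j$ for each signal coordinate. Centering the noncentral $\chi^2$ at its mean, write $Y_j=\widetilde Y_j+c_j\,\mathbbm{1}_{\{E_j(d)\ge d+r^2\}}$ with $\widetilde Y_j=\left(E_j(d)-(d+\lambda_j)\right)\mathbbm{1}_{\{E_j(d)\ge d+r^2\}}$ and the deterministic scalar $c_j=d+\lambda_j-\alpha_r(d)$, whence $\Var_f(Y_j)\le 2\Var_f(\widetilde Y_j)+2c_j^2\Var_f(\mathbbm{1}_{\{E_j(d)\ge d+r^2\}})\le 2\Var_f(E_j(d))+2c_j^2\,P_f(E_j(d)<d+r^2)$, using $\Var_f(\widetilde Y_j)\le E_f\!\left((E_j(d)-(d+\lambda_j))^2\right)=\Var_f(E_j(d))=2d+4\lambda_j\lesssim r^4+\lambda_j$. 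The term $c_j^2 P_f(E_j(d)<d+r^2)$ is handled by cases on $\lambda_j$ via the sub-exponential lower-tail inequality for $\chi^2_d(\lambda_j)$, namely $P_f(E_j(d)<d+r^2)\le \exp\!\left(-c(\lambda_j-r^2)_+^2/(d+\lambda_j)\right)$: when $\lambda_j\lesssim d+r^2$ one has $c_j^2\lesssim (d+r^2)^2$ and $c_j^2 P_f(E_j(d)<d+r^2)\lesssim \lambda_j^2 e^{-c\lambda_j^2/d}\lesssim d\lesssim r^4$ (maximizing the scalar function $y\mapsto y e^{-cy/d}$), together with the trivial bound $c_j^2\cdot 1\lesssim r^4$ when also $\lambda_j\lesssim r^2$; while when $\lambda_j\gg d+r^2$ the lower-tail bound is $\le e^{-c'\lambda_j}$, which dominates the polynomial prefactor $c_j^2\lesssim\lambda_j^2$, again yielding $\lesssim r^4$. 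Hence $\Var_f(Y_j)\lesssim r^4+\lambda_j$; summing over signal coordinates and inserting $\sum_j\lambda_j\le E_f(T_r(d))+C_0 s r^4$ from the previous paragraph, the signal contribution is $\lesssim s r^4+\sum_j\lambda_j\lesssim s^2 r^4+E_f(T_r(d))$. Adding the null contribution $\lesssim s^2 r^4$ completes the proof; note the argument does not actually require $C_\eta$ large, so the stated hypothesis is harmless.

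The main obstacle is the uniform-in-$\lambda_j$ control of $c_j^2 P_f(E_j(d)<d+r^2)$ for the signal coordinates: the gap $c_j$ between the noncentral mean and $\alpha_r(d)$ enters quadratically and can be as large as $\asymp n$, so only the sharpness of the noncentral $\chi^2$ lower-tail concentration --- sharp uniformly across the whole bulk range $\sqrt d\lesssim r^2\lesssim d$ and for every admissible noncentrality $0\le\lambda_j\le n$ --- keeps this term at order $r^4$. Getting the case thresholds and exponents aligned so that the decaying exponential always beats the polynomial prefactor is the delicate point; everything else is bookkeeping with the already-established bounds on $\alpha_r(d)$ and the identities relating $r$, $d$, and $\log(1+p/s^2)$.
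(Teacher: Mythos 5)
Your proof is correct, and it takes a partly different route from the paper's. The paper bounds $\Var_f(Y_j)$ for \emph{every} coordinate --- null, small-signal, and large-signal alike --- by a single invocation of Lemma \ref{lemma:bulk_thold_var}, and then converts $\sum_{j\in S_f} m_j^2$ into $E_f(T_r(d))$ by invoking the already-established display (\ref{eqn:bulk_test_exp1}), i.e.\ $E_f(T_r(d))\geq\bigl(\sum_{j\in S_f}\tfrac{m_j^2}{2}\bigr)\bigl(1-\tfrac{\widetilde C K_2^2\sqrt{\lceil D\rceil}}{C^2-1}\bigr)$, whose usefulness requires $C\geq C_\eta$ to be large enough to keep the prefactor bounded away from zero --- this is precisely where the hypothesis on $C_\eta$ enters. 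You instead invoke Lemma \ref{lemma:bulk_thold_var} only for the null coordinates and re-derive the signal-coordinate variance bound $\Var_f(Y_j)\lesssim r^4+\lambda_j$ from scratch via the centering decomposition $Y_j=\widetilde Y_j+c_j\mathbbm{1}_{\{E_j(d)\geq d+r^2\}}$ and a case analysis on $c_j^2 P_f(E_j(d)<d+r^2)$ using the noncentral $\chi^2$ lower tail; this reproduces the content of Cases 2 and 3 of Lemma \ref{lemma:bulk_thold_var} without gaining anything over simply citing it. Where your argument genuinely departs is the step $E_f(Y_j)\geq\lambda_j-C_0 r^4$, an additive one-sided bound valid for all $\lambda_j$ that replaces the paper's multiplicative bound (\ref{eqn:bulk_test_exp1}). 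This buys you exactly what you note: $\sum_j\lambda_j\leq E_f(T_r(d))+C_0 s r^4$ holds unconditionally, so the ``$C_\eta$ large'' hypothesis becomes vacuous for this lemma, whereas the paper's route does need it. Both arguments are sound; the paper's reuses the expectation calculation already done for Lemma \ref{lemma:test_bulk_exp_lbound}, while yours is more self-contained and mildly strengthens the statement. Two small points to tighten in a write-up: your case split ``$\lambda_j\lesssim d+r^2$ gives $c_j^2\lesssim(d+r^2)^2$'' does not directly yield $c_j^2\lesssim r^4$ (since $(d+r^2)^2\asymp d^2\gg r^4$ is possible in the bulk); you in fact silently switch to $c_j^2\lesssim\lambda_j^2$ before maximizing $\lambda_j^2 e^{-c\lambda_j^2/d}\lesssim d\lesssim r^4$, and the sub-case $\lambda_j\lesssim r^2$ (trivial bound $c_j^2\lesssim r^4$) needs to extend to $\lambda_j\lesssim C r^2$ for a suitable constant so that the exponent $(\lambda_j-r^2)_+^2\gtrsim\lambda_j^2$ is available in the remaining range. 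These are bookkeeping issues, not gaps.
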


    These bounds are proved later on. Let us now describe why they allow us to bound the Type II error. From (\ref{eqn:bulk_test_chebyshev}) as well as Lemmas \ref{lemma:test_bulk_exp_lbound} and \ref{lemma:test_bulk_var_ubound}, we have 
    \begin{align*}
        P_f\left\{T_r(d) \leq C K_1 n\tau(p, s, n)^2\right\} &\leq \frac{\Var_f(T_r(d))}{\left( E_f(T_r(d)) - CK_1 n \tau(p, s, n)^2 \right)^2} \\
        &\leq \frac{C^\dag s^2 r^4 + C^\dag E_f(T_r(d)) }{\left( E_f(T_r(d)) - CK_1 n \tau(p, s, n)^2 \right)^2} \\
        &\leq \frac{C^\dag \left\lceil D \right\rceil K_2^4 s^2 \nu_{\mathcal{H}}\log\left(1 + \frac{p}{s^2}\right)}{C^2 K_1^2 n^2 \tau(p, s, n)^4} + \frac{C^\dag E_f(T_r(d))}{\left( E_f(T_r(d)) - CK_1 n \tau(p, s, n)^2 \right)^2} \\
        &= \frac{C^\dag \left\lceil D \right\rceil K_2^4}{C^2 K_1^2} + \frac{C^\dag E_f(T_r(d))}{\left( E_f(T_r(d)) - CK_1 n \tau(p, s, n)^2 \right)^2} \\
        &\leq \frac{C^\dag \left\lceil D \right\rceil K_2^4}{C^2 K_1^2} + \frac{C^\dag E_f(T_r(d))}{\frac{1}{4}\left( E_f(T_r(d))\right)^2} \\
        &\leq \frac{C^\dag \left\lceil D \right\rceil K_2^4}{C^2 K_1^2} + \frac{4C^\dag }{E_f(T_r(d))} \\
        &\leq \frac{C^\dag \left\lceil D \right\rceil K_2^4}{C^2 K_1^2} + \frac{4C^\dag}{2CK_1n\tau(p, s, n)^2} \\
        &\leq \frac{C^\dag \left\lceil D \right\rceil K_2^4}{C^2 K_1^2} + \frac{2C^\dag}{CK_1\sqrt{\log 2}}
    \end{align*}
    provided we pick \(C_\eta\) larger than a sufficiently large universal constant as required by Lemmas \ref{lemma:test_bulk_exp_lbound} and \ref{lemma:test_bulk_var_ubound}. With this bound in hand and since \(C \geq C_\eta\), we can now pick \(C_\eta\) sufficiently large depending only on \(\eta\) to obtain 
    \begin{equation*}
        \sup_{\substack{f \in \mathcal{F}_s, \\ ||f||_2 \geq C \tau(p, s, n)}}P_f\left\{T_r(d) \leq C K_1 n\tau(p, s, n)\right\} \leq \frac{\eta}{2}.
    \end{equation*}
    To summarize, we can pick \(C_\eta\) depending only on \(\eta\) to be sufficiently large and also satisfying the condition \(C_\eta \geq \log\left(\frac{2}{\eta}\right) \vee 1\) and those of Lemmas \ref{lemma:test_bulk_exp_lbound}, \ref{lemma:test_bulk_var_ubound} to ensure the testing risk is bounded by \(\eta\), i.e. 
    \begin{equation*}
        P_0\left\{T_r(d) > CK_1 n \tau(p, s, n)^2\right\} + \sup_{\substack{f \in \mathcal{F}_s, \\ ||f||_2 \geq C\tau(p, s, n)}} P_f\left\{T_r(d) \leq C K_1 n\tau(p, s, n)^2\right\} \leq \eta, 
    \end{equation*}
    as desired.
\end{proof}

    \noindent It remains to prove Lemmas \ref{lemma:test_bulk_exp_lbound} and \ref{lemma:test_bulk_var_ubound}. Recall we work in the environment of the proof of Proposition \ref{prop:test_sparse_bulk}. 

    \begin{proof}[Proof of Lemma \ref{lemma:test_bulk_exp_lbound}]
        To prove the lower bound on the expectation, first recall
    \begin{equation*}
        E_f(T_r(d)) = \sum_{j=1}^{p} E_f\left( \left(E_j(d) - \alpha_r(d)\right)\mathbbm{1}_{\{E_j(d) \geq d + r^2\}}\right).
    \end{equation*}
    Under \(P_f\), we have \(E_j(d) \sim \chi^2_d(m_j^2)\) where \(m_j^2 = n \sum_{k \leq d} \theta_{k, j}^2\). Here, the collection \(\{\theta_{k, j}\}\) denotes the basis coefficients of \(f\). Intuitively, there are two reasons why the expectation might be small. First, we are thresholding and so we are intuitively removing those coordinates with small, but nonetheless nonzero, means. Furthermore, we are truncating at level \(d\) and not considering higher-order basis coefficients; this also incurs a loss in signal.
    
    Let us first focus on the effect from thresholding. Let \(\widetilde{C}\) denote the universal constant from Lemma \ref{lemma:bulk_thold_exp}. Applying Lemma \ref{lemma:bulk_thold_exp} yields
    \begin{align*}
        E_f(T_r(d)) &= \sum_{j=1}^{p} E_f\left( \left(E_j(d) - \alpha_r(d)\right)\mathbbm{1}_{\{E_j(d) \geq d + r^2\}}\right) \\
        &= \sum_{j \in S_f} E_f\left( \left(E_j(d) - \alpha_r(d)\right)\mathbbm{1}_{\{E_j(d) \geq d + r^2\}}\right) \\
        &\geq \sum_{j \in S_f : m_j^2 \geq \widetilde{C}r^2} \frac{m_j^2}{2} \\
        &= \sum_{j \in S_f} \frac{m_j^2}{2} - \sum_{j \in S_f : m_j^2 < \widetilde{C}r^2} \frac{m_j^2}{2} \\
        &\geq \left(\sum_{j \in S_f} \frac{m_j^2}{2}\right) - \frac{\widetilde{C} sr^2}{2}
    \end{align*}
    where \(S_f \subset [p]\) denotes the subset of active variables \(j\) such that \(\Theta_j \neq 0\). Note that such \(S_f\) exists and \(|S_f| \leq s\) since \(f \in \mathcal{F}_s\). We have thus bounded the amount of signal lost from thresholding. 
    
    Let us now examine the effect of truncation. Consider that 
    \begin{align*}
            ||f||_2^2 &\leq \sum_{j \in S_f} \sum_{k=1}^{\infty} \theta_{k,j}^2 \\
            &= \sum_{j \in S_f} \left(\sum_{k \leq d} \theta_{k,j}^2 + \sum_{k > d} \theta_{k,j}^2\right) \\
            &\leq \sum_{j \in S_f} \left(\sum_{k \leq d} \theta_{k,j}^2 + \mu_{d+1} \sum_{k > d} \frac{\theta_{k, j}^2}{\mu_k}\right) \\
            &\leq \left(\sum_{j \in S_f} \sum_{k \leq d} \theta_{k,j}^2\right) + s \mu_{d+1}.
    \end{align*}
    Here, we have used \(\sum_{k=1}^{\infty} \mu_k^{-1} \theta_{k,j}^2 \leq 1\) for all \(j\). Therefore, we have shown \(\sum_{j \in S_f} \sum_{k \leq d} \theta_{k,j}^2 \geq ||f||_2^2 - s\mu_{d+1}\), and thus have quantified the loss due to truncation.
    
    We are now in position to put together the two pieces. Consider \(||f||_2^2 \geq C^2\tau(p, s, n)^2\). Consequently, 
    \begin{equation*}
        \sum_{j \in S_f} \frac{m_j^2}{2} \geq \frac{C^2 n \tau(p, s, n)^2 - ns\mu_{d+1}}{2} \geq \frac{C^2 n \tau(p, s, n)^2 - ns\mu_{\nu_{\mathcal{H}}+1}}{2}.
    \end{equation*}
    We have used the decreasing order of the kernel's eigenvalues, i.e. that \(d \geq \nu_{\mathcal{H}}\) implies \(\mu_{\nu_{\mathcal{H}}+1} \geq \mu_{d+1}\). Further, consider that by definition of \(\nu_{\mathcal{H}}\) and \(\tau(p, s, n)^2\), we have 
    \begin{equation*}
        \mu_{\nu_{\mathcal{H}}+1} \leq \mu_{\nu_{\mathcal{H}}} \leq \frac{\sqrt{\nu_{\mathcal{H}} \log\left(1 + \frac{p}{s^2}\right)}}{n} = \frac{\tau(p, s, n)^2}{s}.
    \end{equation*}
    With this in hand, it follows that \(\sum_{j \in S_f} \frac{m_j^2}{2} \geq \left(\frac{C^2-1}{2}\right)n \tau(p, s, n)^2\). To summarize, we have shown 
    \begin{align}
        E_f(T_r(d)) &\geq \left(\sum_{j \in S_f} \frac{m_j^2}{2}\right) - \frac{\widetilde{C}sr^2}{2} \nonumber \\
        &\geq \left(\sum_{j \in S_f} \frac{m_j^2}{2}\right) - \frac{\widetilde{C} K_2^2\sqrt{\lceil D \rceil} n \tau(p, s, n)^2}{2} \nonumber \\
        &\geq \left(\sum_{j \in S_f} \frac{m_j^2}{2}\right) \left(1 - \frac{\widetilde{C}K_2^2 \sqrt{\lceil D \rceil}}{C^2 - 1}\right). \label{eqn:bulk_test_exp1}
    \end{align}
    Here, we have used \(d \leq \lceil D \rceil \nu_{\mathcal{H}}\). We can also conclude 
    \begin{equation*}
        E_f(T_r(d)) \geq \left(\frac{C^2 - 1 - K_2^2 \widetilde{C}\sqrt{\lceil D \rceil}}{2}\right)n\tau(p, s, n)^2. 
    \end{equation*}
    In view of the above bound and since \(C > C_\eta\), it suffices to pick \(C_\eta\) large enough to satisfy \(C_\eta^2 - 4K_1 C_\eta \geq 1 + K_2^2 \widetilde{C}\) to ensure \(E_f(T_r(d)) \geq 2 CK_1 n \tau(p, s, n)^2\). The proof of the lemma is complete.
\end{proof}

\begin{proof}[Proof of Lemma \ref{lemma:test_bulk_var_ubound}]
    To bound the variance of \(T_r(d)\), recall that \(\sqrt{\log\left(1 + \frac{p}{s^2}\right)} \leq K_3 \sqrt{d}\), and so \(1 \leq K_2^2 \sqrt{\log\left(1 + \frac{p}{s^2}\right)} \leq K_2^2 K_3 \sqrt{d} \leq L^* \sqrt{d}\). Therefore, we can apply Lemma \ref{lemma:bulk_thold_var}. By Lemma \ref{lemma:bulk_thold_var}, we have 
    \begin{align*}
        \Var_f\left(T_r(d)\right) &= \sum_{j=1}^{p} \Var_f\left((E_j(d) - \alpha_r(d))\mathbbm{1}_{\{E_j(d) \geq d + r^2\}}\right) \\
        &\leq C^\dag pr^4 \exp\left(-\frac{c^{**}r^4}{d}\right) + C^\dag sr^4 + C^\dag \sum_{j \in S_f : m_j^2 > 4r^2} m_j^2 \\
        &\leq C^\dag pr^4 \exp\left(-\frac{c^{**}r^4}{d}\right) + C^\dag sr^4 + C^\dag \sum_{j \in S_f} m_j^2
    \end{align*}
    where \(C^\dag\) is a positive universal constant whose value can change from instance to instance. Recall \(c^{**}\) is defined at the beginning of the proof of Proposition \ref{prop:test_sparse_bulk}. Since \(r^4 = K_2^4 d \log\left(1 + \frac{p}{s^2}\right)\) and \(c^{**}K_2^4 \geq 1\), we have 
    \begin{align*}
        pr^4 \exp\left(-\frac{c^{**}r^4}{d} \right) &\leq pr^4 \exp\left(- \log\left(1 + \frac{p}{s^2}\right)\right) \leq pr^4 \cdot \frac{s^2}{s^2 + p} \leq s^2 r^4.
    \end{align*}
    Therefore, 
    \begin{equation*}
        \Var_f(T_r(d)) \leq C^\dag s^2 r^4 + C^\dag s r^4 + C^\dag \sum_{j \in S_f} m_j^2 \leq 2C^\dag s^2 r^4 + C^\dag \sum_{j \in S_f} m_j^2.
    \end{equation*}
    Taking \(C_\eta\) larger than a sufficiently large universal constant, noting \(C \geq C_\eta\), and invoking (\ref{eqn:bulk_test_exp1}), we have the desired result. 
\end{proof}

\begin{proof}[Proof of Proposition \ref{prop:test_sparse_tail}]
    Our proof is largely the same as in the proof of Proposition \ref{prop:test_sparse_bulk}, except we invoke results about the ``tail" rather than the ``bulk''. Fix \(\eta \in (0, 1)\). We will make a choice of \(C_\eta\) at the end of the proof, so for now let \(C > C_\eta\). We select \(K_1\) in the course of the proof, but we will select \(K_2\) now. Set \(K_2 := \frac{1}{\sqrt{\log 2}} \vee c^{-1/2} \vee \left(c K_3^2\right)^{-1/4}\) with \(c = c^* \wedge c^{**}\) where \(c^{*}\) and \(c^{**}\) are the universal constants in the exponential terms of Lemmas \ref{lemma:null_tail} and \ref{lemma:tail_thold_var} respectively.

    We first bound the Type I error. Since \(\log\left(1 + \frac{p}{s^2}\right) \geq K_3^2 d\), we have \(r^2 \gtrsim d\). Since \(d \geq D\), we have by Lemma \ref{lemma:null_tail} that for any \(x > 0\), 
    \begin{equation*}
        P_0\left\{ T_r(d) > C^* \left( \sqrt{xpr^4 e^{-c^*r^2}} + x \right) \right\} \leq e^{-x}
    \end{equation*}
    where \(C^*\) is a universal positive constant. Taking \(x = C\) and noting \(C > 1\) provided we have chosen \(C_\eta \geq 1\), we see that 
    \begin{align*}
        C^*\left(\sqrt{xpr^4 e^{-c^*r^2}} + x\right) &\leq C^* C\left(K_2^2 \log\left(1 + \frac{p}{s^2}\right) \sqrt{p e^{-c^* K_2^2 \log\left(1 + \frac{p}{s^2}\right)}} + 1\right) \\
        &\leq C^* C \left(K_2^2 \log\left(1 + \frac{p}{s^2}\right) \sqrt{p \cdot \frac{s^2}{s^2 + p}} + 1\right) \\
        &\leq 2C^* C K_2^2 s \log\left(1 + \frac{p}{s^2}\right) \\
        &= CK_1 n \tau(p, s, n)^2 
    \end{align*}
    where we have used that \(c^* K_2^2 \geq 1\) and we have set \(K_1 := 2C^* K_2^2\). Thus, with these choices of \(K_1, K_2,\) and \(x\) we have 
    \begin{equation*}
        P_0\left\{ T_r(d) > CK_1 n \tau(p, s, n)^2 \right\} \leq e^{-x} = e^{-C} \leq e^{-C_\eta} \leq \frac{\eta}{2}
    \end{equation*}
    provided we select \(C_\eta \geq \log\left(\frac{2}{\eta}\right) \vee 1\). 

    We now examine the Type II error. To bound the Type II error, we will use Chebyshev's inequality. In particular, consider that for any \(f \in \mathcal{F}_s\) with \(||f||_2 \geq C \tau(p, s, n)\), we have 
    \begin{align}
        P_f\left\{ T_r(d) \leq C K_1 n\tau(p, s, n)^2 \right\} &= P_f\left\{ E_f(T_r(d)) - CK_1 n\tau(p, s, n)^2 \leq E_f(T_r(d)) - T_r(d)\right\} \nonumber \\
        &\leq \frac{\Var_f\left(T_r(d)\right)}{\left(E_f(T_r(d)) - CK_1n\tau(p, s, n)^2\right)^2} \label{eqn:tail_test_chebyshev}
    \end{align}
    provided that \(E_f(T_r(d)) \geq CK_1n\tau(p, s, n)^2\). To ensure this application of Chebyshev's inequality is valid and to bound the Type II error, we will need a lower bound on the expectation of \(T_r(d)\). We will also need an upper bound on the variance of \(T_r(d)\) in order to bound the Type II error. The following lemmas provide us with the requisite bounds; they are analogous to Lemmas \ref{lemma:test_bulk_exp_lbound} and \ref{lemma:test_bulk_var_ubound} but are now in the context of the tail regime.
    
    \begin{lemma}\label{lemma:test_tail_exp_lbound}
        If \(C_\eta\) is larger than some sufficiently larger universal constant, then \(E_f(T_r(d)) \geq 2CK_1 n \tau(p, s, n)^2\).     
    \end{lemma}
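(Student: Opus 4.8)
The plan is to follow the proof of Lemma~\ref{lemma:test_bulk_exp_lbound} essentially line for line, substituting the tail-regime ingredients for the bulk-regime ones. As there, under $P_f$ we have $E_j(d) \sim \chi^2_d(m_j^2)$ with $m_j^2 = n\sum_{k \le d} \theta_{k,j}^2$, and only the at most $s$ active coordinates $j \in S_f$ contribute, so that $E_f(T_r(d)) = \sum_{j \in S_f} E_f\big((E_j(d) - \alpha_r(d))\mathbbm{1}_{\{E_j(d) \ge d + r^2\}}\big)$. Two quantities must be controlled: the signal discarded by thresholding at level $d + r^2$, and the signal lost by truncating each coordinate's series at order $d$.

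For the thresholding loss I would invoke the tail-regime analogue of Lemma~\ref{lemma:bulk_thold_exp}: since here $r^2 = K_2^2 \log(1 + p/s^2) \gtrsim d$ (using $\log(1+p/s^2) \ge K_3^2 d$), the threshold $d + r^2$ is of order $r^2$ and $E_j(d)$ is subexponential around its mean; the statement to use is that a coordinate with $m_j^2 \ge \widetilde C r^2$ (for a universal $\widetilde C$) contributes at least $m_j^2/2$ to the sum. Discarding the coordinates with $m_j^2 < \widetilde C r^2$ costs at most $\widetilde C s r^2 / 2$, so $E_f(T_r(d)) \ge \tfrac12 \sum_{j \in S_f} m_j^2 - \tfrac12 \widetilde C s r^2$. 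For the truncation loss, exactly as in Lemma~\ref{lemma:test_bulk_exp_lbound} the RKHS constraint $\sum_k \mu_k^{-1}\theta_{k,j}^2 \le 1$ and monotonicity of $\{\mu_k\}$ give $\sum_{j \in S_f}\sum_{k \le d}\theta_{k,j}^2 \ge \|f\|_2^2 - s\mu_{d+1} \ge \|f\|_2^2 - s\mu_{\nu_{\mathcal H} + 1}$. The one new estimate is the bound on $s\mu_{\nu_{\mathcal H}+1}$: in the tail regime $\nu_{\mathcal H} \le d \le \log(1 + p/s^2)/K_3^2$, so by the defining inequality of $\nu_{\mathcal H}$, $\mu_{\nu_{\mathcal H}+1} \le \mu_{\nu_{\mathcal H}} \le \frac{\sqrt{\nu_{\mathcal H}\log(1+p/s^2)}}{n} \le \frac{\log(1+p/s^2)}{K_3 n} = \frac{\tau(p,s,n)^2}{K_3 s}$, hence $s\mu_{\nu_{\mathcal H}+1} \le \tau(p,s,n)^2 / K_3$.

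Putting the pieces together, if $\|f\|_2 \ge C\tau(p,s,n)$ then $\sum_{j\in S_f} m_j^2 = n\sum_{j\in S_f}\sum_{k\le d}\theta_{k,j}^2 \ge n(C^2 - K_3^{-1})\tau(p,s,n)^2$, and since $s r^2 = K_2^2 s\log(1+p/s^2) = K_2^2 n\tau(p,s,n)^2$, we obtain $E_f(T_r(d)) \ge \tfrac{n\tau(p,s,n)^2}{2}\big(C^2 - K_3^{-1} - \widetilde C K_2^2\big)$. Recalling $K_1 = 2C^* K_2^2$ from the body of the proof of Proposition~\ref{prop:test_sparse_tail}, it then suffices to take $C_\eta$ large enough that $C^2 - 4K_1 C \ge K_3^{-1} + \widetilde C K_2^2$ whenever $C > C_\eta$, which yields $E_f(T_r(d)) \ge 2 C K_1 n\tau(p,s,n)^2$, as desired.

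The argument is structurally identical to the bulk case, so I do not anticipate a serious obstacle; the only points demanding care are (a) making sure the thresholding lemma being invoked is the one valid in the subexponential regime $r^2 \gtrsim d$ rather than the subgaussian one, and (b) using the tail-regime constraint $\log(1+p/s^2)\ge K_3^2 d$ (not the bulk one) when bounding $\mu_{\nu_{\mathcal H}+1}$. The remaining work is just bookkeeping of the universal constants $K_2$, $K_3$, $\widetilde C$, $C^*$, $D$.
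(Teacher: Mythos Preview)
Your proposal is correct and follows essentially the same approach as the paper's proof: invoking Lemma~\ref{lemma:tail_thold_exp} (the tail analogue of Lemma~\ref{lemma:bulk_thold_exp}) after verifying \(r^2 \gtrsim d\), then bounding the truncation loss via \(s\mu_{\nu_{\mathcal H}+1} \le \tau(p,s,n)^2/K_3\) using the tail-regime constraint, and finally choosing \(C_\eta\) to satisfy \(C_\eta^2 - 4K_1C_\eta \ge K_3^{-1} + \widetilde C K_2^2\). The constants and intermediate inequalities you record match the paper's exactly.
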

    \begin{lemma}\label{lemma:test_tail_var_lbound}
        If \(C_\eta\) is larger than some sufficiently large universal constant, then \(\Var_f(T_r(d)) \leq C^{\dag}(s^2r^4 + E_f(T_r(d)))\) where \(C^\dag> 0\) is a universal constant.
    \end{lemma}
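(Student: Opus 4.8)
\textbf{Proof proposal for Lemma~\ref{lemma:test_tail_var_lbound}.} The plan is to mirror the proof of Lemma~\ref{lemma:test_bulk_var_ubound}, replacing the ``bulk'' per-coordinate estimate by its ``tail'' counterpart. We work in the environment of the proof of Proposition~\ref{prop:test_sparse_tail}, so \(d = \nu_{\mathcal{H}} \vee \lceil D \rceil\), \(r = K_2\sqrt{\log\left(1 + \frac{p}{s^2}\right)}\), and the standing assumption \(\log\left(1 + \frac{p}{s^2}\right) \geq K_3^2 d\) places us in the regime \(r^2 \gtrsim d\) with \(d \geq D\). Under \(P_f\) we have \(E_j(d) \sim \chi^2_d(m_j^2)\) with \(m_j^2 = n\sum_{k \leq d}\theta_{k,j}^2\), and by orthogonality the variables \(\{E_j(d)\}_{j=1}^p\) are independent; hence \(\Var_f(T_r(d)) = \sum_{j=1}^p \Var_f\!\left((E_j(d) - \alpha_r(d))\mathbbm{1}_{\{E_j(d) \geq d + r^2\}}\right)\). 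The choice \(K_2 \geq (cK_3^2)^{-1/4}\) made at the start of the proof of Proposition~\ref{prop:test_sparse_tail} is exactly what is needed to verify the hypotheses of the per-coordinate tail variance bound (Lemma~\ref{lemma:tail_thold_var}) for every \(j\).

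First I would apply Lemma~\ref{lemma:tail_thold_var} term by term. By analogy with the bulk case, this yields a bound of the shape
\[
\Var_f(T_r(d)) \leq C^\dag p r^4 \exp\!\left(-c^{**} r^2\right) + C^\dag s r^4 + C^\dag \sum_{j \in S_f : m_j^2 > 4r^2} m_j^2 \leq C^\dag p r^4 \exp\!\left(-c^{**} r^2\right) + C^\dag s r^4 + C^\dag \sum_{j \in S_f} m_j^2,
\]
where \(S_f \subset [p]\) is the set of active coordinates, \(|S_f| \leq s\). The first term is the pure-noise contribution, the second is the thresholding loss over active coordinates, and the third is the residual signal contribution. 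Since \(r^2 = K_2^2\log\left(1 + \frac{p}{s^2}\right)\) and \(c^{**}K_2^2 \geq 1\) (again by the choice of \(K_2\)), we get \(p\exp(-c^{**}r^2) \leq p\exp\!\left(-\log\left(1 + \frac{p}{s^2}\right)\right) = p\cdot\frac{s^2}{s^2+p} \leq s^2\), so \(p r^4 \exp(-c^{**}r^2) \leq s^2 r^4\). Combining and using \(s \leq s^2\), this gives \(\Var_f(T_r(d)) \leq 2C^\dag s^2 r^4 + C^\dag \sum_{j \in S_f} m_j^2\) (with \(C^\dag\) allowed to change from line to line).

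Finally I would absorb the remaining sum into \(E_f(T_r(d))\). The proof of Lemma~\ref{lemma:test_tail_exp_lbound} establishes, for \(C_\eta\) larger than a sufficiently large universal constant, a lower bound of the form \(E_f(T_r(d)) \geq \tfrac14 \sum_{j \in S_f} m_j^2\) (the tail analogue of display~(\ref{eqn:bulk_test_exp1})); I would reuse that estimate here, so that \(\sum_{j \in S_f} m_j^2 \leq 4 E_f(T_r(d))\), and conclude \(\Var_f(T_r(d)) \leq C^\dag\left(s^2 r^4 + E_f(T_r(d))\right)\) after relabeling the universal constant. The main substantive input is the single-coordinate tail bound of Lemma~\ref{lemma:tail_thold_var}; once that is in hand the argument is pure bookkeeping, and the only point requiring care is checking that the constants \(K_2, K_3\) (and the threshold \(C_\eta\) inherited from the expectation lemma) are chosen large enough for each invoked estimate, which they already are by the setup of Proposition~\ref{prop:test_sparse_tail}.
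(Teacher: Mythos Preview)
Your proposal is correct and follows essentially the same route as the paper's proof: apply the per-coordinate tail variance bound (Lemma~\ref{lemma:tail_thold_var}), kill the noise term using the exponential factor, and absorb \(\sum_{j\in S_f} m_j^2\) into \(E_f(T_r(d))\) via (\ref{eqn:tail_test_exp_1}). One small inaccuracy: Lemma~\ref{lemma:tail_thold_var} gives the exponent \(c^{**}\min(r^4/d,\,r^2)\), not \(c^{**}r^2\); the paper first bounds \(\min(r^4/d,\,r^2)\ge (K_2^2K_3^2\wedge 1)\,r^2\) using \(r^2\ge K_2^2K_3^2 d\), and it is precisely here (not in verifying the hypothesis of Lemma~\ref{lemma:tail_thold_var}) that the choice \(K_2\ge (cK_3^2)^{-1/4}\) is used, together with \(K_2\ge c^{-1/2}\), to ensure \(c^{**}(K_2^2K_3^2\wedge 1)K_2^2\ge 1\).
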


    With these bounds in hand, the argument in the proof of Proposition \ref{prop:test_sparse_bulk} can be essentially repeated to establish that 
    \begin{equation*}
        P_0\left\{T_r(d) > CK_1n\tau(p, s, n)^2 \right\} + \sup_{\substack{f \in \mathcal{F}_s, \\ ||f||_2 \geq C \tau(p, s, n)}}  P_f\left\{T_r(d) \leq CK_1 n \tau(p, s, n)^2\right\} \leq \eta
    \end{equation*}
    provided \(C_\eta \geq \log\left(\frac{2}{\eta}\right) \vee 1\) and \(C_\eta\) sufficiently large to satisfy Lemmas \ref{lemma:test_tail_exp_lbound} and \ref{lemma:test_tail_var_lbound}. We omit the details for brevity.
\end{proof}

\noindent It remains to prove Lemmas \ref{lemma:test_tail_exp_lbound} and \ref{lemma:test_tail_var_lbound}. 
\begin{proof}[Proof of Lemma \ref{lemma:test_tail_exp_lbound}]
    The proof is similar in style to the proof of Lemma \ref{lemma:test_bulk_exp_lbound}, except now results for the tail regime are invoked. Letting \(\widetilde{C}\) denote the universal constant from Lemma \ref{lemma:tail_thold_exp}, applying Lemma \ref{lemma:tail_thold_exp}, and arguing similarly to the proof of Lemma \ref{lemma:test_bulk_exp_lbound} , we obtain 
    \begin{equation*}
        E_f\left( T_r(d) \right) \geq \left(\sum_{j \in S_f} \frac{m_j^2}{2}\right) - \frac{\widetilde{C}s r^2}{2}
    \end{equation*}
    where \(S_f \subset [p]\) denotes the subset of active variables \(j\) such that \(\Theta_j \neq 0\). Note that such \(S_f\) exists and \(|S_f| \leq s\) since \(f \in \mathcal{F}_s\). Further arguing like the proof of Lemma \ref{lemma:test_bulk_exp_lbound} and using \(\sqrt{\log\left(1 + \frac{p}{s^2}\right)} \geq K_3 \sqrt{d}\), we have  

    \begin{equation}\label{eqn:tail_signal}
        \sum_{j \in S_f} \frac{m_j^2}{2} \geq \left(\frac{C^2 - \frac{1}{K_3}}{2}\right) n \tau(p, s, n)^2.
    \end{equation}
    To summarize, we have shown 
    \begin{align}\label{eqn:tail_test_exp_1}
        E_f\left(T_r(d)\right) &\geq \left(\sum_{j \in S_f} \frac{m_j^2}{2}\right) - \frac{\widetilde{C}s r^2}{2} \nonumber \\
        &\geq \left(\sum_{j \in S_f} \frac{m_j^2}{2}\right) - \frac{\widetilde{C} K_2^2 n\tau(p, s, n)^2}{2} \nonumber \\
        &\geq \left(\sum_{j \in S_f} \frac{m_j^2}{2}\right) \left(1 - \frac{\widetilde{C}K_2^2}{C^2 - \frac{1}{K_3}}\right).
    \end{align}
    Note that we also can conclude
    \begin{equation*}
        E_f\left( T_r(d) \right) \geq \left(\frac{C^2 - \frac{1}{K_3} - K_2^2 \widetilde{C}}{2} \right) n\tau(p, s, n)^2. 
    \end{equation*}
    Since \(C > C_\eta\), it suffices to pick \(C_\eta\) large enough to satisfy \(C_\eta^2 - 4K_1 C_\eta \geq \frac{1}{K_3} + K_2^2\widetilde{C}\) to ensure \(E_f(T_r(d)) \geq 2CK_1 n \tau(p, s, n)^2\). The proof of the lemma is complete.
\end{proof}

\begin{proof}[Proof of Lemma \ref{lemma:test_tail_var_lbound}]
    Recall that \(\log\left(1 + \frac{p}{s^2}\right) \geq K_3^2 d\). By definition of \(r^2\) we have \(r^2 \geq K_2^2K_3^2 d\). In other words \(r^2 \gtrsim d\) and so we can apply Lemma \ref{lemma:tail_thold_var}. By Lemma \ref{lemma:tail_thold_var}, we have 
    \begin{align*}
        \Var_f\left(T_r(d)\right) &= \sum_{j=1}^{p} \Var_f\left(\left(E_j(d) - \alpha_r(d)\right)\mathbbm{1}_{\{E_j(d) \geq d + r^2\}}\right) \\
        &\leq C^{\dag} p r^4\exp\left(-c^{**} \min\left(\frac{r^4}{d}, r^2\right) \right) + C^\dag s r^4 + C^\dag \sum_{j \in S_f : m_j^2 > 4r^2} m_j^2 \\
        &\leq C^{\dag} p r^4\exp\left(-c^{**} \min\left(\frac{r^4}{d}, r^2\right) \right) + C^\dag s r^4 + C^\dag \sum_{j \in S_f} m_j^2.
    \end{align*}
    where \(C^{\dag}\) is a positive universal constant. Recall we had defined \(c^{**}\) at the beginning of the proof of Proposition \ref{prop:test_sparse_tail}. Since \(r^2 \geq K_2^2 K_3^2 d\), we have \(\frac{r^4}{d} \geq r^2 K_2^2 K_3^2\). Therefore, 
    \begin{align*}
        \exp\left(-c^{**} \min\left(\frac{r^4}{d}, r^2\right) \right) \leq \exp\left(-c^{**} \left(K_2^2K_3^2 \wedge 1\right) r^2 \right) \leq \exp\left(- \log\left(1 + \frac{p}{s^2}\right)\right) \leq \frac{s^2}{s^2+p}
    \end{align*}
    where we have used that \(c^{**} \left(K_2^2K_3^2 \wedge 1\right) K_2^2 \geq 1\) by definition of \(K_2\). Therefore, 
    \begin{equation*}
        \Var_f\left(T_r(d)\right) \leq 2C^\dag s^2 r^4 + C^\dag  \sum_{j \in S_f} m_j^2.
    \end{equation*}
    Taking \(C_\eta\) larger than a sufficiently large universal constant and invoking (\ref{eqn:tail_test_exp_1}) yields the desired result.
\end{proof}

\subsubsection{Dense}

\begin{proof}[Proof of Proposition \ref{prop:dense_alpha}]
    For ease of notation, set \(L = \frac{2}{\sqrt{\eta}}\). Define \(C_\eta := \left(\sqrt{2\sqrt{2}L + 1}\right) \vee \left( \sqrt{1 + \frac{4}{\sqrt{\eta}} + L} \right) \vee \left(\sqrt{\frac{64\sqrt{2}}{\eta} + 1} \right)\). Let \(C > C_\eta\). We first bound the Type I error. Consider that under \(P_0\) we have \(T \sim \chi^2_{p\nu_\mathcal{H}}\). Observe that \(E_0\left(T\right) = p\nu_{\mathcal{H}}\) and \(\Var_0\left(T\right) = 2p\nu_{\mathcal{H}}\). Consequently, we have by Chebyshev's inequality
    \begin{equation*}
        P_0\left\{ T > p \nu_{\mathcal{H}} + L \sqrt{p \nu_{\mathcal{H}}}\right\} \leq \frac{\Var_0(T)}{L^2 p \nu_{\mathcal{H}}} = \frac{2}{L^2} \leq \frac{\eta}{2}. 
    \end{equation*}
    We now examine the Type II error. Under \(P_f\), we have \(T \sim \chi^2_{p\nu_{\mathcal{H}}}\left(n \sum_{j = 1}^{p} \sum_{k \leq \nu_{\mathcal{H}}} \theta_{k,j}^2 \right)\) where \(\{\theta_{k,j}\}\) denote the basis coefficients associated to \(f\). Note \(E_f(T) = p\nu_{\mathcal{H}} + n \sum_{j=1}^{p} \sum_{k \leq \nu_H} \theta_{k,j}^2\) and \(\Var_f(T) = 2p\nu_{\mathcal{H}} + 4n\sum_{j= 1}^{p} \sum_{k \leq \nu_\mathcal{H}} \theta_{k,j}^2\). In order to proceed with the argument, we need to obtain a lower bound estimate for the signal strength. Letting \(S\) denote the set of \(j\) for which \(\Theta_j\) are nonzero, consider that for \(||f||_2^2 \geq C^2\tau(p, s, n)^2\) we have
    \begin{align*}
        C^2 \tau(p, s, n)^2 &\leq ||f||_2^2 \\
        &= \sum_{j=1}^{p} \sum_{k = 1}^{\infty} \theta_{k,j}^2 \\
        &\leq \sum_{j \in S} \left(\sum_{k \leq \nu_\mathcal{H}} \theta_{k,j}^2 + \sum_{k > \nu_{\mathcal{H}}} \theta_{k,j}^2\right) \\
        &\leq \sum_{j \in S} \left( \sum_{k \leq \nu_{\mathcal{H}}} \theta_{k,j}^2 + \mu_{\nu_{\mathcal{H}}} \sum_{k > \nu_{\mathcal{H}}} \frac{\theta_{k,j}^2}{\mu_k}\right) \\
        &\leq \left(\sum_{j \in S} \sum_{k \leq \nu_{\mathcal{H}}} \theta_{k,j}^2\right) + s\mu_{\nu_{\mathcal{H}}}.
    \end{align*}
    We have used \(\sum_{k=1}^{\infty} \frac{\theta_{k,j}^2}{\mu_k} \leq 1\) for all \(j\) in the final line. Therefore by definition of \(\nu_{\mathcal{H}}\) we have
    \begin{align*}
        n \sum_{j \in S} \sum_{k \leq \nu_{\mathcal{H}}} \theta_{k,j}^2 \geq nC^2\tau(p, s, n)^2 - n s \mu_{\nu_{\mathcal{H}}} \geq \left(C_\eta^2 - 1\right) \sqrt{\nu_{\mathcal{H}} p}
    \end{align*}
    where we have used that \(ns\mu_{\nu_{\mathcal{H}}} \leq s \sqrt{\nu_{\mathcal{H}} \log\left(1 + \frac{p}{s^2}\right)} \leq \sqrt{p \nu_{\mathcal{H}}}\).

    We now continue with bounding the Type II error. By Chebyshev's inequality, we have 
    \begin{align*}
        &\sup_{\substack{f \in \mathcal{F}_s, \\ ||f||_2 \geq C_\eta \tau(p, s, n)}} P_f\left\{ T \leq p \nu_{\mathcal{H}} + L \sqrt{p \nu_{\mathcal{H}}} \right\} \\
        &= \sup_{\substack{f \in \mathcal{F}_s, \\ ||f||_2 \geq C_\eta \tau(p, s, n)}} P_f\left\{ E_f(T) - p\nu_{\mathcal{H}} -  L \sqrt{p \nu_{\mathcal{H}}} \leq E_f(T) - T \right\}\\
        &\leq \sup_{\substack{f \in \mathcal{F}_s, \\ ||f||_2 \geq C_\eta \tau(p, s, n)}} \frac{\Var_f\left( T \right)}{\left(E_f(T) - p\nu_{\mathcal{H}} - L\sqrt{p\nu_{\mathcal{H}}}\right)^2} \\
        &\leq \sup_{\substack{f \in \mathcal{F}_s, \\ ||f||_2 \geq C_\eta \tau(p, s, n)}} \frac{2p\nu_\mathcal{H} + 4n\sum_{j=1}^{p} \sum_{k \leq \nu_\mathcal{H}} \theta_{k,j}^2}{\left( n\sum_{j=1}^{p} \sum_{k \leq \nu_{\mathcal{H}}} \theta_{k,j}^2 - L\sqrt{p\nu_{\mathcal{H}}}\right)^2} \\
        &= \sup_{\substack{f \in \mathcal{F}_s, \\ ||f||_2 \geq C_\eta \tau(p, s, n)}} \frac{2p\nu_\mathcal{H}}{\left(C_\eta^2 - 1 - L\right)^2\nu_{\mathcal{H}}p} + \frac{4n\sum_{j=1}^{p} \sum_{k \leq \nu_\mathcal{H}} \theta_{k,j}^2}{\left( n\sum_{j=1}^{p} \sum_{k \leq \nu_{\mathcal{H}}} \theta_{k,j}^2 - L\sqrt{p\nu_{\mathcal{H}}}\right)^2} \\
        &\leq \sup_{\substack{f \in \mathcal{F}_s, \\ ||f||_2 \geq C_\eta \tau(p, s, n)}} \frac{2}{\left(C_\eta^2 - 1 - L\right)^2} + \frac{4n\sum_{j=1}^{p} \sum_{k \leq \nu_\mathcal{H}} \theta_{k,j}^2}{\left( \frac{1}{2} n\sum_{j=1}^{p} \sum_{k \leq \nu_{\mathcal{H}}} \theta_{k,j}^2\right)^2} \\
        &= \sup_{\substack{f \in \mathcal{F}_s, \\ ||f||_2 \geq C_\eta \tau(p, s, n)}} \frac{2}{\left(C_\eta^2 - 1 - L\right)^2} + \frac{16}{n\sum_{j=1}^{p} \sum_{k \leq \nu_{\mathcal{H}}} \theta_{k,j}^2} \\
        &\leq \frac{2}{\left(C_\eta^2 - 1 - L\right)^2} + \frac{16}{(C_\eta^2-1)\sqrt{\nu_H p}} \\
        &\leq \frac{2}{\left(C_\eta^2 - 1 - L\right)^2} + \frac{16}{C_\eta^2-1} \\
        &\leq \frac{\eta}{4} + \frac{\eta}{4} \\
        &\leq \frac{\eta}{2}. 
    \end{align*}
    Therefore, the sum of Type I and Type II errors is bounded by \(\eta\) as desired.
\end{proof}

\subsection{Minimax lower bounds}

\begin{proof}[Proof of Proposition \ref{prop:lbound_trivial}]
    We break up the analysis into two cases. 

    \textbf{Case 1:} Suppose \(s < \sqrt{p}\). We will construct a prior distribution \(\pi\) on \(\mathscr{T}_s\) and use Le Cam's two point method to furnish a lower bound. Define \(c_\eta := 1 \wedge \sqrt{\kappa} \wedge \sqrt{\kappa \log\left(1 + 4\eta^2\right)}\). Let \(0 < c < c_\eta\). Let \(\pi\) be the prior in which a draw \(\Theta \sim \pi\) is constructed by uniformly drawing \(S \subset [p]\) of size \(s\) and setting 
    \begin{equation*}
        \Theta_j = 
        \begin{cases}
            c e_1 &\textit{if } j \in S, \\
            0 &\textit{otherwise}
        \end{cases}
    \end{equation*}
    where \(e_1 \in \R^{\mathbb{N}}\) is given by \(e_1 = (1,0,0,...)\). Note that \(\Theta \sim \pi\) implies \(||\Theta||_F^2 = c^2 s\) and \(\Theta \in \mathscr{T}_s\). By Neyman-Pearson lemma and the inequality \(1-d_{TV}(Q, P) \geq 1-\sqrt{\chi^2(Q||P)}/2\), we have 
    \begin{equation}\label{eqn:lbound_trivial_chisquare}
        \inf_{\varphi}\left\{ P_0\left\{ \varphi \neq 0 \right\} + \sup_{\substack{\Theta \in \mathscr{T}_s, \\ ||\Theta||_F \geq c \sqrt{s}}} P_{\Theta}\left\{\varphi \neq 1\right\} \right\} \geq 1 - \frac{1}{2}\sqrt{\chi^2(P_{\pi}||P_0)}
    \end{equation}
    where \(P_\pi\) denotes the mixture \(\int P_\Theta \, \pi(d\Theta)\) induced by \(\pi\). By the Ingster-Suslina method (Proposition \ref{prop:Ingster_Suslina}) and Corollary \ref{corollary:hypergeometric}, we have 
    \begin{align*}
        \chi^2(P_{\pi}||P_0) &= E\left(\exp\left(n \langle \Theta, \widetilde{\Theta}\rangle_F \right)\right) - 1\\
        &= E\left(\exp\left( c^2 n |S \cap \widetilde{S}| \right)\right) - 1\\
        &\leq \left(1 - \frac{s}{p} + \frac{s}{p} e^{c^2n}\right)^s - 1.
    \end{align*}
    where \(\Theta, \widetilde{\Theta} \overset{iid}{\sim} \pi\) and \(S, \widetilde{S}\) are the corresponding random sets. Now, using that \(\log\left(1 + \frac{p}{s^2}\right) \geq \kappa n\), we have 
    \begin{align*}
        &\leq \left(1 - \frac{s}{p} + \frac{s}{p} e^{c^2 \kappa^{-1} \log\left(1 + \frac{p}{s^2}\right)}\right)^s - 1 \\
        &= \left(1 - \frac{s}{p} + \frac{s}{p}\left(1 + \frac{p}{s^2}\right)^{c^2\kappa^{-1}}\right)^s - 1\\
        &\leq \left(1 - \frac{s}{p} + \frac{s}{p} + \frac{c^2\kappa^{-1}}{s}\right)^{s} - 1 \\ 
        &\leq \exp\left(\frac{c^2}{\kappa}\right) - 1 \\
        &\leq 4\eta^2
    \end{align*}
    We have also used that \(c^2\kappa^{-1} < c_\eta^2\kappa^{-1} \leq 1\) and the inequality \((1+x)^y \leq 1+xy\) for \(x > 0\) and \(y \in (0, 1)\). Plugging into (\ref{eqn:lbound_trivial_chisquare}) yields the desired result.

    \textbf{Case 2:} Suppose \(s \geq \sqrt{p}\). We can repeat the analysis done in Case 1 with the modification of replacing every instance of \(s\) with \(\lceil \sqrt{p} \rceil\). 
\end{proof}

\begin{proof}[Proof of Theorem \ref{thm:lbound_nontrivial}]
    We will construct a prior distribution \(\pi\) on \(\mathscr{T}_s\) and use Le Cam's two point method to furnish a lower bound. Set \(c_\eta := 1 \wedge 2^{1/4} \wedge \left(\log\left(1 + 4\eta^2\right)\right)^{1/4}\). Let \(0 < c < c_\eta\). We break up the analysis into two cases. 
    
    \textbf{Case 1:} Suppose we are in the regime where \(\psi(p, s, n)^2 = s\Gamma_{\mathcal{H}}\). Set \(\rho := \sqrt{\frac{\Gamma_{\mathcal{H}}}{\nu_{\mathcal{H}}}}\). Let \(\pi\) be the prior in which a draw \(\Theta \sim \pi\) is obtained by uniformly drawing \(S \subset [p]\) of size \(s\) and drawing 
    \begin{equation*}
        \theta_{i, k} \sim
        \begin{cases}
            \Uniform\{-c\rho, c\rho\} &\textit{if } i \in S \text{ and } k < \nu_{\mathcal{H}}, \\
            \delta_0 &\textit{otherwise}
        \end{cases}
    \end{equation*}
    where \(\delta_0\) denotes the probability measure placing full mass at zero. Note that \(\Theta \sim \pi\) implies \(||\Theta||_F^2 = c^2 \rho^2 s (\nu_{\mathcal{H}}-1) = c^2 s \Gamma_{\mathcal{H}} \cdot \frac{\nu_{\mathcal{H}}-1}{\nu_{\mathcal{H}}} \geq \frac{c^2}{2} s \Gamma_{\mathcal{H}}\). Here, we have used that \(\nu_{\mathcal{H}} \geq 2\) since \(\log\left(1 + \frac{p}{s^2}\right) \leq \frac{n}{2}\). Furthermore, consider that for \(i \in S\), we have 
    \begin{align*}
        \sum_{\ell=1}^{\infty} \frac{\theta_{i,\ell}^2}{\mu_\ell} = c^2\rho^2 \sum_{\ell < \nu_{\mathcal{H}}} \frac{1}{\mu_\ell} = c^2 \frac{1}{\nu_{\mathcal{H}}} \sum_{\ell < \nu_{\mathcal{H}}} \frac{\Gamma_{\mathcal{H}}}{\mu_\ell} \leq c^2 \frac{1}{\nu_{\mathcal{H}}} \sum_{\ell < \nu_{\mathcal{H}}} \frac{\mu_{\nu_{\mathcal{H}}-1}}{\mu_\ell} \leq c^2 \leq c_\eta^2 \leq 1.
    \end{align*}
    Here, we have used the ordering of the eigenvalues \(\mu_1 \geq \mu_2 \geq ... \geq 0\) and \(\Gamma_{\mathcal{H}} \leq \mu_{\nu_{\mathcal{H}}-1}\) by Lemma \ref{lemma:exact_Gamma}. Of course, for \(i \not \in S\) we have \(\Theta_i = 0\). Hence we have \(\Theta \in \mathscr{T}_s\). We then have by the Neyman-Pearson lemma and the inequality \(1-d_{TV}(Q, P) \geq 1-\sqrt{\chi^2(Q||P)}/2\) that 
    \begin{equation}\label{eqn:minimax_lbound_chisquare}
        \inf_{\varphi}\left\{ P_0\left\{ \varphi \neq 0 \right\} + \sup_{\substack{\Theta \in \mathscr{T}_s, \\ ||\Theta||_F \geq c \psi(p, s, n)}} P_{\Theta}\left\{\varphi \neq 1 \right\} \right\} \geq 1 - \frac{1}{2}\sqrt{\chi^2(P_{\pi}||P_0)}. 
    \end{equation}
    For the following calculations, let \(\Theta, \Theta' \overset{iid}{\sim} \pi\) and \(S, S'\) are the corresponding random sets. Also, let \(\left\{r_{i\ell}, r'_{i\ell}\right\}_{1 \leq i \leq p, \ell \in \mathbb{N}}\) denote an iid collection of \(\Rademacher\left(1/2\right)\) random variables. By the Ingster-Suslina method (Proposition \ref{prop:Ingster_Suslina}), independence of \(\{S, S'\}\) with \(\left\{r_{i\ell}, r'_{i\ell}\right\}_{1 \leq i \leq p, \ell \in \mathbb{N}}\), and Corollary \ref{corollary:hypergeometric}, we have
    \begin{align*}
        \chi^2(P_{\pi}||P_0) &= E\left(\exp\left(n \langle \Theta, \Theta'\rangle_F \right)\right) - 1\\
        &= E\left(\exp\left(n c^2\rho^2 \sum_{i \in S \cap S'} \sum_{\ell \leq \nu_{\mathcal{H}}} r_{i\ell} r'_{i\ell} \right)\right) - 1\\
        &= E\left(\prod_{i \in S \cap S'} E\left(\exp\left(nc^2 \rho^2 \sum_{\ell \leq \nu_{\mathcal{H}}} r_{i\ell} r'_{i\ell}\right)\right)\right) - 1 \\
        &= E\left(\cosh\left(nc^2\rho^2 \right)^{\nu_{\mathcal{H}}|S \cap S'|}\right) - 1\\
        &\leq E\left(\exp\left(\frac{c^4 n^2 \rho^4 \nu_{\mathcal{H}}}{2} |S \cap S'| \right) \right) - 1 \\
        &\leq \left(1 - \frac{s}{p} + \frac{s}{p} e^{\frac{c^4 n^2 \rho^4 \nu_{\mathcal{H}}}{2}}\right)^s - 1.
    \end{align*}
    Consider by Lemma \ref{lemma:fixed_point_equiv} that \(c^4 n^2 \rho^4 \nu_{\mathcal{H}} = \frac{c^4 n^2 \Gamma_{\mathcal{H}}^2}{\nu_{\mathcal{H}}} \leq c^4 \log\left(1 + \frac{p}{s^2}\right)\). Therefore, 
    \begin{align*}
        \left(1 - \frac{s}{p} + \frac{s}{p} e^{\frac{c^4 n^2 \rho^4 \nu_{\mathcal{H}}}{2}}\right)^s - 1 \leq \left(1 - \frac{s}{p} + \frac{s}{p} e^{\frac{c^4}{2} \log\left(1 + \frac{p}{s^2}\right)}\right)^s - 1 \leq \left(1 + \frac{c^4}{2s}\right)^s - 1 \leq e^{c_\eta^4/2} - 1 = 4\eta^2.
    \end{align*}
    We have used \(\frac{c^4}{2} < \frac{c_\eta^4}{2} \leq 1\) and the inequality \((1+x)^y \leq 1+xy\) for \(x > 0\) and \(y \in (0, 1)\). Using \(\chi^2(P_{\pi}||P_0) \leq 4\eta^2\) with (\ref{eqn:minimax_lbound_chisquare}) yields the desired result.

    \textbf{Case 2:} Suppose \(s < \sqrt{p}\) and \(\psi(p, s, n)^2 = \frac{s}{n} \log\left(1 + \frac{p}{s^2}\right)\). Set \(\rho = 1\). Let \(\pi\) be the prior in which a draw \(\Theta \sim \pi\) is obtained by uniformly drawing \(S \subset [p]\) of size \(s\) and drawing 
    \begin{equation*}
        \theta_{i,k} \sim 
        \begin{cases}
            \Uniform\{-c\rho, c\rho\} &\textit{if } i \in S \text{ and } k =1, \\
            \delta_0 &\textit{otherwise}.
        \end{cases}
    \end{equation*}
    The desired result can be proved by arguing in a manner similar to that as in the proof of Theorem \ref{thm:lbound_nontrivial} (see also \cite{collier_minimax_2017}). Details are omitted for the sake of brevity.
\end{proof}

\subsection{Adaptive lower bound}

\begin{proof}[Proof of Theorem \ref{thm:adapt_lbound}]
    Recall the definitions of \(\mathscr{A}_{\mathcal{H}}\) in (\ref{def:script_A}) and \(\widetilde{\mathscr{V}}_{\mathcal{H}}\) in (\ref{def:tilde_script_V}). By assumption, we have \(\mathscr{A}_{\mathcal{H}} \leq L \log(e|\widetilde{\mathscr{V}}_{\mathcal{H}}|)\) for a universal constant \(L > 0\). For ease of notation, let us denote \(\psi = \psi_{\text{adapt}}(p, s, n)\) where \(\psi_{\text{adapt}}\) is given by (\ref{rate:psi_adapt}). Set \(c_\eta := (16L)^{-1/4} \wedge \left(\frac{\eta^2 \log\left(1 + 2\eta^2\right)}{32L}\right)^{1/4}\) and let \(0 < c < c_\eta\). We now define a prior \(\pi\). A draw \(\Theta \sim \pi\) is obtained as follows. First, draw \(v \sim \text{Uniform}(\widetilde{\mathscr{V}}_{\mathcal{H}})\). Set \(s := \min\left\{s' \in [p] : s' \geq \sqrt{p\mathscr{A}_{\mathcal{H}}} \text{ and } \frac{v}{2} < \nu_{\mathcal{H}}(s', \mathscr{A}_{\mathcal{H}}) \leq v\right\}\). Then draw uniformly at random a subset \(S \subset [p]\) of size exactly \(s\). Let \(d_s := 2^{k_s} \in \widetilde{\mathscr{V}}_{\mathcal{H}}\) satisfy \(2^{k_s-1} < \nu_{\mathcal{H}}(s, \mathscr{A}_{\mathcal{H}}) \leq 2^{k_s}\). Draw independently 
    \begin{equation*}
        \theta_{k, j} \sim
        \begin{cases}
            \Uniform\left\{-\sqrt{2}c\rho_s, \sqrt{2}c \rho_s\right\} &\textit{if } j \in S \text{ and } 1 \leq k < \nu_{\mathcal{H}}(s, \mathscr{A}_{\mathcal{H}}), \\
            \delta_0 &\textit{otherwise}. 
        \end{cases}
    \end{equation*}
    Here, \(\rho_s = \sqrt{\frac{\psi^2/s}{\nu_{\mathcal{H}}(s, \mathscr{A}_{\mathcal{H}})}}\). This description concludes the definition of \(\pi\). \newline

    Now we must show that \(\pi\) is indeed supported on \(\bigcup_{s} \mathscr{T}_s\). Note that when we draw \(\Theta \sim \pi\), the associated sparsity level \(s\) always satisfies \(s \geq \sqrt{p\mathscr{A}_{\mathcal{H}}}\). Furthermore, conditional on \(s\) we have 
    \begin{equation*}
        ||\Theta||_F^2 = 2c^2 \frac{\psi^2}{\nu_{\mathcal{H}}(s, \mathscr{A}_{\mathcal{H}})} (\nu_{\mathcal{H}}(s, \mathscr{A}_{\mathcal{H}}) - 1) \geq c^2 \psi^2.
    \end{equation*}
    Furthermore, consider that for \(j \in S\) we have 
    \begin{align*}
        \sum_{\ell = 1}^{\infty} \frac{\theta_{\ell, j}^2}{\mu_\ell} &\leq 2c^2 \rho^2 \sum_{\ell < \nu_{\mathcal{H}}(s, \mathscr{A}_{\mathcal{H}})} \frac{1}{\mu_\ell} \\
        &= 2c^2 \frac{1}{\nu_{\mathcal{H}}(s, \mathscr{A}_{\mathcal{H}})} \sum_{\ell < \nu_{\mathcal{H}}(s, \mathscr{A}_{\mathcal{H}})} \frac{\psi^2/s}{\mu_\ell} \\
        &\leq 2c^2 \frac{1}{\nu_{\mathcal{H}}(s, \mathscr{A}_{\mathcal{H}})} \sum_{\ell < \nu_{\mathcal{H}}(s, \mathscr{A}_{\mathcal{H}})} \frac{\mu_{\nu_{\mathcal{H}}(s, \mathscr{A}_{\mathcal{H}})}}{\mu_{\ell}} \\
        &\leq 2c^2 \\
        &\leq 1
    \end{align*}
    where the last line follows from \(c < c_\eta\). Note we have used the ordering of the eigenvalues as well as \(\Gamma_{\mathcal{H}}(s, \mathscr{A}_{\mathcal{H}}) \leq \mu_{\nu_{\mathcal{H}}(s, \mathscr{A}_{\mathcal{H}}) - 1}\) by Lemma \ref{lemma:exact_Gamma_ada}. Of course, for \(j \not \in S\) we have \(\Theta_j = 0\). Hence, \(\Theta \in \mathscr{T}_s\) and so \(\pi\) is properly supported. 
        
    Writing \(P_\pi = \int P_\Theta \pi(d\Theta)\) for the mixture, we have 
    \begin{align}
        \inf_{\varphi}\left\{ P_0\left\{\varphi = 1\right\} + \max_{s \geq \sqrt{p\mathscr{A}_{\mathcal{H}}}} \sup_{\substack{\Theta \in \mathscr{T}_s, \\ ||\Theta||_F \geq c \psi_{\text{adapt}}(p, s, n)}} P_\Theta \left\{\varphi = 0\right\} \right\} &\geq 1 - \frac{1}{2}\sqrt{\chi^2(P_\pi \,||\, P_0)}. \label{eqn:adapt_lbound_chisquare}
    \end{align}
    For the following calculations, let \(\Theta, \Theta' \overset{iid}{\sim} \pi\). Let \(v, v' \in \widetilde{\mathscr{V}}_{\mathcal{H}}\) be the corresponding quantities. Let \(s\) and \(t\) denote the corresponding sparsities. Denote the corresponding support sets \(S\) and \(T\). Also, let \(\{r_{i\ell}, \tilde{r}_{i\ell}\}_{1 \leq i \leq p, \ell \in \mathbb{N}}\) denote an iid collection of \(\Rademacher\left(1/2\right)\) random variables which is independent of \(s, t, S,\) and \(T\). For ease of notation, let \(\nu_s = \nu_{\mathcal{H}}(s, \mathscr{A}_{\mathcal{H}})\). Likewise, let \(\nu_t = \nu_{\mathcal{H}}(t, \mathscr{A}_{\mathcal{H}})\). By the Ingster-Suslina method (Proposition \ref{prop:Ingster_Suslina}), we have 
    \begin{align*}
        \chi^2\left(P_\pi \,||\, P_0\right) + 1&= E\left(\exp\left( n \langle \Theta, \Theta'\rangle_F \right)\right) \\
        &= E\left(\exp\left(2nc^2\rho_s\rho_t \sum_{i \in S \cap T} \sum_{1 < \ell < \nu_{s} \wedge \nu_t} r_{i\ell}\tilde{r}_{i\ell}\right)\right) \\
        &= E\left( \prod_{i \in S \cap T} \prod_{1 < \ell < \nu_s \wedge \nu_t} \cosh\left( 2nc^2 \rho_s \rho_t \right) \right) \\
        &\leq E\left( \exp\left(2n^2c^4 \rho_s^2\rho_t^2 (\nu_s \wedge \nu_t)|S \cap T| \right) \right) \\
        &\leq E\left( \exp\left(2n^2c^4 \rho_s^2\rho_t^2 (d_s \wedge d_t)|S \cap T| \right) \right). 
    \end{align*}
    Here, we have used the inequality \(\cosh(x) \leq e^{x^2/2}\) for \(x > 0\). Consider that by Lemma \ref{lemma:exact_Gamma_ada}
    \begin{align*}
        2n^2 c^4 \rho_s^2 \rho_t^2 (d_s \wedge d_t) &= 2n^2c^4 \cdot \frac{\Gamma_{\mathcal{H}}(s, \mathscr{A}_{\mathcal{H}})}{\nu_s} \frac{\Gamma_{\mathcal{H}}(t, \mathscr{A}_{\mathcal{H}})}{\nu_t} \cdot (d_s \wedge d_t) \\
        &\leq 2c^4 \sqrt{\log\left(1 + \frac{p \mathscr{A}_{\mathcal{H}}}{s^2}\right) \log\left(1 + \frac{p \mathscr{A}_{\mathcal{H}}}{t^2}\right)} \cdot \frac{d_s\wedge d_t}{\sqrt{\nu_s \nu_t}} \\
        &\leq 4c^4 \sqrt{\log\left(1 + \frac{p \mathscr{A}_{\mathcal{H}}}{s^2}\right) \log\left(1 + \frac{p \mathscr{A}_{\mathcal{H}}}{t^2}\right)} \cdot \frac{d_s\wedge d_t}{\sqrt{d_s d_t}} \\
        &\leq 4c^4 \sqrt{\frac{p\mathscr{A}_{\mathcal{H}}}{s^2} \cdot \frac{p\mathscr{A}_{\mathcal{H}}}{t^2}} \cdot \frac{d_s \wedge d_t}{\sqrt{d_s d_t}} \\
        &\leq 8c^4 \log\left(1 + \frac{p \mathscr{A}_{\mathcal{H}}}{st}\right) \cdot \frac{d_s\wedge d_t}{\sqrt{d_s d_t}}
    \end{align*}
    We have used \(s, t \geq \sqrt{p\mathscr{A}_{\mathcal{H}}}\) along with the inequality \(\frac{u}{2} \leq \log(1+u) \leq u\) for \(0 \leq u \leq 1\). Therefore, 
    \begin{align}
         E\left(\exp\left(n\langle \Theta, \Theta'\rangle_F\right)\right) \leq E\left(\exp\left( 8c^4 \log\left(1 + \frac{p \mathscr{A}_{\mathcal{H}}}{st}\right) \cdot \frac{d_s\wedge d_t}{\sqrt{d_s d_t}} |S \cap T|\right)\right). \label{eqn:adapt_lbound_preparation}
    \end{align}

    Since \(8c^4 \frac{d_s\wedge d_t}{\sqrt{d_sd_t}} \leq 1\), we can use Lemma \ref{lemma:hypgeom} and the inequality \((1 + x)^\delta \leq 1 + \delta x\) for \(x > 0\) and \(\delta \leq 1\) to obtain 
    \begin{align*}
        \chi^2\left(P_\pi \,||\, P_0\right) + 1 &\leq E\left( \exp\left(8c^4 \frac{d_s \wedge d_t}{\sqrt{d_s d_t}} \log\left(1 + \frac{p\mathscr{A}_{\mathcal{H}}}{st} \right)|S \cap T|\right)\right) \\
        &\leq E\left( \left(1 - \frac{s}{p} + \frac{s}{p} \exp\left(8c^4 \log\left(1 + \frac{p\mathscr{A}_{\mathcal{H}}}{st}\right) \cdot \frac{d_s\wedge d_t}{\sqrt{d_s d_t}}\right)\right)^t\right) \\
        &=  E\left( \left(1 - \frac{s}{p} + \frac{s}{p} \left(1 + \frac{p \mathscr{A}_{\mathcal{H}}}{st}\right)^{8c^4 \frac{d_s\wedge d_t}{\sqrt{d_s d_t}}}\right)^t\right) \\
        &\leq E\left( \left(1 + \frac{1}{t} \cdot 8c^4 \frac{d_s\wedge d_t}{\sqrt{d_s d_t}} \mathscr{A}_{\mathcal{H}}\right)^t\right) \\
        &\leq E\left(\exp\left( 8c^4 \frac{d_s\wedge d_t}{\sqrt{d_s d_t}} \mathscr{A}_{\mathcal{H}}\right)\right). 
    \end{align*}
    Recall we write \(d_s = 2^{k_s}\) and \(d_t = 2^{k_t}\). Moreover, recall \(d_s, d_t \in \widetilde{\mathscr{V}}_{\mathcal{H}}\). Now observe 
    \begin{equation*}
        E\left(\exp\left( 8c^4 \frac{d_s\wedge d_t}{\sqrt{d_s d_t}} \mathscr{A}_{\mathcal{H}}\right)\right) = E\left( \exp\left( 8c^4 2^{-\frac{|k_s-k_t|}{2}} \mathscr{A}_{\mathcal{H}} \right)\right).
    \end{equation*}
    Recall we have \(\mathscr{A}_{\mathcal{H}} \leq L \log(e|\widetilde{\mathscr{V}}_{\mathcal{H}}|)\). Define the sets 
    \begin{align*}
        E_1 &:= \left\{ (k_s, k_t) \in \mathbb{N} \times \mathbb{N} : 2^{k_s}, 2^{k_t} \in \widetilde{\mathscr{V}}_{\mathcal{H}} \text{ and } |k_s-k_t| \leq \frac{\eta^2}{2e^{8Lc^4}} \log_2(e|\widetilde{\mathscr{V}}_{\mathcal{H}}|) \right\}, \\
        E_2 &:= \left\{ (k_s, k_t) \in \mathbb{N} \times \mathbb{N} : 2^{k_s}, 2^{k_t} \in \widetilde{\mathscr{V}}_{\mathcal{H}} \text{ and } |k_s-k_t| > \frac{\eta^2}{2e^{8Lc^4}} \log_2(e|\widetilde{\mathscr{V}}_{\mathcal{H}}|) \right\}.
    \end{align*}
    Then 
    \begin{align*}
        &E\left( \exp\left( 8c^4 2^{-\frac{|k_s-k_t|}{2}} \mathscr{A}_{\mathcal{H}} \right)\right) \\
        &= E\left( \exp\left( 8c^4 2^{-\frac{|k_s-k_t|}{2}} \mathscr{A}_{\mathcal{H}} \right) \mathbbm{1}_{\{(k_s, k_t) \in E_1\}}\right) + E\left( \exp\left( 8c^4 2^{-\frac{|k_s-k_t|}{2}} \mathscr{A}_{\mathcal{H}} \right) \mathbbm{1}_{\{(k_s, k_t) \in E_2\}}\right).
    \end{align*}
    Let us examine the second term. Note that for any \(\ell > 0\) we have \(x^{-\ell} \log(x) \leq (e\ell)^{-1}\) for all \(x > 0\). Using this, we obtain
    \begin{align*}
        E\left( \exp\left( 8c^4 2^{-\frac{|k_s-k_t|}{2}} \mathscr{A}_{\mathcal{H}} \right) \mathbbm{1}_{\{(k_s, k_t) \in E_2\}}\right) &\leq \exp\left(8Lc^4 \left(e|\widetilde{\mathscr{V}}_{\mathcal{H}}|\right)^{-\frac{\eta^2}{4e^{8Lc^4}}} \mathscr{A}_{\mathcal{H}}\right) \\
        &\leq \exp\left(8Lc^4 \left(e|\widetilde{\mathscr{V}}_{\mathcal{H}}|\right)^{-\frac{\eta^2}{4e^{8Lc^4}}} \log(e|\widetilde{\mathscr{V}}_{\mathcal{H}}|)\right) \\
        &\leq \exp\left(8Lc^4 \cdot \frac{4e^{8Lc^4-1}}{\eta^2}\right) \\
        &\leq \exp\left(\frac{32Lc^4}{\eta^2}\right) \\
        &\leq 2\eta^2 + 1.
    \end{align*}
    The final line results from \(c^4 < c_\eta^4 \leq \frac{1}{32L} \eta^2 \log\left(1 + 2\eta^2\right)\). Note we have also used \(8Lc^4 - 1 \leq 0\) to obtain the penultimate line. 
    We now examine \(E_1\). Consider that \(|E_1| \leq \frac{\eta^2}{2e^{8Lc^4}} |\widetilde{\mathscr{V}}_{\mathcal{H}}| \log_2(e|\widetilde{\mathscr{V}}_{\mathcal{H}}|)\). Therefore, 
    \begin{align*}
        E\left( \exp\left( 8c^4 2^{-\frac{|k_s-k_t|}{2}} \mathscr{A}_{\mathcal{H}} \right) \mathbbm{1}_{\{(k_s, k_t) \in E_1\}}\right) &= \frac{1}{|\widetilde{\mathscr{V}}_{\mathcal{H}}|^2} \sum_{v, v' \in \widetilde{\mathscr{V}}_{\mathcal{H}}} \exp\left( 8c^4 2^{-\frac{|k_s-k_t|}{2}} \mathscr{A}_{\mathcal{H}}\right) \mathbbm{1}_{\{(2^{k_s}, 2^{k_t}) \in E_1\}} \\
        &\leq \frac{|E_1|}{|\widetilde{\mathscr{V}}_{\mathcal{H}}|^2} \exp\left(8c^4 \mathscr{A}_{\mathcal{H}}\right) \\
        &\leq \frac{\frac{\eta^2}{2e^{8Lc^4}} |\widetilde{\mathscr{V}}_{\mathcal{H}}| \log_2(e|\widetilde{\mathscr{V}}_{\mathcal{H}}|)}{|\widetilde{\mathscr{V}}_{\mathcal{H}}|^2} \cdot \exp\left(8Lc^4 \log(e|\widetilde{\mathscr{V}}_{\mathcal{H}}|)\right) \\
        &\leq \frac{\eta^2}{2} \cdot \frac{\log_2(e|\widetilde{\mathscr{V}}_{\mathcal{H}}|)}{|\widetilde{\mathscr{V}}_{\mathcal{H}}|^{1-8Lc^4}} \\
        &\leq \eta^2 + \frac{\eta^2}{2 \log 2} \cdot \frac{\log(|\widetilde{\mathscr{V}}_{\mathcal{H}}|)}{|\widetilde{\mathscr{V}}_{\mathcal{H}}|^{1-8Lc^4}} \\
        &\leq \eta^2 + \frac{\eta^2}{2 \log 2} \\
        &\leq 2\eta^2.
    \end{align*}

    Note we have used \(\mathscr{A}_{\mathcal{H}} \leq L\log(e|\widetilde{\mathscr{V}}_{\mathcal{H}}|)\). We have also used \(8Lc^4 < 8Lc_\eta^4 \leq \frac{1}{2}\) along with the inequality \(\log(x) \leq \sqrt{x}\) for all \(x > 0\) to obtain the penultimate line. Therefore, we have shown 
    \begin{equation*}
        \chi^2(P_\pi \,||\, P_0) \leq 2\eta^2 + \left(2\eta^2 + 1\right) - 1 = 4\eta^2. 
    \end{equation*}
    Plugging into (\ref{eqn:adapt_lbound_chisquare}) yields 
    \begin{equation*}
        \inf_{\varphi} \left\{ P_0\left\{\varphi = 1\right\} + \max_{s \geq \sqrt{p\mathscr{A}_{\mathcal{H}}}}\sup_{\substack{\Theta \in \mathscr{T}_s, \\ ||\Theta||_F \geq c \psi_{\text{adapt}}(p, s, n)}} P_{\Theta}\left\{ \varphi = 0 \right\}\right\} \geq 1 - \eta.
    \end{equation*}
    The proof is complete. 
\end{proof}

\subsection{Adaptive upper bound}

\begin{proof}[Proof of Theorem \ref{thm:adapt_ubound}]
    Fix \(\eta \in (0, 1)\). In various places of the proof, we will point out that \(C_\eta\) can be taken sufficiently large to obtain desired bounds, so for now let \(C > C_\eta\). Let \(L^*\) denote the universal constant from Lemma \ref{lemma:bulk_alpha}. Let \(D\) denote the maximum of the corresponding universal constants from Lemmas \ref{lemma:null_bulk} and \ref{lemma:null_tail}. Set 
    \begin{align*}
        K_2 &:= 1 \vee \frac{1}{(\log 2)^{1/4}} \vee \left(\frac{2}{c}\right)^{1/4} \vee \left(\frac{L^*}{c}\right)^{1/4}, \\
        K_3 &:= \frac{L^*}{K_2^2}, \\
        K_2' &:= \frac{1}{\sqrt{\log 2}} \vee \left(\frac{2}{c'}\right)^{1/2} \vee (c'K_3^2)^{-1/4}.
    \end{align*}
    Here, \(c := c^* \wedge c^{**}\) with \(c^*\) and \(c^{**}\) being the universal constants in the exponential terms of Lemmas \ref{lemma:null_bulk} and \ref{lemma:bulk_thold_var} respectively. Likewise, \(c' := (c')^* \wedge (c')^{**}\) where \((c')^*\) and \((c')^{**}\) are the universal constants in the exponential terms of Lemmas \ref{lemma:null_tail} and \ref{lemma:tail_thold_var}. Note that these choices of \(K_2, K_3, K_2'\) are almost identical to the choices in the proofs of Propositions \ref{prop:test_sparse_bulk} and \ref{prop:test_sparse_tail}. The only modifications are the terms \((2/c)^{1/4}\) and \((2/c')^{1/4}\), and the utility of this modification will become clear through the course of the proof. 

    For any \(\nu\), define
    \begin{align*}
        \mathcal{S}_{\text{bulk}} &:= \left\{s \in \mathscr{S} : s < \sqrt{p \mathscr{A}_{\mathcal{H}}} \text{ and } \sqrt{\log\left(1 + \frac{p \mathscr{A}_{\mathcal{H}}}{s^2}\right)} \leq K_3 \sqrt{d_\nu}\right\}, \\
        \mathcal{S}_{\text{tail}} &:= \left\{s \in \mathscr{S} : s < \sqrt{p \mathscr{A}_{\mathcal{H}}} \text{ and } \sqrt{\log\left(1 + \frac{p \mathscr{A}_{\mathcal{H}}}{s^2}\right)} > K_3 \sqrt{d_\nu}\right\}. 
    \end{align*}

    We examine the Type I and Type II errors separately. Focusing on the Type I error, union bound yields
    \begin{align}\label{eqn:adapt_typeI_unionbound}
        &P_0\left\{ \max_{\nu \in \mathscr{V}_{\mathcal{H}}} \max_{s \in \mathscr{S}} \varphi_{\nu, s} = 1\right\} \nonumber \\
        &\leq \left( \sum_{\nu \in \mathscr{V}_{\mathcal{H}}} \sum_{s \in \mathcal{S}_{\text{bulk}}} P_0\left\{\varphi_{\nu, s} = 1\right\}\right) + \left(\sum_{\nu \in \mathscr{V}_{\mathcal{H}}}\sum_{s \in \mathcal{S}_{\text{tail}}} P_0\left\{\varphi_{\nu, s} = 1\right\}\right) + \left(\sum_{\nu \in \mathscr{V}_{\mathcal{H}}} P_0\left\{\varphi_{\nu, p} = 1\right\} \right).
    \end{align}

    We bound each term separately. \newline

    \textbf{Type I error: Bulk}

    For \(s \in \mathcal{S}_{\text{bulk}}\), 
    \begin{align*}
        P_0\left\{\varphi_{\nu, s} = 1\right\} &= P_0\left\{ T_{r_{\nu, s}}(d_\nu) > C s\sqrt{\nu\log\left(1 + \frac{p\mathscr{A}_{\mathcal{H}}}{s^2}\right)} \right\}.
    \end{align*}

    Consider that Lemma \ref{lemma:null_bulk} gives us the following. If we select \(x > 0\) satisfying 
    \begin{equation}\label{cond:bulk_adapt}
        C^*\left(\sqrt{pr_{\nu, s}^4 e^{-c^* \frac{r_{\nu, s}^2}{d}} x } + \frac{d_\nu}{r_{\nu, s}^2} x \right) \leq C s\sqrt{\nu\log\left(1 + \frac{p\mathscr{A}_{\mathcal{H}}}{s^2}\right)},
    \end{equation}
    then we have \(P_0\left\{\varphi_{\nu, s} = 1 \right\} \leq e^{-x}\). Let us select 
    \begin{equation*}
        x = \frac{C}{4(C^* \vee (C^*)^2)} \cdot \frac{1}{(2K_2^4\lceil D\rceil) \vee (\sqrt{2\lceil D \rceil}/K_2^2)}\left(\frac{p \mathscr{A}_{\mathcal{H}}^2}{s^2} \wedge s \log\left(1 + \frac{p\mathscr{A}_{\mathcal{H}}}{s^2}\right)\right). 
    \end{equation*}
    To verify (\ref{cond:bulk_adapt}), consider that \(c^* K_2^4 \geq 2\) by our choice of \(K_2\), and so 
    \begin{align*}
        \sqrt{pr_{\nu, s}^4 e^{-c^* \frac{r_{\nu, s}^2}{d}} x } &= \sqrt{p K_2^4 d_\nu \log\left(1 + \frac{p\mathscr{A}_{\mathcal{H}}}{s^2}\right) \frac{s^4}{(s^2 + p\mathscr{A}_{\mathcal{H}})^2} x} \\
        &\leq \frac{\sqrt{C}}{2C^*} \frac{1}{\sqrt{2\lceil D\rceil}} s \sqrt{d_\nu \log\left(1 + \frac{p\mathscr{A}_{\mathcal{H}}}{s^2}\right)} \\
        &\leq \frac{\sqrt{C}}{2C^*} s\sqrt{\nu \log\left(1 + \frac{p\mathscr{A}_{\mathcal{H}}}{s^2}\right)}.
    \end{align*}
    Here, we have used that \(d_\nu = \nu \vee \lceil D \rceil\). We have also used \(C \geq 1\), which holds provided we select \(C_\eta\) large enough (i.e. \(C_\eta \geq 1\)).  
    
    Likewise, consider 
    \begin{align*}
        \frac{d_\nu}{r_{\nu, s}^2} x &= \frac{d_\nu}{K_2^2 \sqrt{d_\nu \log\left(1 + \frac{p\mathscr{A}_{\mathcal{H}}}{s^2}\right)}} x \\
        &\leq \frac{C}{4\sqrt{2\lceil D \rceil }C^*} s \sqrt{d_\nu \log\left(1 + \frac{p\mathscr{A}_{\mathcal{H}}}{s^2}\right)} \\
        &\leq \frac{C}{4C^*} s \sqrt{\nu \log\left(1 + \frac{p\mathscr{A}_{\mathcal{H}}}{s^2}\right)}.
    \end{align*}
    Therefore, (\ref{cond:bulk_adapt}) is satisfied, and so we have 
    \begin{equation*}
        P_0\left\{\varphi_{\nu, s} = 1\right\} \leq \exp\left(-C \kappa \left( \frac{p \mathscr{A}_{\mathcal{H}}^2}{s^2} \wedge s \log\left(1 + \frac{p\mathscr{A}_{\mathcal{H}}}{s^2}\right)\right)\right)
    \end{equation*}
    where \(\kappa\) is the universal constant \(\kappa = \frac{1}{4(C^* \vee (C^*)^2)} \cdot \frac{1}{(2K_2^4\lceil D\rceil) \vee (\sqrt{2}/K_2^2)}\). With this bound in hand, observe 
    \begin{align*}
        &\sum_{\nu \in \mathscr{V}_{\mathcal{H}}} \sum_{s \in \mathcal{S}_{\text{bulk}}} P_0\left\{\varphi_{\nu, s} = 1\right\} \\
        &\leq \sum_{\nu \in \mathscr{V}_{\mathcal{H}}} \sum_{s \in \mathcal{S}_{\text{bulk}}} \exp\left(- C \kappa \left(\frac{p\mathscr{A}_{\mathcal{H}}^2}{s^2} \wedge s \log\left(1 + \frac{p\mathscr{A}_{\mathcal{H}}}{s^2}\right)\right) \right)\\
        &\leq |\mathscr{V}_{\mathcal{H}}| \sum_{\substack{k \in \mathbb{N}\cup\{0\} : \\ 2^k < \sqrt{p\mathscr{A}_{\mathcal{H}}}}} \exp\left(- C \kappa \left(\frac{p\mathscr{A}_{\mathcal{H}}^2}{2^{2k}} \wedge 2^k \log\left(1 + \frac{p\mathscr{A}_{\mathcal{H}}}{ 2^{2k}}\right)\right) \right) \\
        &\leq e^{2\mathscr{A}_{\mathcal{H}}} \sum_{\substack{k \in \mathbb{N}\cup\{0\} : \\ 2^k < \sqrt{p\mathscr{A}_{\mathcal{H}}}}} \exp\left(- C \kappa \frac{p\mathscr{A}_{\mathcal{H}}^2}{2^{2k}}\right) + e^{2\mathscr{A}_{\mathcal{H}}} \sum_{\substack{k \in \mathbb{N}\cup\{0\} : \\ 2^k < \sqrt{p\mathscr{A}_{\mathcal{H}}}}} \exp\left(-C \kappa 2^{k} \log\left(1 + \frac{p\mathscr{A}_{\mathcal{H}}}{2^{2k}}\right)\right).
    \end{align*}
    Here we have used \(\log(e|\mathscr{V}_{\mathcal{H}}|) \leq 2\mathscr{A}_{\mathcal{H}}\) from Lemma \ref{lemma:script_AV_equiv}. We bound each term separately. First, consider 
    \begin{align*}
        e^{2\mathscr{A}_{\mathcal{H}}} \sum_{\substack{k \in \mathbb{N}\cup\{0\} : \\ 2^k < \sqrt{p\mathscr{A}_{\mathcal{H}}}}} \exp\left(- C \kappa \frac{p\mathscr{A}_{\mathcal{H}}^2}{2^{2k}}\right) &= e^{2\mathscr{A}_{\mathcal{H}}} \sum_{\substack{k \in \mathbb{N} : \\ 2^k < \sqrt{p\mathscr{A}_{\mathcal{H}}}}} \exp\left(-C\kappa \mathscr{A}_{\mathcal{H}} \cdot \left(\frac{\sqrt{p\mathscr{A}_{\mathcal{H}}}}{2^k}\right)^2\right) \\
        &\leq \sum_{k=0}^{\infty} \exp\left( - (C\kappa - 2) \mathscr{A}_{\mathcal{H}} 4^k \right) \\
        &\leq \frac{\eta}{12}
    \end{align*}
    provided we take \(C_\eta\) sufficiently large. 

    Likewise, consider 
    \begin{align*}
        e^{2\mathscr{A}_{\mathcal{H}}} \sum_{\substack{k \in \mathbb{N}\cup\{0\} : \\ 2^k  < \sqrt{p\mathscr{A}_{\mathcal{H}}}}} \exp\left(-C \kappa 2^{k} \log\left(1 + \frac{p\mathscr{A}_{\mathcal{H}}}{2^{2k}}\right)\right) &\leq e^{2\mathscr{A}_{\mathcal{H}}} \sum_{\substack{k \in \mathbb{N}\cup\{0\} : \\ 2^k < \sqrt{p\mathscr{A}_{\mathcal{H}}}}} \exp\left(-\frac{C\kappa}{2} 2^{k} \log\left(1 + \frac{p\mathscr{A}_{\mathcal{H}}}{2^{2k}}\right) - \frac{C\kappa}{2} \mathscr{A}_{\mathcal{H}} \right) \\
        &\leq e^{-\left(\frac{C\kappa}{2} - 2\right)\mathscr{A}_{\mathcal{H}}} \sum_{\substack{k \in \mathbb{N}\cup\{0\} : \\ 2^k < \sqrt{p\mathscr{A}_{\mathcal{H}}}}} \exp\left(-\frac{C\kappa}{2} 2^{k} \log\left(1 + \frac{p\mathscr{A}_{\mathcal{H}}}{2^{2k}}\right)\right) \\
        &\leq \frac{\eta}{12}
    \end{align*}
    again, provided we take \(C_\eta\) sufficiently large. We have used that \(\mathscr{A}_{\mathcal{H}}\) exhibits at most logarithmic growth in \(p\), i.e. we use the crude bound \(\mathscr{A}_{\mathcal{H}} \leq \log(ep)\). Therefore, we have established 
    \begin{equation}\label{eqn:adapt_Sbulk_bound}
        \sum_{\nu \in \mathscr{V}_{\mathcal{H}}} \sum_{s \in \mathcal{S}_{\text{bulk}}} P_0\left\{\varphi_{\nu, s} = 1\right\} \leq \frac{\eta}{6}. 
    \end{equation}

    \textbf{Type I error: Tail} 
    
    For \(s \in \mathcal{S}_{\text{tail}}\), we have 
    \begin{equation*}
        P_0\left\{\varphi_{\nu, s} = 1 \right\} = P_0\left\{ T_{r_s'}(d_\nu) > C s \log\left(1 + \frac{p\mathscr{A}_{\mathcal{H}}}{s^2}\right)\right\}. 
    \end{equation*}
    Consider that Lemma \ref{lemma:null_tail} gives us an analogue to (\ref{cond:bulk_adapt}), namely if we select \(x > 0\) satisfying 
    \begin{equation}\label{cond:tail_adapt}
        C^{**}\left(\sqrt{p (r_s')^4 e^{-c^{**} (r_s')^2} x } + x\right) \leq C n \tau^2_{\text{adapt}}(p, s, n)^2,  
    \end{equation}
    then we have \(P_0\left\{\varphi_{\nu, s} = 1\right\} \leq e^{-x}\). Let us select 
    \begin{equation*}
        x = \frac{C}{4} \cdot \frac{1}{2\left((C^{**}) \vee (C^{**})^2\right) \left((K_2')^4 \vee 1\right)}\left( \frac{p\mathscr{A}_{\mathcal{H}}^2}{s^2} \wedge s\log\left(1 + \frac{p\mathscr{A}_{\mathcal{H}}}{s^2}\right)\right).
    \end{equation*}
    To verify (\ref{cond:tail_adapt}), consider that \(c^{**}(K_2')^2 \geq 2\) by our choice of \(K_2'\), and so 
    \begin{align*}
        \sqrt{p (r_s')^4 e^{-c^{**} (r_s')^2} x } &= (K_2')^2 \log\left(1 + \frac{p\mathscr{A}_{\mathcal{H}}}{s^2}\right) \sqrt{p \frac{s^4}{(s^2 + p\mathscr{A}_{\mathcal{H}})^2} x} \\
        &\leq \sqrt{C} \frac{1}{2C^{**}} \log\left(1 + \frac{p\mathscr{A}_{\mathcal{H}}}{s^2}\right) \sqrt{p \frac{s^4}{(s^2 + p\mathscr{A}_{\mathcal{H}})^2} \cdot \frac{p\mathscr{A}_{\mathcal{H}}^2}{s^2}} \\
        &\leq \frac{C}{2C^{**}} s \log\left(1 + \frac{p\mathscr{A}_{\mathcal{H}}}{s^2}\right).
    \end{align*}
    Here, we have used \(C \geq 1\). Likewise, consider that 
    \begin{equation*}
        x \leq \frac{C}{2C^{**}} \cdot s \log\left(1 + \frac{p\mathscr{A}_{\mathcal{H}}}{s^2}\right). 
    \end{equation*}
    Therefore, (\ref{cond:tail_adapt}) is satisfied, and so we have 
    \begin{equation*}
        P_0\left\{\varphi_{\nu, s} = 1\right\} \leq \exp\left(-C \kappa \left( \frac{p \mathscr{A}_{\mathcal{H}}^2}{s^2} \wedge s \log\left(1 + \frac{p\mathscr{A}_{\mathcal{H}}}{s^2}\right)\right)\right)
    \end{equation*}
    where \(\kappa\) is the universal constant \(\kappa = \frac{1}{4} \cdot \frac{1}{2\left((C^{**}) \vee (C^{**})^2\right)}\). With this bound in hand, observe 
    \begin{align*}
        &\sum_{\nu \in \mathscr{V}_{\mathcal{H}}} \sum_{s \in \mathcal{S}_{\text{tail}}} P_0\left\{\varphi_{\nu, s} = 1\right\} \\
        &\leq \sum_{\nu \in \mathscr{V}_{\mathcal{H}}} \sum_{s \in \mathcal{S}_{\text{tail}}} \exp\left(- C \kappa \left(\frac{p\mathscr{A}_{\mathcal{H}}^2}{s^2} \wedge s \log\left(1 + \frac{p\mathscr{A}_{\mathcal{H}}}{s^2}\right)\right) \right)\\
        &\leq |\mathscr{V}_{\mathcal{H}}| \sum_{\substack{k \in \mathbb{N}\cup\{0\} : \\ 2^k < \sqrt{p\mathscr{A}_{\mathcal{H}}}}} \exp\left(- C \kappa \left(\frac{p\mathscr{A}_{\mathcal{H}}^2}{2^{2k}} \wedge 2^{k} \log\left(1 + \frac{p\mathscr{A}_{\mathcal{H}}}{2^{2k}}\right)\right) \right) \\
        &\leq e^{2\mathscr{A}_{\mathcal{H}}}\sum_{\substack{k \in \mathbb{N}\cup\{0\} : \\ 2^k < \sqrt{p\mathscr{A}_{\mathcal{H}}}}} \exp\left(- C \kappa \frac{p\mathscr{A}_{\mathcal{H}}^2}{2^{2k}}\right) + e^{2\mathscr{A}_{\mathcal{H}}} \sum_{\substack{k \in \mathbb{N}\cup\{0\} : \\ 2^k < \sqrt{p\mathscr{A}_{\mathcal{H}}}}} \exp\left(-C \kappa 2^{k} \log\left(1 + \frac{p\mathscr{A}_{\mathcal{H}}}{ 2^{2k}}\right)\right).
    \end{align*}
    Here we have used \(\log(e|\mathscr{V}_{\mathcal{H}}|) \leq 2 \mathscr{A}_{\mathcal{H}}\) from Lemma \ref{lemma:script_AV_equiv}. From here, the same argument from the bulk case can be employed to conclude 
    \begin{equation}\label{eqn:adapt_Stail_bound}
        \sum_{\nu \in \mathscr{V}_{\mathcal{H}}} \sum_{s \in \mathcal{S}_{\text{tail}}} P_0\left\{\varphi_{\nu, s} = 1\right\} \leq \frac{\eta}{6}
    \end{equation}
    provided \(C_\eta\) is taken sufficiently large. 

    \textbf{Type I error: Dense}

    Let us now bound \(\sum_{\nu \in \mathscr{V}_{\mathcal{H}}} P_0\left\{\varphi_{\nu, p} = 1\right\}\). Consider that for any \(\nu \in \mathscr{V}_{\mathcal{H}}\), we have
    \begin{align*}
        P_0\left\{ \varphi_{\nu, p} = 1\right\} &= P_0\left\{n\sum_{j=1}^{p}\sum_{k \leq \nu} X_{k,j}^2 > C \sqrt{\nu p \mathscr{A}_{\mathcal{H}}}\right\} \\
        &= P\left\{ \chi^2_{\nu p} > C \sqrt{\nu p \mathscr{A}_{\mathcal{H}}}\right\} \\
        &\leq  P\left\{ \chi^2_{\nu p} > \frac{C}{2}\left(\sqrt{\nu p \mathscr{A}_{\mathcal{H}}} + \mathscr{A}_{\mathcal{H}} \right)\right\} \\
        &\leq P\left\{ \chi^2_{\nu p} > 2 \sqrt{\nu p \cdot \frac{C}{4} \mathscr{A}_{\mathcal{H}}} + 2 \cdot \frac{C}{4} \mathscr{A}_{\mathcal{H}}\right\} \\
        &\leq e^{-\frac{C}{4} \mathscr{A}_{\mathcal{H}}}. 
    \end{align*}
    To obtain the fourth line, we have used \(\mathscr{A}_{\mathcal{H}} \leq \log(ep)\) which implies \(\sqrt{\nu p \mathscr{A}_{\mathcal{H}}} \geq \mathscr{A}_{\mathcal{H}}\). In the above display, we have also used \(\frac{C}{2} \geq 1\) (which holds provided we take \(C_\eta\) large enough) to obtain the penultimate line and Lemma \ref{lemma:laurent_massart} to obtain the final line. Consequently, 
    \begin{equation}\label{eqn:adapt_Sdense_bound}
        \sum_{\nu \in \mathscr{V}_{\mathcal{H}}} P_0\left\{\varphi_{\nu, p} = 1\right\} \leq |\mathscr{V}_{\mathcal{H}}| e^{-\frac{C}{4} \mathscr{A}_{\mathcal{H}}} \leq e^{-\left(\frac{C}{4} - 2\right)\mathscr{A}_{\mathcal{H}}} \leq \frac{\eta}{6}
    \end{equation}
    since \(\log(e|\mathscr{V}_{\mathcal{H}}|) \leq 2 \mathscr{A}_{\mathcal{H}}\) by Lemma \ref{lemma:script_AV_equiv} and \(\frac{C}{4} - 2 \geq \log(6/\eta)\), which holds provided we take \(C_\eta\) sufficiently large. 

    Putting together (\ref{eqn:adapt_Sbulk_bound}), (\ref{eqn:adapt_Stail_bound}), (\ref{eqn:adapt_Sdense_bound}) into (\ref{eqn:adapt_typeI_unionbound}) yields the Type I error bound
    \begin{equation}\label{eqn:adapt_typeI}
        P_0\left\{\max_{\nu \in \mathscr{V}_{\mathcal{H}}} \max_{s \in \mathscr{S}} \varphi_{\nu, s} = 1\right\} \leq \frac{\eta}{6} + \frac{\eta}{6} + \frac{\eta}{6} = \frac{\eta}{2}. 
    \end{equation}

    \textbf{Type II error:} We now examine the Type II error. Suppose \(s^* \in [p]\). Let \(f \in \mathcal{F}_{s^*}\) with \(||f||_2 \geq C \tau_{\text{adapt}}(p, s^*, n)\). We proceed by considering various cases. \newline
    
    \textbf{Type II error: Dense} 
    
    Suppose \(s^* \geq \sqrt{p\mathscr{A}_{\mathcal{H}}}\). Let \(\tilde{\nu}\) denote the smallest element in \(\mathscr{V}_{\mathcal{H}}\) which is greater than or equal to \(\nu_{\mathcal{H}}(s^*, \mathscr{A}_{\mathcal{H}})\). Then 
    \begin{align*}
        P_f\left\{\max_{\nu \in \mathscr{V}_{\mathcal{H}}} \max_{s \in \mathscr{S}} \varphi_{\nu, s} = 0\right\} &\leq P_f\left\{\varphi_{\tilde{\nu}, p} = 0\right\} = P_f\left\{ n \sum_{j=1}^{p} \sum_{k \leq \tilde{\nu}} X_{k,j}^2 > \tilde{\nu}p + C \sqrt{\tilde{\nu}p\mathscr{A}_{\mathcal{H}}}\right\}.  
    \end{align*}
    Consider that 
    \begin{equation*}
        n \sum_{j=1}^{p} \sum_{k \leq \tilde{\nu}} X_{k, j}^2 \sim \chi^2_{p \tilde{\nu}} \left( n \sum_{j=1}^{p} \sum_{k \leq \tilde{\nu}} \theta_{k, j}^2\right)
    \end{equation*}
    where the collection \(\{\theta_{k,j}\}\) denotes the collection of basis coefficients for \(f\). We will also use the matrix \(\Theta\) to denote the collection of basis coefficients; note that \(\Theta \in \mathscr{T}_{s^*}\). Let \(S^*\) denote the set of \(j\) for which \(\Theta_j\) are nonzero. Observe 
    \begin{align*}
        C^2 \tau_{\text{adapt}}^2(p, s^*, n) &\leq ||f||_2^2 \\
        &= \sum_{j=1}^{p} \sum_{k=1}^{\infty} \theta_{k,j}^2 \\
        &\leq \sum_{j \in S^*} \left( \sum_{k \leq \tilde{\nu}} \theta_{k,j}^2 + \sum_{k >\tilde{\nu}} \theta_{k,j}^2 \right) \\
        &\leq \sum_{j \in S^*} \left(\sum_{k \leq \tilde{\nu}} \theta_{k,j}^2 + \mu_{\tilde{\nu}} \sum_{k > \tilde{\nu}} \frac{\theta_{k,j}^2}{\mu_k} \right) \\
        &\leq \left(\sum_{j \in S^*} \sum_{k \leq \tilde{\nu}} \theta_{k,j}^2 \right) + s^* \mu_{\tilde{\nu}} \\
        &= \left(\sum_{j = 1}^{p} \sum_{k \leq \tilde{\nu}} \theta_{k,j}^2 \right) + s^* \mu_{\tilde{\nu}}. 
    \end{align*}
    Therefore, 
    \begin{align*}
        n\sum_{j = 1}^{p} \sum_{k \leq \tilde{\nu}} \theta_{k,j}^2 &\geq C^2 n\tau_{\text{adapt}}^2(p, s^*, n) - ns^*\mu_{\tilde{\nu}} \\
        &\geq C^2 n\tau_{\text{adapt}}^2(p, s^*, n)  - s^*\sqrt{\tilde{\nu} \log\left(1 + \frac{p \mathscr{A}_{\mathcal{H}}}{(s^*)^2}\right)} \\
        &\geq C^2 n \tau_{\text{adapt}}^2(p, s^*, n) - \sqrt{2} n \tau_{\text{adapt}}^2(p, s^*, n) \\
        &\geq (C^2 - \sqrt{2}) n \tau_{\text{adapt}}^2(p, s^*, n)
    \end{align*}
    We have used that \(\tilde{\nu} \leq 2 \nu_{\mathcal{H}}(s^*, \mathscr{A}_{\mathcal{H}})\) to obtain the third line. Therefore, we have 
    \begin{equation*}
        n\sum_{j = 1}^{p} \sum_{k \leq \tilde{\nu}} \theta_{k,j}^2 \geq \left(C^2 - \sqrt{2}\right) n\tau_{\text{adapt}}^2(p, s^*, n) \geq \frac{C^2 - \sqrt{2}}{\sqrt{2}} \sqrt{\tilde{\nu} p \mathscr{A}_{\mathcal{H}}}. 
    \end{equation*}
    The signal magnitude satisfies the requisite strength needed to successfully detect, which can be seen by following the argument of Proposition \ref{prop:dense_alpha}. Hence, we can achieve Type II error less than or equal to \(\frac{\eta}{2}\) by selecting \(C_\eta\) sufficiently large. We can combine this bound with (\ref{eqn:adapt_typeI_unionbound}) to conclude that the total testing risk is bounded above by \(\eta\), as desired. \newline
    
    \textbf{Type II error: Bulk} 
    
    Suppose \(s^* < \sqrt{p \mathscr{A}_{\mathcal{H}}}\) and \(\sqrt{\log\left(1 + \frac{p\mathscr{A}_{\mathcal{H}}}{(s^*)^2}\right)} \leq K_3 \sqrt{\nu_{\mathcal{H}}(s^*, \mathscr{A}_{\mathcal{H}}) \vee \lceil D \rceil}\). Let \(\tilde{\nu}\) be the smallest element in \(\mathscr{V}_{\mathcal{H}}\) greater than or equal to \(\nu_{\mathcal{H}}(s^*, \mathscr{A}_{\mathcal{H}})\). Let \(\tilde{s}\) be the smallest element in \(\mathscr{S}\) greater than or equal to \(s^*\). By the definitions of these grids, we have \(\tilde{\nu}/2 < \nu_{\mathcal{H}}(s^*, \mathscr{A}_{\mathcal{H}}) \leq \tilde{\nu}\) and \(s^* \leq \tilde{s} \leq 2s^*\). Then 
    \begin{equation*}
        P_f\left\{\max_{\nu \in \mathscr{V}_{\mathcal{H}}} \max_{s \in \mathscr{S}} \varphi_{\nu, s} = 0\right\} \leq P_f\left\{\varphi_{\tilde{\nu}, \tilde{s}} = 0\right\}. 
    \end{equation*}
    With these choices, we have
    \begin{equation*}
        \sqrt{\log\left(1 + \frac{p\mathscr{A}_{\mathcal{H}}}{\tilde{s}^2}\right)} \leq K_3 \sqrt{d_{\tilde{\nu}}}.    
    \end{equation*}
    Note that since \(\tilde{s} \geq s^*\), we have \(f \in \mathcal{F}_{\tilde{s}}\). Following argument similar to those in the proof of Proposition \ref{prop:test_sparse_bulk}, it can be seen that the necessary signal strength to successfully detect is of squared order 
    \begin{equation*}
        \frac{\tilde{s} \sqrt{\tilde{\nu} \log\left(1 + \frac{p\mathscr{A}_{\mathcal{H}}}{\tilde{s}^2}\right)}}{n} \asymp \frac{s^*\sqrt{\nu_{\mathcal{H}}(s^*, \mathscr{A}_{\mathcal{H}}) \log\left(1 + \frac{p\mathscr{A}_{\mathcal{H}}}{(s^*)^2}\right)}}{n}.
    \end{equation*}
    This is precisely the order of \(\tau^2_{\text{adapt}}(p, s^*, n)\), and so we can obtain Type II error less than \(\frac{\eta}{2}\) by choosing \(C_\eta\) sufficiently large. We can combine this bound with (\ref{eqn:adapt_typeI_unionbound}) to conclude that the total testing risk is bounded above by \(\eta\), as desired.
    \newline

    \textbf{Type II error: Tail} 
    
    Suppose \(s^* < \sqrt{p \mathscr{A}_{\mathcal{H}}}\) and \(\sqrt{\log\left(1 + \frac{p\mathscr{A}_{\mathcal{H}}}{(s^*)^2}\right)} > K_3 \sqrt{\nu_{\mathcal{H}}(s^*, \mathscr{A}_{\mathcal{H}}) \vee \lceil D \rceil}\). Let \(\tilde{\nu}\) and \(\tilde{s}\) be as defined in the previous case, i.e. Type II error analysis for the bulk case. As before, we have 
    \begin{equation*}
        P_f\left\{\max_{\nu \in \mathscr{V}_{\mathcal{H}}} \max_{s \in \mathscr{S}} \varphi_{\nu, s} = 0\right\} \leq P_f\left\{\varphi_{\tilde{\nu}, \tilde{s}} = 0\right\}. 
    \end{equation*}
    Now, consider that \(s^* \leq \tilde{s}\) and so \(f \in \mathcal{F}_{\tilde{s}}\).
    
    Suppose we have 
    \begin{equation*}
        \sqrt{\log\left(1 + \frac{p\mathscr{A}_{\mathcal{H}}}{\tilde{s}^2}\right)} > K_3 \sqrt{d_{\tilde{\nu}}}.
    \end{equation*}
    We can follow arguments similar to those in the proof of Proposition \ref{prop:test_sparse_tail} to see that the necessary signal strength to successfully detect is of squared order 
    \begin{equation*}
        \frac{\tilde{s}\log\left(1 + \frac{p\mathscr{A}_{\mathcal{H}}}{\tilde{s}^2}\right)}{n} \asymp \frac{s^* \log\left(1 + \frac{p\mathscr{A}_{\mathcal{H}}}{(s^*)^2}\right)}{n}.
    \end{equation*}
    This is precisely the order of \(\tau^2_{\text{adapt}}(p, s^*, n)\) and so we can obtain Type II error less than \(\frac{\eta}{2}\) by choosing \(C_\eta\) sufficiently large. 

    On the other hand, suppose we have 
    \begin{equation*}
        \sqrt{\log\left(1 + \frac{p\mathscr{A}_{\mathcal{H}}}{\tilde{s}^2}\right)} \leq K_3 \sqrt{d_{\tilde{\nu}}}
    \end{equation*}
    As argued in the previous section about the bulk regime, the necessary signal strength to successfully detect is of squared order
    \begin{equation*}
        \frac{\tilde{s} \sqrt{\tilde{\nu}\log\left(1 + \frac{p\mathscr{A}_{\mathcal{H}}}{\tilde{s}^2}\right)}}{n} \asymp \frac{s^* \sqrt{\nu_{\mathcal{H}}(s^*, \mathscr{A}_{\mathcal{H}}) \log\left(1 + \frac{p\mathscr{A}_{\mathcal{H}}}{(s^*)^2}\right)}}{n}.
    \end{equation*}
    Consider that 
    \begin{align*}
        \tau^2_{\text{adapt}}(p, s^*, n) = \frac{s^*\log\left(1 + \frac{p\mathscr{A}_{\mathcal{H}}}{(s^*)^2}\right)}{n} \gtrsim \frac{s^*\sqrt{\nu_{\mathcal{H}}(s^*, \mathscr{A}_{\mathcal{H}}) \log\left(1 + \frac{p\mathscr{A}_{\mathcal{H}}}{(s^*)^2}\right)}}{n}
    \end{align*}
    and so we can obtain Type II error less than \(\frac{\eta}{2}\) by choosing \(C_\eta\) sufficiently large. We can combine this bound with (\ref{eqn:adapt_typeI_unionbound}) to conclude that the total testing risk is bounded above by \(\eta\), as desired.
\end{proof}

\subsection{Smoothness and sparsity adaptive lower bounds}

\begin{proof}[Proof of Theorem \ref{thm:sobolev_adapt_lbound_dense}]
    Fix \(\eta \in (0, 1)\). Fix any \(s \geq p^{1/2+\delta}\sqrt{\log\log n}\). Through the course of the proof, we will note where we must take \(c_\eta\) suitably small enough, so for now let \(0 < c < c_\eta\). For each \(s\), define the geometric grid 
    \begin{equation*}
        \mathcal{V}_s := \left\{2^k : k \in \mathbb{N} \text{ and } \left(\frac{ns}{\sqrt{p\log\log(np)}}\right)^{\frac{2}{4\alpha_1+1}} \leq 2^k \leq \left(\frac{ns}{\sqrt{p\log\log(np)}}\right)^{\frac{2}{4\alpha_0+1}}\right\}.
    \end{equation*}
    Note \(\log|\mathcal{V}_s| \asymp \log\log(np)\). 
    
    We now define a prior \(\pi\) which is supported on the alternative hypothesis. Let \(\nu \sim \text{Uniform}(\mathcal{V}_s)\). Let \(\alpha = \alpha(\nu, s)\) denote the solution to 
    \begin{equation*}
        \nu = \left(\frac{ns}{\sqrt{p \log\log(np)}}\right)^{\frac{2}{4\alpha+1}}.
    \end{equation*}
    Note that \(\alpha(\nu, s) \in [\alpha_0, \alpha_1]\) for any \(\nu \in \mathcal{V}_s\). Note \(\alpha\) is random since \(\nu\) is random. Draw uniformly at random a subset \(S \subset [p]\) of size \(s\). Draw independently 
    \begin{equation*}
        \theta_{k, j} \sim 
        \begin{cases}
            \Uniform\left\{-c\rho_{\nu}, c\rho_{\nu} \right\} &\textit{if } j \in S \text{ and } k \leq \nu, \\
            \delta_0 &\textit{otherwise}.
        \end{cases}
    \end{equation*}
    Here, \(\rho_{\nu}\) is given by \(\rho_{\nu} := \left(\frac{ns}{\sqrt{p \log\log(np)}}\right)^{-\frac{2\alpha+1}{4\alpha+1}}\). Having defined \(\rho_{\nu}\), the definition of the prior \(\pi\) is complete.  

    Now we must show \(\pi\) is indeed supported on the alternative hypothesis. Consider that for \(\Theta \sim \pi\), we have
    \begin{align*}
        ||\Theta||_F^2 = c^2 s \rho_{\nu}^2 \nu = c^2 s \left(\frac{ns}{\sqrt{p\log\log(np)}}\right)^{-\frac{4\alpha}{4\alpha+1}} = c^2 \tau_{\text{dense}}^2(p, s, n, \alpha). 
    \end{align*}
    Furthermore, consider that for \(i \in S\) we have by definition of \(\alpha = \alpha(\nu, s)\), 
    \begin{equation*}
        \sum_{\ell = 1}^{\infty} \frac{\theta_{i, \ell}^2}{\ell^{-2\alpha}} = c^2 \rho_{\nu}^2 \nu^{2\alpha} \sum_{\ell \leq \nu} \frac{\ell^{2\alpha}}{\nu^{2\alpha}} \leq c^2 \rho_{\nu}^2 \nu^{2\alpha+1}  = c^2 \rho_{\nu}^2 \left(\frac{ns}{\sqrt{p\log\log(np)}}\right)^{\frac{4\alpha+2}{4\alpha+1}} \leq c^2 \leq 1.
    \end{equation*}
    We can take \(c_\eta \leq 1\) to ensure \(c^2 \leq 1\). Of course, for \(i \in S^c\) we have \(\Theta_i = 0\). Hence, we have shown \(\Theta \in \mathscr{T}(s, \alpha)\) with probability one, and so \(\pi\) has the proper support. 

    Writing \(P_\pi = \int P_\Theta \pi(d\Theta)\) to denote the mixture induced by the prior \(\pi\), we have
    \begin{align*}
        \inf_{\varphi}\left\{ P_0\left\{\varphi \neq 0\right\} + \max_{\tilde{s} \geq p^{1/2+\delta}} \sup_{\alpha \in [\alpha_0, \alpha_1]} \sup_{\substack{f \in \mathcal{F}(\tilde{s}, \alpha), \\ ||f||_2 \geq c\tau_{\text{dense}}(p, \tilde{s}, n, \alpha)}} P_f\left\{\varphi \neq 1\right\} \right\} &\geq 1 - \frac{1}{2} \sqrt{\chi^2(P_\pi \,||\, P_0)}.
    \end{align*}
    For the following calculations, let \(\Theta, \Theta' \overset{iid}{\sim} \pi\). Let \(\nu, \nu' \in \mathcal{V}_s\) be the corresponding quantities and let \(\alpha, \alpha'\) denote the corresponding smoothness levels. Further let \(S, S'\) denote the corresponding support sets. Note both are of size \(s\). Let \(\{r_{i\ell},r'_{i\ell}\}_{1 \leq i \leq p, \ell \in \mathbb{N}}\) denote an iid collection of \(\Rademacher(1/2)\) random variables which is independent of all the other random variables. By the Ingster-Suslina method (Proposition \ref{prop:Ingster_Suslina}), we have 
    \begin{align*}
        \chi^2(P_\pi \,||\, P_0) + 1 &= E\left(\exp\left(n \langle \Theta, \Theta'\rangle_F\right)\right) \\
        &= E\left(\exp\left(c^2 n \rho_{\nu}\rho_{\nu'} \sum_{i \in S \cap S'} \sum_{\ell=1}^{\nu \wedge \nu'} r_{i\ell}r_{i\ell}'\right)\right) \\
        &\leq E\left(\exp\left(\frac{c^4}{2} n^2 \rho_{\nu}^2\rho_{\nu'}^2 (\nu \wedge \nu') |S \cap S'|\right)\right) \\
        &\leq E\left(\exp\left(\frac{c^4}{2} n^2 \sqrt{\rho_{\nu}^4\nu \cdot \rho_{\nu'}^4 \nu'} \cdot \frac{\nu \wedge \nu'}{\sqrt{\nu \nu'}} |S \cap S'|\right)\right).
    \end{align*}
    From the definition of \(\alpha(\nu, s)\), we have  
    \begin{align*}
        \sqrt{\rho_\nu^4 \nu \cdot \rho_{\nu'}^4 \nu'} &= \left(\left(\frac{ns}{\sqrt{p \log\log(np)}}\right)^{-\frac{8\alpha+4}{4\alpha+1} + \frac{2}{4\alpha+1}} \left(\frac{ns}{\sqrt{p \log\log(np)}}\right)^{-\frac{8\alpha'+4}{4\alpha'+1} + \frac{2}{4\alpha'+1}} \right)^{1/2} \\
        &= \left(\frac{\sqrt{p \log\log(np)}}{ns}\right)^2 \\
        &\leq \frac{2\log\left(1 + \frac{p\log\log(np)}{s^2}\right)}{n^2}. 
    \end{align*}
    Here, we have used \(s \geq \sqrt{p\log\log(np)}\) since \(s \geq p^{1/2+\delta}\) and \(s \geq \sqrt{p \log\log n}\). We have also used the inequality \(x/2 \leq \log(1+x)\) for \(0 \leq x \leq 1\). With this in hand, it follows that 
    \begin{align*}
        \chi^2(P_\pi \,||\, P_0) + 1 &\leq E\left(\exp\left(c^4 \cdot \log\left(1 + \frac{p \log\log(np)}{s^2}\right) \cdot \frac{\nu \wedge \nu'}{\sqrt{\nu \nu'}} |S \cap S'|\right)\right) \\
        &\leq E\left(\left(1 - \frac{s}{p} + \frac{s}{p} e^{c^4\left(\frac{\nu \wedge \nu'}{\sqrt{\nu \nu'}}\right)\log\left(1 + \frac{p\log\log(np)}{s^2}\right)}\right)^s\right) \\
        &\leq E\left( \exp\left( c^4 \frac{\nu \wedge \nu'}{\sqrt{\nu \nu'}} \log\log(np) \right)\right). 
    \end{align*}
    Noting that we can write \(\nu = 2^{k}\), \(\nu' = 2^{k'}\) and that \(|\mathcal{V}_s| \asymp \log(np)\), we can follow the same steps as in the proof of Proposition 4.2 in \cite{gao_estimation_2020}. Taking \(c_\eta\) suitably small, we obtain \(\chi^2(P_\pi \,||\, P_0) \leq 4\eta^2\) which yields 
    \begin{equation*}
        \inf_{\varphi}\left\{ P_0\left\{\varphi \neq 0\right\} + \max_{\tilde{s} \geq p^{1/2+\delta}} \sup_{\alpha \in [\alpha_0, \alpha_1]} \sup_{\substack{f \in \mathcal{F}(\tilde{s}, \alpha), \\ ||f||_2 \geq c\tau_{\text{adapt}}(p, \tilde{s}, n, \alpha)}} P_f\left\{\varphi \neq 1\right\} \right\} \geq 1 - \eta 
    \end{equation*}
    as desired. 
\end{proof}

\begin{proof}[Proof of Theorem \ref{thm:sobolev_adapt_lbound_sparse}]
    Some simplification is convenient. Note \(\tau_{\text{sparse}}\) can be rewritten as 
    \begin{equation*}
    \tau^2_{\text{sparse}}(p, s, n, \alpha) \asymp 
    \begin{cases}
        s \left(\frac{n}{\sqrt{\log(p\log\log n)}}\right)^{-\frac{4\alpha}{4\alpha+1}} &\textit{if } \log(p\log\log n) \leq n^{\frac{1}{2\alpha+1}}, \\
        \frac{s\log(p \log\log n)}{n} &\textit{if } \log(p\log\log n) > n^{\frac{1}{2\alpha+1}}.
    \end{cases}
    \end{equation*}
    Note that in the case \(\log(p \log\log n) > n^{\frac{1}{2\alpha+1}}\), we have \(\log p \gtrsim n^{\frac{1}{2\alpha+1}}\). Therefore, \(\log(p \log \log n) \asymp \log p\) and so we can further simplify 
    \begin{equation}\label{rate:Sobolev_adapt_sparse_simple}
        \tau^2_{\text{sparse}}(p, s, n, \alpha) \asymp 
        \begin{cases}
            s \left(\frac{n}{\sqrt{\log(p\log\log n)}}\right)^{-\frac{4\alpha}{4\alpha+1}} &\textit{if } \log(p\log\log n) \leq n^{\frac{1}{2\alpha+1}}, \\
            \frac{s\log p}{n} &\textit{if } \log(p\log\log n) > n^{\frac{1}{2\alpha+1}}.
        \end{cases}
    \end{equation}
    Writing \(\tau_{\text{sparse}}\) in the form (\ref{rate:Sobolev_adapt_sparse_simple}) is convenient. The lower bound \(\frac{s \log p}{n}\) is exactly the minimax lower bound and so no new argument is needed. All that needs to be proved is the lower bound \(s \left(n / \sqrt{\log(p\log\log n)}\right)^{-\frac{4\alpha}{4\alpha+1}}\). 

    The proof is very similar to that of the proof of Theorem \ref{thm:sobolev_adapt_lbound_dense}, so we only point out the modifications in the interest of brevity. Fix any \(s < p^{1/2-\delta}\). Let \(\pi\) be the prior from the proof of Theorem \ref{thm:sobolev_adapt_lbound_dense}, but use \(\mathcal{V}_{\text{sparse}}\) given by (\ref{def:V_sobolev_sparse}) and \(\alpha(\nu)\) given by (\ref{def:alpha_nu}), defined below, instead. Define the geometric grid 
    \begin{equation}\label{def:V_sobolev_sparse}
        \mathcal{V}_{\text{sparse}} := \left\{2^k : k \in \mathbb{N} \text{ and } \left(\frac{n}{\sqrt{\log(p\log\log n)}}\right)^{\frac{2}{4\alpha_1+1}} \leq 2^k \leq \left(\frac{n}{\sqrt{\log(p\log\log n)}}\right)^{\frac{2}{4\alpha_0+1}}\right\}
    \end{equation}
    Let \(\alpha(\nu)\) denote the solution to 
    \begin{equation}\label{def:alpha_nu}
        \nu = \left(\frac{n}{\sqrt{\log(p\log\log n)}}\right)^{\frac{2}{4\alpha+1}}.
    \end{equation}
    Note \(\alpha(\nu) \in [\alpha_0, \alpha_1]\) for any \(\nu \in \mathcal{V}_{\text{sparse}}\). Also, use 
    \begin{equation*}
        \rho_{\nu} = \left(\frac{n}{\sqrt{\log(p\log\log n)}}\right)^{-\frac{2\alpha+1}{4\alpha+1}}. 
    \end{equation*}
    It can be checked in the same manner that \(\pi\) with these modifications is properly supported on the alternative hypothesis. We can continue along as in the proof of Theorem \ref{thm:sobolev_adapt_lbound_dense} up until the point we have 
    \begin{equation*}
        \sqrt{\rho_\nu^4 \nu \cdot \rho_{\nu'}^4 \nu'} = \left(\frac{\sqrt{\log(p\log\log n)}}{n}\right)^2.
    \end{equation*}
    From here, we use the fact that \(s \leq p^{\frac{1}{2} - \delta}\) implies \(\log(p \log\log n) \asymp \log\left(1 + \frac{p\log\log n}{s^2}\right)\). In other words, there exists a constant \(\kappa > 0\) such that 
    \begin{equation*}
        \sqrt{\rho_\nu^4 \nu \cdot \rho_{\nu'}^4 \nu'} \leq \kappa \frac{\log\left(1 + \frac{p\log\log n}{s^2}\right)}{n^2}.
    \end{equation*}
    Then by taking \(c_\eta\) sufficiently small, the rest of the proof of Theorem \ref{thm:sobolev_adapt_lbound_dense} can be carried out to obtain the desired result.  
\end{proof}

\subsection{Smoothness and sparsity adaptive upper bounds}

\begin{proof}[Proof of Theorem \ref{thm:sobolev_adapt_ubound_dense}]
    Fix \(\eta \in (0, 1)\). For ease, let us just write \(\mathcal{V} = \mathcal{V}_{\text{test}}\). We will note throughout the proof where we take \(C_\eta\) suitably large, so for now let \(C > C_\eta\). We will also note where we take \(K_\eta\) suitably large. We will also note where we take \(K_\eta\) suitably large. We first examine the Type I error. By union bound and taking \(K_\eta \geq 1\), 
    \begin{align*}
        P_0\left\{\max_{\nu \in \mathcal{V}} \varphi_{\nu} = 1\right\} &\leq \sum_{\nu \in \mathcal{V}} P_0\left\{\varphi_{\nu} = 1\right\} \\
        &= \sum_{\nu \in \mathcal{V}} P\left\{\chi^2_{\nu p} \geq \nu p + K_\eta \left(\sqrt{\nu p \log\log(np)} + \log\log(np)\right)\right\} \\
        &\leq \sum_{\nu \in \mathcal{V}} P\left\{\chi^2_{\nu p} \geq \nu p + 2\left(\sqrt{\nu p \frac{K_\eta}{4}\log\log(np)} + \frac{K_\eta}{4}\log\log(np)\right)\right\} \\
        &\leq |\mathcal{V}| e^{-\frac{K_\eta}{4} \log\log(np)} \\
        &\leq e^{-\left(\frac{K_\eta}{4} - \kappa\right) \log\log(np)}
    \end{align*}
    for some universal positive constant \(\kappa\). Here, we have used \(|\mathcal{V}| \asymp \log(np)\). Taking \(K_\eta\) suitably large, the Type I error is suitably bounded 
    \begin{equation*}
        P_0\left\{\max_{\nu \in \mathcal{V}} \varphi_{\nu} = 1\right\} \leq \frac{\eta}{2}. 
    \end{equation*}
    We now examine the Type II error. Fix any \(s^* \geq p^{1/2+\delta}\), \(\alpha^* \in [\alpha_0, \alpha_1]\), and \(f^* \in \mathcal{F}(s^*, \alpha^*)\) with \(||f^*||_2 \geq C \tau_{\text{dense}}(p,s^*, n, \alpha^*)\). Set 
    \begin{equation*}
        \nu^* = \left(\frac{ns^*}{\sqrt{p \log\log(np)}}\right)^{\frac{2}{4\alpha^*+1}}. 
    \end{equation*}

    Let \(\nu \in \mathcal{V}\) be the smallest element larger than \(\nu^*\). Note \(\nu/2 \leq \nu^* \leq \nu\) by definition of \(\mathcal{V}\). 
    \begin{align*}
        &P_{f^*}\left\{ \max_{\tilde{\nu} \in \mathcal{V}} \varphi_{\tilde{\nu}} = 0 \right\} \\
        &\leq P_{f^*}\left\{ \varphi_{\nu} = 0\right\} \\
        &= P_{f^*}\left\{\chi^2_{\nu p}(n||f^*||_2^2) \leq \sqrt{\nu p} + K_\eta\left(\sqrt{p\nu \log\log(np)} + \log\log(np)\right)\right\}.
    \end{align*}
    Consider that since \(\nu \geq \left(\frac{ns^*}{\sqrt{p \log\log(np)}}\right)^{\frac{2}{4\alpha^*+1}}\) and \(s^* \geq p^{1/2+\delta}\), it immediately follows that \(\nu\) grows polynomially in \(n\), and so \(\nu \gtrsim \log\log n\). Therefore, we have \(\sqrt{\nu p \log\log(np)} \geq \frac{1}{\kappa'} \log\log(np)\) for some universal positive constant \(\kappa'\). 
    Then by Chebyshev's inequality
    \begin{align*}
        &P_{f^*}\left\{\chi^2_{\nu p}(n||f^*||_2^2) \leq \nu p + K_\eta\left(\sqrt{p\nu \log\log(np)} + \log\log(np)\right)\right\} \\
        &\leq P_{f^*}\left\{\chi^2_{\nu p}(n||f^*||_2^2) \leq \nu p + K_\eta(1+\kappa')\left(\sqrt{p\nu \log\log(np)} + \log\log(np)\right)\right\} \\
        &= P_{f^*}\left\{(n||f^*||_2^2 - K_\eta(1+\kappa')\sqrt{p\nu\log\log(np)}) \leq \nu p + n||f^*||_2^2 - \chi^2_{\nu p}(n||f^*||_2^2)\right\} \\
        &\leq \frac{\Var\left(\chi^2_{\nu p}(n||f^*||_2^2)\right)}{\left(n||f^*||_2^2 - K_\eta(1+\kappa')\sqrt{p\nu \log\log(np)} \right)^2} \\
        &\leq \frac{2\nu p}{\left(n||f^*||_2^2 - K_\eta(1+\kappa')\sqrt{p\nu \log\log(np)} \right)^2} + \frac{4n||f^*||_2^2}{\left(n||f^*||_2^2 - K_\eta(1+\kappa')\sqrt{p\nu \log\log(np)} \right)^2}.
    \end{align*}
    Consider that 
    \begin{align*}
        K_\eta(1+\kappa')\sqrt{p\nu \log\log(np)} &\leq K_\eta (1+\kappa') \sqrt{2\nu^*p \log\log(np)}\\
        &\leq K_\eta(1+\kappa')\sqrt{2} ns^* \left(\frac{ns^*}{\sqrt{p \log\log(np)}}\right)^{-\frac{4\alpha^*}{4\alpha^*+1}} \\
        &= \sqrt{2}K_\eta(1+\kappa') n \tau_{\text{dense}}^2(p, s^*, n, \alpha^*) \\
        &\leq \frac{\sqrt{2}K_\eta(1+\kappa')}{C^2} n||f^*||_2^2. 
    \end{align*}
    Therefore, taking \(C_\eta\) sufficiently large, we have
    \begin{align*}
        &P_{f^*}\left\{ \max_{\tilde{\nu} \in \mathcal{V}} \varphi_{\tilde{\nu}} = 0 \right\} \\
        &\leq \frac{2\nu p}{\left(n||f^*||_2^2 - K_\eta(1+\kappa')\sqrt{p\nu \log\log(np)} \right)^2} + \frac{4n||f^*||_2^2}{\left(n||f^*||_2^2 - K_\eta(1+\kappa')\sqrt{p\nu \log\log(np)} \right)^2} \\
        &\leq \frac{2}{\left(\frac{C^2}{\sqrt{2}} - K_\eta(1+\kappa')\right)^2 \log\log(np)} + \frac{4}{(1 - \frac{\sqrt{2}K_\eta(1+\kappa')}{C^2}) n||f^*||_2^2} \\
        &\leq \frac{2}{\left(\frac{C^2}{\sqrt{2}} - K_\eta(1+\kappa')\right)^2 \log\log(np)} + \frac{4}{(1 - \frac{\sqrt{2}K_\eta(1+\kappa')}{C^2}) \frac{C^2}{\sqrt{2}}} \\
        &\leq \frac{\eta}{2}. 
    \end{align*}
    Hence, the Type II error is bounded suitably. Since \(f^*\) was arbitrary, we have shown that 
    \begin{align*}
        P_0\left\{\max_{\nu \in \mathcal{V}} \varphi_{\nu} = 1\right\} + \max_{s \geq p^{1/2+\delta}} \sup_{\alpha \in [\alpha_0, \alpha_1]} \sup_{\substack{f \in \mathcal{F}(s, \alpha), \\ ||f||_2 \geq C \tau_{\text{dense}}(p, s, n, \alpha)}} P_f\left\{ \max_{\nu \in \mathcal{V}} \varphi_\nu = 0 \right\} \leq \eta
    \end{align*}
    as desired. 

\end{proof}

\printbibliography

\appendix

\section{Hard thresholding}
In this section, we collect results about the random variable \((||Z||^2 - \alpha_t(d))\mathbbm{1}_{\{||Z||^2 \geq d + t^2\}}\) for \(Z \sim N(\theta, I_d)\) and \(\alpha_t(d)\) defined in (\ref{def:alpha}). We consider the tail \(t^2 \gtrsim d\) and the bulk \(t^2 \lesssim d\) separately. 

The proof outlines are similar to those employed in \cite{liu_minimax_2021}. However, they only consider \(d = 1\), whereas we need to consider the general \(d\) case. Consequently, a much more careful analysis is required. 

\subsection{Tail}

\begin{lemma}[Moment generating function]\label{lemma:tail_mgf}
    Suppose \(\widetilde{c}\) is a positive universal constant. There exist universal positive constants \(D, \widetilde{C}, C,\) and \(c\) such that if \(d \geq D, t^2 \geq \widetilde{c} d,\) and \(0 < \lambda < \frac{1}{2\widetilde{C}}\), then we have 
    \begin{equation*}
        E\left(e^{\lambda Y}\right) \leq \exp\left(Ct^4 \lambda^2 e^{-ct^2}\right)
    \end{equation*}
    where \(Y = \left(||Z||^2 - \alpha_{t}(d)\right) \mathbbm{1}_{\left\{ ||Z||^2 \geq d + t^2 \right\}}\), \(Z \sim N(0, I_d)\), and where \(\alpha_{t}(d)\) is given by (\ref{def:alpha}).
\end{lemma}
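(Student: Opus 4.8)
The key structural fact is that $Y$ is mean zero: by the definition \eqref{def:alpha} of $\alpha_t(d)$ we have $E(Y)=E(\|Z\|^2\mathbbm{1}_{\{\|Z\|^2\ge d+t^2\}})-\alpha_t(d)\,P\{\|Z\|^2\ge d+t^2\}=0$, so $E(e^{\lambda Y})$ ought to be $1+O(\lambda^2)$ with the $O(\lambda^2)$ coefficient carrying the exponentially small factor coming from the rarity of $\{\|Z\|^2\ge d+t^2\}$ in the tail regime $t^2\gtrsim d$. Write $W:=\|Z\|^2-d\sim\chi^2_d-d$, $\beta:=\alpha_t(d)-d$, $p:=P\{W\ge t^2\}$, and $h(\lambda):=E(e^{\lambda(W-\beta)}\mathbbm{1}_{\{W\ge t^2\}})$, so that $E(e^{\lambda Y})=(1-p)+h(\lambda)$. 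One has $h(0)=p$ and $h'(0)=E((W-\beta)\mathbbm{1}_{\{W\ge t^2\}})=0$ (again by the definition of $\alpha_t(d)$), and, since we will take $\tfrac{1}{2\tilde C}<\tfrac12$, $h$ is smooth on $[0,\tfrac{1}{2\tilde C}]$ with $h''(u)=E((W-\beta)^2e^{u(W-\beta)}\mathbbm{1}_{\{W\ge t^2\}})\ge0$. Taylor's theorem with integral remainder then gives $E(e^{\lambda Y})=1+\int_0^\lambda(\lambda-u)h''(u)\,du\le 1+\tfrac{\lambda^2}{2}\sup_{0\le u\le\lambda}h''(u)$, and as $1+x\le e^x$ it suffices to establish the key estimate $\sup_{0\le u<\frac{1}{2\tilde C}}h''(u)\lesssim t^4e^{-ct^2}$, with all constants allowed to depend on the fixed $\tilde c$.

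Two preliminary facts feed into this. First, control of $\beta$: since $W\ge t^2$ on the conditioning event, $\beta\ge t^2$; for the matching upper bound I would use the expected-overshoot identity $\beta-t^2=E(\chi^2_d-T\mid\chi^2_d\ge T)=\bar F(T)^{-1}\int_0^\infty\bar F(T+w)\,dw$, where $T:=d+t^2$ and $\bar F$ is the survival function of $\chi^2_d$, together with the density-ratio bound $f_{\chi^2_d}(y+w)/f_{\chi^2_d}(y)=(1+w/y)^{d/2-1}e^{-w/2}\le e^{w/(2(1+\tilde c))}e^{-w/2}$ valid for $y\ge T$ (this uses $d\ge2$, which is why we may take $D\ge2$); integrating in $y$ yields $\bar F(T+w)\le e^{-c_0w}\bar F(T)$ with $c_0:=\tfrac{\tilde c}{2(1+\tilde c)}$, hence $t^2\le\beta\le t^2+c_0^{-1}$, so in particular $\beta\asymp t^2$ (recall $t^2\ge\tilde c d\ge\tilde c$). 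Second, tail estimates from the Laurent--Massart bound \eqref{eqn:laurent_massart_tail}: in the regime $t^2\ge\tilde cd$ one gets $P\{\chi^2_d-d\ge v\}\le e^{-c_1v}$ for all $v\ge t^2/2$ (so $p\le e^{-c_1t^2}$), and combining this with $E((\chi^2_d)^{2k})\asymp_k d^{2k}$ and Cauchy--Schwarz gives $E((\chi^2_d)^{k}\mathbbm{1}_{\{\chi^2_d\ge d+t^2/2\}})\lesssim_k d^{k}e^{-c_1t^2/4}$.

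For the key estimate, I would tilt the chi-square: writing $h''(u)=e^{-u(d+\beta)}E((\chi^2_d-d-\beta)^2e^{u\chi^2_d}\mathbbm{1}_{\{\chi^2_d\ge d+t^2\}})$ and using the change-of-rate identity $E(g(\chi^2_d)e^{u\chi^2_d})=(1-2u)^{-d/2}E(g((1-2u)^{-1}\chi^2_d))$, this equals $e^{-u(d+\beta)}(1-2u)^{-d/2}E\big(((1-2u)^{-1}\chi^2_d-d-\beta)^2\mathbbm{1}_{\{\chi^2_d\ge(1-2u)(d+t^2)\}}\big)$. Taking $\tilde C$ a sufficiently large constant (depending on $\tilde c$), the restriction $u<\tfrac{1}{2\tilde C}$ renders the prefactor harmless, $e^{-u(d+\beta)}(1-2u)^{-d/2}\le e^{-ut^2}e^{2u^2d}\le e^{-ut^2+2u^2t^2/\tilde c}\le1$ (using $\beta\ge t^2$, $d\le t^2/\tilde c$, and the elementary inequality $-ud-\tfrac d2\log(1-2u)\le2u^2d$ for $u\le\tfrac14$), and shifts the threshold favorably, $(1-2u)(d+t^2)\ge d+t^2/2$. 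The remaining expectation is then bounded by $2(1-2u)^{-2}E((\chi^2_d)^2\mathbbm{1}_{\{\chi^2_d\ge d+t^2/2\}})+2(d+\beta)^2P\{\chi^2_d\ge d+t^2/2\}$, which by the second preliminary and $d,\beta\lesssim t^2$ is $\lesssim t^4e^{-c_1t^2/4}$. This gives $h''(u)\lesssim t^4e^{-ct^2}$ with $c:=c_1/4$, and plugging into the reduction of the first paragraph finishes the proof after collecting the constants $D,\tilde C,C,c$ (all depending only on $\tilde c$).

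The main obstacle is precisely this uniform bound on $h''(u)$: one must control it over the whole range $0\le u<\tfrac{1}{2\tilde C}$ while ensuring that the exponential tilt $e^{u(W-\beta)}$ — whose naive effect is to inflate the chi-square moment generating function by $(1-2u)^{-d/2}$, a factor as large as $e^{2u^2d}$, which is of order $e^{u^2t^2}$ when $t^2\asymp d$ — never swamps the $e^{-\Omega(t^2)}$ smallness of the truncation event; this is exactly what forces $\lambda$ below $\tfrac{1}{2\tilde C}$ for a suitably large universal $\tilde C$. A secondary but genuinely necessary point is the sharp estimate $\beta-t^2=O(1)$: a crude bound on $\beta$ through $P\{\chi^2_d\ge d+t^2\}^{-1}$ is useless because $t$ is unbounded, so the density-comparison (equivalently, increasing-hazard-rate) argument — which is where the hypothesis $d\ge D$ is used — cannot be avoided.
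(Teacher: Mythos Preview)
Your argument is correct and takes a genuinely different route from the paper. The paper also starts from $E(Y)=0$ but splits $e^{\lambda Y}-1-\lambda Y$ into three pieces according to the sign and size of $Y$; the delicate piece $E(e^{\lambda Y}\mathbbm{1}_{\{Y>1/\lambda\}})$ is handled by an exact change of variables that expresses it via the upper incomplete gamma function $\Gamma(\tfrac d2,\cdot)/\Gamma(\tfrac d2)$ and then by Temme's uniform asymptotic expansion (the paper's Corollary~\ref{thm:temme_order_1}), a fairly heavy piece of special-function machinery. Your approach replaces this entirely: the exact Taylor remainder plus the chi-square tilting identity $E(g(\chi^2_d)e^{u\chi^2_d})=(1-2u)^{-d/2}E\big(g((1-2u)^{-1}\chi^2_d)\big)$ reduce everything to a second-moment tail bound, so you avoid all incomplete-gamma asymptotics. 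The prefactor cancellation $e^{-u(d+\beta)}(1-2u)^{-d/2}\le e^{-u\beta+2u^2d}\le1$ is exactly what the paper buys from Temme, and your derivation is more transparent and more portable.

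Two minor remarks. First, your $O(1)$ bound on $\beta-t^2$ via the density-ratio (hazard-rate) argument is sharper than what is actually needed; the paper proves and uses only $\alpha_t(d)\le d+C^*(t^2\vee d)$ (their Lemma~\ref{lemma:conditional_2}), i.e.\ $\beta\lesssim t^2$, and your own final step (``$d,\beta\lesssim t^2$'') only uses this weaker fact --- so your closing claim that the sharp estimate ``cannot be avoided'' is a bit overstated. Second, your tail input $P\{\chi^2_d-d\ge v\}\le e^{-c_1 v}$ for $v\ge t^2/2$ does hold in the regime $t^2\ge\tilde c d$, but note that the constant $c_1$ depends on $\tilde c$; this is harmless since the lemma's constants are allowed to depend on $\tilde c$, but it is worth stating explicitly.
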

\begin{proof}
    We follow the broad approach of the argument presented in the proof of Lemma 18 in \cite{liu_minimax_2021}. The universal positive constant \(D\) will be chosen later on in the proof, so for now let \(d \geq D\) and \(Z \sim N(0, I_d)\). Note we will also take \(D\) large enough so that Lemma \ref{lemma:bulk_alpha} is applicable. Since \(E(Y) = 0\), we have 
    \begin{equation*}
        E(e^{\lambda Y}) = E(1 + \lambda Y + (e^{\lambda Y} - 1 - \lambda Y)) = 1 + E(e^{\lambda Y} - 1 - \lambda Y).
    \end{equation*}
    Consider that for any \(y \in \R\) we have 
    \begin{equation*}
        e^{y} - 1 - y \leq 
        \begin{cases}
            (-y) \wedge y^2 &\textit{if } y < 0, \\
            y^2 &\textit{if } 0 \leq y \leq 1, \\
            e^y &\textit{if } y > 1. 
        \end{cases}
    \end{equation*}
    Therefore, 
    \begin{equation}\label{eqn:low_dim_mgf}
        E(e^{\lambda Y} - 1 - \lambda Y) \leq \lambda^2 E(Y^2 \mathbbm{1}_{\{Y < 0\}}) + \lambda^2 E(Y^2 \mathbbm{1}_{\{0 \leq Y \leq 1/\lambda\}}) + E(e^{\lambda Y}\mathbbm{1}_{\{Y > 1/\lambda\}}).
    \end{equation}
    Each term will be bounded separately. Considering the first term, note that Lemma \ref{lemma:conditional_2} asserts \(\alpha_{t}(d) \leq d + C^*(t^2 \vee d)\) for a universal constant \(C^*\). Note also that \(\alpha_{t}(d) \geq d + t^2\). Therefore, 
    \begin{align*}
        \lambda^2 E(Y^2\mathbbm{1}_{\{Y < 0\}}) &= \lambda^2 E((||Z||^2 - \alpha_{t}(d))^2 \mathbbm{1}_{\{\alpha_t(d) > ||Z||^2 \geq d + t^2\}}) \\
        &\leq \lambda^2 (\alpha_{t}(d) - d - t^2)^2 P\left\{\alpha_{t}(d) > ||Z||^2 \geq d + t^2\right\} \\
        &\leq \lambda^2(d + C^*(t^2 \vee d) - d - t^2)^2 P\left\{||Z||^2 \geq d + t^2\right\} \\
        &\leq 2C^*\lambda^2(d^2 \vee t^4) \exp\left(-c\min\left(\frac{t^4}{d}, t^2\right)\right) \\
        &= 2C^* \lambda^2 t^4 \exp\left(-ct^2\right)
    \end{align*}
    where \(c\) is a universal positive constant and \(C^*\) remains a universal constant but whose value may change from line to line. Note we have also used Corollary \ref{corollary:chisquare_tail} in the above display as well as \(t^2 \geq \widetilde{c}d\). To summarize, we have shown 
    \begin{equation}\label{eqn:low_dim_mgf_I}
        \lambda^2E(Y^2 \mathbbm{1}_{\{Y < 0\}}) \leq 2C^* \lambda^2 t^4 \exp\left(-ct^2\right).
    \end{equation}
    We now bound the second term in (\ref{eqn:low_dim_mgf}). Let \(f_d\) denote the probability density function of the \(\chi^2_d\) distribution. We have 
    \begin{align*}
        \lambda^2 E(Y^2 \mathbbm{1}_{\{0 < Y < 1/\lambda\}}) &\leq \lambda^2 E(Y^2 \mathbbm{1}_{\{0 < Y\}}) \\
        &= \lambda^2 E((||Z||^2 - \alpha_{t}(d))^2 \mathbbm{1}_{\{||Z||^2 > \alpha_{t}(d)\}}) \\
        &= \lambda^2 \int_{\alpha_t(d)}^{\infty} (z - \alpha_t(d))^2 f_d(z) \, dz \\
        &\leq 2\lambda^2 \int_{\alpha_t(d)}^{\infty} \left(z^2 + \alpha_t(d)^2\right) f_d(z) \, dz.
    \end{align*}
    An application of Lemma \ref{lemma:johnstone_chisquare} yields 
    \begin{align*}
        \int_{\alpha_t(d)}^{\infty} z^2 f_d(z) \, dz &= d(d+2) P\left\{\chi^2_{d+4} \geq \alpha_t(d)\right\} \\
        &\leq 3d^2 P\left\{\chi^2_{d+4} \geq d + 4 + (\alpha_t(d) - d - 4)\right\} \\
        &\leq 3d^2P\left\{\chi^2_{d+4} \geq d + 4 + (t^2 - 4)\right\} \\
        &\leq 6d^2 \exp\left(-c\min\left(\frac{(t^2 - 4)^2}{d+4}, (t^2 - 4)\right)\right) \\
        &\leq 6d^2 \exp\left(-c\min\left(\frac{t^4}{20d}, \frac{t^2}{2} \right)\right) \\
        &\leq 6d^2 \exp\left(-c \min\left(\frac{t^4}{d}, t^2\right)\right) \\
        &= 6d^2 \exp\left(-ct^2\right)
    \end{align*}
    where \(c\) remains a universal constant but has a value which may change from line to line. Note we have used \(\alpha_{t}(d) \geq d + t^2\), Corollary \ref{corollary:chisquare_tail}, and \(t^2 \geq \widetilde{c}d\). Likewise, consider that 
    \begin{align*}
        \int_{\alpha_t(d)}^{\infty} \alpha_t(d)^2 f_d(z) \, dz &= \alpha_{t}(d)^2 P\{\chi^2_d \geq \alpha_t(d)\} \\
        &\leq \alpha_t(d)^2 P\{\chi^2_d \geq d + t^2\} \\
        &\leq 2(d + C^*t^2)^2 \exp\left(-c \min\left(\frac{t^4}{d}, t^2\right)\right) \\
        &\leq C^*t^4 \exp\left(-c\min\left(\frac{t^4}{d}, t^2\right)\right) \\
        &= C^*t^4 \exp\left(-ct^2\right)
    \end{align*}
    where we have used \((a+b)^2 \leq 2a^2 + 2b^2\), \(t^2 \geq \widetilde{c}d\), Lemma \ref{lemma:conditional_2}, and Corollary \ref{corollary:chisquare_tail}. Again, \(C^*\) and \(c\) remain universal constants but have values which may change from line to line. Thus, we have shown 
    \begin{equation}\label{eqn:low_dim_mgf_II}
        \lambda^2 E(Y^2 \mathbbm{1}_{\{Y < 0\}}) \leq C^* \lambda^2 t^4 \exp\left(-ct^2\right).
    \end{equation}
    We now bound the final term in (\ref{eqn:low_dim_mgf}). Consider that for \(0 \leq \lambda < \frac{1}{2}\) we have 
    \begin{equation*}
        E(e^{\lambda Y} \mathbbm{1}_{\{Y > 1/\lambda\}}) = \int_{\alpha_t(d) + 1/\lambda}^{\infty} e^{\lambda(z - \alpha_t(d))} f_d(z) \, dz = e^{-\lambda \alpha_t(d)} \int_{\alpha_t(d)+1/\lambda}^{\infty} \frac{1}{2^{d/2}\Gamma(d/2)} e^{-\left(\frac{1}{2} - \lambda\right) z} z^{\frac{d}{2} - 1} \, dz.
    \end{equation*}
    Let \(u = \left(\frac{1}{2} - \lambda\right) z\). Then \(du = \left(\frac{1}{2} - \lambda\right) dz\) and so 
    \begin{align*}
        e^{-\lambda \alpha_t(d)} \int_{\alpha_t(d)+1/\lambda}^{\infty} \frac{1}{2^{d/2}\Gamma(d/2)} e^{-\left(\frac{1}{2} - \lambda\right) z} z^{\frac{d}{2} - 1} \, dz &= \frac{e^{-\lambda \alpha_t(d)}}{2^{d/2}\Gamma(d/2)} \int_{\left(\alpha_t(d) + 1/\lambda\right)(1/2 - \lambda)}^{\infty} e^{-u} u^{\frac{d}{2} - 1} \left(\frac{1}{2} - \lambda\right)^{-\frac{d}{2}} \, du \\
        &= \frac{e^{-\lambda \alpha_t(d)}}{(1-2\lambda)^{d/2}} \cdot \frac{1}{\Gamma\left(\frac{d}{2}\right)} \int_{(\alpha_t(d) + 1/\lambda)(1/2-\lambda)}^{\infty} e^{-u} u^{\frac{d}{2} - 1} \, du \\
        &= \frac{e^{-\lambda \alpha_t(d)}}{(1-2\lambda)^{d/2}} \cdot \frac{\Gamma\left(\frac{d}{2}, \left(\frac{1}{\lambda} + \alpha_t(d)\right)\left(\frac{1}{2} - \lambda\right)\right)}{\Gamma\left(\frac{d}{2}\right)}
    \end{align*}
    where \(\Gamma(s, x) = \int_{x}^{\infty} e^{-t} t^{s-1} \, dt\). To summarize, we have shown 
    \begin{equation}\label{eqn:low_dim_mgf_III_pt1}
        E(e^{\lambda Y} \mathbbm{1}_{\{Y > 1/\lambda\}}) = \frac{e^{-\lambda \alpha_t(d)}}{(1-2\lambda)^{d/2}} \cdot \frac{\Gamma\left(\frac{d}{2}, \left(\frac{1}{\lambda} + \alpha_t(d)\right)\left(\frac{1}{2} - \lambda\right)\right)}{\Gamma\left(\frac{d}{2}\right)}. 
    \end{equation}
    To continue, we would like to apply Corollary \ref{thm:temme_order_1}, but we must first verify the conditions. Let \(a = \frac{d}{2}\) and \(\eta = \sqrt{2(\mu - \log(1 + \mu))}\) with \(\mu = \frac{\left(\frac{1}{\lambda} + \alpha_t(d)\right)\left(\frac{1}{2} - \lambda\right)}{a} - 1\). Note we can take \(D\) to be a sufficiently large universal constant in order to ensure \(a\) is sufficiently large. Since \(\widetilde{c}d \leq t^2\), it follows from Lemma \ref{lemma:conditional_2} that \(\alpha_t(d) \leq \widetilde{C}t^2\) for some positive universal constant \(\widetilde{C}\). Without loss of generality, we can take \(\widetilde{C} \geq \frac{3}{2}\). Let us restrict our attention to \(\lambda < \frac{1}{2\widetilde{C}}\). Observe that 
    \begin{equation*}
        \mu = \frac{\frac{1}{\lambda} + \alpha_t(d) - 2 - 2\lambda \alpha_t(d) - d}{d} \geq \frac{(2\widetilde{C} - 2) + t^2 - \frac{2}{2\widetilde{C}}(\widetilde{C}t^2)}{d} \geq \frac{2(\widetilde{C} - 1)}{d} = \frac{C^{**}}{d}
    \end{equation*}
    where we have defined \(C^{**} = 2(\widetilde{C} - 1)\). Since we have shown \(\mu > 0\), we can apply Corollary \ref{thm:temme_order_1}. By Corollary \ref{thm:temme_order_1} we have 
    \begin{equation*}
        \frac{\Gamma\left(\frac{d}{2}, \left(\frac{1}{\lambda} + \alpha_t(d)\right)\left(\frac{1}{2} - \lambda\right)\right)}{\Gamma\left(\frac{d}{2}\right)} \leq \left(1 - \Phi\left(\eta \sqrt{a}\right)\right) + \frac{C^*}{\sqrt{d}} \exp\left(-\frac{a\eta^2}{2}\right).
    \end{equation*}
    Here, \(\Phi\) denotes the cumulative distribution function of the standard normal distribution and \(C^*\) is a universal constant. By the Gaussian tail bound \(1 - \Phi(x) \leq \frac{\varphi(x)}{x}\) for \(x > 0\) where \(\varphi = \Phi'\), we have 
    \begin{equation*}
        \frac{\Gamma\left(\frac{d}{2}, \left(\frac{1}{\lambda} + \alpha_t(d)\right)\left(\frac{1}{2}-\lambda\right)\right)}{\Gamma\left(\frac{d}{2}\right)} \leq C^* \exp\left(-\frac{a\eta^2}{2}\right)\left(\frac{1}{\eta\sqrt{a}} + \frac{1}{\sqrt{d}}\right)
    \end{equation*}
    where the value of \(C^*\) has changed from line to line but remains a universal constant. To continue the calculation, we need to bound \(\eta\sqrt{a}\) from below. Note we can take \(D \geq C^{**}\). For \(\lambda < \frac{1}{2\widetilde{C}}\), it follows from \(\frac{C^{**}}{d} \leq 1\) and that 
    \begin{align*}
        \eta\sqrt{a} &= \sqrt{a} \sqrt{2(\mu - \log(1 + \mu))} \\
        &\geq \sqrt{a} \sqrt{2 \left(\frac{C^{**}}{d} - \log\left(1 + \frac{C^{**}}{d} \right)\right)} \\
        &\geq \sqrt{a} \sqrt{2 \cdot \left( \frac{(C^{**})^2}{2d} - \frac{(C^{**})^3}{3d^3} \right)} \\
        &= \sqrt{\frac{(C^{**})^2}{2d} - \frac{(C^{**})^3}{3d^2}} \\
        &= \frac{C^{**}}{\sqrt{d}} \sqrt{\frac{1}{2} - \frac{C^{**}}{3d}} \\
        &\geq \frac{C^{**}}{\sqrt{6d}}. 
    \end{align*}
    Consequently, we have the bound 
    \begin{equation*}
        \frac{\Gamma\left(\frac{d}{2}, \left(\frac{1}{\lambda} + \alpha_t(d)\right)\left(\frac{1}{2} - \lambda\right)\right)}{\Gamma\left(\frac{d}{2}\right)} \leq C^* \sqrt{d} \exp\left(-\frac{a\eta^2}{2}\right)
    \end{equation*}
    where the value of \(C^*\) has changed but remains a universal constant. We now examine the term \(e^{-a\eta^2/2}\). Consider that 
    \begin{align*}
        \exp\left(-\frac{a\eta^2}{2}\right) &= \exp\left(-a\left(\mu - \log\left(1 + \mu\right)\right)\right) \\
        &= \exp\left(-a\mu\right) \left( \frac{\left(\frac{1}{\lambda} + \alpha_t(d)\right)\left(1 - 2\lambda\right)}{d}\right)^{d/2} \\
        &= \exp\left(\frac{d}{2} - \frac{1}{2\lambda} + 1 - \frac{\alpha_t(d)}{2} + \lambda \alpha_t(d)\right) \left( \frac{\left(\frac{1}{\lambda} + \alpha_t(d)\right)\left(1- 2\lambda\right)}{d} \right)^{d/2}.
    \end{align*}
    Therefore, letting the value of \(C^*\) change from line to line but remaining a universal constant, we have from (\ref{eqn:low_dim_mgf_III_pt1})
    \begin{align*}
        E\left(e^{\lambda Y}\mathbbm{1}_{\{Y > 1/\lambda\}}\right) &\leq C^* \sqrt{d} \frac{e^{-\lambda \alpha_t(d)}}{(1-2\lambda)^{d/2}} \exp\left(\frac{d}{2} - \frac{1}{2\lambda} + 1 - \frac{\alpha_t(d)}{2} + \lambda \alpha_t(d)\right) \left(\frac{\left(\frac{1}{\lambda} + \alpha_t(d)\right)\left(1 - 2\lambda\right)}{d}\right)^{d/2} \\
        &\leq C^* \sqrt{d} e^{-\left(\frac{1}{2\lambda} + \frac{\alpha_t(d) - d}{2}\right)} \left(1 + \frac{\frac{1}{\lambda} + \alpha_t(d) - d}{d} \right)^{d/2} \\
        &= C^* \sqrt{d} \exp\left( - \left(\frac{1}{2\lambda} + \frac{\alpha_t(d) - d}{2}\right) + \frac{d}{2}\log\left(1 + \frac{\frac{1}{\lambda} + \alpha_t(d) - d}{d}\right) \right) \\
        &= C^* \sqrt{d} \exp\left( - \frac{d}{2}\left( \left(\frac{\frac{1}{\lambda} + \alpha_t(d) - d}{d} \right) - \log\left(1 + \frac{\frac{1}{\lambda} + \alpha_t(d) - d}{d}\right) \right)\right) \\
        &\leq C^* \sqrt{d} \exp\left(-\frac{d}{2} \cdot \frac{\frac{1}{\lambda} + \alpha_t(d) - d}{c^{**}d} \right) \\
        &\leq C^* \sqrt{d} \exp\left(-\frac{1}{2c^{**}\lambda} - \frac{t^2}{2c^{**}}\right) 
    \end{align*}
    for a universal positive constant \(c^{**}\). We have used that \(u - \log(1+u) \gtrsim u\) for \(u \geq 1\). Note that we can use this since \(\frac{\frac{1}{\lambda} + \alpha_t(d) - d}{d} \geq \frac{t^2}{d} \gtrsim 1\). Since \(e^{-\frac{1}{2c^{**}u}} \lesssim u^2\) for all \(u \in \R\), it follows that 
    \begin{equation}\label{eqn:low_dim_mgf_III}
        E\left(e^{\lambda Y} \mathbbm{1}_{\{Y > 1/\lambda\}}\right) \leq C^* \sqrt{d}\lambda^2 \exp\left(-\frac{t^2}{2c^{**}}\right)
    \end{equation}
    where the value of \(C^*\), again, has changed but remains a universal constant. Putting together our bounds (\ref{eqn:low_dim_mgf_I}), (\ref{eqn:low_dim_mgf_II}), (\ref{eqn:low_dim_mgf_III}) into (\ref{eqn:low_dim_mgf}) yields
    \begin{equation*}
        E(e^{\lambda Y}) \leq 1 + Ct^4\lambda^2 e^{-ct^2} \leq \exp\left(Ct^4\lambda^2e^{-ct^2}\right)
    \end{equation*}
    for \(\lambda < \frac{1}{2\widetilde{C}}\) where \(C, c > 0\) are universal constants. The proof is complete. 
\end{proof}

\begin{lemma}\label{lemma:tail_thold_exp}
    Let \(Z \sim N(\theta, I_d)\). Suppose \(\widetilde{c}\) is a universal positive constant. There exists a universal constant \(C\) such that for every \(t^2 \geq \widetilde{c}d\), we have 
    \begin{equation*}
        E\left\{ (||Z||^2 - \alpha_t(d))\mathbbm{1}_{\{||Z||^2 \geq d + t^2\}} \right\} 
        \begin{cases}
            = 0 &\textit{if } \theta = 0, \\
            \geq 0 &\textit{if } ||\theta||^2 < Ct^2, \\
            \geq ||\theta||^2/2 &\textit{if } ||\theta||^2 \geq Ct^2.
        \end{cases}
    \end{equation*}
    Here, \(\alpha_t(d)\) is given by (\ref{def:alpha}).
\end{lemma}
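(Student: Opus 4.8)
Write $\phi(x) := (x - \alpha_t(d))\mathbbm{1}_{\{x \geq d + t^2\}}$ and, for $Z \sim N(\theta, I_d)$, let $h(\mu^2) := E_\theta[\phi(\|Z\|^2)]$ where $\mu^2 = \|\theta\|^2$. The $\theta = 0$ case is immediate: then $\|Z\|^2$ has the law of $\|g\|^2$ for $g \sim N(0, I_d)$, so $h(0) = E[\|g\|^2\mathbbm{1}_{\{\|g\|^2 \geq d+t^2\}}] - \alpha_t(d)\,P\{\|g\|^2 \geq d+t^2\} = 0$ directly from the definition (\ref{def:alpha}) of $\alpha_t(d)$. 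For the other two cases the plan is to split into (a) a quantitative lower bound $h(\mu^2) \geq \mu^2 - C_1 t^2$ valid for all $\mu^2$, and (b) the nonnegativity $h(\mu^2) \geq 0$ valid for all $\mu^2$; together with the choice $C := 2C_1$, (a) gives $h(\mu^2)\geq \mu^2/2$ when $\mu^2 \geq Ct^2$, and (b) gives $h(\mu^2)\geq 0$ when $\mu^2 < Ct^2$.

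For (a) I would use $E_\theta[\|Z\|^2] = d + \mu^2$ together with the trivial bound $E_\theta[\|Z\|^2\mathbbm{1}_{\{\|Z\|^2 < d+t^2\}}] \leq (d+t^2)\,P_\theta\{\|Z\|^2 < d+t^2\}$. Writing $A = \{\|Z\|^2 \geq d+t^2\}$ this yields $h(\mu^2) \geq \mu^2 - t^2 + P_\theta(A)\bigl((d+t^2) - \alpha_t(d)\bigr)$, and since $\alpha_t(d) \geq d+t^2$ and $P_\theta(A) \leq 1$ we get $h(\mu^2) \geq \mu^2 - (\alpha_t(d) - d)$. Because $t^2 \geq \widetilde c\, d$, Lemma \ref{lemma:conditional_2} gives $\alpha_t(d) - d \leq C^*(t^2 \vee d) \leq C_1 t^2$ for a universal constant $C_1$, so $h(\mu^2) \geq \mu^2 - C_1 t^2$. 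This part is routine.

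For (b), the idea is to change measure to $P_0$ and then exploit a monotone reweighting. With $g \sim N(0,I_d)$ we have $h(\mu^2) = E_0\!\left[\phi(\|g\|^2)\,e^{\langle\theta,g\rangle - \mu^2/2}\right] = e^{-\mu^2/2}\,E_0\!\left[\phi(\|g\|^2)\,\beta(\|g\|^2)\right]$, where $\beta(r^2) := E_0\!\left[e^{\langle\theta,g\rangle}\mid \|g\|^2 = r^2\right]$; the second equality is the tower property, using that $\phi(\|g\|^2)$ is $\sigma(\|g\|^2)$-measurable. The crucial observation is that $\beta$ is positive and nondecreasing: conditioning on $\|g\|^2=r^2$ makes $g = r\omega$ with $\omega$ uniform on the sphere, so $\beta(r^2) = E[\cosh(\|\theta\|\,r\,\omega_1)]$ by the symmetry $\omega_1 \overset{d}{=} -\omega_1$, and differentiating in $r^2$ gives a multiple of $E[\omega_1\sinh(\|\theta\|\,r\,\omega_1)] \geq 0$ since $u\mapsto u\sinh(cu) \geq 0$ on $\mathbb{R}$ for $c\geq 0$. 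Since $E_0[\phi(\|g\|^2)] = 0$ by (\ref{def:alpha}), we may subtract $\beta(\alpha_t(d))\,E_0[\phi(\|g\|^2)]$ to obtain
\[
h(\mu^2) = e^{-\mu^2/2}\,E_0\!\left[(\|g\|^2 - \alpha_t(d))\bigl(\beta(\|g\|^2) - \beta(\alpha_t(d))\bigr)\mathbbm{1}_{\{\|g\|^2 \geq d+t^2\}}\right] \geq 0,
\]
because $\|g\|^2 - \alpha_t(d)$ and $\beta(\|g\|^2) - \beta(\alpha_t(d))$ always share sign (monotonicity of $\beta$) and the indicator is nonnegative. Combining (a) and (b) finishes the proof. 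The main obstacle is precisely this step (b): identifying the change-of-measure representation, proving monotonicity of $\beta$, and recognizing the Chebyshev-type correlation argument that upgrades the mean-zero identity for $\phi(\|g\|^2)$ to nonnegativity after the $\beta$-reweighting.
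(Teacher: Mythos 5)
Your proof is correct, and it actually does more work than the paper's own proof for the middle case. Part (a) of your argument --- the quantitative lower bound $h(\mu^2) \geq \mu^2 - C_1 t^2$ --- is substantively the same as the paper's treatment of the $\|\theta\|^2 \geq Ct^2$ case (drop the nonpositive truncated contribution, control $\alpha_t(d)-d$ via Lemma \ref{lemma:conditional_2}). Part (b) is where you genuinely diverge. The paper dismisses the $\|\theta\|^2 < Ct^2$ case in one sentence by asserting that $(\|Z\|^2 - \alpha_t(d))\mathbbm{1}_{\{\|Z\|^2 \geq d + t^2\}}$ is stochastically increasing in $\|\theta\|$, but this does not follow from stochastic increase of $\|Z\|^2$: the thresholding map $\phi(x) = (x - \alpha_t(d))\mathbbm{1}_{\{x \geq d+t^2\}}$ has a downward jump at $x = d+t^2$ (since $\alpha_t(d) \geq d+t^2$) and so is not nondecreasing. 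In fact, for $u$ in the range $(d+t^2-\alpha_t(d),\, 0)$ one has $P_\theta\{\phi(\|Z\|^2) > u\} = 1 - P_\theta\{d+t^2 \leq \|Z\|^2 \leq \alpha_t(d)+u\}$, and the probability of a fixed window under a noncentral $\chi^2_d$ is unimodal --- not monotone --- in the noncentrality, so the claimed stochastic increase actually fails. Your change-of-measure computation (tilt to $P_0$, center by $\beta(\alpha_t(d))$ using $E_0[\phi(\|g\|^2)]=0$, then invoke monotonicity of the spherical likelihood ratio $\beta$ to run a Chebyshev-type co-monotonicity argument on $(\|g\|^2-\alpha_t(d))(\beta(\|g\|^2)-\beta(\alpha_t(d)))$) is a correct, self-contained way to establish $h(\mu^2)\geq 0$; it makes precise the monotone-likelihood-ratio structure of the noncentral chi-squared that the paper was only gesturing at. In short, your part (b) fills what is a genuine lacuna in the paper's own proof sketch.
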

\begin{proof}
    We will make a choice for \(C\) at the end of the proof. Consider first the case where \(\theta = 0\). Then \(E\left\{(||Z||^2 - \alpha_t(d))\mathbbm{1}_{\{||Z||^2 \geq d + t^2\}}\right\} = 0\) by definition of \(\alpha_t(d)\) and so we have the desired result. The second case follows since the expression inside the expectation is stochastically increasing in \(||\theta||\). Moving on to the final case, suppose \(||\theta||^2 \geq Ct^2\). Since \(\alpha_t(d) \geq d + t^2\), we have 
    \begin{equation*}
        E\left\{ (||Z||^2 - \alpha_t(d))\mathbbm{1}_{\{||Z||^2 < d + t^2\}}\right\} \leq 0
    \end{equation*}
    Consequently, 
    \begin{align*}
        E\left\{(||Z||^2 - \alpha_t(d))\mathbbm{1}_{\{||Z||^2 \geq d + t^2\}}\right\} &= E\left\{(||Z||^2 - \alpha_t(d))(1 - \mathbbm{1}_{\{||Z||^2 < d + t^2\}})\right\} \\
        &= d + ||\theta||^2 - \alpha_t(d) - E\left\{(||Z||^2 - \alpha_t(d))\mathbbm{1}_{\{||Z||^2 < d + t^2\}}\right\} \\
        &\geq d + ||\theta||^2 - \alpha_t(d)
    \end{align*}
    By Lemma \ref{lemma:conditional_2} and \(t^2 \gtrsim d\), there exists a universal positive constant \(C^*\) such that \(\alpha_t(d) \leq d + C^* t^2\). Therefore, 
    \begin{align*}
        E\left\{(||Z||^2 - \alpha_t(d))\mathbbm{1}_{\{||Z||^2 \geq d + t^2\}}\right\} \geq ||\theta||^2 - C^* t^2 \geq ||\theta||^2 \left(1 - \frac{C^*}{C}\right). 
    \end{align*}
    Selecting \(C = \frac{C^*}{2}\) yields the desired result. 
\end{proof}

\begin{lemma}\label{lemma:tail_thold_var}
    Let \(Z \sim N(\theta, I_d)\). If \(t^2 \geq \widetilde{c}d\) for a universal positive constant \(\widetilde{c}\), then 
    \begin{equation*}
        \Var\left( (||Z||^2 - \alpha_t(d))\mathbbm{1}_{\{||Z||^2 \geq d + t^2\}} \right) \lesssim 
        \begin{cases}
            t^4 \exp\left(-c^* \min\left(\frac{t^4}{d}, t^2\right)\right) &\textit{if } \theta = 0, \\
            t^4 &\textit{if } 0 < ||\theta|| < 2t, \\
            ||\theta||^2 &\textit{if } ||\theta|| > 2t. 
        \end{cases}
    \end{equation*}
    Here, \(\alpha_t(d)\) is given by (\ref{def:alpha}) and \(c^*\) is a universal positive constant. 
\end{lemma}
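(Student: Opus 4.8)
The plan is to reduce all three bounds to controlling $E(Y^2)$, where $Y := (||Z||^2 - \alpha_t(d))\mathbbm{1}_{\{||Z||^2 \geq d + t^2\}}$, and to treat $\theta = 0$ completely separately from $\theta \neq 0$, since the thresholding event plays opposite roles in the two regimes. For $\theta = 0$ we have $E(Y) = 0$, so $\Var(Y) = E(Y^2)$, and $Y$ is bounded below (by $-C^* t^2$, using $\alpha_t(d) \leq d + C^* t^2$ from Lemma~\ref{lemma:conditional_2} together with $t^2 \geq \widetilde c d$), so its moment generating function is finite on an interval about the origin. Lemma~\ref{lemma:tail_mgf} gives $E(e^{\lambda Y}) \leq \exp(C t^4 \lambda^2 e^{-c t^2})$ for $0 < \lambda < \tfrac{1}{2\widetilde C}$; since both sides equal $1$ and have vanishing first derivative at $\lambda = 0$, comparing the coefficients of $\lambda^2$ (i.e.\ noting that the nonnegative smooth function $\lambda \mapsto \exp(Ct^4\lambda^2 e^{-ct^2}) - E(e^{\lambda Y})$ has nonnegative second derivative at $0$) yields $E(Y^2) \leq 2Ct^4 e^{-ct^2}$. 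As $\min(t^4/d, t^2) \leq t^2$, this is at most $2Ct^4\exp(-c\min(t^4/d,t^2))$, which is the claimed bound with $c^* = c$. (A direct computation of $E(Y^2)$ via the decomposition $||Z||^2 - \alpha_t(d) = (||Z||^2 - d - t^2) + (d + t^2 - \alpha_t(d))$, Corollary~\ref{corollary:chisquare_tail}, and Lemma~\ref{lemma:johnstone_chisquare} would also work, but the mgf route is cleaner.)

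For $\theta \neq 0$ the point is that the threshold is essentially \emph{inactive}: the event $\{||Z||^2 < d + t^2\}$ removes only a small or moderate amount of mass. Write $Y = (||Z||^2 - \alpha_t(d)) - (||Z||^2 - \alpha_t(d))\mathbbm{1}_{\{||Z||^2 < d + t^2\}}$, so that $\Var(Y) \leq 2\Var(||Z||^2) + 2\,E\big((||Z||^2 - \alpha_t(d))^2\mathbbm{1}_{\{||Z||^2 < d+t^2\}}\big)$. The first term equals $2(2d + 4||\theta||^2)$. In the second term, on the event $||Z||^2 < d + t^2 \leq \alpha_t(d)$ we have $(||Z||^2 - \alpha_t(d))^2 \leq \alpha_t(d)^2 \lesssim t^4$ (Lemma~\ref{lemma:conditional_2} and $t^2 \gtrsim d$), so it is $\lesssim t^4\, P(||Z||^2 < d + t^2)$. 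If $0 < ||\theta|| < 2t$, bound this probability by $1$ and use $d \lesssim t^2$, $||\theta||^2 \lesssim t^2$ to get $\Var(Y) \lesssim d + t^2 + t^4 \lesssim t^4$. If $||\theta|| > 2t$, then $2d + 4||\theta||^2 \lesssim ||\theta||^2$ handles the first term; for the second, $E||Z||^2 = d + ||\theta||^2$ exceeds $d + t^2$ by at least $||\theta||^2 - t^2 \geq \tfrac34||\theta||^2$ while $\Var(||Z||^2) = 2d + 4||\theta||^2 \lesssim ||\theta||^2$, so a standard sub-Gaussian left-tail bound for the noncentral chi-square (a noncentral version of Laurent--Massart's inequality) gives $P(||Z||^2 < d + t^2) \leq e^{-c||\theta||^2}$; hence the second term is $\lesssim t^4 e^{-c||\theta||^2} \lesssim ||\theta||^4 e^{-c||\theta||^2} \lesssim 1 \lesssim ||\theta||^2$, using $||\theta||^2 \gtrsim t^2 \gtrsim d$. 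Combining, $\Var(Y) \lesssim ||\theta||^2$.

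The main obstacle is picking the right decomposition in each regime, since neither works everywhere. For $\theta = 0$ the complementary decomposition fails outright: deleting $\{||Z||^2 < d + t^2\}$ removes a bulk of variance of order $d$, far larger than the target $t^4 e^{-ct^2}$, so the exponentially small bound genuinely relies on Lemma~\ref{lemma:tail_mgf}. Conversely, for large $||\theta||$ the naive estimate $E(Y^2) \leq 2\Var(||Z||^2) + 2(d + ||\theta||^2 - \alpha_t(d))^2 P(||Z||^2 \geq d + t^2)$ fails, because the centering error $(d + ||\theta||^2 - \alpha_t(d))^2$ is of order $||\theta||^4$ while $P(||Z||^2 \geq d + t^2) \to 1$; one must instead peel off the complementary event so that the factor multiplying a small probability is the $||\theta||$-free quantity $\alpha_t(d)^2 \lesssim t^4$. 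The one genuinely nontrivial estimate is thus the noncentral chi-square left-tail bound, which is precisely where the separation $||\theta|| > 2t$ and the tail-regime hypothesis $t^2 \gtrsim d$ are both used.
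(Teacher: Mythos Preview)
Your proof is correct, but takes a different route from the paper's. The paper proves Lemma~\ref{lemma:tail_thold_var} by repeating the argument of Lemma~\ref{lemma:bulk_thold_var}: for \(\theta = 0\) it writes \(\Var(Y) = P\{||Z||^2 \geq d+t^2\}\cdot \Var(||Z||^2 \mid ||Z||^2 \geq d+t^2)\) and applies Lemma~\ref{lemma:conditional_var} together with Corollary~\ref{corollary:chisquare_tail}; for \(\theta \neq 0\) it uses the law of total variance conditioning on the indicator \(\mathbbm{1}_{\{||Z||^2 \geq d+t^2\}}\), and then bounds the conditional expectation \(E(||Z||^2 - \alpha_t(d)\mid ||Z||^2 \geq d+t^2)\) rather carefully (for \(||\theta||\geq 2t\) this involves Stirling's formula for \(df_{d+2}(d)\) and Chebyshev's inequality for \(P\{||Z||^2 < d+t^2\}\)).

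Your \(\theta\neq 0\) argument is more elementary: by peeling off the complementary event and using the pointwise bound \((||Z||^2-\alpha_t(d))^2 \leq \alpha_t(d)^2 \lesssim t^4\) on \(\{||Z||^2 < d+t^2\}\), you bypass the conditional-expectation analysis entirely. Note, incidentally, that the exponential noncentral left-tail bound you invoke is not stated in the paper, but Chebyshev's inequality (which is what the paper uses) already gives \(t^4 P\{||Z||^2 < d+t^2\} \lesssim t^4\,||\theta||^2/||\theta||^4 \lesssim ||\theta||^2\), so you could stay within the paper's toolkit. Your \(\theta = 0\) argument, extracting \(E(Y^2)\) from the second-order Taylor coefficient of the mgf bound in Lemma~\ref{lemma:tail_mgf}, is valid and slick, though mildly circular in spirit since the proof of Lemma~\ref{lemma:tail_mgf} already computes these second moments explicitly; the paper's direct route via Lemma~\ref{lemma:conditional_var} is more self-contained.
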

\begin{proof}
    The proof of Lemma \ref{lemma:bulk_thold_var} can be repeated with the modification of invoking Lemma \ref{lemma:conditional_2} instead of Lemma \ref{lemma:bulk_alpha}.
\end{proof}

\begin{lemma}\label{lemma:null_tail}
    Suppose \(\widetilde{c}\) is a universal positive constant. There exist universal positive constants \(D\) and \(C^{*}\) such that if \(d \geq D\), \(t^2 \geq \widetilde{c} d\), and \(x > 0\), then 
    \begin{equation*}
        P\left\{\sum_{i=1}^{n} \left(||Z_i||^2 - \alpha_t(d)\right)\mathbbm{1}_{\{||Z||^2 \geq d + t^2\}} \geq C^* \left(\sqrt{x nt^4 e^{-ct^2}} + x\right) \right\} \leq e^{-x}
    \end{equation*}
    where \(Z_1,...,Z_n \overset{iid}{\sim} N(0, I_d)\) and \(\alpha_t(d)\) is given by (\ref{def:alpha}). Here, \(c\) is a universal positive constant. 
\end{lemma}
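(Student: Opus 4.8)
The plan is to reduce everything to the moment generating function estimate already furnished by Lemma \ref{lemma:tail_mgf} and then run the standard Chernoff argument for sub-gamma sums. Fix $\widetilde c$ as in the statement and let $D, \widetilde C, C, c$ be the universal constants produced by Lemma \ref{lemma:tail_mgf} for this $\widetilde c$ (enlarging $D$ if necessary so that all the auxiliary $\chi^2$ estimates invoked there apply). Write $Y_i = (\|Z_i\|^2 - \alpha_t(d))\mathbbm{1}_{\{\|Z_i\|^2 \ge d + t^2\}}$ and $S_n = \sum_{i=1}^n Y_i$, noting $E(Y_i) = 0$. Assume $d \ge D$ and $t^2 \ge \widetilde c d$. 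By Lemma \ref{lemma:tail_mgf}, for every $0 < \lambda < \frac{1}{2\widetilde C}$ we have $E(e^{\lambda Y_i}) \le \exp(C t^4 \lambda^2 e^{-ct^2})$, and since $Z_1,\dots,Z_n$ are i.i.d., tensorization gives
\[
E(e^{\lambda S_n}) \le \exp\!\big(n C t^4 \lambda^2 e^{-ct^2}\big) \qquad \text{for all } 0 < \lambda < \tfrac{1}{2\widetilde C}.
\]

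Next I would apply Markov's inequality in the form $P(S_n \ge u) \le e^{-\lambda u} E(e^{\lambda S_n}) \le \exp\!\big(-\lambda u + \tfrac{\lambda^2 v}{2}\big)$ with $v := 2 n C t^4 e^{-ct^2}$ and $b := 2\widetilde C$, and optimize over $\lambda$ in the admissible interval $(0, \tfrac{1}{b})$. The unconstrained minimizer of $-\lambda u + \lambda^2 v/2$ is $\lambda^\star = u/v$. If $u \le v/b$ then $\lambda^\star$ is admissible and one gets $P(S_n \ge u) \le e^{-u^2/(2v)}$; if $u > v/b$, then $\lambda^\star$ is infeasible but the exponent is decreasing on $(0, \tfrac1b]$, so letting $\lambda \uparrow \tfrac1b$ and using $v/(2b^2) \le u/(2b)$ yields $P(S_n \ge u) \le e^{-u/(2b)}$. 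In either case
\[
P(S_n \ge u) \le \exp\!\Big(-\tfrac12 \min\big(\tfrac{u^2}{v},\, \tfrac{u}{b}\big)\Big).
\]

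Finally, to land on the stated form I would take $u = \sqrt{2vx} + 2bx$, so that $u^2/v \ge 2x$ and $u/b \ge 2x$ simultaneously, hence $P(S_n \ge u) \le e^{-x}$. Substituting back $v = 2nCt^4 e^{-ct^2}$ and $b = 2\widetilde C$ gives $u = 2\sqrt{C}\,\sqrt{x n t^4 e^{-ct^2}} + 4\widetilde C x \le C^*\big(\sqrt{x n t^4 e^{-ct^2}} + x\big)$ with $C^* := 2\sqrt{C} \vee 4\widetilde C$, which is exactly the claim, with the same universal constant $c$ as in Lemma \ref{lemma:tail_mgf}. There is no genuine obstacle here; the only point deserving care is that the exponential tilt $\lambda$ is confined to $(0, \tfrac{1}{2\widetilde C})$ — this is precisely why the tail is sub-gamma (a $\sqrt{x}$ term plus a linear $x$ term) rather than purely sub-Gaussian — and splitting the analysis into the two regimes of $u$ as above disposes of it.
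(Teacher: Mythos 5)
Your proof is correct and follows essentially the same route as the paper's: apply Lemma \ref{lemma:tail_mgf} to bound the MGF on the admissible interval $\lambda \in (0, \tfrac{1}{2\widetilde{C}})$, run the Chernoff argument, and then invert the resulting sub-gamma tail by choosing $u = \sqrt{2vx} + 2bx$. The paper compresses the constrained optimization and the sub-gamma inversion into a single line, whereas you spell out the case split at $u = v/b$ and track $C^*$ explicitly; this extra detail is harmless and, if anything, makes the constant bookkeeping cleaner. No gaps.
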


\begin{proof}
    Let \(D\) be the constant from Lemma \ref{lemma:tail_mgf}. We use the Chernoff method to obtain the desired bound. Let \(Y\) be as in Lemma \ref{lemma:tail_mgf} and let \(\widetilde{C}\) be the universal constant from Lemma \ref{lemma:tail_mgf}. For any \(u > 0\), we have by Lemma \ref{lemma:tail_mgf}
    \begin{align*}
        P\left\{\sum_{j = 1}^{n} \left(||Z_j||^2 - \alpha_t(d)\right)\mathbbm{1}_{\{||Z_j||^2 \geq d + t^2\}} > u \right\} &\leq \inf_{\lambda < \frac{1}{2\widetilde{C}}} e^{-\lambda u} \left(E(e^{\lambda Y})\right)^n \\
        &\leq \inf_{\lambda < \frac{1}{2\widetilde{C}}} \exp\left(-\lambda u + C nt^4 \lambda^2 e^{-ct^2}\right) \\
        &= \exp\left(-C\left(\frac{u^2}{nt^4 e^{-ct^2}} \wedge u\right)\right)
    \end{align*}
    where \(c > 0\) is a universal constant and \(C > 0\) is a universal constant whose value may change from line to line but remain a universal constant. Selecting \(u = \frac{1}{2}\left(\sqrt{\frac{x nt^4 e^{-ct^2}}{C}} + \frac{x}{C}\right)\) and selecting \(C^*\) suitably yields the desired result. The proof is complete. 
\end{proof}

\subsection{Bulk}

\begin{lemma}\label{lemma:bulk_mgf}
    Let \(L^*\) be the universal positive constant from Lemma \ref{lemma:bulk_alpha}. There exist universal constants \(C^*, C^{**}, C, c > 0\) such that if \(d \geq C^{**}\) and \(1 \leq \beta \leq L^* \sqrt{d}\), then 
    \begin{equation*}
        E(e^{\lambda Y}) \leq \exp\left(Cd\beta^2\lambda^2 e^{-c\beta^2}\right)
    \end{equation*}
    for \(\lambda < \frac{\beta}{2(\beta C^* + \sqrt{d})}\). Here, \(Y = (||Z||^2 - \alpha_t(d))\mathbbm{1}_{\{||Z||^2 \geq d + t^2\}}\) with \(t^2 = \beta \sqrt{d}\) and \(Z \sim N(0, I_d)\). Also, \(\alpha_t(d)\) is given by (\ref{def:alpha}).
\end{lemma}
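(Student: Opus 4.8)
The plan is to follow the proof of Lemma~\ref{lemma:tail_mgf} almost line for line, with two substitutions adapted to the bulk regime $t^2 = \beta\sqrt{d}$. First, the control of the conditional expectation $\alpha_t(d)$ is now supplied by Lemma~\ref{lemma:bulk_alpha} (which gives $d + t^2 \le \alpha_t(d) \le d + C^*t^2$ in this range) rather than by the tail estimate of Lemma~\ref{lemma:conditional_2}. Second, the chi-square deviation exponent $\min(t^4/d,\,t^2) = \min(\beta^2,\,\beta\sqrt{d})$ is simplified using $1 \le \beta \le L^*\sqrt{d}$: in this range $\beta\sqrt d \ge \beta^2/L^*$, so $\min(\beta^2,\beta\sqrt d) \gtrsim \beta^2$, which is exactly the $e^{-c\beta^2}$ appearing in the statement. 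The constant $C^{**}$ in the statement is the threshold on $d$ needed so that Lemma~\ref{lemma:bulk_alpha} and the incomplete-gamma expansion of Corollary~\ref{thm:temme_order_1} are both applicable.

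Since $E(Y) = 0$, write $E(e^{\lambda Y}) = 1 + E\bigl(e^{\lambda Y} - 1 - \lambda Y\bigr)$ and split via the elementary bound $e^y - 1 - y \le (-y)\wedge y^2$ for $y < 0$, $\le y^2$ for $0 \le y \le 1$, and $\le e^y$ for $y > 1$, obtaining
\[
E\bigl(e^{\lambda Y} - 1 - \lambda Y\bigr) \le \lambda^2 E\bigl(Y^2\mathbbm{1}_{\{Y<0\}}\bigr) + \lambda^2 E\bigl(Y^2\mathbbm{1}_{\{0\le Y\le 1/\lambda\}}\bigr) + E\bigl(e^{\lambda Y}\mathbbm{1}_{\{Y>1/\lambda\}}\bigr).
\]
It suffices to bound each summand by a constant multiple of $d\beta^2\lambda^2 e^{-c\beta^2}$. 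For the first term, $(\alpha_t(d) - d - t^2)^2 \lesssim t^4 = d\beta^2$ by Lemma~\ref{lemma:bulk_alpha}, and $P\{\|Z\|^2 \ge d + t^2\} \lesssim e^{-c\min(t^4/d,t^2)} \lesssim e^{-c\beta^2}$ by Corollary~\ref{corollary:chisquare_tail}. For the second term, bound $\mathbbm{1}_{\{0\le Y\le 1/\lambda\}}$ by $\mathbbm{1}_{\{Y>0\}}$, expand $(z - \alpha_t(d))^2 \le 2(z^2 + \alpha_t(d)^2)$, and use Lemma~\ref{lemma:johnstone_chisquare} and Corollary~\ref{corollary:chisquare_tail} precisely as in the tail proof, again with $\alpha_t(d) \le d + C^*t^2$ and $\beta \le L^*\sqrt d$ to collapse the exponent to $e^{-c\beta^2}$. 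Both of these are routine transcriptions of the corresponding steps in Lemma~\ref{lemma:tail_mgf}.

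The substantive step is the third term. As in Lemma~\ref{lemma:tail_mgf} one derives the exact identity
\[
E\bigl(e^{\lambda Y}\mathbbm{1}_{\{Y>1/\lambda\}}\bigr) = \frac{e^{-\lambda\alpha_t(d)}}{(1-2\lambda)^{d/2}}\cdot\frac{\Gamma\!\bigl(\tfrac d2,\,(\tfrac1\lambda + \alpha_t(d))(\tfrac12-\lambda)\bigr)}{\Gamma(d/2)},
\]
and applies Corollary~\ref{thm:temme_order_1} with $a = d/2$ and $\mu = (\tfrac1\lambda + \alpha_t(d))(\tfrac12-\lambda)/a - 1$. The hypothesis $\mu > 0$ is exactly what the restriction $\lambda < \frac{\beta}{2(\beta C^* + \sqrt d)}$ delivers: combining it with $\alpha_t(d) \le d + C^*\beta\sqrt d$ shows $2\lambda\alpha_t(d) < t^2$, hence $\tfrac1\lambda + \alpha_t(d)(1-2\lambda) - d - 2 > \tfrac1\lambda - 2 > 0$. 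After invoking the Gaussian tail bound one arrives at $E\bigl(e^{\lambda Y}\mathbbm{1}_{\{Y>1/\lambda\}}\bigr) \lesssim \sqrt d\,\exp\!\bigl(-\tfrac d2(u - \log(1+u))\bigr)$ with $u = (\tfrac1\lambda + \alpha_t(d) - d)/d \ge (\tfrac1\lambda + t^2)/d$. The main obstacle is that, in contrast with the tail regime where $u \gtrsim 1$ and one uses $u - \log(1+u) \gtrsim u$, here $u$ may be as small as $\asymp \beta/\sqrt d$, so one must use the sharper $u - \log(1+u) \gtrsim \min(u, u^2)$; since $du \ge \tfrac1\lambda + t^2$ and $du^2 \ge \tfrac1{\lambda^2 d} + \beta^2$ (using $t^4/d = \beta^2$), this produces a factor $e^{-c(\frac1\lambda + t^2)}$ or $e^{-c(\frac1{\lambda^2 d} + \beta^2)}$, and converting the residual $e^{-c/\lambda}$ or $e^{-c/(\lambda^2 d)}$ into a $\lambda^2$ prefactor (exactly as $e^{-c/u} \lesssim u^2$ is used in the tail proof, now with $\lambda^2 d \le \beta^2/4$ from the constraint on $\lambda$) yields the bound $\lesssim d\beta^2\lambda^2 e^{-c\beta^2}$. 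Summing the three pieces gives $E(e^{\lambda Y}) \le 1 + Cd\beta^2\lambda^2 e^{-c\beta^2} \le \exp(Cd\beta^2\lambda^2 e^{-c\beta^2})$. The chief bookkeeping burden is tracking how the Temme error term and all universal constants behave as $\mu \downarrow 0$, and disentangling the dependence on $L^*$, on $C^*$ from Lemma~\ref{lemma:bulk_alpha}, and on the Temme constant.
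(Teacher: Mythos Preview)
Your overall architecture matches the paper's, and your handling of the first and third terms is essentially right. The gap is in the second term.

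You propose to bound $\lambda^2 E(Y^2\mathbbm{1}_{\{0\le Y\le 1/\lambda\}})$ by expanding $(z-\alpha_t(d))^2 \le 2(z^2 + \alpha_t(d)^2)$ ``precisely as in the tail proof.'' That expansion works in the tail regime only because $d \lesssim t^2$ there, so both $\int_{\alpha_t(d)}^\infty z^2 f_d(z)\,dz \lesssim d^2 e^{-ct^2}$ and $\alpha_t(d)^2 P\{\chi^2_d \ge \alpha_t(d)\} \lesssim t^4 e^{-ct^2}$ can be absorbed into $t^4 e^{-ct^2}$. In the bulk regime the situation is reversed: $\alpha_t(d) \approx d$ while $z - \alpha_t(d)$ is of order $\beta\sqrt d$ on the relevant set, so the crude inequality $(z-\alpha_t(d))^2 \le 2(z^2+\alpha_t(d)^2)$ discards a cancellation of size $d$ and leaves you with $\lambda^2 d^2 e^{-c\beta^2}$. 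Since $d/\beta^2$ is unbounded under the hypothesis $1 \le \beta \le L^*\sqrt d$, this cannot be upgraded to the required $\lambda^2 d\beta^2 e^{-c\beta^2}$ by adjusting constants in the exponent.

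The paper avoids this loss by keeping the integral intact: it extends the domain from $[\alpha_t(d),\infty)$ to $[d+t^2,\infty)$ and recognizes
\[
\int_{d+t^2}^\infty (z-\alpha_t(d))^2 f_d(z)\,dz = P\{\chi^2_d \ge d+t^2\}\cdot \Var\bigl(\|Z\|^2 \,\big|\, \|Z\|^2 \ge d+t^2\bigr),
\]
then invokes Lemma~\ref{lemma:conditional_var}, which gives the conditional variance as $\lesssim t^4 + d e^{-C\min(t^4/d,t^2)} \lesssim d\beta^2$. Multiplying by the tail probability $\lesssim e^{-c\beta^2}$ yields exactly $\lambda^2 d\beta^2 e^{-c\beta^2}$. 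This is the substantive correction you need; the rest of your outline can stand.
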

\begin{proof}
    The argument we give will be similar to the proof of Lemma \ref{lemma:tail_mgf}, namely we will separately bound each term in (\ref{eqn:low_dim_mgf}) and substitute into the equation 
    \begin{equation*}
        E(e^{\lambda Y}) = 1 + E\left(e^{\lambda Y} - 1 - \lambda Y\right). 
    \end{equation*}
    We start with the first term on the right-hand side in (\ref{eqn:low_dim_mgf}). By Lemma \ref{lemma:johnstone_chisquare}, we have \(\alpha_t(d) \leq d + C^* \beta \sqrt{d}\) for a universal positive constant \(C^*\). Note that \(C^* \geq 1\) since we trivially have \(\alpha_t(d) \geq d + t^2 = d + \beta \sqrt{d}\). Consequently, arguing as in the proof of Lemma \ref{lemma:tail_mgf}, we have 
    \begin{align*}
        \lambda^2E(Y^2 \mathbbm{1}_{\{Y < 0\}}) &\leq \lambda^2(d + t - \alpha_t(d))^2 P\{||Z||^2 \geq d + t^2\}\\
        &\leq C \lambda^2 d\beta^2 \exp\left(-c\min\left(\frac{t^4}{d}, t^2\right)\right) \\
        &= C\lambda^2 d\beta^2 e^{-c\beta^2}
    \end{align*}
    where \(C, c > 0\) are universal constants. We have used Corollary \ref{corollary:chisquare_tail} to obtain the final inequality. We now bound the second term in (\ref{eqn:low_dim_mgf}). Letting \(f_d\) denote the probability density function of the \(\chi^2_d\) distribution, we can repeat and then continue the calculation in the proof of Lemma \ref{lemma:tail_mgf} to obtain 
    \begin{align*}
        \lambda^2 E(Y^2 \mathbbm{1}_{\{0 < Y < 1/\lambda\}}) &\leq \lambda^2 \int_{\alpha_t(d)}^{\infty} (z - \alpha_t(d))^2 f_d(z) \, dz \\
        &\leq \lambda^2 \int_{d+t^2}^{\infty} (z - \alpha_t(d))^2 f_d(z) \, dz \\
        &= \lambda^2 \left( \int_{d+t^2}^{\infty} z^2 f_d(z) \, dz - 2\alpha_t(d) \int_{d+t^2}^{\infty} zf_d(z) \, dz + \alpha_t(d)^2 \int_{d + t^2} f_d(z) \, dz \right) \\
        &= \lambda^2 P\{\chi^2_d \geq d + t^2\} \Var\left(||Z||^2 \,|\, ||Z||^2 \geq d + t^2\right) \\
        &\leq C \lambda^2 d \beta^2 \exp\left(-c \min\left(\frac{t^4}{d}, t^2\right)\right) \\
        &= C\lambda^2 d \beta^2 e^{-c\beta^2}
    \end{align*}
    where the values of \(C, c > 0\) have changed but remain universal constants. We have used Lemma \ref{lemma:conditional_var} here. 

    We now bound the final term in (\ref{eqn:low_dim_mgf}). Arguing exactly as in the proof of Lemma \ref{lemma:tail_mgf}, we have 
    \begin{equation*}
        E(e^{\lambda Y} \mathbbm{1}_{\{Y > 1/\lambda\}}) = \frac{e^{-\lambda \alpha_t(d)}}{(1-2\lambda)^{d/2}} \cdot \frac{\Gamma\left(\frac{d}{2}, \left(\frac{1}{\lambda} + \alpha_t(d)\right)\left(\frac{1}{2}-\lambda\right)\right)}{\Gamma\left(\frac{d}{2}\right)}
    \end{equation*}
    for \(0 \leq \lambda < \frac{1}{2}\). Recall that \(\Gamma(s, x) = \int_{x}^{\infty} e^{-t} t^{s-1} \, dt\). 
    
    Let us restrict our attention to \(\lambda < \frac{\beta}{2(\sqrt{d} + C^*\beta)}\). We seek to apply Corollary \ref{thm:temme_order_1}, but we must verify the conditions. Let \(a = \frac{d}{2}\) and \(\eta = \sqrt{2(\mu - \log\left(1 + \mu\right))}\) with \(\mu = \frac{\left(\frac{1}{\lambda} + \alpha_t(d)\right)\left(\frac{1}{2} - \lambda\right)}{a} - 1\). Observe 
    \begin{equation*}
        \mu = \frac{\left(\frac{1}{\lambda} + \alpha_t(d)\right)\left(1 - 2\lambda\right) - d}{d} = \frac{\frac{1-2\lambda}{\lambda} + \alpha_t(d)(1-2\lambda) - d}{d}.
    \end{equation*}
    Consider that \(\lambda < \frac{\beta}{2(\sqrt{d} + C^*\beta)} = \frac{t^2}{2(d+C^*\beta\sqrt{d})} \leq \frac{\alpha_t(d) - d}{2\alpha_t(d)}\). Therefore, \(\alpha_t(d)(1-2\lambda) - d \geq 0\) and so 
    \begin{equation*}
        \mu \geq \frac{1-2\lambda}{\lambda d} \geq \frac{2}{\beta\sqrt{d}}.
    \end{equation*}
    Note we have used \(C^* \geq 1\). Since \(\mu > 0\) and \(d \geq C^{**}\) which is a sufficiently large universal constant, we can apply Corollary \ref{thm:temme_order_1}. By Corollary \ref{thm:temme_order_1}, we have 
    \begin{equation*}
        \frac{\Gamma\left(\frac{d}{2}, \left(\frac{1}{\lambda} + \alpha_t(d)\right)\left(\frac{1}{2} - \lambda\right)\right)}{\Gamma\left(\frac{d}{2}\right)} \leq \left(1 - \Phi\left(\eta\sqrt{a}\right)\right) + \frac{C^{\dag}}{\sqrt{d}}\exp\left(-\frac{a\eta^2}{2}\right).
    \end{equation*}
    Here, \(C^{\dag}\) is a positive universal constant. Recall that \(\Phi\) denotes the cumulative distribution function of the standard normal distribution. By the Gaussian tail bound \(1-\Phi(x) \leq \frac{\varphi(x)}{x}\) for \(x > 0\), we have 
    \begin{equation}\label{eqn:bulk_mgf_gamma}
        \frac{\Gamma\left(\frac{d}{2}, \left(\frac{1}{\lambda} + \alpha_t(d)\right)\left(\frac{1}{2} - \lambda\right)\right)}{\Gamma\left(\frac{d}{2}\right)} \leq C^{\dag} \exp\left(-\frac{a\eta^2}{2}\right)\left(\frac{1}{\eta\sqrt{a}} + \frac{1}{\sqrt{d}}\right)
    \end{equation}
    where the value of \(C^{\dag}\) has changed but remains a universal constant. To continue with the bound, we need to bound \(\eta\sqrt{a}\) from below. Let us take \(C^{**}\) larger than \(4\). Since \(a = \frac{d}{2}\), we have 
    \begin{align*}
        \eta \sqrt{a} &= \sqrt{a}\sqrt{2(\mu - \log(1+\mu))} \\
        &\geq \sqrt{a}\sqrt{2\left(\frac{2}{\beta\sqrt{d}} + \log\left(1 + \frac{2}{\beta\sqrt{d}}\right)\right)} \\
        &\geq \sqrt{a}\sqrt{2\left(\frac{4}{2\beta^2 d} - \frac{8}{3\beta^3d^{3/2}}\right)} \\
        &\geq \frac{c^*}{\beta}
    \end{align*}
    where \(c^* > 0\) is a universal constant. We can conclude from (\ref{eqn:bulk_mgf_gamma}) that 
    \begin{equation*}
        \frac{\Gamma\left(\frac{d}{2}, \left(\frac{1}{\lambda} + \alpha_t(d)\right)\left(\frac{1}{2} - \lambda\right)\right)}{\Gamma\left(\frac{d}{2}\right)} \leq C^{\dag} \beta \exp\left(-\frac{a\eta^2}{2}\right)
    \end{equation*}
    where the value of \(C^\dag\) has changed but remains a universal constant. We now examine the term \(e^{-\frac{a\eta^2}{2}}\). Arguing exactly as in the proof of Lemma \ref{lemma:tail_mgf}, we obtain 
    \begin{equation*}
        \exp\left(-\frac{a\eta^2}{2}\right) = \exp\left(\frac{d}{2} - \frac{1}{2\lambda} + 1 - \frac{\alpha_t(d)}{2} + \lambda \alpha_t(d)\right) \left( \frac{\left(\frac{1}{\lambda} + \alpha_t(d)\right)\left(1 - 2\lambda\right)}{d} \right)^{d/2}
    \end{equation*}
    which, as argued in the proof of Lemma \ref{lemma:tail_mgf}, leads us to the bound 
    \begin{equation*}
        E(e^{\lambda Y}\mathbbm{1}_{Y > 1/\lambda}) \leq C^* \beta \exp\left( C\lambda^2 d\beta^2 e^{-c\beta^2} \right)
    \end{equation*}
    for \(\lambda < \frac{\beta}{2(\sqrt{d} + C^*\beta)}\) as desired. 
\end{proof}

\begin{lemma}\label{lemma:bulk_thold_exp}
    Let \(Z \sim N(\theta, I_d)\). Suppose \(1 \leq \beta \leq L^* \sqrt{d}\) where \(L^*\) is the universal constant from Lemma \ref{lemma:bulk_alpha}. There exists a universal positive constant \(C\) such that if \(t^2 = \beta \sqrt{d}\), then we have 
    \begin{equation*}
        E\left\{(||Z||^2 - \alpha_t(d))\mathbbm{1}_{\{||Z||^2 \geq d + t^2\}}\right\} 
        \begin{cases}
            = 0 &\textit{if } \theta = 0, \\
            \geq 0 &\textit{if } ||\theta||^2 < C t^2, \\
            \geq ||\theta||^2/2 &\textit{if } ||\theta||^2 \geq C t^2
        \end{cases}
    \end{equation*}
    where \(\alpha_t(d)\) is given by (\ref{def:alpha}).
\end{lemma}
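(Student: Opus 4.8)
The plan is to run the proof of Lemma~\ref{lemma:tail_thold_exp} line for line, replacing only the one regime-specific input: the upper bound on the conditional mean \(\alpha_t(d)\). Throughout, write \(Y := (||Z||^2 - \alpha_t(d))\mathbbm{1}_{\{||Z||^2 \geq d + t^2\}}\) with \(Z \sim N(\theta, I_d)\) and \(t^2 = \beta\sqrt{d}\), and note that \(\alpha_t(d) \geq d + t^2\) always holds by the definition (\ref{def:alpha}), since \(\alpha_t(d)\) is the conditional mean of \(||Z||^2\) under \(\theta = 0\) given \(||Z||^2 \geq d + t^2\), hence exceeds the conditioning threshold.

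The three cases go as follows. For \(\theta = 0\), the definition of \(\alpha_t(d)\) is exactly the centering making \(E(Y) = 0\). For \(0 < ||\theta||^2 < Ct^2\) (indeed for any \(\theta\)), the claim \(E(Y) \geq 0\) reduces to \(E(||Z||^2 \mid ||Z||^2 \geq d + t^2) \geq \alpha_t(d)\), which is the monotonicity observation used in the proof of Lemma~\ref{lemma:tail_thold_exp}: the noncentral \(\chi^2_d\) family has monotone likelihood ratio in \(||\theta||^2\), so conditioning on \(\{||Z||^2 \geq d+t^2\}\) preserves the stochastic order, and the conditional mean is nondecreasing in \(||\theta||^2\), hence at least its value at \(\theta = 0\), namely \(\alpha_t(d)\); this argument is insensitive to the size of \(t^2\) relative to \(d\). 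For \(||\theta||^2 \geq Ct^2\), use \(\alpha_t(d) \geq d + t^2\) to see the complementary event contributes nonpositively, \(E\{(||Z||^2 - \alpha_t(d))\mathbbm{1}_{\{||Z||^2 < d+t^2\}}\} \leq 0\); then, writing \(\mathbbm{1}_{\{||Z||^2 \geq d+t^2\}} = 1 - \mathbbm{1}_{\{||Z||^2 < d+t^2\}}\) and using \(E||Z||^2 = d + ||\theta||^2\), one gets \(E(Y) \geq d + ||\theta||^2 - \alpha_t(d)\). This is the only place the regime matters: since \(1 \leq \beta \leq L^*\sqrt{d}\) and \(t^2 = \beta\sqrt{d}\), Lemma~\ref{lemma:bulk_alpha} supplies a universal \(C^*\) with \(\alpha_t(d) \leq d + C^* t^2\) --- the bulk counterpart of the bound furnished by Lemma~\ref{lemma:conditional_2} in the tail case. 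Hence \(E(Y) \geq ||\theta||^2 - C^* t^2 \geq ||\theta||^2(1 - C^*/C)\), and taking \(C := 2C^*\) yields \(E(Y) \geq ||\theta||^2/2\).

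I do not anticipate any genuine obstacle: every analytic estimate specific to the bulk regime is already packaged inside Lemma~\ref{lemma:bulk_alpha}, whose hypotheses coincide with those of the present lemma. The only items worth a sentence of verification are that the output constant \(C = 2C^*\) is universal (it is, since \(C^*\) from Lemma~\ref{lemma:bulk_alpha} is), and that the middle-case monotonicity does not secretly use \(||\theta||^2 < Ct^2\) (it does not; the restriction merely demarcates the regimes). Accordingly the written proof can be compressed to the remark that it repeats the proof of Lemma~\ref{lemma:tail_thold_exp} with Lemma~\ref{lemma:bulk_alpha} playing the role of Lemma~\ref{lemma:conditional_2}.
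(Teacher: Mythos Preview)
Your proposal is correct and follows essentially the same approach as the paper's proof, which likewise refers back to the argument of Lemma~\ref{lemma:tail_thold_exp} for the first two cases and, in the third case, uses the decomposition \(E(Y) \geq d + ||\theta||^2 - \alpha_t(d)\) together with Lemma~\ref{lemma:bulk_alpha} in place of Lemma~\ref{lemma:conditional_2}. Your choice \(C = 2C^*\) is the right one; the paper's ``\(C = C^*/2\)'' appears to be a typo.
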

\begin{proof}
    We will make a choice for \(C\) later on in the proof. The proof for the cases \(\theta = 0\) and \(||\theta||^2 \leq C t^2\) follows exactly as in the proof of Lemma \ref{lemma:tail_thold_exp}. Here, we focus on the final case in which \(||\theta||^2 \geq C t^2\). Since \(\alpha_t(d) \geq d + t^2\), we have 
    \begin{equation*}
        E\left\{(||Z||^2 - \alpha_t(d))\mathbbm{1}_{\{||Z||^2 < d + t^2\}}\right\} \leq 0.
    \end{equation*}
    Since \(t^2 = \beta \sqrt{d}\), we can apply Lemma \ref{lemma:bulk_alpha} to obtain 
    \begin{align*}
        E\left\{(||Z||^2 - \alpha_t(d))\mathbbm{1}_{\{||Z||^2 \geq d + t^2\}}\right\} &= E\left\{(||Z||^2 - \alpha_t(d))(1 - \mathbbm{1}_{\{||Z||^2 < d + t^2\}})\right\} \\
        &\geq d + ||\theta||^2 - \alpha_t(d) - E\left\{(||Z||^2 - \alpha_t(d))\mathbbm{1}_{\{||Z||^2 < d + t^2\}}\right\} \\
        &\geq ||\theta||^2 - C^* \beta \sqrt{d} \\
        &= ||\theta||^2 \left(1 - \frac{C^*\beta \sqrt{d}}{||\theta||^2}\right) \\
        &= ||\theta||^2 \left(1 - \frac{C^*t^2}{||\theta||^2}\right) \\
        &\geq ||\theta||^2 \left(1 - \frac{C^*}{C}\right). 
    \end{align*}
    where \(C^*\) is a universal positive constant. Taking \(C = \frac{C^*}{2}\) completes the proof.
\end{proof}

\begin{lemma}\label{lemma:bulk_thold_var}
    Let \(Z \sim N(\theta, I_d)\). Suppose \(1 \leq \beta \leq L^* \sqrt{d}\) where \(L^*\) is the universal constant from Lemma \ref{lemma:bulk_alpha}. Then there exists a universal positive constant \(c^*\) such that if \(t^2 = \beta \sqrt{d}\), then 
    \begin{equation*}
        \Var\left((||Z||^2 - \alpha_t(d))\mathbbm{1}_{\{||Z||^2 \geq d + t^2\}}\right) \lesssim 
        \begin{cases}
            t^4 \exp\left(-\frac{c^*t^4}{d}\right) &\textit{if } \theta = 0, \\
            d + ||\theta||^2 &\textit{if } ||\theta|| \geq 2t, \\
            t^4 &\textit{if } 0 < ||\theta|| < 2t.
        \end{cases}
    \end{equation*}
    Here, \(\alpha_t(d)\) is given by (\ref{def:alpha}).
\end{lemma}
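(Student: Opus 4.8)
The plan is to bound the variance $\Var(Y)$ with $Y=(\|Z\|^2-\alpha_t(d))\mathbbm{1}_{\{\|Z\|^2\geq d+t^2\}}$ via $\Var(Y)\leq E(Y^2)$, splitting $E(Y^2)$ into the contribution from $\{Y<0\}$ (i.e. $d+t^2\leq \|Z\|^2 <\alpha_t(d)$) and from $\{Y\geq 0\}$ (i.e. $\|Z\|^2\geq\alpha_t(d)$), exactly as was done for the corresponding terms in the moment-generating-function estimate of Lemma~\ref{lemma:bulk_mgf}. The only structural input beyond standard noncentral $\chi^2$ facts is a good bound on $\alpha_t(d)$, and since $t^2=\beta\sqrt d$ with $1\leq\beta\leq L^*\sqrt d$ we are squarely in the bulk regime, so Lemma~\ref{lemma:bulk_alpha} gives $\alpha_t(d)\leq d+C^*\beta\sqrt d = d+C^*t^2$ (and trivially $\alpha_t(d)\geq d+t^2$). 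This is the analogue of invoking Lemma~\ref{lemma:conditional_2} in the tail case, and indeed Lemma~\ref{lemma:tail_thold_var} is proved precisely by this substitution; here I am doing the converse.

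\textbf{Case $\theta=0$.} Here I would use $\Var(Y)\leq E(Y^2\mathbbm{1}_{\{Y<0\}})+E(Y^2\mathbbm{1}_{\{Y\geq 0\}})$. For the first piece, $(\|Z\|^2-\alpha_t(d))^2\leq(\alpha_t(d)-d-t^2)^2\lesssim t^4$ on the event, and the event has probability $P\{\chi^2_d\geq d+t^2\}\lesssim \exp(-c\min(t^4/d,t^2))=\exp(-c\beta^2)\leq\exp(-c t^4/d)$ by Corollary~\ref{corollary:chisquare_tail} together with $\beta\le L^*\sqrt d$ (so $\beta^2\le L^{*2}d$, hence $\beta^2\gtrsim t^4/d$ up to constants — actually $t^4/d=\beta^2$ identically, so this is an equality). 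For the second piece, writing $\int_{\alpha_t(d)}^\infty(z-\alpha_t(d))^2 f_d(z)\,dz$ and expanding, I'd recognize this as $P\{\chi^2_d\ge d+t^2\}\cdot\Var(\|Z\|^2\mid \|Z\|^2\ge d+t^2)$ only after shifting the lower limit down to $d+t^2$ (which only increases the integral since $\alpha_t(d)\ge d+t^2$ and the integrand vanishes at $\alpha_t(d)$ but we are overcounting the region $[d+t^2,\alpha_t(d)]$ by a $\lesssim t^4$-bounded integrand times a probability already controlled) — alternatively just bound $z^2$ and $\alpha_t(d)^2$ tail integrals directly via Lemma~\ref{lemma:johnstone_chisquare} and Corollary~\ref{corollary:chisquare_tail} as in Lemma~\ref{lemma:tail_mgf}. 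Either way one gets $\lesssim t^4\exp(-c^* t^4/d)$.

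\textbf{Cases $\theta\neq 0$.} Now $Z\sim N(\theta,I_d)$ and $\|Z\|^2\sim\chi^2_d(\|\theta\|^2)$. For $0<\|\theta\|<2t$: on $\{Y<0\}$ the integrand is $\lesssim t^4$ and the probability is $\le 1$; on $\{Y\ge0\}$ I would bound $E((\|Z\|^2-\alpha_t(d))^2\mathbbm{1}_{\{\|Z\|^2\ge\alpha_t(d)\}})$ by centering at $E\|Z\|^2=d+\|\theta\|^2$, noting $|\alpha_t(d)-(d+\|\theta\|^2)|\lesssim t^2$ since $\|\theta\|^2<4t^2$ and $\alpha_t(d)-d\asymp t^2$, so $E(Y^2\mathbbm{1}_{\{Y\ge0\}})\lesssim \Var(\|Z\|^2)+t^4 = 2d+4\|\theta\|^2+t^4\lesssim t^4$ using $\|\theta\|^2<4t^2$ and $d\le t^4/L^{*2}\lesssim t^4$. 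For $\|\theta\|>2t$: threshold is now deep in the bulk of the distribution of $\|Z\|^2$, so roughly $Y=\|Z\|^2-\alpha_t(d)$ with high probability; I'd write $\Var(Y)\lesssim E((\|Z\|^2-E\|Z\|^2)^2)+(E\|Z\|^2-\alpha_t(d))^2\cdot P\{\ldots\}+(\text{truncation correction})$ and bound each by $d+\|\theta\|^2$, using $\Var(\chi^2_d(\|\theta\|^2))=2d+4\|\theta\|^2$ and $|E\|Z\|^2-\alpha_t(d)|=|d+\|\theta\|^2-\alpha_t(d)|\lesssim \|\theta\|^2\vee t^2\lesssim\|\theta\|^2$.

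\textbf{Main obstacle.} The delicate point is the $\|\theta\|>2t$ case: one must show the truncation indicator $\mathbbm{1}_{\{\|Z\|^2<d+t^2\}}$ contributes negligibly to the variance even though $\alpha_t(d)$ can be as large as $d+C^*t^2$, so the ``gap'' between the threshold $d+t^2$ and the centering $\alpha_t(d)$ is of the same order $t^2$ as $\sqrt d\beta$; a clean way is to use $\|\theta\|>2t\Rightarrow \|\theta\|^2>4t^2\geq 4(\alpha_t(d)-d)/C^*$ together with a concentration bound $P\{\chi^2_d(\|\theta\|^2)<d+t^2\}\le P\{\chi^2_d(\|\theta\|^2)<E\|Z\|^2-\tfrac12\|\theta\|^2\}$ which decays, making both the truncation-probability-weighted terms lower order than $\|\theta\|^2$. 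I would carry the cases out in the order $\theta=0$, then $0<\|\theta\|<2t$, then $\|\theta\|>2t$, since each reuses the tail estimates of the previous one.
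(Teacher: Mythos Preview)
Your approach is correct and, for the two $\theta\neq 0$ cases, takes a genuinely different route from the paper. For $\theta=0$ you are doing essentially what the paper does (bound $\Var(Y)\le E(Y^2)$, recognize the piece over $[d+t^2,\infty)$ as $P\{\chi^2_d\ge d+t^2\}\cdot\Var(\|Z\|^2\mid \|Z\|^2\ge d+t^2)$, then invoke the conditional-variance estimate and the $\chi^2$ tail bound). One small slip: the inequality $d\le t^4/L^{*2}$ is not quite right; what you actually have (and need) is $d=t^4/\beta^2\le t^4$ from $\beta\ge 1$.

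For $\theta\neq 0$ the paper does \emph{not} work via $\Var(Y)\le E(Y^2)$. Instead it conditions on the indicator and applies the law of total variance,
\[
\Var(Y)=E\bigl[\Var(Y\mid \mathbbm{1}_A)\bigr]+\Var\bigl[E(Y\mid\mathbbm{1}_A)\bigr]
\le \Var(\|Z\|^2)+P(A)P(A^c)\bigl(E(\|Z\|^2-\alpha_t(d)\mid A)\bigr)^2,
\]
which produces the crucial $P(A^c)$ factor automatically. To bound the conditional expectation the paper writes $Z=g+\theta$, uses the symmetry $E(\langle\theta,g\rangle\mid A)=0$, and then for $\|\theta\|\ge 2t$ controls $E(\|g\|^2-d\mid \|g+\theta\|^2\ge d+t^2)$ by the unconditioned positive part $E(\|g\|^2-d)_+=2df_{d+2}(d)\asymp\sqrt d$ together with a lower bound on $P(A)$. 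Your decomposition $Y=W-W\mathbbm{1}_{A^c}$ with $W=\|Z\|^2-\alpha_t(d)$ achieves the same end more directly: $\Var(Y)\lesssim\Var(W)+E(W^2\mathbbm{1}_{A^c})\lesssim \Var(\|Z\|^2)+(E\|Z\|^2-\alpha_t(d))^2P(A^c)$, and Chebyshev gives $P(A^c)\lesssim(d+\|\theta\|^2)/\|\theta\|^4$, so the second term is $\lesssim d+\|\theta\|^2$. This avoids both the symmetry trick and the density computation; the paper's route, on the other hand, gives a single unified decomposition for both $\theta\neq 0$ cases and isolates exactly which conditional moment needs to be estimated. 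Note that for $\|\theta\|>2t$ you \emph{must} abandon the crude $\Var(Y)\le E(Y^2)$ bound (since $E(Y)\asymp\|\theta\|^2$ forces $E(Y^2)\gtrsim\|\theta\|^4$), and your sketch does implicitly do so; just be sure the write-up makes that switch explicit.
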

\begin{proof}
    Let \(f_d\) and \(F_d\) respectively denote the probability density function and cumulative distribution function of the \(\chi^2_d\) distribution. 

    \textbf{Case 1:} Consider the first case in which \(\theta = 0\). Then 
    \begin{align*}
        \Var((||Z||^2 - \alpha_t(d))\mathbbm{1}_{\{||Z||^2 \geq d + t^2\}}) &\leq E\left((||Z||^2 - \alpha_t(d))^2 \mathbbm{1}_{\{||Z||^2 \geq d + t^2\}}\right) \\
        &= P\left\{||Z||^2 \geq d + t^2\right\} \cdot E\left( (||Z||^2 - \alpha_t(d))^2 \,|\, ||Z||^2 \geq d + t^2\right) \\
        &= \left(1 - F_d(d+t^2)\right)\Var\left(||Z||^2 \,|\, ||Z||^2 \geq d + t^2\right) \\
        &\leq C^* \left(1 - F_d(d+t^2)\right) d \beta^2
    \end{align*}
    where we have applied the definition of \(\alpha_t(d)\) and we have applied Lemma \ref{lemma:conditional_var}. Here, \(C^*\) is a universal positive constant. An application of Corollary \ref{corollary:chisquare_tail} gives the desired result for this case. 

    We now move to the other two cases. Suppose \(\theta \neq 0\). For ease of notation, let \(Y = (||Z||^2 - \alpha_t(d))\mathbbm{1}_{\{||Z||^2 \geq d + t^2\}}\). Observe 
    \begin{align}
        &\Var(Y) \nonumber\\
        &= E\left(\Var(Y\,|\, \mathbbm{1}_{\{||Z||^2 \geq d + t^2\}})\right) + \Var\left(E(Y \,|\, \mathbbm{1}_{\{||Z||^2 \geq d + t^2\}})\right) \nonumber \\
        &\leq P\left\{||Z||^2 \geq d + t^2\right\} \Var\left(||Z||^2 \,|\, ||Z||^2 \geq d + t^2\right) + P\left\{||Z||^2 \geq d + t^2\right\} \left(E(||Z||^2 - \alpha_t(d) \,|\, ||Z||^2 \geq d + t^2)\right)^2.\label{eqn:bulk_var}
    \end{align}
    Note that \(0 \leq E(||Z||^2 - \alpha_t(d) \,|\, ||Z||^2 \geq d + t^2)\) since \(\theta \neq 0\) (by an appeal to stochastic ordering), and so we can find upper bounds for the square of this conditional expectation by first finding upper bounds on the conditional expectation. We examine each term separately in the above display. First, consider 
    \begin{align}
        P\left\{||Z||^2 \geq d + t^2\right\}\Var\left(||Z||^2 \,|\, ||Z||^2 \geq d + t^2\right) &\leq E\left(\Var\left(||Z||^2 \,|\, \mathbbm{1}_{\{||Z||^2 \geq d + t^2\}}\right)\right) \nonumber \\
        &\leq \Var(||Z||^2) \nonumber\\
        &= 2d + 4||\theta||^2. \label{eqn:bulk_var_I}
    \end{align}
    We now examine the second term. Note we can write \(Z = g + \theta\) where \(g \sim N(0, I_d)\). Therefore, 
    \begin{align}
        E(||Z||^2 - \alpha_t(d) \,|\, ||Z||^2 \geq d + t^2) &= E\left(||g+\theta||^2 - \alpha_t(d) \,|\, ||g + \theta||^2 \geq d + t^2\right) \nonumber\\
        &\leq E(||g||^2 + 2\langle \theta, g\rangle + ||\theta||^2 - d - t^2 \,|\, ||g+\theta||^2 \geq d + t^2) \nonumber \\
        &= E(||g||^2 - d \,|\, ||g + \theta||^2 \geq d + t^2) + ||\theta||^2 - t^2 \label{eqn:bulk_var_II}
    \end{align}
    where we have used that \(\langle \theta, g \rangle \mathbbm{1}_{\{||g + \theta||^2 \geq d + t^2\}} \overset{d}{=} \langle \theta, - g\rangle \mathbbm{1}_{\{||-g+\theta||^2 \geq d + t^2\}}\). With this in hand, we have \(E(\langle \theta, g \rangle \mathbbm{1}_{\{||g + \theta||^2 \geq d + t^2\}}) = 0\), which further implies \(E(\langle \theta, g \rangle \,|\, ||g + \theta||^2 \geq d + t^2) = 0\). Note we have also used \(\alpha_t(d) \geq d + t^2\) to obtain the second line in the above display. With the above display in hand, we now examine the remaining two cases. 

    \textbf{Case 2:} Consider the case \(||\theta|| \geq 2t\). Observe that 
    \begin{equation*}
        E(||g||^2 - d \,|\, ||g + \theta||^2 \geq d + t^2) = \frac{E\left((||g||^2 - d)\mathbbm{1}_{\{||g + \theta||^2 \geq d + t^2\}}\right)}{P\left\{||g+\theta||^2 \geq d + t^2\right\}}.
    \end{equation*}
    Examining the denominator, since \(||g + \theta||^2 \sim \chi^2_d(||\theta||^2) \overset{d}{=} \chi^2_{d-1} + \chi^2_1(||\theta||^2)\) where the two \(\chi^2\)-variates on the right hand side are independent, we have 
    \begin{equation*}
        P\{||g+\theta||^2 \geq d + t^2\} \geq P\{\chi^2_{d-1} \geq d - 1\} P\{\chi^2_1(||\theta||^2) \geq 1 + t^2\} \geq c P\{\chi^2_1(||\theta||^2) \geq 1 + t^2\} 
    \end{equation*}
    where \(c\) is a universal positive constant. Examining the numerator, consider that by Lemma \ref{lemma:johnstone_chisquare} we have 
    \begin{align*}
        E\left((||g||^2 - d)\mathbbm{1}_{\{||g+\theta||^2 \geq d + t^2\}}\right) &\leq E((||g||^2 - d)\mathbbm{1}_{\{||g||^2 \geq d\}}) \\
        &= \int_{d}^{\infty} (z-d)f_d(z) \, dz \\
        &= d\int_{d}^{\infty} (f_{d+2}(z) - f_d(z))\, dz \\
        &= -2d\int_{d}^{\infty} f_{d+2}'(z) \, dz \\
        &= 2df_{d+2}(d).
    \end{align*}
    Hence, from (\ref{eqn:bulk_var_II}), we have shown 
    \begin{equation*}
        E(||Z||^2 - \alpha_t(d) \,|\, ||Z||^2 \geq d + t^2) \leq \frac{2df_{d+2}(d)}{cP\{\chi^2_1(||\theta||^2) \geq 1 + t^2\}} + ||\theta||^2 - t^2.
    \end{equation*}
    Thus, we have the bound 
    \begin{align*}
        \Var\left(Y\right) \leq 2d + 4||\theta||^2 + P\{||Z||^2 \geq d + t^2\} \left( \frac{2df_{d+2}(d)}{cP\{\chi^2_1(||\theta||^2) \geq 1 + t^2\}} + ||\theta||^2 - t^2 \right)^2.
    \end{align*}
    Consider that
    \begin{equation*}
        \frac{2df_{d+2}(d)}{c} = \frac{d}{c\Gamma\left(\frac{d}{2}+1\right)} \left(\frac{d}{2e}\right)^{d/2} \leq c\sqrt{d}
    \end{equation*}
    where the value of \(c\) can change in each expression but remains a universal positive constant. The final inequality follows from Stirling's formula. Moreover, since \(||\theta|| \geq 2t\), there exists a positive universal constant \(c'\) such that \(P\{\chi^2_1(||\theta||^2) \geq 1 + t^2\} \geq 1/c'\). Furthermore, it follows by Chebyshev's inequality that 
    \begin{equation*}
        P\{||Z||^2 < d + t^2\} = P\{||\theta||^2 - t^2 < d + ||\theta||^2 - ||Z||^2\} \leq \frac{2d + 4||\theta||^2}{(||\theta||^2 - t^2)^2}.
    \end{equation*}
    Consequently, 
    \begin{align*}
        \Var(Y) &\leq 2d + 4||\theta||^2 + \frac{2d + 4||\theta||^2}{(||\theta||^2 - t^2)^2} (cc'\sqrt{d} + ||\theta||^2 - t^2)^2 \\
        &\asymp d + ||\theta||^2 + \frac{d + ||\theta||^2}{(||\theta||^2 - t^2)^2} (\sqrt{d} + ||\theta||^2)^2 \\
        &\asymp d + ||\theta||^2 + \frac{d + ||\theta||^2}{||\theta||^4} ||\theta||^4 \\
        &\asymp d + ||\theta||^2.
    \end{align*}
    The proof for this case is complete. 

    \textbf{Case 3:} Consider the case \(0 < ||\theta|| < 2t\). Then 
    \begin{align*}
        E(||Z||^2 - \alpha_t(d) \,|\, ||Z||^2 \geq d + t^2) &= E(||g||^2 - d \,|\, ||g+\theta||^2 \geq d + t^2) + ||\theta||^2 - t^2 \\
        &\leq E(||g||^2 - d \,|\, ||g + \theta||^2 \geq d + t^2) + 3t^2 \\
        &\leq E(||g||^2 - d \,|\, ||g||^2 \geq d + t^2) + 3t^2 \\
        &= \alpha_t(d) - d + 3t^2 \\
        &\lesssim \beta \sqrt{d} + t^2 \\
        &\asymp t^2
    \end{align*}
    where we have used Lemma \ref{lemma:bulk_alpha}. Therefore, using the above bound with (\ref{eqn:bulk_var_I}) and plugging into (\ref{eqn:bulk_var}), we obtain 
    \begin{equation*}
        \Var(Y) \lesssim d + ||\theta||^2 + t^4 \asymp t^4.
    \end{equation*}
    The proof is complete.
\end{proof}

\begin{lemma}\label{lemma:null_bulk}
    Let \(L^*\) be the universal positive constant from Lemma \ref{lemma:bulk_alpha}. There exists universal positive constants \(D\) and \(C^{*}\) such that if \(d \geq D\), then for any \(1 \leq \beta \leq L^* \sqrt{d}\) and \(x > 0\), we have 
    \begin{equation*}
        P\left\{\sum_{j = 1}^{n} \left(||Z_j||^2 - \alpha_t(d)\right) \mathbbm{1}_{\{||Z_j||^2 \geq d + t^2\}} \geq C^* \left(\sqrt{x n t^4 e^{-\frac{ct^4}{d}}} + \frac{d}{t^2}x\right) \right\} \leq e^{-x}
    \end{equation*}
    where \(Z_1,...,Z_n \overset{iid}{\sim} N(0, I_d)\), \(t^2 = \beta \sqrt{d}\), and \(\alpha_{t}(d)\) is given by (\ref{def:alpha}). Here, \(c > 0\) is a universal constant. 
\end{lemma}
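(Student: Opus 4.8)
The plan is to use the Chernoff method together with the moment generating function estimate of Lemma \ref{lemma:bulk_mgf}, exactly mirroring the argument for the tail regime given in the proof of Lemma \ref{lemma:null_tail}. Take \(D\) to be the constant \(C^{**}\) appearing in Lemma \ref{lemma:bulk_mgf}, fix \(1 \leq \beta \leq L^* \sqrt{d}\), set \(t^2 = \beta\sqrt{d}\), and write \(Y_j := \left(||Z_j||^2 - \alpha_t(d)\right)\mathbbm{1}_{\{||Z_j||^2 \geq d + t^2\}}\), so that \(Y_1, \dots, Y_n\) are i.i.d. with \(E(Y_1) = 0\). Let \(\lambda_0 := \frac{\beta}{2(\beta C^* + \sqrt{d})}\) denote the radius of validity in Lemma \ref{lemma:bulk_mgf}. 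Using the identities \(d\beta^2 = t^4\) and \(e^{-c\beta^2} = e^{-ct^4/d}\), Lemma \ref{lemma:bulk_mgf} gives, for any \(u > 0\),
\begin{equation*}
    P\left\{\sum_{j=1}^{n} Y_j > u\right\} \leq \inf_{0 < \lambda < \lambda_0} e^{-\lambda u}\left(E(e^{\lambda Y_1})\right)^n \leq \inf_{0 < \lambda < \lambda_0} \exp\left(-\lambda u + C n t^4 \lambda^2 e^{-ct^4/d}\right).
\end{equation*}

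The next step is to carry out this constrained optimization. Writing \(A := Cnt^4 e^{-ct^4/d}\), the unconstrained minimizer of \(\lambda \mapsto -\lambda u + A\lambda^2\) is \(\lambda^{*} = \frac{u}{2A}\), with value \(-\frac{u^2}{4A}\). If \(\lambda^{*} < \lambda_0\) we obtain \(P\{\sum_j Y_j > u\} \leq \exp\left(-\frac{u^2}{4Cnt^4 e^{-ct^4/d}}\right)\); if instead \(\lambda^{*} \geq \lambda_0\), taking \(\lambda = \lambda_0\) and noting \(A\lambda_0 \leq u/2\) yields \(P\{\sum_j Y_j > u\} \leq e^{-\lambda_0 u / 2}\). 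Since \(\beta \leq L^*\sqrt{d}\) (and \(\beta \geq 1\)) we have \(\beta C^* + \sqrt{d} \leq (C^* L^* + 1)\sqrt{d}\), hence \(\lambda_0 \geq \frac{\beta}{2(C^*L^*+1)\sqrt{d}} = \frac{t^2}{2(C^*L^*+1)d}\), so in the second case the probability is at most \(\exp\left(-c' t^2 u/d\right)\) for a universal \(c' > 0\). Combining the two cases gives
\begin{equation*}
    P\left\{\sum_{j=1}^{n} Y_j > u\right\} \leq \exp\left(-c''\left(\frac{u^2}{nt^4 e^{-ct^4/d}} \wedge \frac{t^2 u}{d}\right)\right),
\end{equation*}
and choosing \(u = C^*\left(\sqrt{xnt^4 e^{-ct^4/d}} + \frac{d}{t^2}x\right)\) makes each term inside the minimum at least a constant multiple of \(x\); taking \(C^*\) large enough that \(c'' \min\{(C^*)^2, C^*\} \geq 1\) delivers the bound \(e^{-x}\).

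The step I expect to be the crux is the constrained Chernoff optimization: because Lemma \ref{lemma:bulk_mgf} controls the MGF only for \(\lambda\) below \(\lambda_0 \asymp t^2/d\), the optimal exponent saturates at this boundary once \(u\) is large, and this saturation is precisely what produces the linear term \(\frac{d}{t^2}x\) alongside the subgaussian term \(\sqrt{xnt^4 e^{-ct^4/d}}\). The only genuine (if minor) input beyond bookkeeping with \(\beta = t^2/\sqrt{d}\) is the lower bound \(\lambda_0 \gtrsim t^2/d\), which is where the hypothesis \(\beta \leq L^*\sqrt{d}\) enters; everything else is routine.
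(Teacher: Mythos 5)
Your proposal is correct and takes essentially the same approach as the paper: the paper's proof just says to repeat the Chernoff argument of Lemma \ref{lemma:null_tail} with the infimum taken over \(\lambda < \frac{\beta}{2(\beta C^* + \sqrt{d})}\) as dictated by Lemma \ref{lemma:bulk_mgf}. You have correctly carried out the constrained optimization, identified that the boundary \(\lambda_0 \gtrsim t^2/d\) (which follows from \(\beta \leq L^*\sqrt{d}\)) is what produces the linear term \(\frac{d}{t^2}x\), and the final choice of \(u\) and the constant matching are all sound.
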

\begin{proof}
    The proof of Lemma \ref{lemma:null_tail} can be repeated with the modification of invoking Lemma \ref{lemma:bulk_mgf} instead of Lemma \ref{lemma:tail_mgf} and taking infimum over \(\lambda < \frac{\beta}{2\left(\beta C^* + \sqrt{d}\right)}\). 
\end{proof}

\section{\texorpdfstring{Properties of the \(\chi^2_d\) distribution}{ Properties of the chi-squared distribution}}

\begin{theorem}[Bernstein's inequality - Theorem 2.8.1 \cite{vershynin_high-dimensional_2018}]
    Let \(Y_1,...,Y_d\) be independent mean-zero subexponential random variables. Then, for every \(u \geq 0\), we have 
    \begin{equation*}
        P\left\{ \left| \sum_{i=1}^{d} Y_i \right| \geq u \right\} \leq 2 \exp\left(-c\min\left(\frac{u^2}{\sum_{i=1}^{d} ||X_i||^2_{\psi_1}}, \frac{u}{\max_i ||X_i||_{\psi_1}} \right) \right)
    \end{equation*}
    where \(c > 0\) is a universal constant and \(\psi_2, \psi_1\) denote the subgaussian and subexponential norms respectively (see (2.13) and (2.21) of \cite{vershynin_high-dimensional_2018})
\end{theorem}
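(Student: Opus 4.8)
The plan is to prove this by the classical exponential-moment (Chernoff) method. First I would reduce to a one-sided bound: since $-Y_i$ is again mean-zero subexponential with $\|-Y_i\|_{\psi_1} = \|Y_i\|_{\psi_1}$, any bound on $P\{\sum_{i=1}^d Y_i \geq u\}$ immediately yields the two-sided statement at the cost of a factor $2$. So it suffices to control the upper tail $P\{\sum_{i=1}^d Y_i \geq u\}$.

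The key lemma is a single-variable moment generating function estimate: there are universal constants $c_0, C_0 > 0$ such that for every mean-zero subexponential $Y$ and every $|\lambda| \leq c_0/\|Y\|_{\psi_1}$ one has $E\left(e^{\lambda Y}\right) \leq \exp\left(C_0 \lambda^2 \|Y\|_{\psi_1}^2\right)$. To establish this I would use the equivalent characterization of the $\psi_1$-norm in terms of moment growth, namely $\left(E|Y|^k\right)^{1/k} \lesssim \|Y\|_{\psi_1}\, k$ for all $k \geq 1$, then Taylor-expand $e^{\lambda Y} = 1 + \lambda Y + \sum_{k \geq 2} \lambda^k Y^k/k!$, take expectations (the linear term vanishes because $EY = 0$), bound $E|Y|^k \leq (C_1 \|Y\|_{\psi_1} k)^k$, and use $k! \geq (k/e)^k$ to turn the remaining sum into a geometric series; when $\lambda\|Y\|_{\psi_1}$ is below a small universal threshold this series converges and is $O(\lambda^2\|Y\|_{\psi_1}^2)$, after which $1 + x \leq e^x$ finishes the estimate.

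With the MGF bound in hand, write $K = \max_i \|Y_i\|_{\psi_1}$ and $\sigma^2 = \sum_{i=1}^d \|Y_i\|_{\psi_1}^2$. By independence, for $0 \leq \lambda \leq c_0/K$,
\[
E\left(\exp\left(\lambda \sum_{i=1}^d Y_i\right)\right) = \prod_{i=1}^d E\left(e^{\lambda Y_i}\right) \leq \exp\left(C_0 \lambda^2 \sigma^2\right),
\]
so Markov's inequality gives $P\{\sum_{i=1}^d Y_i \geq u\} \leq \exp\left(-\lambda u + C_0 \lambda^2 \sigma^2\right)$ for all such $\lambda$. The unconstrained minimizer of the exponent is $\lambda^* = u/(2C_0\sigma^2)$. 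If $\lambda^* \leq c_0/K$ (the ``subgaussian regime''), substituting it yields $\exp\left(-u^2/(4C_0\sigma^2)\right)$; otherwise $u/\sigma^2 > 2C_0 c_0/K$, and taking $\lambda = c_0/K$ makes $C_0\lambda^2\sigma^2 = \lambda\cdot C_0\lambda\sigma^2 < \lambda u/2$, so the exponent is at most $-\lambda u/2 = -c_0 u/(2K)$. Taking the worse of the two regimes gives $\exp\left(-c\min\left(u^2/\sigma^2,\, u/K\right)\right)$ for a universal $c > 0$, and the factor $2$ from the two-sided reduction completes the proof.

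The main obstacle is the single-variable MGF estimate — obtaining a clean quadratic bound $E\left(e^{\lambda Y}\right) \leq \exp\left(C_0\lambda^2\|Y\|_{\psi_1}^2\right)$ valid on a $\|Y\|_{\psi_1}$-scaled neighborhood of the origin, where the interaction among the $\psi_1$-norm, the linear-in-$k$ moment growth, and the convergence radius of the exponential series must be handled with care. Once that is in place, the tensorization step and the case split into the subgaussian and subexponential regimes are routine bookkeeping.
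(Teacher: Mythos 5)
Your proposal is correct and is essentially the textbook proof of Bernstein's inequality for subexponential random variables: this is exactly the argument in Vershynin's \emph{High-Dimensional Probability} (Theorem 2.8.1 with Lemma 2.7.7 supplying the MGF bound), which the paper cites as the source without reproducing a proof. The reduction to a one-sided tail by symmetry, the single-variable MGF estimate $E(e^{\lambda Y}) \leq \exp(C_0\lambda^2\|Y\|_{\psi_1}^2)$ on the window $|\lambda|\leq c_0/\|Y\|_{\psi_1}$ via moment growth and Taylor expansion, tensorization by independence, and the two-regime optimization (subgaussian when the unconstrained minimizer $\lambda^*=u/(2C_0\sigma^2)$ is admissible, subexponential when you are forced to take $\lambda=c_0/K$) is precisely the standard route, and your bookkeeping in the second regime is correct. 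One small remark: the statement as printed in the paper writes $\|X_i\|_{\psi_1}$ rather than $\|Y_i\|_{\psi_1}$, an obvious typo carried over from the reference's notation; your consistent use of $\|Y_i\|_{\psi_1}$ is the intended reading.
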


\begin{corollary}\label{corollary:chisquare_tail}
    Suppose \(Z \sim N(0, I_d)\). If \(u \geq 0\), then 
    \begin{equation*}
        P\left\{||Z||^2 \geq d + u\right\} \leq 2 \exp\left(-c\min\left(\frac{u^2}{d}, u\right)\right)
    \end{equation*}
    where \(c > 0\) is a universal constant.
\end{corollary}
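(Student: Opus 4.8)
The plan is to deduce the bound as an immediate application of Bernstein's inequality (Theorem 2.8.1, stated just above). First I would write $||Z||^2 = \sum_{i=1}^{d} Z_i^2$ with $Z_1, \dots, Z_d \overset{iid}{\sim} N(0,1)$ and set $Y_i := Z_i^2 - 1$. Then $Y_1, \dots, Y_d$ are independent and mean-zero, and each $Y_i$ is subexponential with $||Y_i||_{\psi_1} \leq c_0$ for a universal constant $c_0$; this is a standard computation (see \cite{vershynin_high-dimensional_2018}), following from the explicit moment generating function $E(e^{\lambda Z_i^2}) = (1-2\lambda)^{-1/2}$, which is finite for $\lambda < 1/2$.

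Next I would invoke Theorem 2.8.1 for the sum $\sum_{i=1}^{d} Y_i = ||Z||^2 - d$ at deviation level $u \geq 0$. Since $\sum_{i=1}^{d} ||Y_i||_{\psi_1}^2 \leq c_0^2 d$ and $\max_{1 \leq i \leq d} ||Y_i||_{\psi_1} \leq c_0$, this yields
\[
P\left\{||Z||^2 \geq d + u\right\} \leq P\left\{\left|\, ||Z||^2 - d \,\right| \geq u\right\} \leq 2\exp\left(-c\min\left(\frac{u^2}{c_0^2 d}, \frac{u}{c_0}\right)\right),
\]
and since $\min\left(u^2/(c_0^2 d),\, u/c_0\right) \geq (c_0^2 \vee c_0)^{-1}\min\left(u^2/d,\, u\right)$, the claim follows with universal constant $c' = c/(c_0^2 \vee c_0)$.

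The derivation is routine and I do not anticipate a genuine obstacle: the only step meriting a word of care is the uniform bound on the subexponential norms $||Z_i^2 - 1||_{\psi_1}$, which is textbook, and everything else is a direct substitution into the cited inequality. Alternatively, one could bypass the subexponential-norm language entirely and argue directly from the Laurent--Massart bound (\ref{eqn:laurent_massart_tail}), namely $P\{\chi^2_d - d \geq 2\sqrt{du} + 2u\} \leq e^{-u}$, by splitting into the cases $u \lesssim d$ and $u \gtrsim d$ and reparametrizing.
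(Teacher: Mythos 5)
Your proof is correct and matches the paper's intent exactly: the corollary is stated immediately after Theorem 2.8.1 (Bernstein's inequality) and is meant to follow by applying that theorem to $||Z||^2 - d = \sum_{i=1}^d (Z_i^2 - 1)$, which is precisely what you do. The subexponential-norm bound you flag is indeed standard, and your constant bookkeeping is sound.
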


\begin{lemma}[Lemma 1 \cite{laurent_adaptive_2000}]\label{lemma:laurent_massart}
    Let \(Z_1,...,Z_d \overset{iid}{\sim} N(0, 1)\). If \(\lambda_1 \geq ... \geq \lambda_d > 0\) and \(x > 0\), then 
    \begin{equation*}
        P\left\{ \sum_{j=1}^{d} \lambda_j Z_j^2 \geq \sum_{j=1}^{d} \lambda_j + 2\sqrt{x \sum_{j=1}^{d} \lambda_j^2} + 2\lambda_1 x \right\} \leq e^{-x}.
    \end{equation*}
\end{lemma}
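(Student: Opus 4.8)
The plan is to use the classical Chernoff (exponential Markov) method, since $\sum_{j=1}^d \lambda_j Z_j^2$ is a weighted sum of independent $\chi^2_1$ variables with an explicitly computable Laplace transform. First I would fix $t \in \left(0, \tfrac{1}{2\lambda_1}\right)$ and use $E\, e^{t Z_j^2} = (1-2t)^{-1/2}$ together with independence to write, with $S := \sum_{j=1}^d \lambda_j Z_j^2$ and $ES = \sum_{j=1}^d \lambda_j$,
\[
\log E\, e^{t(S - ES)} = \frac{1}{2}\sum_{j=1}^{d}\Big( -\log(1 - 2\lambda_j t) - 2\lambda_j t\Big).
\]

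Next I would control the right-hand side using the elementary inequality $-\log(1-s) - s \le \tfrac{s^2}{2(1-s)}$ valid for $s \in [0,1)$, which follows immediately by comparing the power series $\sum_{k \ge 2} s^k/k$ with $\tfrac{1}{2}\sum_{k \ge 2} s^k = \tfrac{s^2}{2(1-s)}$. Applying it with $s = 2\lambda_j t$ and bounding $\lambda_j \le \lambda_1$ in each denominator (here the ordering $\lambda_1 \ge \dots \ge \lambda_d$ is used) gives
\[
\log E\, e^{t(S - ES)} \le \sum_{j=1}^{d} \frac{\lambda_j^2 t^2}{1 - 2\lambda_j t} \le \frac{t^2 v}{1 - 2 \lambda_1 t}, \qquad v := \sum_{j=1}^d \lambda_j^2.
\]
By Markov's inequality applied to $e^{t(S - ES)}$, for every such $t$ and every $y > 0$,
\[
P\left\{ S - ES \ge y \right\} \le \exp\left( -ty + \frac{t^2 v}{1 - 2\lambda_1 t}\right).
\]

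Finally I would specialize to $y = 2\sqrt{vx} + 2\lambda_1 x$ and substitute the explicit choice
\[
t_0 := \frac{\sqrt{x}}{\sqrt{v} + 2\lambda_1 \sqrt{x}} \in \left(0, \frac{1}{2\lambda_1}\right),
\]
for which $1 - 2\lambda_1 t_0 = \tfrac{\sqrt v}{\sqrt v + 2\lambda_1\sqrt x}$. A short computation then yields $t_0 y = \tfrac{2x(\sqrt v + \lambda_1\sqrt x)}{\sqrt v + 2\lambda_1\sqrt x}$ and $\tfrac{t_0^2 v}{1 - 2\lambda_1 t_0} = \tfrac{x\sqrt v}{\sqrt v + 2\lambda_1\sqrt x}$, so the exponent $-t_0 y + \tfrac{t_0^2 v}{1-2\lambda_1 t_0}$ collapses to exactly $-x$, giving the claimed bound $P\{S - ES \ge y\} \le e^{-x}$. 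There is no genuine obstacle, as the result is classical (one may also simply cite \cite{laurent_adaptive_2000}); the only point requiring care is selecting $t_0$ so that the two terms in the exponent balance to produce exactly $-x$ rather than a constant multiple of it, and checking that this algebra closes is the entire content of the argument.
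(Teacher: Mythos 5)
Your proposal is a correct and complete Chernoff-method proof of the Laurent--Massart bound, and it is essentially the same argument as in the cited source. The paper itself gives no proof of this lemma — it simply cites Laurent and Massart (2000) — so there is nothing in the paper to compare against beyond the citation.

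A few remarks on the argument itself, all of which check out: the elementary inequality $-\log(1-s) - s \le \frac{s^2}{2(1-s)}$ for $s\in[0,1)$ is exactly the termwise power-series comparison you describe; the monotone ordering $\lambda_1 \ge \cdots \ge \lambda_d$ is used precisely once, to replace $1 - 2\lambda_j t$ by $1 - 2\lambda_1 t$ in each denominator before summing; the chosen $t_0 = \sqrt{x}/(\sqrt{v} + 2\lambda_1\sqrt{x})$ does lie in $(0, 1/(2\lambda_1))$ since $\sqrt{v} > 0$; and the algebra collapses cleanly: with $1 - 2\lambda_1 t_0 = \sqrt{v}/(\sqrt{v}+2\lambda_1\sqrt{x})$, one finds $-t_0 y + t_0^2 v/(1-2\lambda_1 t_0) = -x$ exactly. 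No gap.
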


\begin{lemma}[Corollary 3 \cite{zhang_non-asymptotic_2020}]\label{lemma:anru_chisquare}
    Suppose \(Z \sim N(0, I_d)\). There exist universal constants \(C, c > 0\) such that
    \begin{equation*}
        P\left\{||Z||^2 \geq d + u\right\} \geq C \exp\left(-c \min\left(\frac{u^2}{d}, u\right)\right)
    \end{equation*}
    for all \(u > 0\). 
\end{lemma}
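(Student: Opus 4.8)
The plan is to establish this reverse tail bound for \(X := \|Z\|^2 \sim \chi^2_d\) by splitting the range of \(u\) into three pieces: a ``trivial'' range \(0 < u \le C_1\sqrt d\), a moderate-deviation range \(C_1\sqrt d < u \le d\) (on which \(\min(u^2/d,u) = u^2/d\)), and a large-deviation range \(u > d\) (on which \(\min(u^2/d,u) = u\)); here \(C_1\) is an absolute constant fixed in the moderate-deviation step. It is convenient to write \(\psi(x) := x - \log(1+x)\), which satisfies \(0 \le \psi(x) \le x^2/2\) for all \(x \ge 0\).

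For \(u > d\) I would use a one-heavy-coordinate decomposition. Write \(X = Z_1^2 + W\) with \(W := \sum_{i=2}^d Z_i^2 \sim \chi^2_{d-1}\) independent of \(Z_1\); on the event \(\{Z_1^2 \ge 1+u\}\cap\{W \ge d-1\}\) we have \(X \ge (1+u) + (d-1) = d+u\), so by independence \(P\{X \ge d+u\} \ge P\{Z_1^2 \ge 1+u\}\, P\{\chi^2_{d-1} \ge d-1\}\). The second factor is bounded below by an absolute constant uniformly in \(d\) (it equals \(1\) when \(d=1\) and converges to \(1/2\) while remaining positive for every \(d \ge 2\)); the first is at least \(c\,(1+u)^{-1/2} e^{-(1+u)/2}\) by the classical Gaussian lower tail bound \(1-\Phi(x) \ge \frac{x}{\sqrt{2\pi}(1+x^2)} e^{-x^2/2}\). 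Since \((1+u)^{-1/2} e^{-(1+u)/2} \gtrsim e^{-u}\), this yields \(P\{X \ge d+u\} \gtrsim e^{-u}\), which is the asserted bound in this range.

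For \(C_1\sqrt d < u \le d\) I would run an exponential-tilting (Chernoff lower bound) argument. Set \(t := d+u\), \(\lambda := \tfrac12(1 - d/t) = u/(2t) \in (0, 1/4]\), \(M(\lambda) := E e^{\lambda X} = (1-2\lambda)^{-d/2}\), and let \(Q\) be the tilted law with \(dQ/dP = e^{\lambda X}/M(\lambda)\); under \(Q\), \(X\) is distributed as \((1-2\lambda)^{-1}\chi^2_d = (t/d)\chi^2_d\), so \(E_Q X = t\) exactly. With \(a := 1/\lambda\), the change-of-measure identity gives \(P\{X \ge t\} \ge M(\lambda)\, e^{-\lambda(t+a)}\, Q\{t \le X \le t+a\} = e^{-1}\, M(\lambda) e^{-\lambda t}\, Q\{t \le X \le t+a\}\). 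Here \(M(\lambda)e^{-\lambda t} = (t/d)^{d/2} e^{-u/2} = \exp(-\tfrac d2\psi(u/d)) \ge e^{-u^2/(4d)}\), and by the scaling of \(Q\) one has \(Q\{t \le X \le t+a\} = P\{d \le \chi^2_d \le d + 2d/u\}\). The key input is the local density estimate \(P\{d \le \chi^2_d \le d + b\} \gtrsim b/\sqrt d\) for \(0 < b \le c_3\sqrt d\), which follows from Stirling's formula: it gives \(f_d(d) \asymp d^{-1/2}\) for the \(\chi^2_d\) density \(f_d\), together with the flatness bound \(f_d(x)/f_d(d) = (d/x)\exp(-\tfrac d2\psi((x-d)/d)) \gtrsim 1\) on \([d, d+c_3\sqrt d]\). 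Choosing \(C_1 := 2/c_3\) makes \(2d/u \le c_3\sqrt d\) throughout this range, whence \(P\{X \ge d+u\} \gtrsim (\sqrt d/u)\, e^{-u^2/(4d)} \gtrsim e^{-cu^2/d}\), the last step because \(\log(u/\sqrt d) = \tfrac12\log(u^2/d) \le u^2/d\).

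Finally, for \(0 < u \le C_1\sqrt d\) the target \(Ce^{-c\min(u^2/d,u)}\) is at most \(C\), so it suffices to observe \(P\{X \ge d+u\} \ge P\{\chi^2_d \ge d + C_1\sqrt d\}\), and the infimum of the latter over \(d \ge 1\) is a strictly positive absolute constant (it converges to \(1 - \Phi(C_1/\sqrt2) > 0\) and is positive for every \(d\)). Taking \(C\) to be the smallest and \(c\) the largest of the constants produced in the three ranges completes the argument. I expect the moderate-deviation range to be the main obstacle: a crude Chernoff bound using a window of a full standard deviation around the tilted mean would lose a factor \(e^{-\Theta(\sqrt d)}\), so one must use a short window of length \(\asymp 1/\lambda\) and exploit the flatness of \(f_d\) near its mode, and obtaining that local density control with universal constants valid for every \(d \ge 1\) — rather than only for large \(d\), where a central limit theorem would suffice — is the delicate point.
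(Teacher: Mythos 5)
The paper does not prove this lemma at all; it simply cites Corollary~3 of \cite{zhang_non-asymptotic_2020} and moves on. Your proposal therefore provides something the paper does not: a self-contained argument. The three-range decomposition (trivial for $u\lesssim\sqrt d$, exponential tilting with a short window for the sub-Gaussian range $\sqrt d\lesssim u\le d$, single-heavy-coordinate for $u>d$) is the standard route to such reverse Bernstein bounds, and the details you give are correct. In particular the tilting algebra is right: with $t=d+u$, $\lambda=u/(2t)$, one has $1-2\lambda=d/t$, the tilted law is $(t/d)\chi^2_d$, $M(\lambda)e^{-\lambda t}=\exp\left(-\tfrac d2\psi(u/d)\right)\ge e^{-u^2/(4d)}$, and the window probability rescales exactly to $P\{d\le\chi^2_d\le d+2d/u\}$; your choice $a=1/\lambda$ then trades an $e^{-1}$ factor for a window whose length $2d/u$ is at most $c_3\sqrt d$ once $C_1:=2/c_3$. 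The local density estimate $P\{d\le\chi^2_d\le d+b\}\gtrsim b/\sqrt d$ for $b\le c_3\sqrt d$ does hold with universal constants for every $d\ge1$: Stirling with its explicit error term gives $f_d(d)=\tfrac{1}{2\sqrt{\pi d}}e^{-\theta/(6d)}$ with $\theta\in(0,1)$, so $f_d(d)\asymp d^{-1/2}$ uniformly (and the small cases $d=1,2,3$ can be checked by hand), and your flatness identity $f_d(x)/f_d(d)=(d/x)\exp(-\tfrac d2\psi((x-d)/d))\ge e^{-c_3^2/4}/(1+c_3)$ on $[d,d+c_3\sqrt d]$ is exact. Absorbing the polynomial factor $\sqrt d/u$ into the exponential via $\tfrac12\log(u^2/d)\le\tfrac12\,u^2/d$ is fine since $u^2/d\ge C_1^2$ in this range. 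For $u>d$ the claim $(1+u)^{-1/2}e^{-(1+u)/2}\gtrsim e^{-u}$ holds because $u\ge1$, and $P\{\chi^2_{d-1}\ge d-1\}$ is bounded below uniformly in $d$ (equal to $1$ at $d=1$ and $\to1/2$). Two minor remarks you should make explicit if you write this up: (i) when $d<C_1^2$ the moderate range is empty and the trivial range overlaps the large-deviation range on $d<u\le C_1\sqrt d$ — harmless, but worth noting that both arguments apply there; (ii) the final constants are $c=\max$ and $C=\min$ over the three ranges, but since increasing $c$ or decreasing $C$ only weakens the bound, each range's inequality survives the substitution.
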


\begin{lemma}[\cite{johnstone_chi-square_2001}]\label{lemma:johnstone_chisquare}
    Let \(f_d\) and \(F_d\) respectively denote the probability density and cumulative distribution functions of the \(\chi^2_d\) distribution. Then the following relations hold, 
    \begin{enumerate}[label=(\roman*)]
        \item \(tf_d(t) = d f_{d+2}(t)\),
        \item \(t^2 f_d(t) = d(d+2)f_{d+4}(t)\),
        \item \(f_{d+2}'(t) = \frac{f_d(t) - f_{d+2}(t)}{2}\),
        \item \((1 - F_{d+2}(t)) - (1 - F_d(t)) = \frac{e^{-t/2}t^{d/2}}{2^{d/2}\Gamma(d/2+1)}\).
    \end{enumerate}
\end{lemma}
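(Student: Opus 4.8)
The plan is to derive all four identities from the closed form of the $\chi^2_d$ density, $f_d(t) = \frac{1}{2^{d/2}\Gamma(d/2)}\, t^{d/2-1}e^{-t/2}$ on $(0,\infty)$, using only the functional equation $\Gamma(d/2+1)=(d/2)\Gamma(d/2)$ and the identity $2^{(d+2)/2}=2\cdot 2^{d/2}$; no probabilistic input is needed.

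First I would establish (i) by a one-line computation: writing out $d\,f_{d+2}(t)$ and replacing $\Gamma(d/2+1)$ by $(d/2)\Gamma(d/2)$ and $2^{(d+2)/2}$ by $2\cdot 2^{d/2}$, the factors of $d/2$ and of $2$ cancel and one is left exactly with $t\,f_d(t)$. Identity (ii) then follows with no further work by applying (i) twice, since $t^2 f_d(t) = t\cdot(t f_d(t)) = d\cdot t f_{d+2}(t) = d(d+2)f_{d+4}(t)$.

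Next, for (iii) I would differentiate the explicit formula for $f_{d+2}$ directly. The product rule produces two terms, one carrying $t^{d/2-1}e^{-t/2}$ with coefficient $\frac{d/2}{2^{(d+2)/2}\Gamma(d/2+1)}$ and one carrying $t^{d/2}e^{-t/2}$ with coefficient $\frac{-1/2}{2^{(d+2)/2}\Gamma(d/2+1)}$; the same two simplifications as above turn the first into $\tfrac12 f_d(t)$ and the second into $-\tfrac12 f_{d+2}(t)$, which is the claim. Finally, for (iv), I would rewrite the left-hand side as $F_d(t)-F_{d+2}(t)=\int_0^t\bigl(f_d(s)-f_{d+2}(s)\bigr)\,ds$, recognise the integrand as $2f_{d+2}'(s)$ via (iii), and evaluate the integral as $2\bigl(f_{d+2}(t)-f_{d+2}(0)\bigr)=2f_{d+2}(t)$; substituting the explicit $f_{d+2}$ and simplifying the constant yields $\frac{e^{-t/2}t^{d/2}}{2^{d/2}\Gamma(d/2+1)}$. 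Equivalently, (iv) is just the standard recurrence for the upper incomplete gamma function, obtained by integrating $\int_t^\infty s^{d/2-1}e^{-s/2}\,ds$ by parts.

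There is no genuine obstacle here—the whole lemma is a bookkeeping exercise with the normalising constant $2^{d/2}\Gamma(d/2)$. The only point deserving explicit mention is the vanishing of the boundary term $f_{d+2}(0)$ in step (iv), which holds because $d+2>2$ for every $d\ge 1$, so $f_{d+2}$ is continuous and zero at the origin; this is precisely why the identity is stated with $f_{d+2}$ on the right-hand side.
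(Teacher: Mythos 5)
Your verification is correct in all four parts, and the derivations are clean: (i) is a one-line bookkeeping identity using $\Gamma(d/2+1)=(d/2)\Gamma(d/2)$; (ii) follows from iterating (i); (iii) is the product rule with the same simplifications; and (iv) integrates (iii) on $[0,t]$, correctly noting the boundary term $f_{d+2}(0)=0$ vanishes because $d/2>0$. The paper itself supplies no proof of this lemma — it is cited from Johnstone — so there is no approach to compare against; a direct calculation from the density is exactly what one would do. One small imprecision in your closing remark: the right-hand side of (iv) equals $2f_{d+2}(t)$, not $f_{d+2}(t)$ itself, so the identity isn't quite ``stated with $f_{d+2}$ on the right-hand side''; the vanishing boundary term is needed either way, but the comment as phrased slightly misreads the conclusion.
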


\begin{lemma}
    Let \(f_d\) and \(F_d\) respectively denote the probability density and cumulative distribution functions of the \(\chi^2_d\) distribution. Suppose \(d > 2\). If \(t \geq d\), then \(2f_d(t) \leq 1 - F_d(t)\). 
\end{lemma}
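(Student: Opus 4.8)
The plan is to reduce the inequality to part (iv) of the Johnstone identities (Lemma~\ref{lemma:johnstone_chisquare}), which telescopes the complementary c.d.f.\ across degrees of freedom. Since $d > 2$, the law $\chi^2_{d-2}$ is a genuine chi-square distribution, so I would apply relation (iv) with $d$ replaced by $d-2$ to obtain
\begin{equation*}
    \bigl(1 - F_d(t)\bigr) - \bigl(1 - F_{d-2}(t)\bigr) = \frac{e^{-t/2}t^{(d-2)/2}}{2^{(d-2)/2}\,\Gamma\!\left(\frac{d-2}{2}+1\right)} = \frac{e^{-t/2}t^{d/2-1}}{2^{d/2-1}\,\Gamma(d/2)} .
\end{equation*}
The second step is the elementary density identity
\begin{equation*}
    \frac{e^{-t/2}t^{d/2-1}}{2^{d/2-1}\,\Gamma(d/2)} = 2\cdot \frac{t^{d/2-1}e^{-t/2}}{2^{d/2}\,\Gamma(d/2)} = 2 f_d(t),
\end{equation*}
so that $1 - F_d(t) = \bigl(1 - F_{d-2}(t)\bigr) + 2 f_d(t)$. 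Since $1 - F_{d-2}(t) \geq 0$ is a tail probability, this gives $1 - F_d(t) \geq 2 f_d(t)$, which is the claim. I would remark that this argument in fact yields the inequality for every $t > 0$; the hypothesis $t \geq d$ is not actually used.

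As an alternative self-contained route (avoiding Lemma~\ref{lemma:johnstone_chisquare}), one can integrate $1 - F_d(t) = \frac{1}{2^{d/2}\Gamma(d/2)}\int_t^\infty s^{d/2-1}e^{-s/2}\,ds$ by parts once, with $u = s^{d/2-1}$ and $dv = e^{-s/2}\,ds$. The boundary term peels off exactly $2 f_d(t)$, leaving the remainder $\frac{d-2}{2^{d/2}\Gamma(d/2)}\int_t^\infty s^{d/2-2}e^{-s/2}\,ds$, which is finite and nonnegative precisely because $d > 2$; dropping it gives $1 - F_d(t) \geq 2 f_d(t)$.

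There is essentially no substantive obstacle here. The only points needing care are (a) verifying the density normalization identity above, and (b) noting that the substitution $d \mapsto d-2$ in relation (iv) is legitimate only when $d > 2$ — which is exactly the stated hypothesis — and that, in the integration-by-parts variant, the same constraint $d > 2$ is what makes the leftover integral converge and keeps the coefficient $d-2$ nonnegative.
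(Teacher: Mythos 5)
Your proof is correct, and it takes a genuinely different route from the paper's. The paper's argument invokes Cauchy's mean value theorem on the pair $(1-F_d,\, 2f_d)$ over $[t,r]$, lets $r\to\infty$, and then uses Johnstone relations (i) and (iii) to show that the derivative-ratio $\frac{(1-F_d)'(x)}{2f_d'(x)} = \frac{1}{1-(d-2)/x}$ has infimum $1$ over $x \geq d$ — this is where the hypothesis $t\geq d$ is genuinely used. You instead apply Johnstone relation (iv) with $d$ shifted to $d-2$ to get the exact telescoping identity
\begin{equation*}
    1 - F_d(t) = \bigl(1 - F_{d-2}(t)\bigr) + 2f_d(t),
\end{equation*}
from which the inequality follows by dropping the nonnegative tail probability $1 - F_{d-2}(t)$; your integration-by-parts variant is the same identity derived from scratch. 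Your argument is shorter, avoids the calculus-of-variations flavor of the MVT step, and yields the strictly stronger conclusion that $2f_d(t) \leq 1-F_d(t)$ for \emph{all} $t > 0$ (with equality only as $t\to\infty$ or $t\to 0^+$), whereas the paper's method as written only delivers it for $t\geq d$. Your observation that the hypothesis $t\geq d$ is superfluous is accurate and a worthwhile improvement; the only condition actually needed is $d>2$, which you correctly identify as the source of both the validity of the index shift and the nonnegativity of the leftover integral.
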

\begin{proof}
    By Mean Value Theorem, we have for any \(r \geq t\), 
    \begin{align*}
        \inf_{x \geq d} \frac{\frac{\partial}{\partial x} (1 - F_d(x))}{2f_d'(x)} &\leq \frac{(1-F_d(t)) - (1-F_d(r))}{2f_d(t) - 2f_d(r)} \\
        &= \frac{\frac{1-F_d(t)}{2f_d(t)} - \frac{1-F_d(r)}{2f_d(t)}}{1 - \frac{f_d(r)}{f_d(t)}}.
    \end{align*}
    Consider that \(\lim_{r \to \infty} 1 - F_d(r) = \lim_{r \to \infty} f_d(r) = 0\). So taking \(r \to \infty\) yields 
    \begin{equation*}
        \inf_{x \geq d} \frac{\frac{\partial}{\partial x}(1 - F_d(x))}{2f_d'(x)} \leq \frac{1-F_d(t)}{2f_d(t)}.
    \end{equation*}
    We now evaluate the infimum on the left-hand side. For \(x \geq d\), consider that an application of Lemma \ref{lemma:johnstone_chisquare} gives 
    \begin{align*}
        \frac{\frac{\partial}{\partial x}(1-F_d(x))}{2f_d'(x)} &= -\frac{f_d(x)}{f_{d-2}(x) - f_d(x)} \\
        &= -\frac{1}{\frac{f_{d-2}(x)}{f_d(x)}-1} \\
        &= -\frac{1}{\frac{xf_{d-2}(x)}{xf_d(x)}-1}\\
        &= -\frac{1}{\frac{d-2}{x} - 1} \\
        &= \frac{1}{1-\frac{d-2}{x}}.
    \end{align*}
    Since \(x \geq d > d-2\), it follows that 
    \begin{equation*}
        \inf_{x \geq d} \frac{\frac{\partial}{\partial x}(1-F_d(x))}{2f_d'(x)} = 1.
    \end{equation*}
    Thus we can immediately conclude \(2f_d(t) \leq 1 - F_d(t)\) as desired.
\end{proof}

\begin{lemma}\label{lemma:chisquare_cdf}
    Let \(F_d\) denote the cumulative distribution function of the \(\chi^2_d\) distribution. If \(x \geq 0\), then 
    \begin{equation*}
        1 - F_d(x) = \frac{1}{\Gamma\left(\frac{d}{2}\right)} \int_{x/2}^{\infty} t^{d/2 - 1} e^{-t} \, dt = Q\left(\frac{d}{2}, \frac{x}{2}\right)
    \end{equation*}
    where \(Q\) is the upper incomplete gamma function defined in Theorem \ref{thm:temme_expansion}.
\end{lemma}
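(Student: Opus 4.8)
The plan is to start from the explicit form of the $\chi^2_d$ density and carry out a single change of variables. Recall that the $\chi^2_d$ law has Lebesgue density $f_d(z) = \frac{1}{2^{d/2}\Gamma(d/2)}\, z^{d/2-1} e^{-z/2}$ on $(0,\infty)$, so that for any $x \geq 0$,
\begin{equation*}
    1 - F_d(x) = \int_x^\infty \frac{1}{2^{d/2}\Gamma(d/2)}\, z^{d/2-1} e^{-z/2}\, dz.
\end{equation*}
First I would substitute $t = z/2$, under which $dz = 2\,dt$, $z^{d/2-1} = 2^{d/2-1} t^{d/2-1}$, and the lower limit $z = x$ becomes $t = x/2$. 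Collecting the powers of two, namely $2^{-d/2}\cdot 2^{d/2-1}\cdot 2 = 1$, yields
\begin{equation*}
    1 - F_d(x) = \frac{1}{\Gamma(d/2)} \int_{x/2}^\infty t^{d/2-1} e^{-t}\, dt.
\end{equation*}
Finally I would identify the right-hand side with $Q\!\left(\frac{d}{2}, \frac{x}{2}\right)$ by appealing to the definition of the regularized upper incomplete gamma function $Q(a, y) = \frac{1}{\Gamma(a)}\int_y^\infty t^{a-1} e^{-t}\, dt$ recorded in Theorem \ref{thm:temme_expansion}, which completes the proof.

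There is essentially no obstacle here: the computation is a routine substitution. The only point requiring a moment's care is the bookkeeping of constants in the change of variables and confirming that the normalization convention for $Q$ matches the one used elsewhere in the paper, i.e. that $Q$ denotes the \emph{regularized} upper incomplete gamma (dividing by $\Gamma(a)$) rather than the unnormalized $\Gamma(a, y)$. Granting that convention, the stated identity follows immediately, and one may note in passing that the endpoint case $x = 0$ is consistent since both sides then equal $1$.
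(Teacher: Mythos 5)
Your proof is correct and is exactly the change-of-variables argument the paper's one-line proof gestures at; you simply spell out the substitution $t = z/2$ and the cancellation of the powers of two. Nothing further is needed.
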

\begin{proof}
    The result follows directly from a change of variables when integrating the probability density function. 
\end{proof}

\begin{lemma}\label{lemma:bulk_alpha}
    Suppose \(d\) is larger than a sufficiently large universal constant. There exist universal positive constants \(L^*\) and \(C^*\) such that the following holds. If \(1 \leq \beta \leq L^* \sqrt{d}\) and \(t^2 = \beta \sqrt{d}\), then 
    \begin{equation*}
        \alpha_t(d) \leq d + C^* \beta \sqrt{d}
    \end{equation*}
    where \(\alpha_{t}(d)\) is given by (\ref{def:alpha}). 
\end{lemma}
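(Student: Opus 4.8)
The plan is to rewrite $\alpha_t(d)$ exactly in terms of the $\chi^2_d$ density and tail, and then observe that near the point $z:=d+t^2$ the density of $\chi^2_d$ decays at the \emph{slow} local rate $\asymp t^2/z\asymp\beta/\sqrt d$ rather than at a constant rate, which is what makes the conditional excess $\alpha_t(d)-(d+t^2)$ only of order $\sqrt d$. First I would record the identity: with $F_d$ the c.d.f.\ and $f_d$ the density of $\chi^2_d$, part (i) of Lemma~\ref{lemma:johnstone_chisquare} (i.e.\ $wf_d(w)=df_{d+2}(w)$) gives $E\bigl(\chi^2_d\,\mathbbm{1}_{\{\chi^2_d\ge z\}}\bigr)=d\,(1-F_{d+2}(z))$, so $\alpha_t(d)=\dfrac{d\,(1-F_{d+2}(z))}{1-F_d(z)}$, and part (iv) of Lemma~\ref{lemma:johnstone_chisquare} together with the one-line evaluation $\Gamma(d/2)/\Gamma(d/2+1)=2/d$ yields $(1-F_{d+2}(z))-(1-F_d(z))=\tfrac{2z}{d}f_d(z)$. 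Hence
\begin{equation*}
    \alpha_t(d)=d+\frac{2z\,f_d(z)}{1-F_d(z)},\qquad z=d+t^2=d+\beta\sqrt d .
\end{equation*}
Taking $L^*\le 1$ forces $z\le 2d$, so it suffices to prove $\dfrac{f_d(z)}{1-F_d(z)}\lesssim\dfrac{\beta}{\sqrt d}$.

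Next I would lower-bound the tail. Since $\chi^2_d$ is unimodal with mode at $d-2$, the density $f_d$ is nonincreasing on $[d-2,\infty)\ni z$, so for any $L>0$,
\begin{equation*}
    1-F_d(z)=\int_z^\infty f_d(w)\,dw\ \ge\ \int_z^{z+L}f_d(w)\,dw\ \ge\ L\,f_d(z+L).
\end{equation*}
The crux is to pick $L$ as large as possible while keeping $f_d(z+L)$ comparable to $f_d(z)$. Using $\log(1+x)\ge x-x^2/2$ for $x\ge0$ and $d/2-1\ge0$,
\begin{equation*}
    \log\frac{f_d(z)}{f_d(z+L)}=\frac L2-\Bigl(\tfrac d2-1\Bigr)\log\Bigl(1+\tfrac Lz\Bigr)\ \le\ \frac{L\,(z-d+2)}{2z}+\frac{dL^2}{4z^2},
\end{equation*}
and since $z-d+2=t^2+2\le 3t^2=3\beta\sqrt d$ (using $t^2=\beta\sqrt d\ge1$) and $d\le z$, the choice $L:=\tfrac{2\sqrt d}{3\beta}$ makes the right-hand side at most $1+\tfrac19<2$, i.e.\ $f_d(z+L)\ge e^{-2}f_d(z)$. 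Therefore $1-F_d(z)\ge \tfrac{2}{3}e^{-2}\,\tfrac{\sqrt d}{\beta}\,f_d(z)$, so $\dfrac{f_d(z)}{1-F_d(z)}\le \dfrac{3e^{2}}{2}\,\dfrac{\beta}{\sqrt d}$ and, plugging into the displayed identity and using $z\le 2d$,
\begin{equation*}
    \alpha_t(d)\ \le\ d+2z\cdot\frac{3e^{2}}{2}\,\frac{\beta}{\sqrt d}\ \le\ d+6e^{2}\,\beta\sqrt d .
\end{equation*}
So $L^*=1$ and $C^*=6e^2$ work (any larger $C^*$ as well), and one needs only $d\ge 2$ so that the mode of $\chi^2_d$ is at $d-2\ge0$ and $t^2=\beta\sqrt d\ge1$; the hypothesis ``$d$ larger than a universal constant'' is harmless.

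I expect the only genuinely delicate step to be the calibration of the window length $L$. The naive choice $L\asymp\sqrt d$ (which would be right for a $\chi^2_d$ tail evaluated $\Theta(\sqrt d)$ above its mean) loses a full factor of $\beta$ and yields only the useless bound $\alpha_t(d)\le d+C^*d$; one must instead take $L\asymp\sqrt d/\beta$, which amounts to balancing the linear term $L(t^2+2)/(2z)$ against the quadratic correction $dL^2/(4z^2)$ in the density-ratio estimate, and then checking the side conditions $d\le z\le 2d$ (the upper one being where $L^*\le1$ enters) and $t^2\ge1$. A slicker but less self-contained route is to note that $\alpha_t(d)=(d+t^2)+m(d+t^2)$ where $m$ is the mean-residual-life function of $\chi^2_d$, which is nonincreasing because the $\chi^2_d$ density is log-concave, so $m(d+t^2)\le m(d)$ and one only has to bound $m(d)=2d\,f_d(d)/(1-F_d(d))\lesssim\sqrt d$ via Stirling; I would mention this as an alternative but carry out the elementary density-integral argument above, since it uses nothing beyond Lemma~\ref{lemma:johnstone_chisquare}.
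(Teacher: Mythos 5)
Your proof is correct, and it takes a genuinely different and more elementary route than the paper. Both arguments start from the same exact identity
\begin{equation*}
\alpha_t(d) = d + \frac{2z\,f_d(z)}{1-F_d(z)}, \qquad z = d+t^2,
\end{equation*}
obtained from parts (i) and (iv) of Lemma~\ref{lemma:johnstone_chisquare}. The paper then attacks the ratio $f_d(z)/(1-F_d(z))$ by writing the numerator via Stirling and the denominator via Temme's uniform asymptotic expansion of the incomplete gamma function (Theorem~\ref{thm:temme_expansion} and Corollary~\ref{thm:temme_order_1}), extracting the constraint $\beta\le L^*\sqrt d$ from the interplay between the two asymptotics. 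You instead lower-bound $1-F_d(z)$ directly: you exploit that $f_d$ is nonincreasing past its mode and that over a window of length $L\asymp\sqrt d/\beta$ starting at $z$ the density drops by at most a constant factor, which is checked elementarily from the log-density ratio. This calibration of $L$ (balancing the linear drift $L(z-d+2)/(2z)$ against the curvature correction $dL^2/(4z^2)$) is exactly the nontrivial point you flag, and you get it right. What this buys: your argument is self-contained — it uses only Lemma~\ref{lemma:johnstone_chisquare} and a Taylor bound on $\log(1+x)$ — avoids importing Temme's machinery, and applies already for $d\ge 2$ rather than requiring $d$ to exceed an unspecified large constant, at the price of working with specific constants $L^*=1$, $C^*=6e^2$. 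Your side remark that $\alpha_t(d)-(d+t^2)$ is the mean residual life of $\chi^2_d$, which is nonincreasing by log-concavity, is also a valid alternate route and would localize the whole computation to $z=d$.
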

\begin{proof}
    For ease of notation, let \(r^2 = d + t^2\). Let \(f_d\) and \(F_d\) denote the probability density and cumulative distribution functions of the \(\chi^2_d\) distribution. We will select the universal constant \(L^*\) later on in the proof. By Lemma \ref{lemma:johnstone_chisquare}, we have 
    \begin{align*}
        \alpha_{t}(d) &= \frac{\int_{r^2}^{\infty} zf_d(z) \, dz}{1 - F_d(r^2)} \\
        &= \frac{\int_{r^2}^{\infty} df_{d+2}(z) \,dz}{1 - F_d(r^2)} \\
        &= d \left(1 + \frac{\left(1 - F_{d+2}(r^2)\right) - \left(1 - F_d(r^2)\right)}{1-F_d(r^2)} \right) \\
        &= d \left(1 + \frac{r^d e^{-r^2/2}}{2^{d/2} \Gamma\left(\frac{d}{2}+1\right)} \cdot \frac{1}{1-F_d(r^2)}\right) \\
        &= d \left(1 + \left(\frac{r^2}{2}\right)^{d/2} \frac{e^{-r^2/2}}{\Gamma\left(\frac{d}{2}+1\right)} \cdot \frac{1}{1-F_d(r^2)}\right)
    \end{align*}
    Rearranging terms and invoking Stirling's approximation (which states \(\Gamma(x+1) \sim \sqrt{2\pi x}\left(\frac{x}{e}\right)^x\) as \(x \to \infty\)) yields 
    \begin{equation*}
        \left(\frac{r^2}{2}\right)^{d/2} \frac{e^{-r^2/2}}{\Gamma\left(\frac{d}{2}+1\right)} \leq \frac{1+c}{\sqrt{\pi d}} \exp\left(\frac{d}{2} \log\left(\frac{r^2}{2}\right) - \frac{r^2}{2} - \frac{d}{2}\log\left(\frac{d}{2}\right) + \frac{d}{2}\right)
    \end{equation*}
    for a universal constant \(c > 0\) since \(d\) is larger than a sufficiently large universal constant. Applying Lemma \ref{lemma:chisquare_cdf} and Corollary \ref{thm:temme_order_1} yields 
    \begin{equation*}
        1 - F_d(r^2) = Q\left(\frac{d}{2}, \frac{r^2}{2}\right) \geq \left(1 - \Phi(\eta\sqrt{a})\right) - \frac{e^{-a\eta^2/2}}{\sqrt{2\pi}} \cdot \frac{c^{**}}{\sqrt{d}}
    \end{equation*}
    where \(c^{**}\) is a universal constant. Observe that \(\eta \sqrt{a}\) is larger than a sufficiently large universal constant since \(d\) is larger than a sufficiently large universal constant. Using the fact that \(1 - \Phi(x) = \frac{1}{x\sqrt{2\pi}} e^{-x^2/2} \left(1 + o(1)\right)\) as \(x \to \infty\), we have 
    \begin{equation*}
        1 - F_d(r^2) \geq \frac{1}{\sqrt{2\pi}} e^{-\frac{a\eta^2}{2}} \left(\frac{c^*}{\eta\sqrt{a}} - \frac{c^{**}}{\sqrt{d}}\right)
    \end{equation*}
    for a universal positive constant \(c^*\). Consider that 
    \begin{equation*}
        \frac{\eta^2}{2} = \mu - \log(1 + \mu) = \frac{\frac{r^2}{2}}{\frac{d}{2}} - 1 - \log\left(\frac{r^2}{2}\right) + \log\left(\frac{d}{2}\right).
    \end{equation*}
    Consequently, 
    \begin{equation*}
        -\frac{\eta^2 a}{2} = - \left( \frac{r^2}{2} - \frac{d}{2} - \frac{d}{2}\log\left(\frac{r^2}{2}\right) + \frac{d}{2}\log\left(\frac{d}{2}\right) \right) = -\frac{r^2}{2} + \frac{d}{2} + \frac{d}{2} \log\left(\frac{r^2}{2}\right) - \frac{d}{2}\log\left(\frac{d}{2}\right). 
    \end{equation*}
    Therefore, we have the bound 
    \begin{equation*}
        1 - F_d(r^2) \geq \frac{1}{\sqrt{2\pi}} \exp\left( -\frac{r^2}{2} + \frac{d}{2} + \frac{d}{2} \log\left(\frac{r^2}{2}\right) - \frac{d}{2}\log\left(\frac{d}{2}\right) \right) \cdot \left( \frac{c^{*}}{\eta \sqrt{a}} - \frac{c^{**}}{\sqrt{d}} \right).
    \end{equation*}
    Consider further that the inequality \(x - \log(1 + x) \leq \frac{x^2}{2} \leq x^2\) gives us 
    \begin{align*}
        \eta\sqrt{a} = \sqrt{\frac{d}{2}} \cdot \sqrt{2 \left(\frac{\beta}{\sqrt{d}} - \log\left(1 + \frac{\beta}{\sqrt{d}}\right)\right)} = \sqrt{d\left(\frac{\beta}{\sqrt{d}} - \log\left(1 + \frac{\beta}{\sqrt{d}}\right)\right)} \leq \beta.
    \end{align*}
    Hence, we have 
    \begin{equation*}
        1 - F_d(r^2) \geq \frac{1}{\sqrt{2\pi}} \exp\left(-\frac{r^2}{2} + \frac{d}{2} + \frac{d}{2}\log\left(\frac{r^2}{2}\right) - \frac{d}{2}\log\left(\frac{d}{2}\right) \right) \left(\frac{c^*}{\beta} - \frac{c^{**}}{\sqrt{d}}\right).
    \end{equation*}
    Let us take \(L^* := \frac{c^*}{2c^{**}}\). With this choice, we have \(\beta \leq \sqrt{d} \cdot \frac{c^*}{2c^{**}}\) and so \(\left(\frac{c^*}{\beta} - \frac{c^{**}}{\sqrt{d}}\right) \geq \frac{c^*}{2\beta}\). Therefore, 
    \begin{equation*}
        \left(\frac{r^2}{2}\right)^{d/2} \frac{e^{-r^2/2}}{\Gamma\left(\frac{d}{2}+1\right)} \cdot \frac{1}{1-F_d(r^2)} \leq \frac{1+c}{\sqrt{\pi d}} \cdot \frac{2\beta \sqrt{2\pi}}{c^*} \cdot \frac{\exp\left(\frac{d}{2} \log\left(\frac{r^2}{2}\right) - \frac{r^2}{2} - \frac{d}{2}\log\left(\frac{d}{2}\right) + \frac{d}{2}\right)}{\exp\left(-\frac{r^2}{2} + \frac{d}{2} + \frac{d}{2}\log\left(\frac{r^2}{2}\right) - \frac{d}{2}\log\left(\frac{d}{2}\right) \right)} \leq C^* \frac{\beta}{\sqrt{d}}
    \end{equation*}
    for a universal positive constant \(C^*\). Thus we have 
    \begin{equation*}
        \alpha_{t}(d) \leq d\left(1 + \frac{C^*\beta}{\sqrt{d}}\right) = d + C^* \beta \sqrt{d}
    \end{equation*}
    as desired. 
\end{proof}

\begin{theorem}[Uniform expansion of the incomplete gamma function \cite{temme_uniform_1975}]\label{thm:temme_expansion}
    For \(a > 0\) and \(x \geq 0\) real numbers, define the upper incomplete gamma function 
    \begin{equation*}
        Q(a, x) := \frac{1}{\Gamma(a)} \int_{x}^{\infty} t^{a-1} e^{-t} \, dt. 
    \end{equation*}
    Further define \(\lambda = \frac{x}{a}, \mu = \lambda - 1\), and \(\eta = \sqrt{2(\mu-\log(1+\mu))}\). Then \(Q\) admits an asymptotic series expansion in \(a\) which is uniform in \(\eta \in \R\). In other words, for any integer \(N \geq 0\), we have 
    \begin{equation*}
        Q(a, x) = \left(1 - \Phi\left(\eta\sqrt{a}\right)\right) + \frac{e^{-\frac{a\eta^2}{2}}}{\sqrt{2\pi a}} \sum_{k=0}^{N} c_k(\eta) a^{-k} + \Rem_{N}(a, \eta)
    \end{equation*}
    where the remainder term satisfies
    \begin{equation*}
        \lim_{a \to \infty} \sup_{\eta \in \R} \left| \frac{\Rem_N(a, \eta)}{\frac{e^{-\frac{a\eta^2}{2}}}{\sqrt{2\pi a}} c_N(\eta) a^{-N}} \right| = 0.
    \end{equation*}
    Here, \(\Phi\) denotes the cumulative distribution function of the standard normal distribution. 
\end{theorem}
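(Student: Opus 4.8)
The plan is to reproduce the classical uniform saddle-point argument of Temme \cite{temme_uniform_1975}. Starting from the integral representation $Q(a,x) = \frac{1}{\Gamma(a)}\int_x^\infty t^{a-1}e^{-t}\,dt$, I would first rescale $t = a\tau$ to obtain
\[
Q(a,x) = \frac{a^a e^{-a}}{\Gamma(a)}\int_{\lambda}^{\infty} e^{-a\,\phi(\tau)}\,\frac{d\tau}{\tau}, \qquad \phi(\tau) := \tau - 1 - \log\tau, \quad \lambda = \frac{x}{a}.
\]
The exponent $\phi$ is convex on $(0,\infty)$, vanishes only at the interior saddle $\tau = 1$, and the lower limit $\lambda$ may lie on either side of, or coincide with, that saddle. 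The device that makes the subsequent expansion uniform in $\lambda \in [0,\infty)$ (equivalently in $\eta \in \mathbb{R}$) is the substitution $\tfrac12\eta^2 = \phi(\tau)$ with $\operatorname{sgn}(\eta) = \operatorname{sgn}(\tau - 1)$: this is a real-analytic bijection $(0,\infty)\ni\tau \leftrightarrow \eta\in\mathbb{R}$, sending $\tau = 1$ to $\eta = 0$ and the endpoint $\lambda$ to the value $\eta$ in the statement (note $\tfrac12\eta^2 = \lambda - 1 - \log\lambda = \mu - \log(1+\mu)$).

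After the change of variables, $Q(a,x) = \frac{a^a e^{-a}}{\Gamma(a)}\int_{\eta}^{\infty} e^{-\frac12 a u^2} f(u)\,du$ with $f(u) = \frac{1}{\tau(u)}\frac{d\tau}{du} = \frac{u}{\tau(u)-1}$. The crucial structural point is that $f$ extends to a real-analytic function on all of $\mathbb{R}$ with $f(0) = 1$ — the apparent singularity at $u = 0$ is removable because $\tau(u) - 1 = u + \tfrac13 u^2 + \cdots$ near the saddle — and this is exactly what prevents the expansion from degenerating as $\lambda \to 1$. I would then split off the constant $f(0) = 1$, combining $\frac{a^a e^{-a}}{\Gamma(a)}\int_\eta^\infty e^{-\frac12 a u^2}\,du$ with Stirling's asymptotic series for $a^a e^{-a}/\Gamma(a)$ to produce the term $1 - \Phi(\eta\sqrt a)$, and treat the leftover $\int_\eta^\infty e^{-\frac12 a u^2}(f(u)-1)\,du$ by the standard integration-by-parts recursion: writing $f(u) - 1 = u\,g_0(u)$ with $g_0$ analytic and using $u\,e^{-\frac12 a u^2} = -a^{-1}\frac{d}{du}e^{-\frac12 a u^2}$ yields a boundary term proportional to $a^{-1} e^{-\frac12 a\eta^2} g_0(\eta)$ plus a new integral of the same shape with $g_0$ replaced by $g_0'$ and an extra $a^{-1}$. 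Iterating $N+1$ times and again folding in the Stirling series collects the non-$\Phi$ contributions into $\frac{e^{-\frac12 a\eta^2}}{\sqrt{2\pi a}}\sum_{k=0}^N c_k(\eta)a^{-k}$, where $c_0(\eta) = g_0(\eta) = \frac{f(\eta)-1}{\eta} = \frac1\mu - \frac1\eta$ (analytic at $\eta = 0$) and the higher $c_k$ are the smooth functions generated by the recursion.

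The main obstacle is the remainder estimate, which must be uniform over all $\eta \in \mathbb{R}$, i.e. over $\lambda \in [0,\infty)$. After $N+1$ integrations by parts the remainder is $a^{-(N+1)}$ times a fixed iterated derivative of $f$ integrated against $e^{-\frac12 a u^2}$ over $(\eta,\infty)$, plus finitely many boundary terms; to show this is $o$ of the last retained term uniformly, I would (i) establish uniform bounds on $f$ and each of its derivatives on $\mathbb{R}$, which requires analyticity of $f$ in a fixed complex strip around the real axis (inverse function theorem applied to $u\mapsto\tau$, with separate care of $\tau \to 0^+$, where $\eta\to-\infty$ and $\lambda\to 0$, and $\tau\to\infty$, where $\eta\to+\infty$ and $\lambda\to\infty$, so the uniform expansion reduces to the classical non-uniform Watson-type one far from the saddle), and (ii) combine these with the Gaussian-tail facts $\sqrt{a/2\pi}\int_\eta^\infty e^{-\frac12 a u^2}\,du = 1 - \Phi(\eta\sqrt a) \asymp (\eta\sqrt a)^{-1}e^{-\frac12 a\eta^2}$ for $\eta \gtrsim a^{-1/2}$ and $\int_\eta^\infty e^{-\frac12 a u^2}\,du = O(a^{-1/2})$ for $\eta \lesssim a^{-1/2}$, handling the two regimes $\eta\sqrt a$ bounded versus large separately. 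Since the statement is Temme's, the cleanest write-up simply cites \cite{temme_uniform_1975} (and the paper itself only invokes the $N=0$ inequality, Corollary \ref{thm:temme_order_1}); the outline above is the proof that would be reconstructed.
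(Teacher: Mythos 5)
The paper does not prove this theorem; it is imported directly from Temme (1975) with a citation, so there is no in-paper argument to compare your proposal against. That said, your reconstruction faithfully reproduces the canonical Temme--Olver uniform saddle-point argument: the rescaling $t=a\tau$, the substitution $\tfrac12\eta^2 = \tau-1-\log\tau$ with matched signs (which sends the endpoint $\lambda$ to the $\eta$ in the statement and makes the map $\tau\leftrightarrow u$ a real-analytic bijection), the identification of the removable singularity $f(u)=u/(\tau(u)-1)$ at $u=0$ as the device that keeps the expansion from degenerating at the saddle, the split $f = 1 + (f-1)$ producing $1-\Phi(\eta\sqrt a)$ after multiplying by $a^a e^{-a}/\Gamma(a)\sim\sqrt{a/(2\pi)}$, and the integration-by-parts recursion $f-1 = u\,g_0$ giving $c_0(\eta)=g_0(\eta)=\tfrac1\mu-\tfrac1\eta$, matching Theorem \ref{thm:temme_coef}.

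One small point worth flagging, since it is the place where a blind reconstruction most easily goes astray: the factor $a^a e^{-a}/\Gamma(a)$ multiplies \emph{both} pieces, so after extracting $1-\Phi(\eta\sqrt a)$ one is left with a cross term $\bigl(\tfrac{1}{\Gamma^*(a)}-1\bigr)\bigl(1-\Phi(\eta\sqrt a)\bigr)$ where $\Gamma^*(a)=\Gamma(a)/(\sqrt{2\pi}\,a^{a-1/2}e^{-a})$, and this must be reabsorbed into the $c_k$ series rather than discarded. It is $O(a^{-1})\cdot(1-\Phi(\eta\sqrt a))$, which for bounded $\eta\sqrt a$ is $O(a^{-1})$ and hence legitimately $o$ of the retained term $c_0(\eta)e^{-a\eta^2/2}/\sqrt{2\pi a}=\Theta(a^{-1/2})$, and for large $\eta\sqrt a$ it already has the Gaussian-times-polynomial form; your phrase ``folding in the Stirling series'' gestures at this, but a careful write-up would carry the scaled gamma function $\Gamma^*$ explicitly and verify the two regimes separately, exactly as you anticipate for the remainder bound itself. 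Your decision to cite \cite{temme_uniform_1975} for the full argument is the right call, and is what the paper does.
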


\begin{theorem}[Theorem 1 in \cite{temme_asymptotic_1979}]\label{thm:temme_coef}
    Consider the setting of Theorem \ref{thm:temme_expansion}. The coefficient \(c_0\) is given by \(c_0(\eta) = \frac{1}{\mu} - \frac{1}{\eta}\). 
\end{theorem}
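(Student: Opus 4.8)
The plan is to carry out Temme's construction only as far as is needed to isolate the leading coefficient, invoking Theorem~\ref{thm:temme_expansion} for the existence and the precise shape of the expansion so that the sole remaining task is to pin down $c_0$. First I would substitute $t=au$ in the defining integral, obtaining $Q(a,x) = \frac{a^a}{\Gamma(a)}\int_{\lambda}^{\infty} u^{-1} e^{-a(u-\log u)}\,du$, and then apply the change of variables $u\mapsto\tau$ determined by $\tfrac12\tau^2 = u-\log u-1$ with $\sgn\tau = \sgn(u-1)$. Since $u\mapsto u-\log u-1$ decreases strictly on $(0,1)$ and increases strictly on $(1,\infty)$, this is a smooth increasing bijection $(0,\infty)\to\R$ for which $u=\lambda$ corresponds to the value $\eta$ of the theorem, and one has $u-\log u = 1+\tfrac12\tau^2$ (so $e^{-a(u-\log u)}=e^{-a}e^{-a\tau^2/2}$). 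Differentiating the defining relation gives $\tau\,d\tau = (1-u^{-1})\,du$, hence
\[
Q(a,x) = \frac{a^a e^{-a}}{\Gamma(a)}\int_{\eta}^{\infty} e^{-a\tau^2/2}\, f(\tau)\,d\tau,\qquad f(\tau) := \frac{\tau}{u(\tau)-1},
\]
where $f$ extends analytically across $\tau=0$ with $f(0)=1$, because $u-1\sim\tau$ near $u=1$.

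Next I would perform the single integration by parts that releases the leading correction. Write $f(\tau) = 1+\tau g_0(\tau)$ with $g_0(\tau):=(f(\tau)-1)/\tau$ analytic; using $\frac{d}{d\tau}\bigl(-a^{-1}e^{-a\tau^2/2}\bigr) = \tau e^{-a\tau^2/2}$ together with $\int_{\eta}^{\infty} e^{-a\tau^2/2}\,d\tau = \sqrt{2\pi/a}\,(1-\Phi(\eta\sqrt a))$ and Stirling's formula $\frac{a^a e^{-a}}{\Gamma(a)} = \sqrt{a/(2\pi)}\,(1+O(a^{-1}))$, one gets
\[
Q(a,x) = \bigl(1-\Phi(\eta\sqrt a)\bigr) + \frac{e^{-a\eta^2/2}}{\sqrt{2\pi a}}\, g_0(\eta) + \bigl(\text{terms of order } a^{-1} \text{ relative to the preceding one}\bigr).
\]
Matching with Theorem~\ref{thm:temme_expansion} gives $c_0(\eta)=g_0(\eta)$, and since $f(\eta) = \eta/(u(\eta)-1) = \eta/(\lambda-1) = \eta/\mu$, we conclude
\[
c_0(\eta) = g_0(\eta) = \frac{f(\eta)-1}{\eta} = \frac{1}{\mu}-\frac{1}{\eta};
\]
one also checks directly that this is analytic at $\eta=0$ with $c_0(0)=-\tfrac13$, using $\eta = \mu-\tfrac{\mu^2}{3}+\cdots$.

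The genuinely delicate point is the ``terms of order $a^{-1}$ relative to the preceding one'' clause: the Stirling correction and the residual integral $a^{-1}\int_{\eta}^{\infty} e^{-a\tau^2/2}g_0'(\tau)\,d\tau$ must be shown to feed only into $c_1,c_2,\dots$, and doing this uniformly in $\eta\in\R$ — where for $\eta\le 0$ it requires tracking cancellations between these two pieces — is exactly the heart of Temme's uniformity argument. I would avoid re-proving it by identifying $c_0$ only for $\eta>0$, where the interval $[\eta,\infty)$ excludes the critical point $\tau=0$: an elementary endpoint-Laplace estimate then gives $\int_{\eta}^{\infty} e^{-a\tau^2/2}f(\tau)\,d\tau \sim a^{-1}\eta^{-1}f(\eta)e^{-a\eta^2/2}$ while $1-\Phi(\eta\sqrt a)\sim(\eta\sqrt{2\pi a})^{-1}e^{-a\eta^2/2}$, so $\sqrt{2\pi a}\,e^{a\eta^2/2}\bigl(Q(a,x)-(1-\Phi(\eta\sqrt a))\bigr)\to(f(\eta)-1)/\eta$ with no cancellation to manage; since the coefficients in Theorem~\ref{thm:temme_expansion} are analytic in $\eta$, the formula $c_0(\eta)=1/\mu-1/\eta$ then extends to all $\eta$ by analytic continuation. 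The two changes of variables, the integration by parts, and the evaluation $f(\eta)=\eta/\mu$ are all routine.
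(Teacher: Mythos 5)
The paper does not prove this statement; it simply cites Temme's 1979 paper, so there is no in-paper proof to compare against. Your proposal reconstructs the relevant part of Temme's derivation, and the reconstruction is correct: the substitution \(t=au\) puts \(Q(a,x)\) in the form \(\frac{a^a}{\Gamma(a)}\int_\lambda^\infty u^{-1}e^{-a(u-\log u)}\,du\); the change of variables \(\tfrac12\tau^2=u-\log u-1\) (with \(\sgn\tau=\sgn(u-1)\)) yields \(\frac{a^a e^{-a}}{\Gamma(a)}\int_\eta^\infty e^{-a\tau^2/2}\,\frac{\tau}{u(\tau)-1}\,d\tau\); splitting \(f(\tau)=1+\tau g_0(\tau)\) and integrating by parts with \(\tau e^{-a\tau^2/2}=\frac{d}{d\tau}(-a^{-1}e^{-a\tau^2/2})\) releases the term \(\frac{e^{-a\eta^2/2}}{\sqrt{2\pi a}}g_0(\eta)\) after Stirling, and \(g_0(\eta)=(\eta/\mu-1)/\eta=1/\mu-1/\eta\). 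Your workaround for the uniformity question — pin down \(c_0\) on \(\eta>0\) by a plain endpoint Laplace estimate (where \(\int_\eta^\infty e^{-a\tau^2/2}f\,d\tau\sim a^{-1}\eta^{-1}f(\eta)e^{-a\eta^2/2}\) and \(1-\Phi(\eta\sqrt a)\sim(\eta\sqrt{2\pi a})^{-1}e^{-a\eta^2/2}\)), then extend by analyticity — is sound, and is in fact a reasonable shortcut if one is only after \(c_0\) rather than the full uniform expansion. This works even when the two leading asymptotics happen to cancel (\(f(\eta)=1\)), because the \(O(a^{-1})\) error terms vanish after multiplying by \(\sqrt{2\pi a}\,e^{a\eta^2/2}\) and taking \(a\to\infty\), so the limit still computes \(c_0(\eta)\) directly via the theorem's statement with \(N=0\).

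Two small remarks. First, you are implicitly using the signed convention \(\sgn\eta=\sgn\mu\) — which is Temme's and which is what makes \(c_0(\eta)=1/\mu-1/\eta\) analytic through \(\eta=0\) with \(c_0(0)=-1/3\) — whereas the paper's restatement of Theorem~\ref{thm:temme_expansion} writes \(\eta=\sqrt{2(\mu-\log(1+\mu))}\ge 0\) unsigned. This discrepancy is harmless for the paper, since Corollary~\ref{thm:temme_order_1} is only invoked with \(\mu>0\), but your analytic-continuation step genuinely needs the signed \(\eta\). Second, the analyticity of \(c_0\) in \(\eta\) is a fact from Temme's paper that is not actually asserted in the paper's version of Theorem~\ref{thm:temme_expansion}; it is fine to use, but worth flagging that you are pulling in more than the paper's statement supplies.
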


\begin{corollary}\label{thm:temme_order_1}
    Consider the setting of Theorem \ref{thm:temme_expansion}. For any \(a > 0\) and \(x \geq 0\), we have 
    \begin{equation*}
        Q(a, x) = \left(1 - \Phi\left(\eta\sqrt{a}\right)\right) + \frac{e^{-\frac{a\eta^2}{2}}}{\sqrt{2\pi a}} \left(\frac{1}{\mu} - \frac{1}{\eta}\right) + \Rem_0(a, \eta)
    \end{equation*}
    where 
    \begin{equation*}
        \lim_{a \to \infty} \sup_{\eta \in \R} \left| \frac{\Rem_0(a, \eta)}{\frac{e^{-\frac{a\eta^2}{2}}}{\sqrt{2\pi a}} \left(\frac{1}{\mu} - \frac{1}{\eta}\right)} \right| = 0.
    \end{equation*}
    Consequently, if \(a\) is larger than some universal positive constant and \(\mu > 0\), we have 
    \begin{equation*}
        \left|Q(a, x) - \left(1 - \Phi\left(\eta\sqrt{a}\right)\right)\right| \leq \frac{Ce^{-\frac{a\eta^2}{2}}}{\sqrt{2\pi a}} 
    \end{equation*}
    where \(C\) is some universal positive constant. 
\end{corollary}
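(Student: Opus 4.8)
The plan is to read the first displayed identity off directly from results already assembled in the excerpt, and then to upgrade the qualitative control of the remainder to the stated quantitative bound by establishing uniform boundedness of the leading coefficient on the region $\mu > 0$.

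First I would take $N = 0$ in Theorem \ref{thm:temme_expansion}, which gives
\[
    Q(a, x) = \left(1 - \Phi(\eta\sqrt{a})\right) + \frac{e^{-a\eta^2/2}}{\sqrt{2\pi a}}\, c_0(\eta) + \Rem_0(a, \eta)
\]
with $\lim_{a\to\infty} \sup_{\eta\in\R} \big|\Rem_0(a,\eta) \big/ \big(\tfrac{e^{-a\eta^2/2}}{\sqrt{2\pi a}} c_0(\eta)\big)\big| = 0$. Substituting the explicit formula $c_0(\eta) = \frac{1}{\mu} - \frac{1}{\eta}$ from Theorem \ref{thm:temme_coef} reproduces verbatim the first displayed equation of the corollary together with its accompanying limit, so nothing further is needed for that half of the statement.

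For the ``consequently'' clause, set $M := \sup_{\mu > 0}\big|\tfrac{1}{\mu} - \tfrac{1}{\eta(\mu)}\big|$, where $\eta(\mu)$ denotes the quantity defined in Theorem \ref{thm:temme_expansion}, taken with the same sign as $\mu$. The crux is the claim that $M < \infty$, and this is the one place where a short computation is unavoidable: using $\mu - \log(1+\mu) = \tfrac{\mu^2}{2} - \tfrac{\mu^3}{3} + \cdots$ one finds $\eta(\mu) = \mu\big(1 - \tfrac{\mu}{3} + \cdots\big)$ near $\mu = 0$, so $\tfrac{1}{\mu} - \tfrac{1}{\eta(\mu)} \to -\tfrac{1}{3}$ as $\mu \downarrow 0$; as $\mu \to \infty$ one has $\eta(\mu)^2 \sim 2\mu$, so both $\tfrac{1}{\mu}$ and $\tfrac{1}{\eta(\mu)}$ vanish; and on every compact subinterval of $(0,\infty)$ the expression is continuous. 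A continuous function on $(0,\infty)$ with finite limits at both endpoints is bounded, so $M < \infty$. (Alternatively, one may invoke the classical fact that each Temme coefficient $c_k$ extends to an analytic function of $\eta$ across $\eta = 0$.)

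Finally, applying the limit above with tolerance $1$ produces a universal threshold $a_0$ such that $|\Rem_0(a,\eta)| \le \tfrac{e^{-a\eta^2/2}}{\sqrt{2\pi a}}|c_0(\eta)|$ for every $\eta$ whenever $a > a_0$. Combining this with the first display and the bound $|c_0(\eta)| \le M$ valid for $\mu > 0$ gives
\[
    \left|Q(a,x) - \left(1 - \Phi(\eta\sqrt{a})\right)\right| \le \frac{e^{-a\eta^2/2}}{\sqrt{2\pi a}}\,|c_0(\eta)| + |\Rem_0(a,\eta)| \le \frac{2M}{\sqrt{2\pi a}}\,e^{-a\eta^2/2},
\]
which is the asserted inequality with $C = 2M$. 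The main (indeed essentially the only) obstacle is verifying $M < \infty$, i.e.\ the uniform boundedness of $c_0$ over $\{\mu > 0\}$; the rest is bookkeeping around Temme's uniform expansion.
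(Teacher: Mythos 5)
Your proposal is correct and has the same overall architecture as the paper's proof: take $N=0$ in Temme's expansion, substitute the closed form of $c_0(\eta)$, and reduce the ``consequently'' clause to showing that $\bigl|\tfrac{1}{\mu} - \tfrac{1}{\eta}\bigr|$ is uniformly bounded on $\{\mu > 0\}$. The one place you diverge is in how that boundedness is established. You argue qualitatively via endpoint limits plus interior continuity: the value $-\tfrac{1}{3}$ at $\mu \downarrow 0$, the value $0$ at $\mu \to \infty$, and continuity in between yield a finite (but implicit) supremum $M$. The paper instead applies the Lagrange remainder form of Taylor's theorem to $\log(1+\mu)$, writing $\log(1+\mu) = \mu - \tfrac{\mu^2}{2(1+\xi)^2}$ for some $\xi$ strictly between $0$ and $\mu$, so that $\eta = \mu/(1+\xi)$ and hence
\begin{equation*}
    \left|\frac{1}{\mu} - \frac{1}{\eta}\right| = \frac{\xi}{\mu} \le 1,
\end{equation*}
an explicit bound in one line. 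The paper's route is shorter and produces a concrete constant; yours is equally valid and has the small advantage of exposing the asymptotics behind the bound (and your parenthetical alternative, analyticity of the Temme coefficients across $\eta = 0$, is indeed the standard justification). The rest of your bookkeeping around the remainder term and the final constant $C = 2M$ is correct.
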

\begin{proof}
    The first two displays follow exactly from Theorems \ref{thm:temme_expansion} and \ref{thm:temme_coef}. To show the final display, we must show that \(\left|\frac{1}{\mu} - \frac{1}{\eta}\right| \lesssim 1\) whenever \(\mu > 0\). First, consider the Taylor expansion
    \begin{equation*}
        \log(1+\mu) = \mu - \frac{\mu^2}{2(1+\xi)^2} 
    \end{equation*}
    where \(\xi\) is some point between \(0\) and \(\mu\). Therefore, 
    \begin{equation*}
        \sqrt{2(\mu - \log(1+\mu))} = \frac{\mu}{1+\xi}.
    \end{equation*}
    Thus 
    \begin{align*}
        \left| \frac{1}{\mu} - \frac{1}{\eta}\right| = \left|\frac{1}{\mu} - \frac{1+\xi}{\mu}\right| = \frac{\xi}{\mu} \leq 1
    \end{align*}
    since \(\xi\) is between \(0\) and \(\mu\). Therefore, the final display in the statement of the Corollary follows by taking \(a\) to be larger than some universal constant and taking \(C\) to be a large enough universal constant. 
\end{proof}

\begin{lemma}\label{lemma:conditional_2}
    Let \(Z \sim N(0, I_d)\). If \(t \geq 0\), then 
    \begin{equation*}
        E(||Z||^2 \,|\, ||Z||^2 \geq d + t^2) \leq d + C^*(t^2 \vee d)
    \end{equation*}
    where \(C^*\) is a positive universal constant. 
\end{lemma}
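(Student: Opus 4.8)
The plan is to derive an exact formula for the conditional expectation in terms of $\chi^2$ densities and then bound it using the hazard-type inequality already recorded in this appendix. First I would set $u = t^2$ and note that, since $||Z||^2 \sim \chi^2_d$, the quantity in question is exactly $\alpha_t(d)$ from (\ref{def:alpha}). Using relation (i) of Lemma~\ref{lemma:johnstone_chisquare}, namely $zf_d(z) = d f_{d+2}(z)$, one obtains $E\left(||Z||^2 \mathbbm{1}_{\{||Z||^2 \geq d + u\}}\right) = d\, P(\chi^2_{d+2} \geq d + u)$, and hence
\[
E(||Z||^2 \,|\, ||Z||^2 \geq d + u) = d \cdot \frac{P(\chi^2_{d+2} \geq d + u)}{P(\chi^2_d \geq d + u)}.
\]
Then relation (iv) of Lemma~\ref{lemma:johnstone_chisquare} gives $P(\chi^2_{d+2} \geq x) - P(\chi^2_d \geq x) = \frac{e^{-x/2} x^{d/2}}{2^{d/2}\Gamma(d/2 + 1)} = \frac{2x}{d} f_d(x)$, so I would rewrite the previous display as
\[
E(||Z||^2 \,|\, ||Z||^2 \geq d + u) = d + \frac{2(d + u) f_d(d + u)}{P(\chi^2_d \geq d + u)};
\]
this is precisely the identity that opens the proof of Lemma~\ref{lemma:bulk_alpha}.

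For $d \geq 3$ I would finish by invoking the (unlabeled) lemma in this appendix stating that $2 f_d(x) \leq 1 - F_d(x)$ whenever $x \geq d$ and $d > 2$: taking $x = d + u \geq d$ yields $\frac{2(d + u) f_d(d + u)}{P(\chi^2_d \geq d + u)} \leq d + u \leq 2(t^2 \vee d)$, and therefore $E(||Z||^2 \,|\, ||Z||^2 \geq d + t^2) \leq d + 2(t^2 \vee d)$, which is the claim. The cases $d \in \{1, 2\}$ must be handled separately, since that hazard inequality genuinely fails at $d = 1$. For $d = 2$, memorylessness of $\chi^2_2 = \mathrm{Exp}(1/2)$ gives $E(\chi^2_2 \,|\, \chi^2_2 \geq 2 + u) = 4 + u$. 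For $d = 1$, writing $a = \sqrt{1 + u}$, an integration by parts gives $E\left(\chi^2_1 \mathbbm{1}_{\{\chi^2_1 \geq 1 + u\}}\right) = a\varphi(a) + (1 - \Phi(a))$, whence $E(\chi^2_1 \,|\, \chi^2_1 \geq 1 + u) = 1 + \frac{a\varphi(a)}{1 - \Phi(a)} \leq 3 + u$ by the Mills-ratio bound $1 - \Phi(a) \geq \frac{a}{1 + a^2}\varphi(a)$. In each of the three cases the bound has the form $d + (\text{universal constant})\cdot(t^2 \vee d)$, so taking $C^*$ to be the largest constant so produced (e.g.\ $C^* = 3$) completes the proof.

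I expect no genuine difficulty here: the one thing to watch is the restriction $d > 2$ on the hazard inequality, which is the reason the two small-dimensional cases cannot be absorbed into the general argument and instead need to be treated by direct computation.
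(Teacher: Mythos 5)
Your proof is correct and follows a genuinely different route from the paper's. You derive the exact identity
\[
E(\|Z\|^2 \mid \|Z\|^2 \geq d + u) \;=\; \alpha_t(d) \;=\; d + \frac{2(d+u)\,f_d(d+u)}{1 - F_d(d+u)}
\]
via Lemma~\ref{lemma:johnstone_chisquare}, then invoke the unlabeled hazard inequality $2 f_d(x) \leq 1 - F_d(x)$ for $x \geq d$, $d > 2$, and close with direct computations for $d \in \{1,2\}$. The paper instead never uses that hazard inequality here; it splits the conditional expectation at a threshold $d + Lt^2$, bounds the bulk trivially, and controls the excess by Cauchy--Schwarz together with a chi-square tail \emph{upper} bound (Corollary~\ref{corollary:chisquare_tail}) and a matching \emph{lower} bound (Lemma~\ref{lemma:anru_chisquare}), finally choosing $L$ to neutralize the resulting exponential ratio. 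Your argument is more elementary and gives explicit constants ($C^* = 3$ suffices), at the cost of treating $d = 1, 2$ by hand; the paper's argument is uniform in $d$ but leans on a nonasymptotic chi-square tail lower bound that is a heavier hammer.

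One small slip in your $d = 1$ case: you should have $E\bigl(\chi^2_1 \mathbbm{1}_{\{\chi^2_1 \geq 1+u\}}\bigr) = 2\bigl(a\varphi(a) + (1 - \Phi(a))\bigr)$ and $P(\chi^2_1 \geq 1 + u) = 2(1 - \Phi(a))$, not the un-doubled expressions you wrote; the factor of $2$ cancels in the ratio, so your stated conditional expectation $1 + \frac{a\varphi(a)}{1 - \Phi(a)}$ and the final bound are unaffected.
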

\begin{proof}
    We will choose a universal constant \(L \geq 1\) at the end of the proof, so for now we leave it as undetermined. Let \(f_d\) denote the probability density function of the \(\chi^2_d\) distribution. Observe that 
    \begin{align*}
        E(||Z||^2 \,|\, ||Z||^2 \geq d + t^2) &= E(||Z||^2 \mathbbm{1}_{\{||Z||^2 \leq d + Lt^2\}} \,|\, ||Z||^2 \geq d + t^2) + E(||Z||^2 \mathbbm{1}_{\{||Z||^2 > d + Lt^2\}} \,|\, ||Z||^2 \geq d + t^2) \\
        &\leq d + Lt^2 + \sqrt{E(||Z||^4 \,|\, ||Z||^2 \geq d + t^2)} \sqrt{P\{||Z||^2 > d + Lt^2\}}.
    \end{align*}
    By Corollary \ref{corollary:chisquare_tail}, there exists a universal constant \(c_1\) such that 
    \begin{equation*}
        \sqrt{P\{||Z||^2 > d + Lt^2\}} \leq \sqrt{2} \exp\left(-Lc_1 \min\left(\frac{t^4}{d}, t^2\right)\right).
    \end{equation*}
    Here we have used \(L \geq 1\). Further consider that 
    \begin{align*}
        E(||Z||^4\,|\, ||Z||^2 \geq d + t^2) &= \frac{\int_{d+t^2}^{\infty} z^2 f_d(z) \, dz}{P\{||Z||^2 \geq d + t^2\}} \\
        &\leq \frac{E(||Z||^4)}{P\{||Z||^2 \geq d + t^2\}} \\
        &= \frac{d(d+2)}{P\{||Z||^2 \geq d + t^2\}}.
    \end{align*}
    Therefore, by Lemma \ref{lemma:anru_chisquare} we have 
    \begin{equation*}
        E(||Z||^2 \,|\, ||Z||^2 \geq d + t^2) \leq d + Lt^2 + d C \cdot \frac{\exp\left( - Lc_1 \min\left(\frac{t^4}{d}, t^2\right) \right)}{\exp\left(-c_2 \min\left(\frac{t^4}{d}, t^2\right)\right)}
    \end{equation*}
    where \(C\) and \(c_2\) are universal positive constants. Taking \(L := \frac{c_2}{c_1} \vee 1\) completes the proof. 
\end{proof}

\begin{lemma}\label{lemma:conditional_4}
    Let \(Z \sim N(0, I_d)\). If \(t \geq 0\), then 
    \begin{equation*}
        E(||Z||^4 \,|\, ||Z||^2 \geq d + t^2) \leq d^2 + C^*(t^4 \vee d^2)
    \end{equation*}
    where \(C^*\) is a positive universal constant. 
\end{lemma}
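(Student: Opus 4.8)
The plan is to follow the truncation argument in the proof of Lemma \ref{lemma:conditional_2}, replacing second moments by fourth moments throughout. Fix a universal constant $L \geq 1$ to be chosen at the end, and split
\begin{equation*}
    E(||Z||^4 \,|\, ||Z||^2 \geq d + t^2) = E(||Z||^4 \mathbbm{1}_{\{||Z||^2 \leq d + Lt^2\}} \,|\, ||Z||^2 \geq d + t^2) + E(||Z||^4 \mathbbm{1}_{\{||Z||^2 > d + Lt^2\}} \,|\, ||Z||^2 \geq d + t^2).
\end{equation*}
The first term is trivially at most $(d + Lt^2)^2$. For the second term, I would apply the Cauchy--Schwarz inequality unconditionally and then divide by $P\{||Z||^2 \geq d + t^2\}$, exactly as in the proof of Lemma \ref{lemma:conditional_2}, to obtain
\begin{equation*}
    E(||Z||^4 \mathbbm{1}_{\{||Z||^2 > d + Lt^2\}} \,|\, ||Z||^2 \geq d + t^2) \leq \frac{\sqrt{E(||Z||^8)}\,\sqrt{P\{||Z||^2 > d + Lt^2\}}}{P\{||Z||^2 \geq d + t^2\}}.
\end{equation*}

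Next I would estimate the three factors. Since $||Z||^2 \sim \chi^2_d$, we have $E(||Z||^8) = d(d+2)(d+4)(d+6) \leq (d+6)^4 \lesssim d^4$, so $\sqrt{E(||Z||^8)} \lesssim d^2$. By Corollary \ref{corollary:chisquare_tail} and $L \geq 1$, $\sqrt{P\{||Z||^2 > d + Lt^2\}} \leq \sqrt{2}\exp\left(-Lc_1 \min(t^4/d, t^2)\right)$ for a universal constant $c_1 > 0$, while by Lemma \ref{lemma:anru_chisquare}, $P\{||Z||^2 \geq d + t^2\} \geq C_2 \exp\left(-c_2 \min(t^4/d, t^2)\right)$ for universal constants $C_2, c_2 > 0$. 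Combining the three bounds gives
\begin{equation*}
    E(||Z||^4 \mathbbm{1}_{\{||Z||^2 > d + Lt^2\}} \,|\, ||Z||^2 \geq d + t^2) \lesssim d^2 \exp\left(-(Lc_1 - c_2)\min(t^4/d, t^2)\right),
\end{equation*}
and choosing $L := (c_2/c_1) \vee 1$ makes the exponent nonpositive, so this term is $\lesssim d^2$. Putting the two pieces together and using $2Ldt^2 \leq L(d^2 + t^4)$, we arrive at
\begin{equation*}
    E(||Z||^4 \,|\, ||Z||^2 \geq d + t^2) \leq (d + Lt^2)^2 + C' d^2 \leq d^2 + C^*(t^4 \vee d^2)
\end{equation*}
for a suitable universal constant $C^*$, which is the claim.

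The argument is entirely routine given Lemma \ref{lemma:conditional_2}; the only point worth flagging is why the extra truncation at level $d + Lt^2$ with a \emph{free} parameter $L$ is needed, rather than directly comparing $P\{\chi^2_{d+4} \geq d + t^2\}$ with $P\{\chi^2_d \geq d + t^2\}$ via the identity in Lemma \ref{lemma:johnstone_chisquare}(ii). The available upper tail bound (Corollary \ref{corollary:chisquare_tail}) and lower tail bound (Lemma \ref{lemma:anru_chisquare}) carry universal constants $c_1$ and $c_2$ whose ratio is not under our control, so a naive comparison need not produce a bounded ratio; inflating the truncation level introduces the adjustable factor $L$ in the exponent, which can be taken large enough to dominate $c_2$. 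This device also makes the estimate valid for all $t \geq 0$ uniformly, including small $t$, with no case split.
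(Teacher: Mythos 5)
Your proof is correct and follows essentially the same strategy as the paper: split at a truncation level $d + Lt^2$ with a free parameter $L$, bound the far-tail contribution via Cauchy--Schwarz together with the upper and lower $\chi^2$ tail bounds (Corollary~\ref{corollary:chisquare_tail} and Lemma~\ref{lemma:anru_chisquare}), and choose $L$ a large enough universal constant so that the resulting exponential is bounded. Your explicit rewriting of the Cauchy--Schwarz step with the unconditional moment $E(||Z||^8)$ and the bare probability $P\{||Z||^2 \geq d + t^2\}$ in the denominator is a clean and fully justified variant of the paper's (slightly more compressed) bookkeeping.
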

\begin{proof}
    We will choose a universal constant \(L \geq 1\) at the end of the proof, so for now we leave it as undetermined. Let \(f_d\) denote the probability density function of the \(\chi^2_d\) distribution. Consider 
    \begin{align*}
        &E(||Z||^4 \,|\, ||Z||^2 \geq d + t^2) \\
        &= E(||Z||^4 \mathbbm{1}_{\{||Z||^2 \leq d + Lt^2\}} \,|\, ||Z||^2 \geq d + t^2) + E(||Z||^4 \mathbbm{1}_{\{||Z||^2 \leq d + Lt^2\}} \,|\, ||Z||^2 \geq d + t^2) \\
        &\leq d^2 + 2Lt^2 d + L^2 t^4 + \sqrt{E(||Z||^8 \,|\, ||Z||^2 \geq d + t^2)} \sqrt{P\{||Z||^2 > d + Lt^2\}}.
    \end{align*}
    By Corollary \ref{corollary:chisquare_tail}, there exists a universal positive constant \(c_1\) such that 
    \begin{equation*}
        \sqrt{P\{||Z||^2 > d + Lt^2\}} \leq \sqrt{2}\exp\left(-Lc_1 \min\left(\frac{t^4}{d}, t^2\right)\right). 
    \end{equation*}
    Here we have used \(L \geq 1\). Further consider 
    \begin{align*}
        E(||Z||^8 \,|\, ||Z||^2 \geq d + t^2) &= \frac{\int_{d+t^2}^{\infty} z^4 f_d(z) \, dz}{P\{||Z||^2 \geq d + t^2\}} \\
        &\leq \frac{E(||Z||^8)}{P\{||Z||^2 \geq d + t^2\}} \\
        &= \frac{d(d+2)(d+4)(d+6)}{P\{||Z||^2 \geq d + t^2\}}. 
    \end{align*}
    Hence, by Lemma \ref{lemma:anru_chisquare} we have 
    \begin{equation*}
        E(||Z||^4 \,|\, ||Z||^2 \geq d + t^2) \leq d^2 + 2Lt^2d + L^2t^4 + d^2 C \frac{\exp\left(-Lc_1 \min\left(\frac{t^4}{d}, t^2\right)\right)}{\exp\left(-c_2\min\left(\frac{t^4}{d}, t^2\right)\right)}
    \end{equation*}
    where \(C > 0\) is a universal constant. Taking \(L > \frac{c_2}{c_1} \vee 1\) completes the proof.
\end{proof}

\begin{lemma}\label{lemma:conditional_var}
    Let \(Z \sim N(0, I_d)\). If \(t \geq 0\), then 
    \begin{equation*}
        \Var(||Z||^2 \,|\, ||Z||^2 \geq d + t^2) \lesssim t^4 + de^{-C\min\left(\frac{t^4}{d}, t^2\right)}
    \end{equation*}
    for some universal positive constant \(C\). 
\end{lemma}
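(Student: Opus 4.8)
The plan is to collapse the conditional variance to a one-variable optimization by exploiting the recursion relations between $\chi^2$ densities in Lemma \ref{lemma:johnstone_chisquare}. Write $V = \|Z\|^2 \sim \chi^2_d$, put $r^2 = d + t^2$, and let $S_k := 1 - F_k$ denote the survival function of $\chi^2_k$. Iterating the relation $z f_d(z) = d f_{d+2}(z)$ from Lemma \ref{lemma:johnstone_chisquare}(i) gives $\int_{r^2}^\infty z f_d(z)\,dz = d\,S_{d+2}(r^2)$ and $\int_{r^2}^\infty z^2 f_d(z)\,dz = d(d+2)\,S_{d+4}(r^2)$; dividing by $S_d(r^2)$ yields closed forms for $E(V \mid V \geq r^2)$ and $E(V^2 \mid V \geq r^2)$. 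Using Lemma \ref{lemma:johnstone_chisquare}(iv) twice, $S_{d+2}(r^2) = S_d(r^2) + h_d(r^2)$ and $S_{d+4}(r^2) = S_{d+2}(r^2) + h_{d+2}(r^2)$ with $h_k(t) = e^{-t/2}t^{k/2}/(2^{k/2}\Gamma(k/2+1))$, and one checks directly that $h_{d+2}(t) = \tfrac{t}{d+2}h_d(t)$.

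Substituting these into the closed forms and setting $u := E(V \mid V \geq r^2) - d = d\,h_d(r^2)/S_d(r^2) \geq 0$, a short computation (the leading $d^2$ terms of $E(V^2\mid V\geq r^2)$ and $(E(V\mid V\geq r^2))^2$ must cancel) produces the exact identity
\begin{equation*}
    \Var(V \mid V \geq r^2) = 2d + u(t^2 + 2) - u^2 .
\end{equation*}
The only structural fact beyond this is the trivial observation that $V \geq r^2 = d + t^2$ on the conditioning event, so $E(V\mid V\geq r^2) \geq d + t^2$, i.e.\ $u \geq t^2$.

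With the identity in hand, the estimate is immediate. The map $u \mapsto u(t^2+2-u)$ is a downward parabola with vertex $\tfrac{t^2+2}{2}$, and $t^2 \geq \tfrac{t^2+2}{2}$ exactly when $t^2 \geq 2$; hence for $t^2 \geq 2$ the parabola is decreasing on $[t^2,\infty)$ so $u(t^2+2-u) \leq 2t^2$, while for $t^2 < 2$ we have $u(t^2+2-u) \leq \tfrac{(t^2+2)^2}{4} < 4$. Thus $\Var(V\mid V\geq r^2) \leq 2d + 2t^2 + 4 \lesssim d + t^2$. Finally, the stated bound (with the constant $C$ taken to be $1$) follows from the purely elementary inequality $d + t^2 \lesssim t^4 + d\,e^{-\min(t^4/d,\,t^2)}$, which I would verify in the three cases $t^2 \leq 1$, then $t^2 > 1$ with $\min(t^4/d,t^2) \leq 1$, and then $t^2 > 1$ with $\min(t^4/d,t^2) > 1$ — in the last case using that $\min(t^4/d,t^2) > 1$ forces $d < t^4$, and in the first two that $d\,e^{-\min} \geq d/e$.

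The one place where care is genuinely required is the algebra yielding $\Var(V\mid V\geq r^2) = 2d + u(t^2+2) - u^2$: it is essential to keep the exact relation $h_{d+2}(r^2) = \tfrac{r^2}{d+2}h_d(r^2)$ rather than an inequality, since merely combining the crude bounds of Lemmas \ref{lemma:conditional_2} and \ref{lemma:conditional_4} via $\Var = E(V^2\mid\cdot) - (E(V\mid\cdot))^2$ only gives $\Var \lesssim t^4 + d^2$, which is far too weak in the regime $t^2 < d$. Everything else — the two density identities, the positivity $u \geq t^2$, the parabola estimate, and the closing elementary inequality — is routine.
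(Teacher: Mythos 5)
Your proposal is correct and takes a genuinely different route from the paper. The paper bounds the conditional variance by the conditional second moment about $d$, i.e. $\Var(\|Z\|^2 \mid \cdot) \leq E((\|Z\|^2-d)^2 \mid \cdot)$, then splits the expectation at the threshold $d + Lt^2$ and uses Cauchy--Schwarz together with the $\chi^2$ tail bounds (Corollary~\ref{corollary:chisquare_tail}, Lemma~\ref{lemma:anru_chisquare}) on the far piece; the subgaussian/subexponential tail factor is what produces the exponential in the stated bound. You instead derive the exact identity $\Var(V\mid V\geq r^2) = 2d + u(t^2+2)-u^2$ from the density recursions, and I have checked the algebra: with $w = h_d(r^2)/S_d(r^2)$ one indeed gets $E(V^2\mid\cdot)=d(d+2)+d(d+2)w+dr^2w$ and $(E(V\mid\cdot))^2=d^2+2d^2w+d^2w^2$, and the difference collapses to $2d + dw(r^2-d+2)-d^2w^2$. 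The resulting bound $\Var \lesssim d+t^2$ is actually strictly sharper than the paper's $t^4 + d\,e^{-C\min(t^4/d,t^2)}$ in the regime $t^2 \gg \sqrt d$, and your closing three-case verification that $d+t^2 \lesssim t^4 + d\,e^{-\min(t^4/d,t^2)}$ for all $d\geq 1,\, t\geq 0$ is sound. What you gain is a cleaner, tighter bound with no dependence on tail constants; what the paper's route buys is that one never has to verify an exact cancellation (the centering at $d$ rather than at the conditional mean sidesteps it) and the same splitting machinery is reused verbatim in Lemmas~\ref{lemma:conditional_2} and~\ref{lemma:conditional_4}. Your remark that blindly combining those two lemmas via $E(V^2\mid\cdot)-(E(V\mid\cdot))^2$ only gives $t^4+d^2$ is right, and correctly identifies why the exact relation $h_{d+2} = \tfrac{r^2}{d+2}h_d$ (rather than an inequality) is the load-bearing step in your argument; the paper avoids this issue differently, by never writing the variance as a difference of uncontrolled large quantities.
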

\begin{proof}
    We will use a universal constant \(L \geq 1\) in our proof; a choice for it will be made at the end. Let \(f_d\) denote the probability density of the \(\chi^2_d\) distribution. Observe that 
    \begin{align*}
        &\Var(||Z||^2 \,|\, ||Z||^2 \geq d + t^2) \\
        &= \Var(||Z||^2 - d \,|\, ||Z||^2 \geq d + t^2) \\
        &\leq E((||Z||^2 - d)^2 \,|\, ||Z||^2 \geq d + t^2) \\
        &= E((||Z||^2 - d)^2 \mathbbm{1}_{\{||Z||^2 \leq d + Lt^2\}} \,|\, ||Z||^2 \geq d + t^2) + E((||Z||^2 - d)^2 \mathbbm{1}_{\{||Z||^2 > d + Lt^2\}} \,|\, ||Z||^2 \geq d + t^2) \\
        &\leq L^2 t^4 + \sqrt{E((||Z||^2 - d)^4 \,|\, ||Z||^2 \geq d + t^2)} \sqrt{P\{||Z||^2 \geq d + Lt^2\}}. 
    \end{align*}
    By Corollary \ref{corollary:chisquare_tail}, there exists a universal positive constant \(c_1\) such that 
    \begin{equation*}
        \sqrt{P\{|Z||^2 \geq d + Lt^2\}} \leq \sqrt{2} \exp\left(-c_1 L\min\left(\frac{t^4}{d}, t^2\right)\right).
    \end{equation*}
    Here, we have used \(L \geq 1\). With this term under control, now observe 
    \begin{align*}
        E((||Z||^2 - d)^4 \,|\, ||Z||^2 \geq d + t^2) &= \frac{\int_{d+t^2}^{\infty} (z - d)^2 f_d(z) \, dz}{P\{||Z||^2 \geq d + t^2\}} \\
        &\leq \frac{E((||Z||^2 - d)^4)}{P\{||Z||^2 \geq d + t^2\}} \\
        &= \frac{12d(d+4)}{P\{||Z||^2 \geq d + t^2\}}.
    \end{align*}
    Hence, by Lemma \ref{lemma:anru_chisquare} we have 
    \begin{equation*}
        \sqrt{E((||Z||^2 - d)^4 \,|\, ||Z||^2 \geq d + t^4)} \leq \frac{C^* d}{\exp\left(-c_2 \min\left(\frac{t^4}{d}, t^2\right)\right)}.
    \end{equation*}
    Taking \(L > \frac{c_2}{c_1} \vee 1\), we have 
    \begin{align*}
        \Var(||Z||^2 \,|\, ||Z||^2 \geq d + t^2) &\leq L^2t^4 + \frac{C^* d}{\exp\left(-c_2 \min\left(\frac{t^4}{d}, t^2\right)\right)} \cdot \sqrt{2} \exp\left(-c_1 L\min\left(\frac{t^4}{d}, t^2\right)\right) \\
        &\leq L^2 t^4 + d \cdot C^* \sqrt{2} \exp\left( -(c_1L - c_2) \min\left(\frac{t^4}{d}, t^2\right)\right). 
    \end{align*}
    which is precisely the desired result. 
\end{proof}

\section{Miscellaneous}
\subsection{Minimax}
\begin{lemma}\label{lemma:exact_Gamma}
    We have 
    \begin{equation*}
        \Gamma_{\mathcal{H}} = \mu_{\nu_{\mathcal{H}}} \vee \frac{\sqrt{(\nu_{\mathcal{H}}-1)\log\left(1 + \frac{p}{s^2}\right)}}{n}.
    \end{equation*}
    Consequently, \(\Gamma_{\mathcal{H}} \leq \mu_{\nu_{\mathcal{H}}-1}\).
\end{lemma}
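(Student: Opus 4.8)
The plan is a completely elementary monotonicity argument, exploiting that $\nu\mapsto\mu_\nu$ is non-increasing (the eigenvalues are ordered) while $\nu\mapsto\frac{\sqrt{\nu\log(1+p/s^2)}}{n}$ is strictly increasing. Write $g(\nu):=\frac{\sqrt{\nu\log(1+p/s^2)}}{n}$ for brevity, so that $\Gamma_{\mathcal{H}}=\max_{\nu\in\mathbb{N}}\{\mu_\nu\wedge g(\nu)\}$ and $\nu_{\mathcal{H}}$ is the least $\nu$ with $\mu_\nu\le g(\nu)$. I would split the maximization into the ranges $\nu<\nu_{\mathcal{H}}$ and $\nu\ge\nu_{\mathcal{H}}$ and evaluate each separately.

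For $1\le\nu<\nu_{\mathcal{H}}$, minimality of $\nu_{\mathcal{H}}$ forces $\mu_\nu>g(\nu)$, hence $\mu_\nu\wedge g(\nu)=g(\nu)$; since $g$ is increasing, $\max_{1\le\nu<\nu_{\mathcal{H}}}\{\mu_\nu\wedge g(\nu)\}=g(\nu_{\mathcal{H}}-1)$ (this term is $0$, hence harmless, in the boundary case $\nu_{\mathcal{H}}=1$). For $\nu\ge\nu_{\mathcal{H}}$, the ordering of the eigenvalues together with the defining inequality at $\nu_{\mathcal{H}}$ gives $\mu_\nu\le\mu_{\nu_{\mathcal{H}}}\le g(\nu_{\mathcal{H}})\le g(\nu)$, so $\mu_\nu\wedge g(\nu)=\mu_\nu$, and since $\mu_\nu$ is non-increasing, $\max_{\nu\ge\nu_{\mathcal{H}}}\{\mu_\nu\wedge g(\nu)\}=\mu_{\nu_{\mathcal{H}}}$. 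Taking the larger of the two values yields $\Gamma_{\mathcal{H}}=\mu_{\nu_{\mathcal{H}}}\vee g(\nu_{\mathcal{H}}-1)$, which is exactly the asserted identity.

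For the consequence, I would observe that $\mu_{\nu_{\mathcal{H}}}\le\mu_{\nu_{\mathcal{H}}-1}$ by the ordering of eigenvalues, while $g(\nu_{\mathcal{H}}-1)<\mu_{\nu_{\mathcal{H}}-1}$ again by minimality of $\nu_{\mathcal{H}}$; combining these with the identity just proved gives $\Gamma_{\mathcal{H}}=\mu_{\nu_{\mathcal{H}}}\vee g(\nu_{\mathcal{H}}-1)\le\mu_{\nu_{\mathcal{H}}-1}$. There is no real obstacle here; the only point requiring a word of care is the boundary case $\nu_{\mathcal{H}}=1$, where the index set $\{\nu:\nu<\nu_{\mathcal{H}}\}$ is empty (so the first maximum is vacuous, consistent with $g(0)=0$), and the second assertion $\Gamma_{\mathcal{H}}\le\mu_{\nu_{\mathcal{H}}-1}$ is only invoked downstream under hypotheses such as $\log(1+p/s^2)\le n/2$, which force $\nu_{\mathcal{H}}\ge 2$.
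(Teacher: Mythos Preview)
Your proposal is correct and follows essentially the same approach as the paper: both split the maximization at $\nu_{\mathcal{H}}$, use minimality of $\nu_{\mathcal{H}}$ to identify the minimum on each side, and then derive the consequence from $\mu_{\nu_{\mathcal{H}}}\le\mu_{\nu_{\mathcal{H}}-1}$ and $g(\nu_{\mathcal{H}}-1)<\mu_{\nu_{\mathcal{H}}-1}$. Your treatment is slightly more explicit (and you add a helpful remark on the boundary case $\nu_{\mathcal{H}}=1$), but the argument is the same.
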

\begin{proof}
    Consider that 
    \begin{align*}
        \Gamma_{\mathcal{H}} &= \max_{\nu < \nu_{\mathcal{H}}} \left\{ \mu_{\nu} \wedge \frac{\sqrt{\nu\log\left(1 + \frac{p}{s^2}\right)}}{n}\right\} \vee \max_{\nu \geq \nu_{\mathcal{H}}} \left\{\mu_{\nu} \wedge \frac{\sqrt{\nu \log\left(1 + \frac{p}{s^2}\right)}}{n}\right\} \\
        &= \max_{\nu < \nu_{\mathcal{H}}} \left\{ \frac{\sqrt{\nu\log\left(1 + \frac{p}{s^2}\right)}}{n}\right\} \vee \max_{\nu \geq \nu_{\mathcal{H}}} \left\{\mu_{\nu} \wedge \frac{\sqrt{\nu \log\left(1 + \frac{p}{s^2}\right)}}{n}\right\} \\
        &= \frac{\sqrt{(\nu_{\mathcal{H}} - 1)\log\left(1 + \frac{p}{s^2}\right)}}{n} \vee \mu_{\nu_{\mathcal{H}}}.
    \end{align*}
    Here, we have used the ordering of the eigenvalues and the fact that \(\mu_{\nu_{\mathcal{H}}} \leq \frac{\sqrt{\nu_{\mathcal{H}} \log\left(1+ \frac{p}{s^2}\right)}}{n}\) to obtain the second term. The bound \(\Gamma_{\mathcal{H}} \leq \mu_{\nu_{\mathcal{H}}-1}\) follows immediately from \(\mu_{\nu_{\mathcal{H}}} \leq \mu_{\nu_{\mathcal{H}}-1}\) and \(\frac{\sqrt{(\nu_{\mathcal{H}}-1)\log\left(1 + \frac{p}{s^2}\right)}}{n} \leq \mu_{\nu_{\mathcal{H}}-1}\) by definition of \(\nu_{\mathcal{H}}\). 
\end{proof}

\begin{proof}[Proof of Lemma \ref{lemma:fixed_point_equiv}]
    The result is a special case of Lemma \ref{lemma:Gamma_nu_equiv_adapt}.
\end{proof}

\subsection{Adaptation}

\begin{lemma}\label{lemma:exact_Gamma_ada}
    Suppose \(a > 0\). We have 
    \begin{equation*}
        \Gamma_{\mathcal{H}}(s, a) = \mu_{\nu_{\mathcal{H}}(s, a)} \vee \frac{\sqrt{(\nu_{\mathcal{H}}(s, a) - 1)\log\left(1 + \frac{pa}{s^2}\right)}}{n}. 
    \end{equation*}
    Consequently, \(\Gamma_{\mathcal{H}}(s, a) \leq \mu_{\nu_{\mathcal{H}}(s, a) - 1}\). 
\end{lemma}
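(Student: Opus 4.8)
The plan is to mirror the proof of Lemma~\ref{lemma:exact_Gamma} verbatim, simply carrying the parameter $a$ through every inequality; indeed Lemma~\ref{lemma:exact_Gamma} is precisely the case $a=1$. First I would split the maximization defining $\Gamma_{\mathcal{H}}(s,a)$ into the two ranges $\nu < \nu_{\mathcal{H}}(s,a)$ and $\nu \geq \nu_{\mathcal{H}}(s,a)$.

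On the range $\nu < \nu_{\mathcal{H}}(s,a)$: by minimality of $\nu_{\mathcal{H}}(s,a)$, every such $\nu$ satisfies $\mu_{\nu} > \frac{\sqrt{\nu\log(1+pa/s^2)}}{n}$, so the minimum in each term equals $\frac{\sqrt{\nu\log(1+pa/s^2)}}{n}$; since this quantity is increasing in $\nu$, the maximum over $\nu < \nu_{\mathcal{H}}(s,a)$ is $\frac{\sqrt{(\nu_{\mathcal{H}}(s,a)-1)\log(1+pa/s^2)}}{n}$. On the range $\nu \geq \nu_{\mathcal{H}}(s,a)$: the eigenvalue ordering together with the defining inequality for $\nu_{\mathcal{H}}(s,a)$ gives $\mu_{\nu} \leq \mu_{\nu_{\mathcal{H}}(s,a)} \leq \frac{\sqrt{\nu_{\mathcal{H}}(s,a)\log(1+pa/s^2)}}{n} \leq \frac{\sqrt{\nu\log(1+pa/s^2)}}{n}$, so the minimum equals $\mu_\nu$, and the maximum over this range is $\mu_{\nu_{\mathcal{H}}(s,a)}$, again by monotonicity of the eigenvalues. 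Combining the two ranges yields the claimed identity $\Gamma_{\mathcal{H}}(s,a) = \mu_{\nu_{\mathcal{H}}(s,a)} \vee \frac{\sqrt{(\nu_{\mathcal{H}}(s,a)-1)\log(1+pa/s^2)}}{n}$.

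For the consequence $\Gamma_{\mathcal{H}}(s,a) \leq \mu_{\nu_{\mathcal{H}}(s,a)-1}$: the eigenvalue ordering gives $\mu_{\nu_{\mathcal{H}}(s,a)} \leq \mu_{\nu_{\mathcal{H}}(s,a)-1}$, while the fact that $\nu_{\mathcal{H}}(s,a)-1$ fails the defining inequality gives $\frac{\sqrt{(\nu_{\mathcal{H}}(s,a)-1)\log(1+pa/s^2)}}{n} < \mu_{\nu_{\mathcal{H}}(s,a)-1}$; taking the maximum of these two bounds closes the argument. There is essentially no obstacle: the only point requiring mild care is the degenerate case $\nu_{\mathcal{H}}(s,a)=1$, in which the first term vanishes and the statement reduces to $\Gamma_{\mathcal{H}}(s,a)=\mu_1$, handled exactly as in Lemma~\ref{lemma:exact_Gamma}.
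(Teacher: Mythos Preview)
Your proposal is correct and follows essentially the same approach as the paper: split the maximum at $\nu_{\mathcal{H}}(s,a)$, use minimality of $\nu_{\mathcal{H}}(s,a)$ to simplify each range via monotonicity of the eigenvalues and of the square-root term, and then bound both pieces of the resulting formula by $\mu_{\nu_{\mathcal{H}}(s,a)-1}$. Your treatment is in fact slightly more careful than the paper's, since you explicitly note the degenerate case $\nu_{\mathcal{H}}(s,a)=1$.
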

\begin{proof}
    Consider that 
    \begin{align*}
        \Gamma_{\mathcal{H}}(s, a) &= \max_{\nu < \nu_{\mathcal{H}}(s,a)} \left\{ \mu_{\nu} \wedge \frac{\sqrt{\nu \log\left(1 + \frac{pa}{s^2}\right)}}{n} \right\} \vee \max_{\nu \geq \nu_{\mathcal{H}}(s,a)} \left\{ \mu_{\nu} \wedge \frac{\sqrt{\nu \log\left(1 + \frac{pa}{s^2}\right)}}{n}  \right\} \\
        &= \max_{\nu < \nu_{\mathcal{H}}(s,a)} \left\{ \frac{\sqrt{\nu \log\left(1 + \frac{pa}{s^2}\right)}}{n} \right\} \vee \max_{\nu \geq \nu_{\mathcal{H}}(s,a)} \left\{ \mu_{\nu} \wedge \frac{\sqrt{\nu\log\left(1 + \frac{pa}{s^2}\right)}}{n} \right\} \\
        &= \frac{\sqrt{(\nu_{\mathcal{H}}(s,a) - 1) \log\left(1 + \frac{pa}{s^2}\right)}}{n} \vee \mu_{\nu_{\mathcal{H}}(s,a)}. 
    \end{align*}
    Here, we have used the ordering of the eigenvalues and the fact that \(\mu_{\nu_{\mathcal{H}}(s,a)} \leq \frac{\sqrt{\nu_{\mathcal{H}}(s,a) \log\left(1 + \frac{pa}{s^2}\right)}}{n}\) to obtain the second term. To bound \(\Gamma_{\mathcal{H}} \leq \mu_{\nu_{\mathcal{H}}(s,a) - 1}\) follows immediately from \(\mu_{\nu_{\mathcal{H}}} \leq \mu_{\nu_{\mathcal{H}}(s,a) - 1}\) and \(\frac{\sqrt{(\nu_{\mathcal{H}}(s,a) - 1)\log\left(1 + \frac{pa}{s^2}\right)}}{n} \leq \mu_{\nu_{\mathcal{H}}(s,a) - 1}\) by definition of \(\nu_{\mathcal{H}}(s,a)\). 
\end{proof}

\begin{proof}[Proof of Lemma \ref{lemma:Gamma_nu_equiv_adapt}]
    We first prove the second inequality. Since \(\log\left(1 + \frac{pa}{s^2}\right) \leq \frac{n}{2}\) and \(\mu_1 = 1\), it immediately follows that \(\nu_{\mathcal{H}}(s,a) \geq 2\). Therefore, 
    \begin{align*}
        \frac{\sqrt{\nu_{\mathcal{H}}(s,a)\log\left(1 + \frac{pa}{s^2}\right)}}{n} &\leq \sqrt{2} \frac{\sqrt{(\nu_{\mathcal{H}}(s,a) - 1) \log\left(1 + \frac{pa}{s^2}\right)}}{n} \\
        &\leq \sqrt{2}\left( \mu_{\nu_{\mathcal{H}}(s,a)} \vee \frac{\sqrt{(\nu_{\mathcal{H}}(s,a) - 1)\log\left(1 + \frac{pa}{s^2}\right)}}{n} \right) \\ 
        &\leq \sqrt{2} \Gamma_{\mathcal{H}}(s, a)
    \end{align*}
    as desired. 

    Now we prove the first inequality. For any \(\nu \geq \nu_{\mathcal{H}}(s,a)\), we have by the ordering of the eigenvalues 
    \begin{equation*}
        \mu_{\nu} \wedge \frac{\sqrt{\nu \log\left(1 + \frac{pa}{s^2}\right)}}{n} \leq \mu_{\nu} \leq \mu_{\nu_{\mathcal{H}}}(s, a) \leq \frac{\sqrt{\nu_{\mathcal{H}}(s,a) \log\left(1 + \frac{pa}{s^2}\right)}}{n}.
    \end{equation*}
    For any \(\nu < \nu_{\mathcal{H}}(s,a)\), by definition of \(\nu_{\mathcal{H}}(s,a)\) it follows that 
    \begin{equation*}
        \mu_{\nu} \wedge \frac{\sqrt{\nu\log\left(1 + \frac{pa}{s^2}\right)}}{n} = \frac{\sqrt{\nu\log\left(1 + \frac{pa}{s^2}\right)}}{n} \leq \frac{\sqrt{\nu_{\mathcal{H}}(s,a)\log\left(1 + \frac{pa}{s^2}\right)}}{n}.
    \end{equation*}
    Since for all \(\nu\) we have shown 
    \begin{equation*}
        \mu_\nu \wedge \frac{\sqrt{\nu \log\left(1 + \frac{pa}{s^2}\right)}}{n} \leq \frac{\sqrt{\nu_{\mathcal{H}}(s,a) \log\left(1 + \frac{pa}{s^2}\right)}}{n},
    \end{equation*}
    taking max over \(\nu\) yields 
    \begin{equation*}
        \Gamma_{\mathcal{H}}(s, a) \leq \frac{\sqrt{\nu_{\mathcal{H}}(s,a)\log\left(1 + \frac{pa}{s^2}\right)}}{n}
    \end{equation*}
    as desired. 
\end{proof}

\begin{proof}[Proof of Lemma \ref{lemma:script_AV_equiv}]
    For ease of notation, let us write \(a = \mathscr{A}_{\mathcal{H}}\). By the assumption \(\log\left(1 + pa\right) \leq \frac{n}{2}\) and \(\mu_1 = 1\) we have \(\nu_{\mathcal{H}}(s,a) \geq 2\) for all \(s\). By definition of \(\nu_{\mathcal{H}}(s,a)\), we have 
    \begin{equation*}
        \mu_{\nu_{\mathcal{H}}(s,a)} \leq \frac{\sqrt{\nu_{\mathcal{H}}(s,a)\log\left(1 + \frac{pa}{s^2}\right)}}{n}
    \end{equation*}
    and
    \begin{equation*}
        \mu_{\nu_{\mathcal{H}}(s,a) - 1} > \frac{\sqrt{(\nu_{\mathcal{H}}(s,a)-1)\log\left(1 + \frac{pa}{s^2}\right)}}{n}
    \end{equation*}
    Define 
    \begin{equation*}
        \delta_s := \inf\left\{ \delta > 0 : \mu_{\nu(s, a) - 1} \leq \frac{\sqrt{(\nu_{\mathcal{H}}(s,a) - 1)\log\left(1 + \frac{p(a+\delta)}{s^2}\right)}}{n}\right\}. 
    \end{equation*}
    It is clear \(\delta_s > 0\) for each \(s\) since the inequality in the penultimate display is strict. Moreover, observe that \(\nu_{\mathcal{H}}(s, a+\delta) > \nu_{\mathcal{H}}(s,a) - 1\) for all \(\delta < \delta_s\) by definition of \(\nu_{\mathcal{H}}\). Moreover, since \(\nu_{\mathcal{H}}\) is decreasing in its second argument, it thus follows that \(\nu_{\mathcal{H}}(s, a+\delta) = \nu_{\mathcal{H}}(s,a)\) for all \(\delta < \delta_s\). Letting \(\delta^* := 1 \wedge \min_{s \in [p]} \delta_s\), it immediately follows that \(\mathscr{V}_{a} = \mathscr{V}_{a + \delta}\) for all \(\delta < \delta^*\). By definition of \(\mathscr{A}_{\mathcal{H}}\), it follows that for any \(\delta < \delta^*\)
    \begin{equation*}
        \mathscr{A}_{\mathcal{H}} = a \leq \log(e|\mathscr{V}_{a}|)= \log(e|\mathscr{V}_{a + \delta}|) < a + \delta \leq 2\mathscr{A}_{\mathcal{H}} 
    \end{equation*}
    where we have used \(\mathscr{A}_{\mathcal{H}} \geq 1\) and \(\delta \leq 1\) to obtain the final inequality. The proof is complete.
\end{proof}

\subsection{Technical}

\begin{proposition}[Ingster-Suslina method \cite{ingster_nonparametric_2003}]\label{prop:Ingster_Suslina}
    Suppose \(\Sigma \in \R^{d \times d}\) is a positive definite matrix and \(\Theta \subset \R^d\) is a parameter space. If \(\pi\) is a probability distribution supported on \(\Theta\), then 
    \begin{equation*}
        \chi^2\left(\left.\left. \int N(\theta, \Sigma) \, \pi(d\theta) \right|\right| N(0, \Sigma)\right) = E\left(\exp\left(\left\langle \theta, \Sigma^{-1} \widetilde{\theta} \right\rangle\right)\right) - 1
    \end{equation*}
    where \(\theta, \widetilde{\theta} \overset{iid}{\sim} \pi\) and \(\chi^2(\cdot || \cdot)\) denotes the \(\chi^2\)-divergence defined in Section \ref{section:notation}.
\end{proposition}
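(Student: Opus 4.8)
\textbf{Proof proposal for Proposition \ref{prop:Ingster_Suslina}.} The plan is a direct computation starting from the definition of the $\chi^2$-divergence. Writing $P_0 = N(0,\Sigma)$ and $P_\pi = \int N(\theta, \Sigma)\,\pi(d\theta)$, recall that
\begin{equation*}
    \chi^2(P_\pi \,||\, P_0) = \int \left(\frac{dP_\pi}{dP_0}\right)^2 dP_0 - 1,
\end{equation*}
so it suffices to compute $\int (dP_\pi/dP_0)^2 \, dP_0$. First I would record the classical likelihood-ratio formula for Gaussian shifts: since $\Sigma$ is positive definite, for each $\theta$ the density of $N(\theta,\Sigma)$ with respect to $N(0,\Sigma)$ is $x \mapsto \exp\left(\langle \Sigma^{-1}\theta, x\rangle - \tfrac12 \langle \theta, \Sigma^{-1}\theta\rangle\right)$. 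Averaging over $\pi$ (and using that densities integrate to the mixture density) gives
\begin{equation*}
    \frac{dP_\pi}{dP_0}(x) = \int \exp\left(\langle \Sigma^{-1}\theta, x\rangle - \tfrac12 \langle \theta, \Sigma^{-1}\theta\rangle\right) \pi(d\theta).
\end{equation*}

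Next I would square this expression, writing the square as a double integral over two independent copies $\theta, \widetilde{\theta} \overset{iid}{\sim} \pi$, integrate against $dP_0$, and interchange the order of integration by Tonelli's theorem (all integrands are nonnegative, so this is unconditionally valid; if the resulting quantity is $+\infty$ the claimed identity still holds with both sides infinite). This reduces the computation to the Gaussian moment generating function: for any $u \in \R^d$, $\int e^{\langle u, x\rangle}\, dP_0(x) = e^{\frac12 \langle u, \Sigma u\rangle}$. Applying this with $u = \Sigma^{-1}(\theta + \widetilde{\theta})$ and expanding $\langle \Sigma^{-1}(\theta+\widetilde\theta), \theta + \widetilde\theta\rangle = \langle \theta, \Sigma^{-1}\theta\rangle + 2\langle \theta, \Sigma^{-1}\widetilde\theta\rangle + \langle \widetilde\theta, \Sigma^{-1}\widetilde\theta\rangle$ (using symmetry of $\Sigma^{-1}$) yields
\begin{equation*}
    \int \left(\frac{dP_\pi}{dP_0}\right)^2 dP_0 = \iint \exp\left(\langle \theta, \Sigma^{-1}\widetilde\theta\rangle\right) \pi(d\theta)\,\pi(d\widetilde\theta) = E\left(\exp\left(\langle \theta, \Sigma^{-1}\widetilde\theta\rangle\right)\right),
\end{equation*}
since the two quadratic terms $\pm\tfrac12\langle\theta,\Sigma^{-1}\theta\rangle$ and $\pm\tfrac12\langle\widetilde\theta,\Sigma^{-1}\widetilde\theta\rangle$ cancel against those from the two likelihood ratios. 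Subtracting $1$ gives the stated identity.

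There is no genuine obstacle here; the only points requiring a word of care are the justification of the interchange of integration order (handled by nonnegativity and Tonelli) and the bookkeeping in the exponent cancellation, which I would carry out explicitly in one or two lines. This argument is standard in the minimax testing literature (e.g.\ \cite{ingster_nonparametric_2003, baraud_non-asymptotic_2002}); I include it for completeness and to fix the exact normalization used throughout the lower bound proofs.
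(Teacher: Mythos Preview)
Your proposal is correct and is the standard argument for this classical identity. The paper itself does not supply a proof of this proposition; it is stated with a citation to \cite{ingster_nonparametric_2003} and used as a black box in the lower bound arguments, so there is nothing to compare against beyond noting that your computation is exactly the well-known derivation.
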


\begin{lemma}[\cite{tsybakov_introduction_2009}]
    If \(P, Q\) are probability measures on a measurable space \((\mathcal{X}, \mathcal{A})\) with \(P \ll Q\), then \(d_{TV}(P, Q) \leq \frac{1}{2}\sqrt{\chi^2(P||Q)}\). 
\end{lemma}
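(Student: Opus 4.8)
The plan is to reduce everything to the $L^1$ representation of the total variation distance and then apply Cauchy--Schwarz. Since $P \ll Q$, write $f := \frac{dP}{dQ}$ for the Radon--Nikodym derivative, which exists and is nonnegative $Q$-almost everywhere with $\int_{\mathcal{X}} f \, dQ = 1$. For any $A \in \mathcal{A}$ we then have $P(A) - Q(A) = \int_A (f - 1)\, dQ$, so $|P(A) - Q(A)| \le \int_{\{f \ge 1\}} (f-1)\, dQ$ with equality when $A = \{f \ge 1\}$. Hence $d_{TV}(P, Q) = \int_{\{f \ge 1\}} (f - 1)\, dQ$.

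Next I would symmetrize this expression. Because $\int_{\mathcal{X}} (f - 1)\, dQ = 1 - 1 = 0$, the positive and negative parts of $f - 1$ integrate to the same value, so $\int_{\{f \ge 1\}} (f-1)\, dQ = \frac{1}{2}\int_{\mathcal{X}} |f - 1|\, dQ$. This gives the standard identity $d_{TV}(P, Q) = \frac{1}{2}\int_{\mathcal{X}} |f - 1|\, dQ$.

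Finally I would apply the Cauchy--Schwarz inequality with the factor $1$:
\begin{equation*}
    \int_{\mathcal{X}} |f - 1|\, dQ = \int_{\mathcal{X}} |f-1| \cdot 1 \, dQ \le \left(\int_{\mathcal{X}} (f-1)^2 \, dQ\right)^{1/2}\left(\int_{\mathcal{X}} 1 \, dQ\right)^{1/2} = \sqrt{\chi^2(P \,||\, Q)},
\end{equation*}
using $\int_{\mathcal{X}} 1 \, dQ = 1$ and the definition $\chi^2(P\,||\,Q) = \int_{\mathcal{X}}(f-1)^2\, dQ$ from Section~\ref{section:notation}. Combining with the identity from the previous paragraph yields $d_{TV}(P, Q) = \frac{1}{2}\int_{\mathcal{X}} |f-1|\, dQ \le \frac{1}{2}\sqrt{\chi^2(P\,||\,Q)}$, as claimed.

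There is essentially no real obstacle here; the only point requiring a little care is justifying that the supremum defining $d_{TV}$ is attained at the set $\{f \ge 1\}$ and the symmetrization step $\int_{\{f\ge 1\}}(f-1)\,dQ = \frac12\int|f-1|\,dQ$, both of which follow from $\int(f-1)\,dQ=0$. (One should also note $\chi^2(P\,||\,Q)$ may be $+\infty$, in which case the bound is trivial, so the Cauchy--Schwarz step is only needed when it is finite.)
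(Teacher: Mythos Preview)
Your proof is correct and is the standard argument. The paper does not actually provide a proof of this lemma; it simply cites it from \cite{tsybakov_introduction_2009}, so there is nothing in the paper to compare against.
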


\begin{lemma}\label{lemma:hypgeom}
    If \(Y\) is distributed according to the hypergeometric distribution with probability mass function \(P\{Y = k\} = \frac{\binom{s}{k} \binom{n-s}{t-k}}{\binom{n}{t}}\) for \(0 \leq k \leq s \wedge t\), then \(E(\exp(\lambda Y)) \leq \left(1 - \frac{s}{n} + \frac{s}{n} e^{\lambda} \right)^t\) for \(\lambda > 0\). 
\end{lemma}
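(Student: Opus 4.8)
The plan is to realize $Y$ concretely as the overlap of two subsets of $[n]$ and then expand the exponential moment combinatorially, term by term. The starting observation is the factorial identity $\binom{t}{k}\binom{n-t}{s-k}/\binom{n}{s} = \binom{s}{k}\binom{n-s}{t-k}/\binom{n}{t}$, valid for every $k$, so that $Y$ has the same law as $|A \cap S|$, where $A \subseteq [n]$ is a fixed set with $|A| = t$ and $S \subseteq [n]$ is a uniformly random set with $|S| = s$. Writing $Y = \sum_{j \in A} \mathbbm{1}_{\{j \in S\}}$ and using $e^{\lambda \mathbbm{1}_{\{j \in S\}}} = 1 + (e^\lambda - 1)\mathbbm{1}_{\{j \in S\}}$, I would expand
\begin{equation*}
    e^{\lambda Y} = \prod_{j \in A}\bigl(1 + (e^\lambda - 1)\mathbbm{1}_{\{j \in S\}}\bigr) = \sum_{B \subseteq A}(e^\lambda - 1)^{|B|}\,\mathbbm{1}_{\{B \subseteq S\}},
\end{equation*}
since $\mathbbm{1}_{\{B \subseteq S\}} = \prod_{j \in B}\mathbbm{1}_{\{j \in S\}}$.

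Taking expectations, $E\bigl(\mathbbm{1}_{\{B \subseteq S\}}\bigr) = P\{B \subseteq S\} = \binom{n - |B|}{s - |B|}\big/\binom{n}{s} = \prod_{i=0}^{|B|-1}\frac{s - i}{n - i}$. Because $s \le n$ we have $\frac{s-i}{n-i} \le \frac{s}{n}$ for every $i \ge 0$, hence $P\{B \subseteq S\} \le (s/n)^{|B|}$. For $\lambda > 0$ the coefficient $(e^\lambda - 1)^{|B|}$ is nonnegative, so every summand is bounded by $(e^\lambda-1)^{|B|}(s/n)^{|B|}$; grouping the subsets $B \subseteq A$ by their cardinality $m$ (there are $\binom{t}{m}$ of cardinality $m$) and applying the binomial theorem gives
\begin{equation*}
    E(e^{\lambda Y}) \le \sum_{m=0}^{t}\binom{t}{m}\Bigl(\tfrac{s}{n}(e^\lambda - 1)\Bigr)^m = \Bigl(1 + \tfrac{s}{n}(e^\lambda - 1)\Bigr)^t = \Bigl(1 - \tfrac{s}{n} + \tfrac{s}{n}e^\lambda\Bigr)^t,
\end{equation*}
which is the claimed bound.

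The only points requiring care are the combinatorial identity identifying the law of $Y$ with that of $|A \cap S|$ (a routine manipulation of factorials) and the elementary estimate $\prod_{i=0}^{m-1}\frac{s-i}{n-i} \le (s/n)^m$; neither is a genuine obstacle, so this is essentially the whole proof. As an alternative one could invoke Hoeffding's result that sampling without replacement is dominated in the convex order by sampling with replacement — equivalently, the negative association of the indicators $\{\mathbbm{1}_{\{j \in S\}}\}_{j \in [n]}$ — applied to the convex function $x \mapsto e^{\lambda x}$; but the direct expansion above is shorter and fully self-contained.
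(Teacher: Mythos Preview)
Your proof is correct and complete. The paper itself states this lemma without proof (it is listed as a technical fact in the appendix, with the special case $s=t$ attributed to \cite{collier_minimax_2017}), so there is no argument in the paper to compare against; your direct combinatorial expansion is a clean and self-contained justification.
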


\begin{corollary}[\cite{collier_minimax_2017}]\label{corollary:hypergeometric}
    If \(Y\) is distributed according to the hypergeometric distribution with probability mass function \(P\{Y = k\} = \frac{\binom{s}{k}}{\binom{n-s}{s-k}}{\binom{n}{s}}\) for \(0 \leq k \leq s \leq n\), then \(E(Y) = \frac{s^2}{n}\) and \(E(\exp(\lambda Y)) \leq \left(1 - \frac{s}{n} + \frac{s}{n} e^{\lambda}\right)^s\) for \(\lambda > 0\). 
\end{corollary}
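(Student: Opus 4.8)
The plan is to derive the moment generating function bound as an immediate specialization of Lemma \ref{lemma:hypgeom}, and to obtain the mean $E(Y) = s^2/n$ by a short indicator decomposition. No genuinely new argument is needed; the corollary is a bookkeeping consequence of the lemma preceding it.

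First I would observe that the law of $Y$ described by the mass function $\binom{s}{k}\binom{n-s}{s-k}/\binom{n}{s}$ is precisely the hypergeometric distribution of Lemma \ref{lemma:hypgeom} with the parameter there called $t$ set to $s$. Concretely, if $S, S'$ are independent subsets of $[n]$, each drawn uniformly at random among subsets of size $s$, then $Y \overset{d}{=} |S \cap S'|$ has exactly this law. Consequently the bound
\begin{equation*}
    E\left(e^{\lambda Y}\right) \leq \left(1 - \frac{s}{n} + \frac{s}{n} e^{\lambda}\right)^s
\end{equation*}
for $\lambda > 0$ follows directly by substituting $t = s$ into the conclusion of Lemma \ref{lemma:hypgeom}.

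For the mean, I would exploit exchangeability: fix a representative set $S_0 \subset [n]$ of size $s$, so that $Y \overset{d}{=} |S_0 \cap S'| = \sum_{i \in S_0} \mathbbm{1}_{\{i \in S'\}}$ with $S'$ uniform over the size-$s$ subsets of $[n]$. Since $P\{i \in S'\} = s/n$ for each fixed $i$, linearity of expectation yields $E(Y) = s \cdot \tfrac{s}{n} = \tfrac{s^2}{n}$; alternatively one may simply quote the standard formula $\tfrac{ts}{n}$ for the mean of a hypergeometric variable and put $t = s$. There is no substantive obstacle in this proof: the only points requiring mild care are confirming that the displayed mass function (which contains an obvious typographical slip, the intended expression being $\binom{s}{k}\binom{n-s}{s-k}/\binom{n}{s}$) coincides with the $t = s$ instance of the law in Lemma \ref{lemma:hypgeom}, and justifying the reduction to a fixed $S_0$ via exchangeability when computing $E(Y)$.
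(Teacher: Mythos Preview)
Your proposal is correct. The paper does not supply its own proof of this corollary; it is stated as a cited result from \cite{collier_minimax_2017}, so there is no paper proof to compare against. Your derivation---specializing Lemma~\ref{lemma:hypgeom} with \(t=s\) for the moment generating function bound and using the indicator decomposition (or the standard hypergeometric mean \(ts/n\)) for \(E(Y)=s^2/n\)---is exactly the natural way to obtain the corollary from the preceding lemma, and your identification of the typographical slip in the displayed mass function is also accurate.
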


\end{document}